\documentclass[11pt]{amsart}

\usepackage{xcolor}
\usepackage[draft,inline,nomargin]{fixme}
\usepackage[top=1in,bottom=1in,left=1in,right=1in]{geometry}

\usepackage{multirow}
\usepackage{array}
\usepackage{booktabs}

\usepackage{cellspace}
\setlength\cellspacetoplimit{2pt}
\setlength\cellspacebottomlimit{2pt}

\fxsetup{theme=color,mode=multiuser}
\FXRegisterAuthor{mac}{amac}{\color{red}Angie}
\FXRegisterAuthor{hm}{ahm}{\color{blue}Hannah}

\usepackage{enumerate}
\usepackage{amsthm}
\usepackage{thmtools}
\usepackage{amsmath}
\usepackage{amsfonts,amssymb}
\usepackage{mathrsfs}
\usepackage{graphicx}
\usepackage{hhline}
\usepackage{blkarray}

\usepackage{hyperref}

\usepackage[all,arc,curve,color,frame]{xy}
\usepackage{colordvi}
\usepackage{vmargin}


\usepackage{pdflscape}
\usepackage{afterpage}
\usepackage{caption}

\usepackage{amscd}
\usepackage{xcolor}

\numberwithin{equation}{section}

\makeatletter
\@namedef{subjclassname@2010}{%
 \textup{2010} Mathematics Subject Classification}
\makeatother

\declaretheorem[style=plain,parent=section]{theorem}
\declaretheorem[style=plain,sibling=theorem]{corollary}
\declaretheorem[style=plain,sibling=theorem]{lemma}

\declaretheorem[style=plain,sibling=theorem]{proposition}

\declaretheorem[style=plain,sibling=theorem]{question}

\declaretheorem[style=definition,sibling=theorem]{definition}

\declaretheorem[style=definition, qed=\hfill $\diamond$, sibling=definition]{example}
\declaretheorem[style=remark,sibling=theorem]{remark}



\newcommand{\RR}{\ensuremath{\mathbb{R}}}

\newcommand{\PP}{\ensuremath{\mathbb{P}}}
\newcommand{\resK}{\ensuremath{\widetilde{K}}}
\newcommand{\CC}{\ensuremath{\mathbb{C}}}

\newcommand{\ZZ}{\ensuremath{\mathbb{Z}}}

\newcommand{\trop}{\text{trop}}
\newcommand{\Trop}{\text{Trop}}
\newcommand{\val}{\operatorname{val}}

\newcommand{\ww}{\omega}

\newcommand{\TP}{\mathbb{T}}
\newcommand{\TPr} {\ensuremath{\TP\PP}}
\newcommand{\PS}{\CC\{\!\{t
\}\!\}}
\newcommand{\PSR}{\RR\{\!\{t
\}\!\}}

\newcommand\be{\begin{equation}}
\newcommand\ee{\end{equation}}
\newcommand\bea{\begin{eqnarray}}
\newcommand\eea{\end{eqnarray}}
\newcommand\bi{\begin{itemize}}
\newcommand\ei{\end{itemize}}
\newcommand\ben{\begin{enumerate}}
\newcommand\een{\end{enumerate}}
\newcommand\ba{\begin{array}}
\newcommand\ea{\end{array}}
\newcommand\bpf{\begin{proof}}
\newcommand\epf{\end{proof}}
\newcommand\bex{\begin{exercise}}
\newcommand\eex{\end{exercise}}


\providecommand{\Sn}[1]{\ensuremath{\mathfrak{S}_{#1}}}

\newcommand{\KK}{\ensuremath{\mathbb{K}}}
\newcommand{\KKr}{\ensuremath{\KK_{\RR}}}

\newcommand{\Star}{\ensuremath{\operatorname{Star}}}
\newcommand{\len}{\ensuremath{\operatorname{len}}}
\newcommand \sage{\texttt{Sage}}

\title{Combinatorics and real lifts of bitangents to tropical quartic curves}

\author [Maria Angelica Cueto and Hannah Markwig]
{
Maria Angelica Cueto
 \and Hannah Markwig${}^{\S}$
}
\thanks{${\S}$ \emph{Corresponding author}}
\date{\today}
\keywords{Tropical geometry, quartic curves, bitangents, theta characteristics, real bitangents, tropical modifications}
\subjclass[2010]{14T05, 14H45, 14H50}

\begin{document}

\begin{abstract}
Smooth algebraic plane quartics over algebraically closed fields of characteristic different than two have 28 bitangent lines. Their tropical counterparts often have infinitely many bitangents. They are grouped into seven equivalence classes, one for each  linear system associated to an effective tropical theta characteristic on the tropical quartic. We show such classes determine tropically convex sets and provide a complete combinatorial classification of such objects into 41 types (up to symmetry).

  The occurrence of a given class is determined by both the combinatorial type and the metric structure of the  input tropical plane quartic. We use this result to provide explicit sign-rules to obtain real lifts for each tropical bitangent class, and  confirm that each one has either zero or exactly four real lifts, as previously conjectured by Len and the second author. Furthermore, such real lifts are always totally-real. 
\end{abstract}

\maketitle

\section{Introduction}\label{sec:introduction}

\emph{Superabundance} phenomena in tropical geometry pose a challenge to addressing enumerative geometry questions  by combinatorial means~\cite{bir.gat.sch:17,Mi03,ran:17, spe:14}. 
For example, smooth tropical plane quartic curves can have infinitely many bitangent tropical lines, as opposed to the count of 28 bitangents to smooth algebraic plane quartic curves  over algebraically closed fields of characteristic different than two~\cite{bak.len.ea:16}. For an example with exactly seven tropical bitangents, we refer to~\cite[Figure 12b]{gei.pan:21}.

Basic duality between $\RR^2$ and the space of non-degenerate tropical lines in $\RR^2$ identifies each such line with (plus or minus) its unique vertex. 
This paper gives a combinatorial characterization of these infinite sets of points and sheds light on this question over real closed valued fields, such as the real Puiseux series $\PSR$, related to Pl\"ucker's famous count of real bitangents to  real quartic curves~\cite{Plucker}. 

The existence of infinitely many tropical bitangents was first shown by  Baker \emph{et al.}~\cite{bak.len.ea:16} using the theory of divisors on tropical curves and their linear equivalences, encoded by chip-firing moves~\cite{bak.jen:16}. Combinatorially, tropical quartic curves correspond to metric graphs of genus three (depicted in~\autoref{fig:PossibleSkeletons}.) Out of all five graphs, only the first four can be realized as skeleta of smooth tropical plane curves dual  to unimodular triangulations of the $4$-dilated $2$-simplex. 
The relevant length data of these graphs  is also linearly restricted.

Even though the tropical count is infinite, the collection of tropical bitangent lines can be grouped together into seven equivalence classes, under perturbations that preserve the tangencies~\cite{bak.len.ea:16}. These \emph{bitangent classes} are polyhedral complexes in $\RR^2$. To highlight the interactions between a given tropical quartic curve $\Gamma$ and its bitangent classes, we can further refine the structure of each class using the subdivision of $\RR^2$ induced by $\Gamma$. We define the \emph{shape} of a tropical bitangent class to be this refined combinatorial structure.

The symmetric group $\Sn{3}$ acts on bitangent classes and their shapes. 
Our first main result is a complete combinatorial classification of such objects up to $\Sn{3}$-symmetry:

\begin{theorem}\label{thm:combclass}
  There are 41 shapes of bitangent classes to generic tropicalized plane quartics, up to symmetry (see~\autoref{fig:2dCells}.) All of them are  min-tropical convex sets. 
\end{theorem}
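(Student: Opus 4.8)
The plan is to reduce both assertions to a finite, in practice computer-assisted, case analysis indexed by the combinatorial types of smooth tropical plane quartics. First I would encode the bitangent class attached to an effective tropical theta characteristic $\theta$ on $\Gamma$: a point $p\in\RR^2$ lies in this class exactly when the stable intersection of the tropical line with vertex $p$ and the curve $\Gamma$ equals twice an effective divisor linearly equivalent to $\theta$, with the requisite chip-firing moves realizable on $\Gamma$. Using the embedding $\Gamma\hookrightarrow\RR^2$ dual to a unimodular triangulation of $4\Delta_2$, this translates into a system of piecewise-linear equalities and inequalities in the coordinates of $p$ and in the vertex positions and edge lengths of $\Gamma$; intersecting its solution set with the polyhedral subdivision of $\RR^2$ induced by $\Gamma$ produces, by definition, the shape of the class.

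Next I would argue finiteness of the list. The regular unimodular triangulations of $4\Delta_2$ form a finite list up to the $\Sn{3}$-symmetry of the simplex, and for each such triangulation the admissible edge lengths of the genus-$3$ skeleton range over a polyhedral cone. Since the coefficients in the conditions above are piecewise-linear in all of these parameters, this cone subdivides into finitely many relatively open polyhedral cells on which the combinatorics of all seven bitangent classes, hence all seven shapes, are locally constant. Enumerating these cells, reading off the resulting shapes, and identifying them under the induced $\Sn{3}$-action produces the finite list; carrying out this enumeration is what yields the 41 shapes of \autoref{fig:2dCells}.

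For min-tropical convexity, note that passing from a bitangent class to its shape only refines the cell structure, so the underlying set is unchanged, and it suffices to show every bitangent class is min-tropically convex. I would first try to read this off the model above directly, by checking that its defining conditions are of ``difference'' type $x_i-x_j\le c$ (possibly together with restrictions on some coordinates), which always cut out min-tropically convex sets; more robustly, since the list of 41 shapes is explicit, it suffices to verify for each one that the min-tropical segment joining any two of its finitely many vertices stays inside it and that the complex is stable under adjoining its (finitely many) recession rays.

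The hard part is guaranteeing exhaustiveness in the classification step: one must traverse the full secondary-fan-like decomposition of the metric cone for every combinatorial type of $\Gamma$, correctly handle the walls at which the triangulation degenerates or the combinatorial type of a bitangent line jumps, and organize the $\Sn{3}$-reduction so that no shape is double counted or overlooked. This is where a careful, machine-checked enumeration becomes essentially unavoidable, whereas the model of the first step and the convexity check of the third are conceptually routine once the list is in hand.
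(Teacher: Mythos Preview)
Your plan is correct in principle but takes a genuinely different route from the paper. You propose a global, essentially computer-assisted enumeration: list all unimodular triangulations of $4\Delta_2$ up to $\Sn{3}$, subdivide the metric cone for each into loci where the seven bitangent shapes are constant, and collate. The paper avoids this entirely. Instead it first classifies the finitely many \emph{local tangency types} (labeled (1) through (6b)) that can occur at a single tangency point of a tropical line with $\Gamma$, and for each type describes the \emph{local moves} of the vertex of $\Lambda$ that preserve that tangency (\autoref{lm:localMoves}). A bitangent class is then generated by intersecting the local moves coming from the two tangencies, and the classification proceeds case-by-case on the pair of tangency types, the dimension and boundedness of the resulting cells, and a small amount of dual-subdivision bookkeeping (\autoref{prop:mult4intersection}, \autoref{prop:oneComponentIntersections}, \autoref{prop:shapesfortwoconnectedcomp} and the lemmas beneath it). No enumeration of triangulations is needed; only local patches of the Newton subdivision near the dual edges of the tangencies enter.

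What this buys: your approach would require traversing on the order of a thousand triangulations and, for each, a secondary subdivision of a multi-dimensional metric cone, with all the wall-crossing hazards you flag. The paper's local approach reduces everything to a handful of tangency-type pairs and a few relative-position comparisons (the partial order $\preceq_d$), making the argument human-checkable and yielding as a byproduct the partial Newton-subdivision constraints in \autoref{fig:NP} that drive the real-lifting results later. Your approach, conversely, would more naturally output which shapes occur for a \emph{given} triangulation, which is closer in spirit to the database of Geiger--Panizzut. For the min-convexity claim both approaches agree: once the 41 shapes are listed, one inspects each directly (this is exactly what the paper does in \autoref{cor:MinConvexSets}).
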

\noindent
This classification only relies on the duality between tropical smooth plane quartic curves and unimodular triangulations of the standard simplex of side length four. 

Standard duality identifying a tropical line with the negative of its vertex seems more natural, since incidence relations would be preserved~\cite{BJLR17}. With this convention, the associated bitangent classes would be max-tropical convex sets.
We choose to avoid changing the sign of the vertex since the tangency points can be easily determined from it.

All shapes in~\autoref{fig:2dCells} are color-coded to highlight their refined combinatorial structure. Black and gray cells correspond to those missing the curve, whereas red ones lie on it. Unfilled vertices correspond to vertices of $\Gamma$. 

The proof of this statement is given in~\autoref{sec:comb-class-bitang}. It involves a careful analysis of possible combinations of tangency points (up to symmetry) and the description of local moves of the vertex of the bitangent line $\Lambda$ that preserve tangencies. 
A simple inspection shows that they are all min-tropical convex sets, most of which are not finitely generated. 
The latter contrasts with the construction of complete linear systems on abstract tropical curves done by Haase-Musiker-Yu~\cite{haa.mus.yu:12}. The difference arises precisely due to the choice of embedding.

Whenever superabundance is observed, lifting questions arise naturally.  
Len and the second author~\cite{len.mar:20} proved that for generic choices, each class lifts to exactly four bitangents if the classical quartic curve is defined over a non-Archimedean algebraically closed field $\KK$. 
This fact was independently proven by Jensen and  Len~\cite{jen.len:18} (removing this mild genericity constraints) by exploiting the classical connection between bitangent lines and theta characteristics in the tropical setting~\cite{MZ08,Zh10}, and by Chan and Jiradilok~\cite{cha.jir:17} in the special case where the  underlying non-Archimedean skeleton of the tropical curve is the complete graph on four vertices.

Although each bitangent class has exactly four lifts~\cite[Theorem 4.1]{len.mar:20}, their number can be realized in various ways and by more than one member of the given class. \autoref{fig:2dCells} shows which members have  non-trivial lifting multiplicities (with values one, two or four.)

By definition, the tropical theta characteristics  of a tropical quartic are extremely sensitive to the underlying metric structure on its skeleton~\cite[Appendix]{bak.len.ea:16}. The same is true for its bitangent classes. In particular, the seven shapes occurring can vary  within a single chamber in the secondary fan of the standard simplex of side-length four.
Nonetheless, the presence of a bitangent class of a fixed shape  imposes restrictions on the Newton subdivision of the quartic curve with tropicalization $\Gamma$. Our findings are summarized in~\autoref{cor:combClass}. \autoref{fig:NP} shows the relevant cells associated to each representative shape. Edges are color-coded to emphasize the combinatorial  tangency types.

Questions involving the realness of bitangent lines to plane quartics can be traced back to Pl\"ucker~\cite{Plucker}. As shown by Zeuthen~\cite{Zeuthen}, the answer depends on the topology  of the underlying smooth real quartic curve viewed in the real projective plane~\cite[Table 1]{PSV}. The last two columns of~\autoref{tab:ex} show these numbers in four of the six possible topological types. The missing three topological types  (two nested ovals, one oval or an empty curve) admit exactly four real bitangent lines each.

The fact that the number of real bitangents of a real plane quartic and the number of complex lifts to any tropical bitangent class are always a multiple of four suggests the question whether real lifts to a given bitangent class also come in multiples of four. Using Tarski's Principle~\cite{JL89} we can count real bitangents by lifting tropical bitangents to a real closed filed $\KKr$, thus providing  a positive answer to this question:

\begin{theorem}{\cite[Conjecture 5.1]{len.mar:20}}\label{thm:reallifting}
Let $\Gamma$ be a generic tropicalization of a smooth plane quartic defined over a real closed complete non-Archimedean valued field $\KKr$. Then, a bitangent class of a given shape has either zero or exactly four lifts to real bitangents to a quartic curve when its real locus is near the tropical limit.
\end{theorem}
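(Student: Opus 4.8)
The plan is to reduce the real-lifting count to the combinatorial classification of \autoref{thm:combclass} together with an explicit local analysis of tropicalization near each tangency configuration. I would first fix a bitangent class of a given shape and recall from~\cite{len.mar:20} that, over an algebraically closed non-Archimedean field, it lifts to exactly four complex bitangents, whose defining data (the line's vertex and its two tangency points, or the dual theta characteristic) can be tracked through the tropicalization map. The key point is that the four lifts are governed by sign/leading-coefficient equations coming from the tangency conditions: at each tangency point the quartic and the line must agree to second order, which, after tropicalizing, becomes a finite system of binomial-type equations in the leading coefficients of the Puiseux-series parameters. I would make these equations explicit for each of the 41 shapes (grouping shapes by their combinatorial tangency type as in~\autoref{fig:NP}), so that the real-lifting question becomes: for which sign choices of the input data does this finite system admit real solutions, and how many?

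Next I would carry out the real count shape by shape. For a fixed shape, the system of leading-coefficient equations splits the four complex lifts according to whether the relevant leading coefficients are forced to be real, purely imaginary, or come in complex-conjugate pairs; one reads off directly from the binomial structure whether the four solutions are (a) all real, (b) all non-real (two conjugate pairs), or (c) impossible in a mixed way. The heart of the matter is to show that case (b) never produces "two real, two non-real": because each tangency condition contributes a square (second-order contact forces an equation of the form (leading coeff)${}^2 = {}$(something determined by the quartic's coefficients)), real solvability is an all-or-nothing phenomenon, so the real lifts come in the block of four or not at all. This is where Tarski's Principle (or the Tarski--Seidenberg transfer) enters: having verified the count over the real Puiseux series $\PSR$ with its natural real structure, we transfer the statement to an arbitrary real closed complete non-Archimedean valued field $\KKr$.

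Concretely, the steps in order are: (1) parametrize a bitangent line in a class by its vertex and encode the two tangency conditions as polynomial equations; (2) tropicalize and use the shape (from \autoref{thm:combclass}) to pin down which monomials achieve the minimum, reducing to the initial/leading-coefficient system; (3) for each of the 41 shapes, read off the explicit sign rule and count real solutions, confirming the answer is always $0$ or $4$, and moreover that all four tangency points are defined over $\KKr$ (totally-real, as claimed in the abstract); (4) invoke Tarski's Principle to pass from $\PSR$ to general $\KKr$. The main obstacle I anticipate is step (3): the bookkeeping is substantial because the tangency equations and their sign behavior differ across the shapes (in particular the shapes with lifting multiplicities $2$ or $4$, where several tangency points collide and the second-order contact equations degenerate), and one must check uniformly that no shape yields a "split" real count. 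Handling those degenerate shapes carefully — showing that the collision still forces an even, all-or-nothing real structure — is the crux of the argument.
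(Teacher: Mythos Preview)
Your proposal is essentially the paper's own strategy: reduce to the 41 shapes, solve the local initial-form systems at each liftable member, and read off sign conditions on the coefficients of $q$ that decide real solvability. The paper carries this out exactly as you describe, producing the explicit sign rules collected in~\autoref{tab:lifting}.

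One point of your framing is slightly off and would bite you in the execution. You locate the main difficulty in ``shapes with lifting multiplicities $2$ or $4$, where several tangency points collide,'' and argue that the square structure at a single tangency makes real solvability all-or-nothing. That is true locally, but it is not where the global $0$-or-$4$ dichotomy is at risk. The real issue is that for many shapes the four $\KK$-lifts are distributed across \emph{two distinct} weight-two members of the class (e.g.\ shapes (D), (E), (F), (I), (K), (L), (N), (O)--(S), (T), (U), (V)). Each such member contributes its own square-root condition, and a priori one member could satisfy its sign condition while the other fails, yielding exactly two real lifts. The paper's proof (Propositions~\ref{pr:secondRowTable} and~\ref{pr:rest}) spends most of its effort verifying, shape by shape, that the two sign conditions coming from the two members \emph{coincide} as inequalities in the $s_{ij}$. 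This coordination is not automatic from the local square structure; it depends on the combinatorics of the dual subdivision pinned down in~\autoref{cor:combClass}, and in several cases requires tracking how the sign of $\bar{n}$ (determined by one tangency) feeds into the radicand condition at the other. Your plan is correct, but make sure step~(3) checks this inter-member consistency rather than only the local behavior at each vertex.
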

Rather than venturing for a tropical analog between real theta characteristics and real bitangents to plane quartics~\cite{GH81, Kra98, kum:19}, we choose an algorithmic approach which has the potential to be used in the arithmetic setting~\cite{lar.vog:19, mar.pay.sha:22}.
Our proof of~\autoref{thm:reallifting}  builds on the lifting techniques developed in~\cite{len.mar:20}, which we review in~\autoref{sec:lift-trop-bitang}.  \autoref{tab:lifting} provides precise necessary and sufficient conditions for the existence of real lifts to each bitangent class in terms of the signs of relevant vertices in the Newton subdivision. The positivity conditions in the table are due to the presence of radicands in the formulas for computing the initial terms of the coefficients of the classical line lifting a given tropical bitangent.

 Once the realness of a given bitangent is established, it is natural to ask whether the tangency points are also real, i.e., if the bitangent to the real quartic curve is totally real or not. ~\autoref{cor:totallyreal} shows that for generic plane curves defined over $\KKr$ with smooth generic tropicalizations in $\RR^2$, any real bitangent line to them is always totally real. Thus, new examples will only be captured by tropical geometry once the current  methods are extended to non-smooth tropical plane quartics~\cite{LL17} or smooth ones embedded in higher dimensional tori. We leave this task to future work.

 The rest of the paper is organized as follows. \autoref{sec:trop-plane-curv} reviews the construction of tropical bitangents to tropical plane curves and its connection to tropical theta characteristics.  We provide a combinatorial classification of local tangencies and recall the main techniques for lifting tropical bitangents from~\cite{len.mar:20} under mild genericity conditions. Both tools play a central role in the proofs of 
 {Theorems}~\ref{thm:combclass} and~\ref{thm:reallifting}.

 In~\autoref{sec:bitang-class-shap} we  define  bitangent classes and introduce their combinatorial refinements, called shapes. An analysis of local moves for points on each bitangent class, discussed in~\autoref{lm:localMoves}, allows us to conclude each class is a connected polyhedral complex. Our last result characterizes  unbounded  cells in suitable classes. \autoref{sec:comb-class-bitang} contains the proof of~\autoref{thm:combclass}.

 \autoref{sec:lift-trop-bitang} discusses local lifting multiplicities both over real closed valued fields and their algebraic closure for all bitangent classes. \autoref{thm:combinationsOfTangencies} determines all combinations of local tangency types that can arise from tropical bitangents under mild genericity conditions. 
         {Propositions}~\ref{pr:realLifts3a3c}, \ref{pr:realLift46a} and~\autoref{lm:tangency2} provide necessary and sufficient local lifting conditions over $\KKr$ for each tangency type with multiplicity two. Lifting formulas in the presence of multiplicity four tangencies are provided in~\autoref{sec:appendix1}.

 \autoref{sec:real-lift-trop} contains the proof of~\autoref{thm:reallifting}.  \autoref{sec:trop-totally-real} confirms our lifting techniques only produces totally real bitangents, which manifest the geometry behind the genericity conditions imposed on the input smooth plane quartics. We conclude with some open questions and directions to pursue in the future.

\subsection{How to use this paper}
For any given quartic polynomial $q(x,y)$ defined over the field of real Puiseux series $\PSR$,~\autoref{thm:reallifting} and~\autoref{tab:lifting} provide an easy way to decide which of the seven bitangent classes of its tropicalization $\Gamma =\Trop\,V(q)$ lift to the reals: it suffices to check the positivity of appropriate products of its coefficients.

Building on~\cite{len.mar:20}, we can even determine which member of each bitangent class lifts to a classical bitangent line to $V(q)$, and how many lifts does it have. 
Furthermore, since the signs of the coefficients in $q$ determine the topology of the real quartic curve $V_{\RR}(q)$ close to the tropical limit by means of Viro's Patchworking method~\cite{IMS09,Viro}, our methods give a way of verifying Pl\"ucker's classical Theorem in concrete examples. We refer the reader to the Polymake extension~\texttt{TropicalQuarticCurves} and the database entry \texttt{QuarticCurves} in \texttt{polyDB} recently developed by Geiger and Panizzut~\cite{gei.pan:21bis} to provide a tropical proof of Pl\"ucker and Zeuthen's count~\cite[Theorem 1]{gei.pan:21}.

We illustrate this ideas in the following example. The connection to tropical theta characteristics is discussed in~\autoref{ex:abstract212I}.

\begin{figure}[tb]
  \includegraphics[scale=0.33]{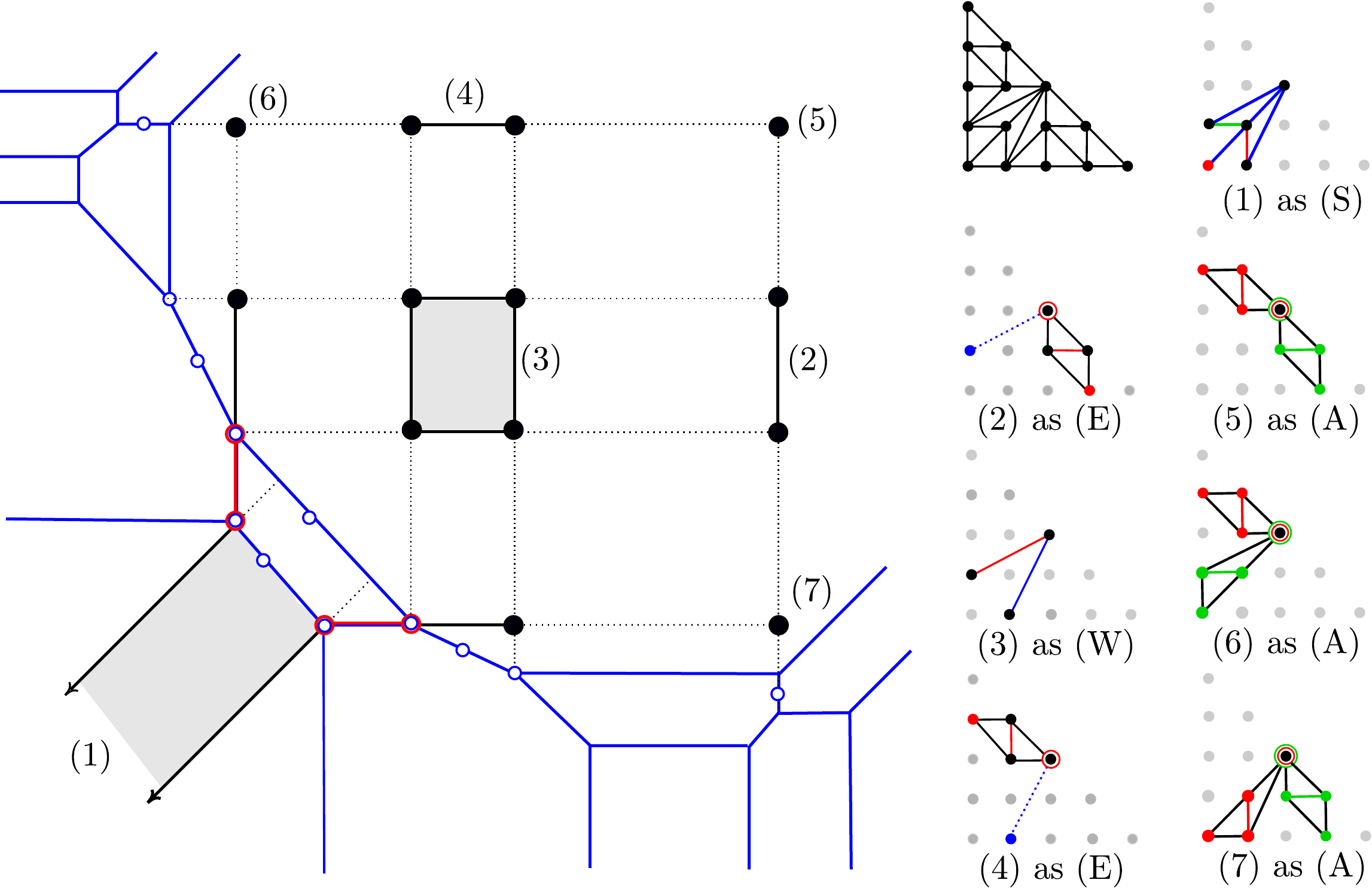}
  \caption{From left to right: a tropical quartic curve $\Gamma$ and its seven bitangent classes, together with its dual subdivision and the relevant dual cells responsible for each class. The color code matches the  $\Sn{3}$-orbit shape representative from~\autoref{fig:2dCells} (permutations are listed in~\autoref{tab:exampleLifts}). The unfilled dots on $\Gamma$ match the locations of chips from~\autoref{fig:thetaChar}.\label{fig:quartic}}
\end{figure}

\begin{example}\label{ex:212I} We consider the plane quartic curve over $\PS$ defined by 
\begin{equation}\label{eq:exampleQ}
  \begin{aligned}
    q(x,y)\!:= &  1 + 3t^{10} x - t^8y + 5t^{29}x^2 + 7t^{15} xy - 3 t^{28} y^2 + 11t^{50}x^3 - 5t^{29}x^2y -7t^{28}xy^2\\
   &   +13t^{50}y^3 +17t^{77}x^4-13t^{51}x^3y+14t^{32}x^2y^2-11t^{51}xy^3    -17 t^{74}y^4.
  \end{aligned}
\end{equation}
The tropical curve $\Gamma = \Trop\,V(q)$ and its seven tropical bitangent classes together with its dual Newton subdivision can be seen in~\autoref{fig:quartic}. The shape for classes (2), (3) and (4) is only revealed after applying a non-trivial permutation, described explicitly in~\autoref{tab:exampleLifts}. The picture shows the relevant cells in the Newton subdivision for these non-standard  bitangent classes. In this example, the shape of all seven bitangent classes is independent of the metric structure on $\Gamma$, hence it only depends on the Newton subdivision of $q$.

Next, we discuss how the signs $s_{ij}$ of the coefficients $a_{ij}$ of $q$ determine the  existence of real lifts of the bitangent classes of $\Gamma$, as predicted by~\autoref{tab:lifting} (after suitable permutations). 
By construction, all lifts from shape (3) must be real, but realness for the remaining shapes does depend on the choice of signs. \autoref{tab:exampleLifts} summarizes our findings.

\begin{table}[htb]
  \begin{tabular}{|c|c|c|c|c|}
    \hline Class & Shape & Permutation  & Sign Conditions \\
    \hline (1) & (S) & $id$ & $s_{00}\,s_{22} > 0$ \\
    \hline (2) & (E) & $\tau_{1} \circ \tau_{0}$ & $s_{21}\,s_{31}\,s_{30}\,s_{22} < 0$  \\
    \hline (3) & (W) & $\tau_0 \circ \tau_{1} \circ \tau_{0}$ & ------ \\
    \hline (4) & (E) & $\tau_0 \circ \tau_{1} \circ \tau_{0}$ & $s_{12}\,s_{13}\, s_{22}\,s_{03}<0$\\
\hline    (5) & (A) & $id$ & same as (2) and (4) \\
 \hline   (6) & (A) & $id$ & same as (1) and (4)\\
  \hline   (7) & (A) & $id$ & same as (1) and (2)\\
    \hline
  \end{tabular}
  \caption{Bitangent classes, their canonical shapes, the required permutations to bring a class into each shape,  and sign conditions for real-liftings. The maps $\tau_0$ and $\tau_1$ are given in~\autoref{tab:S3Action}.
     \label{tab:exampleLifts}}
\end{table}

The parameters featuring in all formulas in~\autoref{tab:lifting} are determined by the vertices of the Newton subdivision in the neighborhood of each tangency points. In particular, for class (5) we use the parameters $v=u=2$, $i=j=3$, for  (6) we have  $v=2$, $i=3$, $u=j=0$, whereas for (7) we take $v=i=0$, $u=2$ and $j=3$. Replacing the values for each $s_{ij}$ coming from~\eqref{eq:exampleQ} we certify that the class (1) is the only one with  real lifts. Thus, $V(q)$ has a total of eight real bitangents. Furthermore, our results place the vertex of the corresponding tropical bitangents at the black vertices of (1) and (3).

Notice that three inequalities involving eight signs in~\autoref{tab:exampleLifts} govern the realness of the algebraic lifts. Thus, we can study how the total number of real bitangents varies as we choose a different smooth plane quartic  $V(\tilde{q})$ over $\PSR$ with tropicalization  $\Gamma$.  If all inequalities hold, the quartic curve $V(\tilde{q})$ has 28 real bitangents. Violating any subset of them will lead to different numbers of classes with real lifts, namely four, two or one. Using~\autoref{thm:reallifting}, we conclude that the number of real lifts are 28, 16, 8 or 4.

\autoref{tab:ex} shows sign choices realizing each of these four cases, together with the bitangent classes admitting real lifts and the topology of a real quartic curve 
close to the tropical limit. The latter was certified with the combinatorial patchworking online tool~\cite{Viro:sage2}.
\end{example}

\begin{table}[tb]
   \begin{tabular}{|c|c|c|c|}
    \hline
    Negative signs & Real bitangent classes & Number of Real lifts & Topology \\
    \hline \hline
 ---  & (1) and (3) & 8 & 2 non-nested ovals \\\hline
 $s_{31}$  & (1), (2), (3) and (7) & 16 & 3 ovals  \\\hline
 $s_{13}$, $s_{31}$      & (1)$,\ldots,$(7) & 28 & 4 ovals \\\hline
 $s_{13}$, $s_{31}$, $s_{22}$     & (3) & 4 & 1 oval \\
 \hline  \end{tabular}
  \caption{Sample sign choices for the tropicalized quartic  $\Gamma$ in~\autoref{fig:quartic} giving all possible number of real bitangents on the algebraic lift $V(\tilde{q})$. For each row, we indicate which  bitangent classes have real lifts  and the topology of the real curve $V_{\RR}(\tilde{q})$ near the tropical limit.\label{tab:ex}}
 \end{table}

\section{Tropical curves, bitangent lines and their lifts}\label{sec:trop-plane-curv}

Throughout this paper we work with real closed, complete non-Archimedean valued fields $\KKr$ with valuation ring $(R,\mathfrak{M})$ and their algebraic closures, denoted by $\KK$. We assume the valuation map is non-trivial and admits a splitting, which we denote by $\ww \mapsto t^{\ww}$ as in~\cite[Chapter 2]{MSBook}. We set $t^{0}=1$.  Our main examples will be the Puiseux series $\PSR$ and $\PS$ in the uniformizer $t$. By the Tarski principle \cite{JL89}, $\PSR$ is equivalent to the reals, so we can count real bitangents by lifting tropical bitangents to $\PSR$. This principle has been applied to other problems in tropical geometry, see e.g.\ \cite{ABGJ, MSBook}.

Initial forms (defined below) will be central to this work:

\begin{definition}\label{def:initialForms}
  Given $a\in\KKr$,   $a\in \KK$ with $\val(a)=\alpha$ its \emph{initial form} $\bar{a}$ is defined as the class $\overline{t^{-\alpha}a}$  in the residue field $\resK := R/\mathfrak{M}$.
\end{definition}

In the sequel, a  bitangent line $\ell$ to $V(q)$ will be determined by the fixed equation
 \begin{equation}\label{eq:line}
\ell = y + m + n x ,\qquad \text{ with } m,n\in \KK^*.
 \end{equation}
Its tropicalization $\Lambda$ will always be a non-degenerate tropical line in $\RR^2$, i.e., a tripod graph with ends of direction $(-1,0)$, $(0,-1)$ and $(1,1)$.

Tropical curves will be defined following the \emph{max} convention (for more details, see e.g.~\cite{MSBook}.) The tropicalization of a plane curve $V(q)$ with $q\in \KK[x,y]$ will be determined by the Newton subdivision of $q$  (for an example, see~\autoref{fig:quartic}.) In turn, the
 tropicalization of a curve  embedded in $(\KK^*)^n$ by an ideal $I$ in the Laurent polynomial ring is  the  (Euclidean) closure in $\RR^n$ of the image of $V(I)$   under the  coordinatewise negative valuation map on $\KK^*$. Our choice of $I$ will be determined by tropical modifications or refinements of $\RR^2$ along tropical bitangent lines to tropical plane curves. For details, we refer to~\cite{Mi06}.

\subsection{Tropical bitangents} \label{sec:tropical-bitangents} Throughout this paper, we consider smooth plane quartics $V(q)$ defined over either $\KK$ or $\KKr$, where 
\begin{equation}\label{eq:q}
  q(x,y):=\sum_{\substack{0\leq i +j \leq 4\\0 \leq i,j }} a_{ij} \, x^i \, y^j.
\end{equation}
We assume the Newton polytope of $q$ is the standard $2$-simplex of side length four and we let $\Gamma$ be the associated tropical plane quartic. 

We assume $\Gamma$ is a \emph{tropical smooth plane quartic}, that is, the Newton subdivision of $q$ is a unimodular triangulation.
The skeleton $\Sigma(\Gamma)$ of $\Gamma$ is the subgraph obtained by repeatedly contracting all the edges adjacent to leaves. As we mentioned in the introduction, only four out of all five planar genus three graphs in~\autoref{fig:PossibleSkeletons} can arise as skeletons of  $\Gamma$. The edge lengths are also linearly constrained, as shown in~\cite[Theorem 5.1]{PlaneModuli2015}.

\begin{figure}[tb]
\includegraphics[scale=0.75]{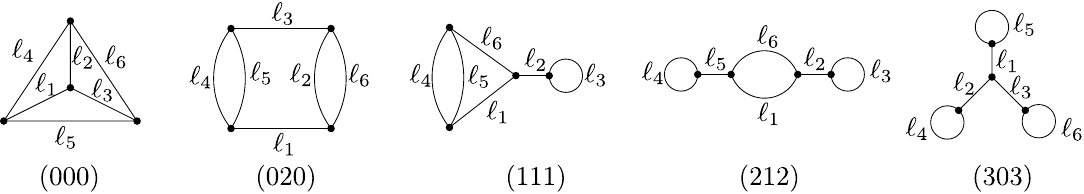}
\caption{Possible skeletons for smooth plane quartics. Following  the notation of~\cite{PlaneModuli2015}, each graph is labeled by a triple $(lbc)$ of single digit numbers, where $l$ is the number of loops, $b$ is the number of bi-edges (pairs of edges between adjacent vertices), and $c$ is the number of bridges (or cut-edges.) The graph (303) cannot be realized via tropicalization in $\RR^2$.
  \label{fig:PossibleSkeletons}}
\end{figure}

Tropical bitangent lines to $\Gamma$ were first studied by Baker \emph{et al.}~\cite{bak.len.ea:16}. They were defined using~\emph{stable intersections} between a tropical line $\Lambda$ and $\Gamma$. We recall the construction:
\begin{definition}\label{def:tropicalBitangent} A tropical line $\Lambda$ is \emph{bitangent} to $\Gamma$ if any of the following conditions holds:
  \begin{enumerate}[(i)]
  \item $\Lambda \cap \Gamma$    has two connected components, each with stable intersection multiplicity 2; or
  \item $\Lambda \cap \Gamma$ is connected and its stable intersection multiplicity is 4.
  \end{enumerate}
\end{definition}
Stable intersections in $\RR^2$ are determined by the fan displacement rule (see, e.g.~\cite{jen.yu:16}):
\[\Gamma \cap_{st} \Lambda = \lim_{\varepsilon\to 0} \Gamma \cdot (\Lambda + \varepsilon v).
\]
A generic choice of vector  $v$
ensures that $\Gamma$ and $\Lambda + \varepsilon v$ intersect properly in $\RR^2$ whenever $\varepsilon$ is small enough. If two tropical curves $C_1$ and $C_2$ intersect property and $p\in C_1\cap C_2$, the intersection multiplicity at $p$ is $(C_1\cdot C_2)_{|p} = |\det(u_1,u_2)|$ where $u_1$ and $u_2$ are the weighted directions of the edges of $C_1$ and $C_2$ containing $p$ in their relative interiors. Furthermore, $\displaystyle{C_1\cdot C_2 = \sum_{p\in C_1\cap C_2} (C_1\cdot C_2)_{|p}}$ whenever these curves intersect properly.

\begin{remark}\label{rm:actionBySn} By construction, the symmetric group $\Sn{3}$ on three letters records automorphisms of $\PP^2$  fixing the  tropical line. This action extends to  $\ZZ^2$ and the space of smooth tropical plane quartics and their bitangent lines. \autoref{tab:S3Action} shows the action of two generators of $\Sn{3}$ on both the classical and tropical worlds.
\begin{table}[htb]
  \begin{tabular}{|c|c|c|c|}
\hline    Gen. & Projective & Lattice & Tropical \\
    \hline
    $\tau_0$ & $x\longleftrightarrow y$ & $(i,j) \mapsto (j,i)$ & $X\longleftrightarrow Y$\\
        \hline $\tau_1$ & $ x\longleftrightarrow z$ & $(i,j)\mapsto (4-i-j, j)$ & $X\mapsto -X$\;\;; $Y\mapsto Y-X$\\
     \hline
      \end{tabular}
    \caption{Generators of $\Sn{3}$ as projective, polytopal and tropical isomorphisms.\label{tab:S3Action}}
\end{table}
\end{remark}

Using the theory of tropical divisors on curves (see e.g.~\cite{bak.jen:16}),  bitangent lines to $\Gamma$ with tangency points $P$ and $P'$ can be identified with tropical divisors on $\Gamma$ that are linearly equivalent to $2P+2P'$. More precisely, given the stable intersection $D:=\Gamma\cap_{st} \Lambda$, we must find  a piecewise linear function $f$ on $\Gamma$ that is linear outside $D\cup \{P, P'\}$ and such that $D + \operatorname{div}(f)$ is effective and contains $2P + 2P'$. These tropical divisors correspond to effective \emph{tropical theta characteristics}~\cite{Zh10} on the metric graph $\Sigma(\Gamma)$, as the following example illustrates.

\begin{example}\label{ex:abstract212I} 
  \autoref{fig:thetaChar} shows the skeleton $\Sigma(\Gamma)$ of the curve discussed in~\autoref{ex:212I} with its metric structure and 12 relevant points ($v_0'$ through $v_3'$) used to describe its eight  theta characteristics.  The location of the points  $v_{12}''$ and $v_{12}'''$ depends on the metric structure on $\Sigma(\Gamma)$.
  The loops $\gamma_1,\gamma_2$ and $\gamma_3$ in  $\Sigma(\Gamma)$ of lengths 17, 12 and 14, respectively, are dual to the lattice points $(1,2)$, $(1,1)$ and $(2,1)$ in the Newton subdivision.

  Using Zharkov's Algorithm~\cite{Zh10}, we write all theta characteristics of the metric graph:
\begin{equation*}\label{eq:thetaChar}
  \begin{minipage}[c]{0.3\textwidth}
\[    \begin{aligned}
    L_0  & :=  - v_0' + v_0 + v_{12} + v_3',\\
    L_{\gamma_1}& :=  v_{12} + v_3'\,,\\
\end{aligned}
\]  \end{minipage}
  \begin{minipage}[c]{0.22\textwidth}
\[    \begin{aligned}
  L_{\gamma_2}& :=  v_0' + v_3'\,,\\
    L_{\gamma_3}& :=  v_0' + v_{12}'\,,    
  \end{aligned}\]
  \end{minipage}
  \begin{minipage}[c]{0.22\textwidth}
   \[ \begin{aligned}
    L_{\gamma_{12}}  & :=  v_{01} + v_3'\,,\\
    L_{\gamma_{13}}  &  :=  v_{12}'' + v_{12}'''\,,
  \end{aligned}\]
  \end{minipage}
  \begin{minipage}[c]{0.22\textwidth}
   \[ \begin{aligned}
    L_{\gamma_{23}} & :=  v_{0}' + v_{23}\,,\\
    L_{\gamma_{123}} &:=  v_{01} + v_{23}\,.    
  \end{aligned}\]
  \end{minipage}
      \end{equation*}
where $\gamma_{I} := \sum_{i\in I}\gamma_i$ for each $I$. The seven effective theta characteristics $L_{\gamma_1},\ldots, L_{\gamma_{123}}$ correspond to the seven bitangent classes of $\Gamma$. The location of the chips on each graph of~\autoref{fig:thetaChar} indicates the pair of tangency points for some bitangent line to $\Gamma$.
  \end{example}

\begin{figure}[tb]
  \includegraphics[scale=0.15]{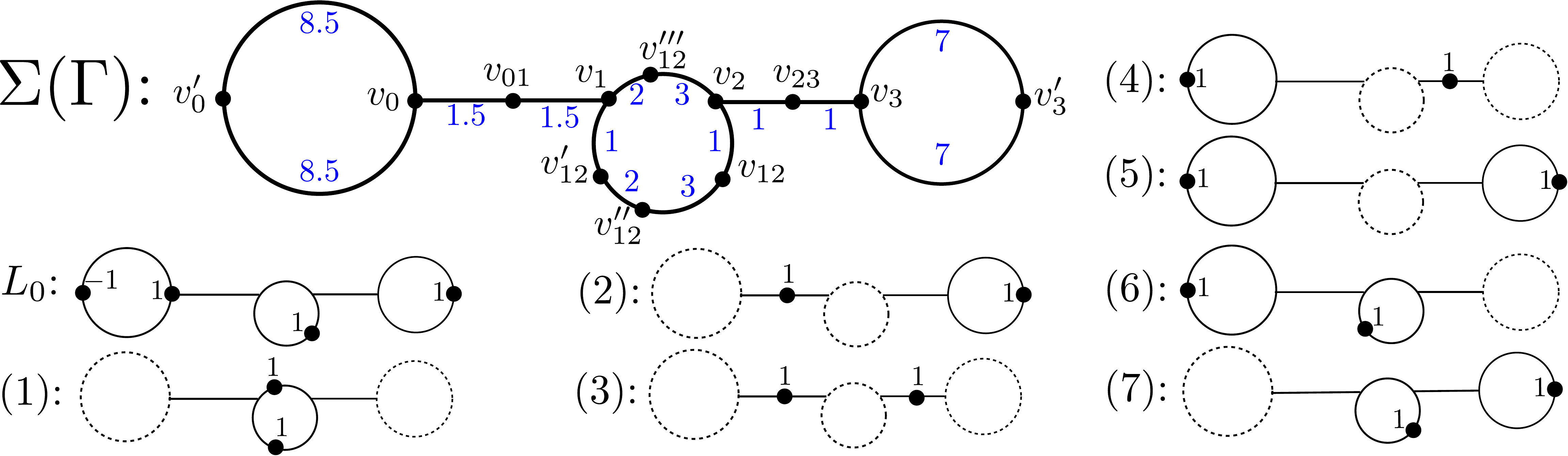}
  \caption{Metric structure on the skeleton $\Gamma$ of the tropical quartic, and its eight theta characteristics. The dashed cycles on the chip configuration of each effective theta characteristic indicate its associated non-zero element of $H_1(\Gamma,\ZZ/2\ZZ)$. Note that $L_0$ is the single non-effective theta characteristic. The labeling matches that of~\autoref{fig:quartic}.
    \label{fig:thetaChar}}
\end{figure}

In~\cite{len.mar:20}, Len and the second author provided a classification of all tangency types between the curves $\Gamma$ and $\Lambda$ into five types. Furthermore, the theta characteristics approach allowed them to determine the precise location of the tangency points within the stable intersection $\Gamma\cap_{st} \Lambda$. For non-proper intersections, the tangencies are located in the midpoint of bounded edges where overlappings occur. Overlappings along ends of $\Gamma$ reflect a tangency at the adjacent vertex of $\Gamma$.

To simplify our case analysis in~\autoref{sec:comb-class-bitang}, we provide a refined classification of local tangencies. Their $\Sn{3}$-representatives are depicted to the right of~\autoref{fig:localTangencies}. Our combinatorial classification requires precise information of the edge directions in the link of a tropical tangency. Tangencies of types (6a) and (6b) show an overlap between a horizontal end of both $\Lambda$ and $\Gamma$. The remaining two edge directions in the star of the tangency at $\Gamma$ are fixed: they are  $(1,-1)$ and $(0,1)$ for type (6a), and $(3,-1), (-2,1)$ for type (6b).  \autoref{tab:edgeOnEdgeMult2} shows all relevant directions involved in type (2) and (4) tangencies occurring at each end of  $\Lambda$. For the bitangent lines depicted in (3b), the connected component of intersection with $\Gamma$ contains both tangency points. In our local discussion here, we only refer to the tangency point in the interior of the horizontal bounded edge, as highlighted in the picture. 

\begin{table}[htb]
  \begin{tabular}{|c|c|c|}
\hline    End & Direction of $e$ & Direction of $e^\vee$ \\
    \hline
    Diagonal & {$(-1,1)$}, $(1,3)$, $(3,1)$ & $(1,1)$, $(-3,1)$, $(-1,3)$\\
        \hline Horizontal & $(1,2)$, $(-1,2)$,  $(3,2)$ &  $(-2,1)$, $(2,1)$, $(-2,3)$ \\
     \hline
    Vertical & $(2,1)$, $(2,3)$, $(2,-1)$ & $(-1,2)$, $(-3,2)$, $(1,2)$\\
    \hline
      \end{tabular}
    \caption{Data for  multiplicity two local tangencies  of types (1) and (2) for each end of $\Lambda$. Here, $e$ is the  bounded edge  of $\Gamma$ containing the tangency and $e^{\vee}$ denotes the corresponding dual edge in the Newton subdivision. For type (4) only the first direction for $e$ and $e^{\vee}$ in each column is allowed.\label{tab:edgeOnEdgeMult2}}
  \end{table}

The next lemma discusses the combinatorics of proper tangencies at vertices:

\begin{lemma}\label{lm:transverseTangencies} Let  $v$ be a vertex of both $\Gamma$ and $\Lambda$ and assume the local intersection at $v$ is transverse. There are two configurations (up to $\Sn{3}$-symmetry) for which $v$ is a tangency point. They correspond to types (5a) and (5b) in~\autoref{fig:localTangencies}. The ends of $\Star_{\Gamma}(v)$ have directions $(1,0),(0,1)$ and $(-1,-1)$ for (5a), and  $(1,0)$, $(2,-1)$, $(-3,1)$ for (5b).
\end{lemma}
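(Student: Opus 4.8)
The plan is to analyze, purely combinatorially, what a transverse intersection at a common vertex $v$ of $\Gamma$ and $\Lambda$ can look like, and then impose the tangency condition (stable intersection multiplicity $\geq 2$ concentrated at $v$, as in Definition~\ref{def:tropicalBitangent}). First I would fix the tropical line $\Lambda$ as a non-degenerate tripod with ray directions $(-1,0)$, $(0,-1)$, $(1,1)$, and note that a transverse intersection at $v$ forces $v$ to be the common vertex of $\Lambda$ and a trivalent vertex of $\Gamma$. Since $\Gamma$ is dual to a unimodular triangulation of the side-length-four standard simplex, the vertex $v$ is dual to a unit-area triangle, so the three primitive edge directions $u_1,u_2,u_3$ emanating from $v$ in $\Gamma$ satisfy the balancing condition $u_1+u_2+u_3=0$ (after suitable weights, but unimodularity makes them primitive) and pairwise $|\det(u_i,u_j)|=1$. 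This is a strong constraint: up to $\mathrm{GL}_2(\ZZ)$ the only such triple is $\{(1,0),(0,1),(-1,-1)\}$, but here we are restricted to the subgroup of lattice transformations that fix $\Lambda$, i.e.\ the $\Sn{3}$-action of \autoref{tab:S3Action} together with translations, so there are several $\Sn{3}$-orbits to enumerate.

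Next I would compute the stable intersection multiplicity at $v$. Transversality means no edge of $\Gamma$ through $v$ is parallel to an edge of $\Lambda$ through $v$, so each of the three rays of $\Lambda$ meets $\Gamma$ in the relative interior of a distinct edge-cone at $v$; the local multiplicity contributed along the ray with primitive direction $w$ (one of $(-1,0),(0,-1),(1,1)$) equals $|\det(w, u)|$ where $u$ is the primitive direction of the edge of $\Gamma$ that ray points into. For $v$ to be a tangency point we need the total local multiplicity to be $2$ (types (5a),(5b) are the multiplicity-two vertex tangencies; the multiplicity-four case is separate), so exactly one ray of $\Lambda$ must contribute multiplicity $2$ and the other two contribute multiplicity $1$, or—recalling that the theta-characteristic analysis of \cite{len.mar:20} pins down which intersection points are tangencies—one checks the admissible distributions. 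I would then go orbit-by-orbit through the finitely many unimodular triples $\{u_1,u_2,u_3\}$ realizable in a triangulation of the $4$-simplex, discard those giving total multiplicity $\neq 2$ at $v$ or a non-transverse configuration, and verify the survivors are exactly the two listed: ends $(1,0),(0,1),(-1,-1)$ for (5a), and $(1,0),(2,-1),(-3,1)$ for (5b), up to $\Sn{3}$.

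The main obstacle I expect is the bookkeeping of the $\Sn{3}$-orbit reduction: many superficially different unimodular triples are $\Sn{3}$-equivalent, and one must be careful that the equivalences used genuinely lie in the stabilizer of $\Lambda$ (so $\tau_0,\tau_1$ and their products, but not arbitrary $\mathrm{GL}_2(\ZZ)$ elements), while also tracking that the triangle dual to $v$ actually fits inside the size-four simplex with the correct orientation relative to $\Lambda$'s rays. A secondary subtlety is ruling out configurations where the intersection $\Lambda\cap\Gamma$ at $v$ has multiplicity $2$ but $v$ is \emph{not} the tangency point (the tangency sitting elsewhere in the same component); here I would invoke the refined tangency classification recalled from \cite{len.mar:20} just before the lemma, which guarantees that a transverse multiplicity-two intersection at a common vertex is itself the tangency point. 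Once the case analysis is pruned, reading off the edge directions for (5a) and (5b) is immediate.
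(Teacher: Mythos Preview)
Your proposal has a genuine gap in the multiplicity accounting. You assert that both (5a) and (5b) are ``the multiplicity-two vertex tangencies; the multiplicity-four case is separate,'' but this is wrong: type (5b) has local intersection multiplicity \emph{four}, not two (see the caption of \autoref{fig:localTangencies} and \autoref{rem:mult4}). The configuration $(1,0),(2,-1),(-3,1)$ is precisely the unique transverse vertex-on-vertex tangency of multiplicity four, and it belongs in the lemma because \autoref{def:tropicalBitangent}(ii) allows a tangency component of multiplicity four. Your own arithmetic in fact hints at this: you write ``exactly one ray of $\Lambda$ must contribute multiplicity $2$ and the other two contribute multiplicity $1$,'' which sums to four, contradicting the multiplicity-two target you set two lines earlier. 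So the case analysis you propose---enumerating unimodular triples and discarding those with total multiplicity $\neq 2$---would throw out (5b) and miss half the lemma.

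Separately, your method for computing the stable multiplicity at a transverse common vertex is not quite right: it is not the case that each ray of $\Lambda$ ``points into'' a unique edge of $\Gamma$ and contributes $|\det(w,u)|$. The correct computation uses the fan displacement rule on the pair of trivalent fans, and the answer depends on how the three rays of $\Star_\Gamma(v)$ are distributed among the six sectors cut out by the min- and max-tropical lines at $v$. The paper's proof organizes the case analysis exactly this way: it partitions according to how many edges of $\Star_\Gamma(v)$ coincide with edges of a min-tropical line (three, one, or none), and for each pattern uses smoothness plus balancing to pin down the few admissible direction triples and their multiplicities. This sector-based bookkeeping handles the $\Sn{3}$-reduction and the multiplicity computation simultaneously, and naturally separates the multiplicity-two case (5a) from the multiplicity-four case (5b). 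You should redo the analysis allowing both multiplicities and using the sector decomposition rather than a ray-by-ray determinant count.
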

\begin{proof} The min- and max-tropical lines with vertex $v$ divide $\RR^2$ into six regions, as seen in the right-most picture in~\autoref{fig:localTangencies}. We prove the statement by a direct computation, analyzing the locations of the three ends in the star $\Star_{\Gamma}(v)$ of $v$ in $\Gamma$ with respect to these six regions. First, assume $\Star_{\Gamma}(v)$ is a min-tropical line. Then, the vertex $v$ is a tangency point of local multiplicity two.

  Second, assume $\Star_{\Gamma}(v)$  contains only one of the three ends of a min-tropical line. Up to $\Sn{3}$-symmetry, we may assume it has direction $(1,0)$ and the remaining two ends are located in the relative interior of the following combined regions: (I,IV), (I,V), (I,VI) or (II,V). The latter case is not possible by the smoothness condition on $\Gamma$. In the first three cases, the same condition ensures that the bounded edge of $\Gamma$ in I equals  $(-a,1)$ with $a=2$ or $3$. Since $v$ is a tangency,  we conclude that $a=3$ and the multiplicity is four.

  Finally, we claim no tangency at $v$ can occur if the star of $\Gamma$ at $v$ shares no end with a min-tropical line. Indeed, we exploit the $\Sn{3}$-action to restrict  the location of the three ends of $\Star_{\Gamma}(v)$ to four triples: (I,I,IV), (I,II,IV), (I,III,V) or (I,III,VI). The last three cases are incompatible with the balancing condition and smoothness of $\Star_{\Gamma}(v)$. In turn, smoothness and the requirement of having intersection multiplicity two or four at $v$ reduces the possible (I,I,IV) configurations to three cases, with ends $(0,1)$, $(-1,0)$, $(1,-1)$,  $(0,1)$ $(-1,2)$ and $(1,-3)$, or $(-1,0)$, $(-2,1)$ and $(3,-1)$, violating either the transverse condition property or the restrictions on $\Star_{\gamma}(v)$. This concludes our proof.  
\end{proof}

\begin{figure}[tb]
  \includegraphics[scale=0.32]{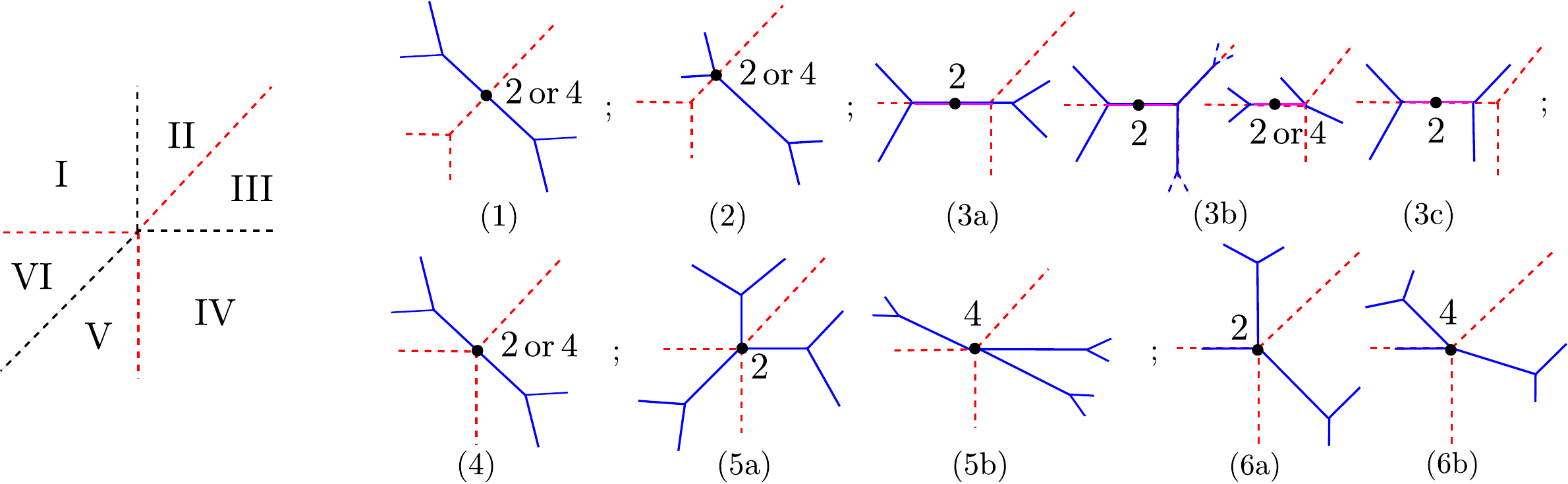}
  \caption{From left to right: six sectors of $\RR^2$ determined by max- and min- tropical lines, and six types of local tangencies at the marked point $P$. The numbers indicate the possible local intersection multiplicities for $P$. Cases (3a), (3b), (3c), (6a) and (6b) are non-transverse intersections.\label{fig:localTangencies}}
\end{figure}

\begin{remark}\label{rem:mult4}
  The action of $\Sn{3}$ allows us to restrict the local configurations corresponding to a tangency point $P$ of multiplicity 4. A simple computation of local multiplicities determines the directions of the relevant edges of $\Gamma$. For cases (1), (2), (4) and (5b), the point $P$ is adjacent to or in the relative interior of an edge $e$ with direction $(-3,1)$. For (6b), $P$ is a vertex of $\Gamma$ and its two adjacent  bounded edges have directions $(3,-1), (-2,1)$. In these five cases, we assume $P$ lies in the diagonal end of $\Lambda$.

  For (3b), we assume $P$ lies in the horizontal end of $\Lambda$. In this situation, the multiplicity four can only occur where the vertex of $\Lambda$ agrees with the right-most vertex of a bounded horizontal edge $e$ of $\Gamma$ and the stable intersection assigns weight three to $v$ and one to the other endpoint of $e$. Linear equivalence identifies this situation with a multiplicity four point on the edge. There are two possibilities for $\Star_{\Gamma}(v)$ that determine the location of the tropical tangency points via chip-firing.

  First, if  $\Star_{\Gamma}(v) = \Star_{\Lambda}(v)$, then $v$ must be adjacent to three bounded edges of $\Gamma$, with lengths  $\lambda_1\leq \lambda_2, \lambda_3$. The tangency points will be located on the two longest edges, at distance $(\lambda_3-\lambda_1)/2$ and $(\lambda_3-\lambda_2)/2$ away from the vertex $v$. 

  On the contrary, $\Star_{\Gamma}(v) = \Star_{\Lambda}(v)$ then the tangency points are located at $v$ and at the midpoint of the horizontal end of $\Gamma$ adjacent to $v$. Furthermore, the three edge directions in $\Star_{\Gamma}(v)$ come in two  possible combinations: $(-1,0)$, $(2,-1)$, and $(-1,1)$, or $(-1,0)$, $(3,1)$ and $(-2,-1)$. They are related by the map $\tau_0\circ\tau_1\circ \tau_0$ from~\autoref{tab:S3Action}.  
\end{remark}

\subsection{Lifting tropical bitangents}\label{sec:lift-bitang} In~\cite{len.mar:20}, Len and the second author developed a novel effective technique for producing bitangent lines to generic smooth plane quartics from their tropical counterparts.  
In this section, we review their construction, setting up the notation and definitions for 
{Sections}~\ref{sec:lift-trop-bitang} through~\ref{sec:trop-totally-real}, and~\autoref{sec:appendix1}.

\begin{definition}\label{def:bitangentTriple}
  Consider a degree four bivariant polynomial $q$ over $\KK$ and let $V(q)$ be the associated plane quartic curve. We say $(\ell,p,p')$ is a \emph{bitangent triple} to $V(q)$ defined over $\KK$ if $\ell$ is a bitangent line to $V(q)$ with tangency points $p$ and $p'$.
\end{definition}

 We are interested in analyzing which tropical bitangents $\Lambda$ to $\Gamma$ with tropical tangency points $P$ and $P'$ arise as tropicalizations of bitangent triples, and, most importantly, how many such liftings exist.
 
\begin{definition} \label{def:liftingBitangents}
  We say $\Lambda$ lifts over $\KK$  if there exists a bitangent triple $(\ell,p,p')$ defined over $\KK$ (also called a $\KK$-bitangent triple) whose tropicalization is $(\Lambda,P,P')$, i.e.,
  \[ \Trop(\ell) = \Lambda, \quad \Trop(p)=P\quad \text{ and } \Trop(p') =P'.
  \]The \emph{lifting multiplicity} of $\Lambda$ equals the number of such bitangent triples.
  \end{definition}

The results in~\cite[Section 3]{len.mar:20} highlight a central feature of studying bitangent lines to \emph{generic} smooth plane curves and their lifting multiplicities through the tropical lens. Indeed, they show that liftings can be determined by local systems of equations relative to each tropical tangency. Furthermore, in the quartic case, each tropical tangency provides independent and complementary information regarding the coefficients of $\ell$ in~\eqref{eq:line}. A tangency along the horizontal end of $\Lambda$ will determine $m$. The maps $\tau_0$ and $\tau_1$ from~\autoref{tab:S3Action} replace $m$ with $m/n$ and $n$ when the point lies in the vertical and  diagonal end of $\Lambda$, respectively. The situation is more delicate when the vertex of $\Lambda$ is the tangency point, as it occurs  for cases (4) and (5a) through (6b). We postpone their discussion to~\autoref{sec:lift-trop-bitang} (for multiplicity two tangencies) and~\autoref{sec:appendix1} (for multiplicity four.)

In what follows, we review the local lifting methods from~\cite{len.mar:20} when the local tangency point $P$ has multiplicity two and occurs in the relative interior of the horizontal end of $\Lambda$. After a translation and rescaling of $\Gamma$ and $q$, we may assume that  $q\in R[x,y]\smallsetminus \mathfrak{M}R[x,y]$ and $P=(0,0)$, so the vertex of $\Lambda$ equals $(\lambda,0)$ for some $\lambda>0$. In particular, $\val(m)=0$, $\val(n)=-\lambda$ and  $p\in R^2$. We write $\overline{p}=(\bar{x},\bar{y})\in (\resK^*)^2$ for the point recording the coordinatewise initial forms of $p$.

The \emph{local equations} for $\ell$ at $P$ give  a system of three equations in three unknowns $(\overline{m},\bar{x},\bar{y})$ over $\resK$ determined by the vanishing of the initial forms (with respect to $P$) of $q,\ell$ and the Wronskian $W := \det(J(q,\ell; x,y))$, that is,
\begin{equation}\label{eq:localEqns}
\overline{q}=\overline{\ell}=\overline{W}=0, \quad \text{ for }  \overline{q}:=\!\!\!\sum_{(i,j)\in P^\vee}\!\!\! \overline{a_{ij}}\, \bar{x}^i\bar{y}^j, \;\overline{\ell} := \bar{y} + \overline{m}  \text{ and } \overline{W} := \det(J(\overline{q},\overline{\ell};\bar{x},\bar{y})).
\end{equation}
Here, $P^{\vee}$ denotes the cell in the Newton subdivision of $q$ dual to $P$.
Notice that since $P=(0,0)$ and $\val(n)<0$, these initial forms are nothing but the image of the three original polynomials in $\resK[\bar{x},\bar{y},\overline{m}]$. Furthermore, the class of the Wronskian agrees with the Wronskian of $\overline{q}$ and $\overline{\ell}$.
If the system above does not have a unique solution, \emph{tropical modifications} can be used to solve for the initial forms and higher order terms (see \cite[Lemma 3.7]{len.mar:20} and references therein.) The support of the local equations has to be increased accordingly. 

Each solution $(\overline{m},\bar{x},\bar{y})\in (\resK^*)^3$ of the system from~\eqref{eq:localEqns} produces a unique pair $(m,p)$ in $R^3$ with the prescribed initial forms.  Applying a similar method to the second tropical tangency point and combining the outputs will  determined the bitangent triples lifting $\Lambda$. 
Uniqueness is essential to compute the lifting multiplicities of each $(\Lambda,P,P')$ and it follows from the well-known multivariate analog of Hensel's Lemma (see~\cite[Theorem 2.2]{len.mar:20} and references therein.)

\begin{lemma}[\textbf{Multivariate Hensel's Lemma}]\label{lm:multivariateHensel} Consider $\mathbf{f}:= (f_1,\ldots, f_n)$ with $f_i\in R[x_1,\ldots, x_n]\smallsetminus \mathfrak{M}R[x_1,\ldots, x_n]$ and let $J_{\mathbf{f}}$ be its Jacobian matrix. Fix $a=(a_1,\ldots, a_n)\in \resK^n$ satisfying $\overline{J_{\mathbf{f}}}(a)\neq 0 \in \resK$ and $\overline{f_i}(a)=0$ for all $i=1,\ldots,n$. Then, there exists a unique $b = (b_1,\ldots, b_n)\in R^n$ with $f_i(b) = 0$, $\val(b_i)=0$ and $\overline{b_i} = a_i$ for all $i=1,\ldots, n$.
\end{lemma}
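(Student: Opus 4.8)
The plan is to establish this by the quadratically convergent multivariate Newton iteration, using that $R$ is complete with respect to the non-Archimedean absolute value $|\cdot| := c^{\val(\cdot)}$ for some fixed $0 < c < 1$. The one preliminary I would record is the exact, denominator-free Taylor expansion of a polynomial $f_i \in R[x_1,\dots,x_n]$ around a point $a \in R^n$: writing $h = (h_1,\dots,h_n)$,
\[
f_i(a+h) = f_i(a) + \sum_{j=1}^n \frac{\partial f_i}{\partial x_j}(a)\,h_j + R_i(a,h),
\]
where $R_i \in R[x,h]$ (all coefficients in $R$, since one only expands products of binomials with integer coefficients — no factorials) and every monomial of $R_i$ has degree at least two in the $h$-variables. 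Consequently $|R_i(a,h)| \le \bigl(\max_j |h_j|\bigr)^2$ whenever $a \in R^n$ and $\max_j |h_j| < 1$. This single estimate drives both halves of the statement.

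For existence, I would fix a lift $a^{(0)} \in R^n$ of $a$. The hypotheses give $\kappa := \max_i |f_i(a^{(0)})| < 1$ and $\det J_{\mathbf f}(a^{(0)}) \in R^\times$, so $J_{\mathbf f}(a^{(0)})^{-1}$ has entries in $R$. Define $a^{(k+1)} := a^{(k)} - J_{\mathbf f}(a^{(k)})^{-1}\mathbf f(a^{(k)})$. An induction on $k$ shows that $a^{(k)} \in R^n$, that $a^{(k)} \equiv a^{(0)} \pmod{\mathfrak M}$ (so $\det J_{\mathbf f}(a^{(k)})$ remains a unit), that $|a^{(k+1)}-a^{(k)}| \le |\mathbf f(a^{(k)})|$, and — substituting $h = a^{(k+1)}-a^{(k)}$ into the Taylor formula, where the linear term cancels $\mathbf f(a^{(k)})$ by construction — that $|\mathbf f(a^{(k+1)})| \le |\mathbf f(a^{(k)})|^2$. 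Hence $|\mathbf f(a^{(k)})| \le \kappa^{2^k} \to 0$, the increments decay geometrically, and by completeness $a^{(k)} \to b \in R^n$ with $\mathbf f(b) = 0$ (continuity of polynomials) and $b \equiv a^{(0)} \pmod{\mathfrak M}$. In particular $\overline{b_i} = a_i$ for all $i$, and $\val(b_i) = 0$ precisely when $a_i \ne 0$, which holds in all the intended applications.

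For uniqueness, given two roots $b, b' \in R^n$ both reducing to $a$, I would set $\delta := \max_i |b_i - b'_i| < 1$ and apply the Taylor formula at $b'$ with $h = b - b'$: since $\mathbf f(b) = \mathbf f(b') = 0$, this yields $J_{\mathbf f}(b')(b-b') = -R(b', b-b')$, and inverting $J_{\mathbf f}(b')$ over $R$ gives $\delta \le \delta^2$, forcing $\delta = 0$, i.e. $b = b'$.

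I do not expect a genuine obstacle — this is the textbook Newton/Hensel argument — but two points warrant care. First, the value group of $\KKr$ need not be discrete, so ``complete'' must be read as Cauchy-complete for $|\cdot|$ rather than $\mathfrak M$-adically complete; this costs nothing, since what the argument actually uses is the geometric decay of the Newton increments. Second, one must check that the Taylor remainder genuinely lies in $R[x,h]$ and is homogeneous of degree $\ge 2$ in $h$, which is exactly where the hypothesis $f_i \in R[x]\smallsetminus \mathfrak M R[x]$ (as opposed to $f_i \in \KKr[x]$) enters. Alternatively, one could dispense with completeness altogether and invoke the fact that $\KKr$ is Henselian — valid for $\PSR$ and $\PS$ since these are real closed, respectively algebraically closed — and the corresponding implicit-function statement for Henselian local rings.
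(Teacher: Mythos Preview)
The paper does not prove this lemma at all: it is stated as a well-known result with a citation to~\cite[Theorem 2.2]{len.mar:20} ``and references therein,'' and is used as a black box throughout. Your Newton-iteration argument is the standard textbook proof and is correct; there is nothing to compare against on the paper's side.

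Two small remarks. First, your observation that the conclusion $\val(b_i)=0$ only follows when $a_i\neq 0$ is well taken; the paper's applications always work with solutions in $(\resK^*)^n$, so this is implicit. Second, the hypothesis $f_i\in R[x]\smallsetminus\mathfrak{M}R[x]$ splits into two parts: $f_i\in R[x]$ is what puts the Taylor remainder in $R[x,h]$ (as you say), while ``not in $\mathfrak{M}R[x]$'' is a normalization ensuring $\overline{f_i}$ is not identically zero --- otherwise the reduced system carries no information. Neither point affects the validity of your argument.
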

 \noindent  In our case of interest, $a = (\overline{m}, \bar{x},\bar{y})$ and $b = (m,p)$. Furthermore, we can often certify the non-vanishing of the initial form of the Jacobian by computing   $J(\overline{q}, \overline{\ell},\overline{W})$ instead.

\begin{remark}[\textbf{Genericity constraints}]\label{rem:genericity} As we mentioned earlier, the tropical techniques for computing bitangents require some mild genericity conditions. In addition to the smoothness of $\Gamma$ and the non-degeneracy of a tropical bitangent lines to it,  we further assume the following conditions:
  \begin{enumerate}[(i)]
  \item if $\Gamma$ contains a vertex $v$ adjacent to three bounded edges with directions $(-1,0)$, $(0,-1)$ and $(1,1)$, then the shortest lengths of these three edges is unique;
\item $V(q)$ has no hyperflexes;
\item the coefficients of $q$ are generic enough to guarantee that if the tangencies occur in the relative interior of the same end of $\Lambda$, then 
  the local systems defined by these two points  are inconsistent.
\end{enumerate}
Condition (i) allows us to determine the position of the tropical tangencies for the line $\Lambda$ with vertex $v$, thus ensuring the validity of the lifting methods for (5b) tangencies developed in~\cite[Proposition 3.12]{len.mar:20}.  Overall, these  mild constraints determine which bitangent lines to $\Gamma$ lift to bitangent triples over $\KK$ among each bitangent class. 
\end{remark}

As a consequence of the genericity conditions imposed on $\Gamma$ and $q$ we have:

\begin{theorem}\label{thm:liftingProducts} {\cite[Theorem 3.1]{len.mar:20}} Assume $\Lambda$ is a tropical bitangent to a generic tropical smooth plane quartic $\Gamma$, with two distinct tangencies. Suppose $\Gamma\cap \Lambda$ is disconnected. Then:
  \begin{enumerate}[(i)]
    \item
    If these points lie in the relative interior of two distinct ends of $\Lambda$, the lifting multiplicity of $\Lambda$ is the
    products of the local lifting multiplicities of the two tangencies.
  \item If the tangency points lie in the same end of $\Lambda$, the lifting multiplicity of $\Lambda$ is zero.
  \end{enumerate}
\end{theorem}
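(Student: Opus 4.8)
The plan is to reduce the global lifting problem to the two \emph{independent} local lifting problems at $P$ and $P'$ developed in \cite{len.mar:20} and reviewed around \eqref{eq:localEqns}, and then to glue the local solutions using \autoref{lm:multivariateHensel}. I would write a candidate bitangent as $\ell=y+m+nx$ and recall that fixing $\Trop(\ell)=\Lambda$ forces both $\val(m)$ and $\val(n)$. The single fact I would lean on throughout is the one recorded in the paragraph preceding \eqref{eq:localEqns}: for a multiplicity-two tangency $P$ in the relative interior of an end, the local system $\overline q=\overline\ell=\overline W=0$ is a system over $\resK$ in exactly three unknowns — the two coordinates of the initial form of the tangency point together with the single coefficient of $\ell$ visible on that end ($m$ on the horizontal end, and, after $\tau_0$ or $\tau_1$, $m/n$ on the vertical end and $n$ on the diagonal one) — and the other coefficient of $\ell$ appears neither in these equations nor in their reduced Jacobian.

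For part (i): since $\Gamma\cap\Lambda$ is disconnected it has exactly two components (\autoref{def:tropicalBitangent}(i)), each carrying one of the two tangencies, and the two local analyses take place near disjoint pieces of $\Gamma$. Assembling them gives the system $q(p)=\ell(p)=W(p)=q(p')=\ell(p')=W(p')=0$ of six equations in the six unknowns $m,n,p,p'$. Reducing modulo $\mathfrak{M}$ after the normalizations of \cite[Section 3]{len.mar:20}, I would check that this system block-decomposes: the three equations at $P$ involve only the coefficient visible there and the initial form of $p$, the three at $P'$ only the coefficient visible there and the initial form of $p'$, and the reduced $6\times 6$ Jacobian is block diagonal with the two local reduced Jacobians on the diagonal, each nonzero by the genericity hypotheses of \autoref{rem:genericity}. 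Hence the reduced solutions are precisely the pairs consisting of one solution of the $P$-system and one of the $P'$-system; because $P$ and $P'$ lie in distinct ends, the two visible coefficients are two distinct elements of $\{m,m/n,n\}$ and so such a pair pins down both $\overline m$ and $\overline n$. By \autoref{lm:multivariateHensel} each pair lifts to a unique sextuple over $R$ solving all six equations, i.e.\ to a unique $\KK$-bitangent triple with the prescribed tropicalization; conversely any bitangent triple lifting $(\Lambda,P,P')$ solves the six equations, hence reduces to such a pair and is its Hensel lift, so the correspondence is a bijection. Therefore the lifting multiplicity of $\Lambda$ equals the number of solutions of the $P$-system times the number of solutions of the $P'$-system, i.e.\ the product of the two local lifting multiplicities. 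If some local system fails to be $0$-dimensional or has a vanishing reduced Jacobian, I would rerun the argument with that system replaced by the refinement coming from tropical modifications (\cite[Lemma 3.7]{len.mar:20}); these are performed independently near each tangency — this is the second use of disconnectedness — so the block structure and the product count survive.

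For part (ii): using $\Sn{3}$ I put both $P$ and $P'$ in the relative interior of the horizontal end, and suppose toward a contradiction that $(\ell,p,p')$ lifts $(\Lambda,P,P')$. Then $\ell=y+m+nx$ is one fixed line, so the initial form of $m$ read off from the tangency at $p$ coincides with the one read off from the tangency at $p'$; taking initial forms of the tangency conditions shows that $(\overline m,\bar x,\bar y)$ solves the $P$-local system and $(\overline m,\bar x',\bar y')$ solves the $P'$-local system with the same value of $\overline m$. This is exactly a common solution of the two local systems attached to points in the relative interior of one end, which is ruled out by genericity condition (iii) of \autoref{rem:genericity}. Hence no such triple exists and the lifting multiplicity is zero.

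The main obstacle is the reduction step in part (i): checking over $\resK$ that the combined six-variable system genuinely splits into the two three-variable local systems with non-overlapping variables and block-diagonal Jacobian — the delicate case being when the coefficient visible at $P'$ is $m/n$, so that $\overline m$ and $\overline n$ are coupled and one must first pass to the variables $\overline m$ and $\overline{m/n}$ before the block form appears — together with the surjectivity half of the correspondence, namely that every bitangent triple lifting $(\Lambda,P,P')$ is accounted for, which is where disconnectedness (so the two local pictures never merge) and the genericity of \autoref{rem:genericity} are needed. Once the block decomposition is in hand, the remainder is a direct application of \autoref{lm:multivariateHensel} and the elementary observation that any two of $m$, $n$, $m/n$ determine the third.
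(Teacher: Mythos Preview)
The paper does not give its own proof of this statement: it is quoted verbatim as \cite[Theorem 3.1]{len.mar:20} and used as a black box, with the surrounding subsection only reviewing the local lifting machinery from that reference. So there is no ``paper's proof'' to compare against here.

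That said, your proposal is a faithful reconstruction of the argument as one would expect it to go from the material the paper does review in \autoref{sec:lift-bitang}. The block-decomposition for part~(i) is exactly the independence principle the paper summarizes (``each tropical tangency provides independent and complementary information regarding the coefficients of $\ell$''), and your use of \autoref{lm:multivariateHensel} to glue is the intended mechanism. Your handling of the $m/n$ coupling via a change of variables is the right fix. For part~(ii) you correctly observe that the whole content is precisely genericity condition~(iii) of \autoref{rem:genericity}; indeed the paper essentially \emph{defines} genericity so that this case has multiplicity zero, so there is nothing more to prove once that hypothesis is in force. The only caveat is that your sketch treats the case where both local systems are already zero-dimensional with nonvanishing reduced Jacobian; the modification step you allude to (needed for types (3a)--(3c)) is where most of the actual work in \cite{len.mar:20} lies, and your one-line ``rerun the argument'' hides that.
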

In~\autoref{sec:appendix1}, we discuss lifting multiplicities in the presence of a multiplicity four tropical tangency.  In particular,~\autoref{thm:liftingMult4} combined with the formulas in~\autoref{tab:liftingMult4} allows us to conclude that multiplicity four tangencies of types (5b) and (6b) both lift with multiplicity one as seen in~\autoref{fig:2dCells}. Since we assume $V(q)$ has no hyperflexes, none of the other potential multiplicity four tropical tangencies will lift to a bitangent triple.

\autoref{tab:liftingMultiplicities} provides a summary for each relevant type, combining the statements included in~\autoref{sec:appendix1} with various results from~\cite{len.mar:20}, namely,  Propositions 3.5  (for type (1)), 3.6 (for type (2)), 3.7 (for types (3a) and (3c)), 3.10 (for type (4)), 3.11 (for type (5a)) and Remark 3.8 (for type (3b) and (6a).)

Recall that in our local discussion here, for case (3b) we only refer to the tangency point highlighted in the picture in~\autoref{fig:localTangencies}. \autoref{pr:mult4type3b} shows that, globally, the bitangent line depicted in (3b) on the right does not lift if its multiplicity is four. This happens precisely when the second tangency point is the vertex of the tropical bitangent line. \autoref{pr:shapeC} studies the global lifting behavior of a bitangent line with two type (3b) tangencies, as seen in the left of the figure. In this situation,  the two tangency points lie in the relative interior of two edges of the quartic curve.

The multiplicity formula for types (4) and (6a) involves  two vectors. First, the edge $e$ of $\Gamma$ carrying the tangency, and, second,  the end  $e'$ of $\Lambda$ where the remaining tangency occurs. Possible combinations of $(e,e')$ will be determined in~\autoref{sec:comb-class-bitang} (see~\autoref{rm:NPColorCoding}). In~\autoref{sec:lift-trop-bitang} we refine these techniques to address real liftings of tropical bitangents with two tangencies.

\begin{table}[tb]
  \begin{tabular}{|c||c|c|c|c|c|c||c|c|c|}
    \hline    type
    & (1) & (2) & (3a),  (3b) or (3c) &  (4) & (5a) & (6a) & (3b) & (5b) & (6b) \\
    \hline
    mult. & 0 & 1 & 2 &  $|\det(e,e')|$ & 2  & $|\det(e,e')|$ & 4 & 1 & 1\\
      \hline
      \end{tabular}
    \caption{Local lifting multiplicities for each tangency type, assuming multiplicity two for all types except for the three right-most columns.\label{tab:liftingMultiplicities}}
  \end{table}

\section{Bitangent classes, shapes and their local properties}\label{sec:bitang-class-shap}

As was mentioned in~\autoref{sec:introduction} we are interested in determining all tropical bitangent lines $\Lambda$ to generic tropical smooth plane quartics, denoted throughout by $\Gamma$. In particular, we assume that such lines are non-degenerate in $\TPr^2$, i.e., they have a vertex $v$ and three ends. Thus, we may identify $\Lambda$ with the location of its vertex $v$ in $\RR^2$. Results in this section are purely combinatorial: they  depend solely on the duality between $\Gamma$ and unimodular triangulations of the standard 2-simplex of side length four. The metric structure on the skeleton of $\Gamma$ plays no role, and $\Gamma$ need not be generic in the sense of~\autoref{rem:genericity}.

By~\cite[Proposition 3.6, Definition 3.8]{bak.len.ea:16}, linear equivalence of effective tropical theta characteristics correspond to  continuous translations of tropical bitangent lines that preserve the bitangency property. This leads us to  the following definition:

\begin{definition}\label{def:bitangent class}
  Given a tropical  line $\Lambda$ bitangent to a tropical smooth plane quartic $\Gamma$, we define its \emph{tropical bitangent class} as the connected components of the subset of $\RR^2$ containing the vertices of all tropical bitangent lines linearly equivalent to $\Lambda$. The \emph{shape of a tropical bitangent class} refines each class by coloring those points belonging to the tropical quartic $\Gamma$, and subdividing edges and rays of a class, accordingly.
\end{definition}

\noindent By~\cite[Theorem 3.9]{bak.len.ea:16} we know that each $\Gamma$ admits seven bitangent classes. Shapes refine the combinatorial structure of bitangent classes  using the subdivision of $\RR^2$ induced by $\Gamma$.

By~\autoref{rm:actionBySn}, the symmetric group $\Sn{3}$ acts on bitangent classes and their shapes. We aim for a complete classification of all bitangent shapes, up to $\Sn{3}$-symmetry. To achieve this, we must first discuss how to perturb a vertex while remaining in the same bitangent class. It suffices to focus on one tangency point at a time. A description of such local moves is the content of the next lemma:

\begin{lemma}\label{lm:localMoves} Let $\Lambda$ be a tropical bitangent to $\Gamma$ and let $P$ be a tangency point. The relative position of $P$ within $\Lambda$ and $\Gamma$ restricts the directions in which to move the vertex $v$ of $\Lambda$ so that the corresponding translation of $P$ remains a tangent point. They are depicted in~\autoref{fig:localMoves}.
\end{lemma}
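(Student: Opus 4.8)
The plan is to carry out a case analysis driven by the local tangency classification that has already been established. By~\autoref{lm:transverseTangencies}, \autoref{rem:mult4}, and the discussion surrounding~\autoref{fig:localTangencies} and~\autoref{tab:edgeOnEdgeMult2}, every tangency point $P$ falls into one of the finitely many local types (1), (2), (3a)--(3c), (4), (5a)--(5b), (6a)--(6b), each specifying the directions of the edges of $\Gamma$ in the star of $P$ and the relative position of $P$ inside $\Lambda$ (interior of an end, or the vertex $v$). Since the bitangency condition is entirely local to a neighborhood of each tangency point, moving the vertex $v$ of $\Lambda$ keeps $P$ tangent exactly as long as the combinatorial picture near the translated copy of $P$ is preserved; hence it suffices to work one tangency type at a time and record, for each type, the cone of directions $u$ such that $v \mapsto v + \varepsilon u$ (for small $\varepsilon > 0$) yields a tropical line still tangent to $\Gamma$ at the corresponding translate of $P$.

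First I would fix conventions: after an $\Sn{3}$-transformation (using~\autoref{tab:S3Action}) we may assume the relevant end of $\Lambda$ containing $P$ is the horizontal end, so that moving $v$ horizontally toward or away from $P$ slides $P$ along that end, while moving $v$ vertically or diagonally changes which edge of $\Gamma$ the end meets. Then, type by type: for a multiplicity-two tangency in the relative interior of an end (types (1), (2), (4)), the end of $\Lambda$ overlaps or crosses a fixed edge $e$ of $\Gamma$ whose direction is listed in~\autoref{tab:edgeOnEdgeMult2}; the admissible moves of $v$ are precisely those that keep the end incident to the relative interior of $e$ with the same combinatorial crossing/overlap pattern, which is a one- or two-dimensional cone spanned by the direction of $e$ and (when the intersection is transverse) the direction of the end itself. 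For the non-transverse edge-overlap types (3a), (3b), (3c), (6a), (6b), the end of $\Lambda$ runs along an end or bounded edge of $\Gamma$ and the move must preserve that overlap, forcing $v$ to travel parallel to the shared direction (and, for (3b) where both tangency points sit in one component, also respecting the length bookkeeping of the horizontal bounded edge). For the vertex tangencies (5a), (5b), where $P = v$, the constraint is that $v$ must remain a vertex of $\Gamma$: since $\Gamma$'s vertices are isolated points, in these cases the class is locally zero-dimensional and there is no admissible move — or, in the multiplicity-four refinement of~\autoref{rem:mult4}, $v$ may slide only along the specified edge. Collecting these cones gives exactly the local move diagrams of~\autoref{fig:localMoves}.

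The key structural point that makes each case routine is that a tropical line is bitangent to $\Gamma$ if and only if the stable intersection $\Gamma \cap_{st} \Lambda$ has the multiplicity profile of~\autoref{def:tropicalBitangent}, and that profile is detected by the fan displacement rule, hence by purely local data at each intersection point: the directions of the edges of $\Gamma$ and of the ends of $\Lambda$ meeting there. So for each local type I would just verify that the proposed cone of moves is precisely the set of perturbations of $v$ under which (a) the local intersection multiplicity at the translate of $P$ is unchanged, and (b) no new intersection component of the wrong multiplicity is created nearby. Part (a) is an elementary computation with $2\times 2$ determinants $|\det(u_1,u_2)|$; part (b) amounts to checking that we do not push the end of $\Lambda$ past a vertex of $\Gamma$ or off the edge $e$, which bounds the cone.

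The main obstacle I anticipate is bookkeeping rather than conceptual: there are many local types, and for the non-transverse overlap cases one must be careful that the tangency point — which sits at the midpoint of a bounded edge where the overlap occurs, per the discussion after~\autoref{fig:thetaChar} — genuinely moves with $v$ in the claimed direction, and that in the (3b) configuration the two tangency points lying in a single connected component impose a joint constraint (the two edges carrying them must retain compatible lengths) rather than two independent ones. Handling that coupling correctly, and making sure the $\Sn{3}$-normalization chosen for one tangency is compatible with the normalization needed to read off the move for the other, is where the care is required; everything else reduces to the determinant computations already packaged in~\autoref{tab:edgeOnEdgeMult2} and~\autoref{fig:localTangencies}.
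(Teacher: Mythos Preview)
Your overall strategy---a case analysis over the local tangency types from~\autoref{fig:localTangencies}---is exactly what the paper does. But there is a genuine conceptual gap in how you set up the constraint, and it causes several of your case analyses to come out wrong.

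You write that moving $v$ keeps $P$ tangent ``exactly as long as the combinatorial picture near the translated copy of $P$ is preserved.'' This is too restrictive. The lemma asks only that the translate of $P$ remain \emph{a} tangency point, not that it remain a tangency point \emph{of the same local type}. The paper is explicit about this: it tracks one tangency at a time and allows the type to change along the move (and even allows the second tangency to be temporarily lost). Once you impose type preservation, you under-count the admissible moves in several cases:
\begin{itemize}
\item \textbf{Type (5a).} You claim $v$ must stay a vertex of $\Gamma$, so the move set is zero-dimensional. This is wrong. In the paper's analysis, $v$ may slide along any of the three bounded edges of $\Gamma$ adjacent to $P$; the tangency at the translated point becomes a type~(3) overlap tangency rather than a vertex tangency. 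The local move is one-dimensional and bounded, not a single point.
\item \textbf{Type (5b).} You allow at most a slide along one edge. The paper finds, in addition, a two-dimensional unbounded move: a circular sector (bounded by the slope-one line and the slope~$-1/3$ edge of $\Gamma$) swept by $\RR_{\leq 0}(1,1)$.
\item \textbf{Types (6a), (6b).} You say the move must be parallel to the shared end direction. In fact the paper obtains a two-dimensional circular sector plus the ray $\RR_{\leq 0}(1,1)$; and for (6a) there is an additional bounded vertical move along the $(0,1)$ edge of $\Gamma$. In particular the local move for (6a) is not even pure-dimensional.
\end{itemize}
Your description for types (1), (2), (4) is also slightly off: type~(4) has $P$ at the vertex of $\Lambda$, not in the interior of an end, and for type~(1) the local move is a full open disc (all directions) plus the ray $\RR_{\leq 0}(1,1)$, not a cone spanned by two edge directions.

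Finally, your worry about the coupling of the two tangencies in the (3b) configuration is unnecessary at this stage: the lemma deliberately treats one tangency point at a time, so no joint constraint enters. The combination of the two local move sets happens later, in the shape classification.
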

\begin{proof} We proceed by a case-by-case analysis, depending on the nature of $\Gamma\cap \Lambda$ locally around $P$. Up to $\Sn{3}$-symmetry, there are six cases to consider, as seen in~\autoref{fig:localTangencies}. The corresponding local moves for each case are depicted in~\autoref{fig:localMoves}. It is important to remark that we are only concerned with ensuring the translation of $P$ remains a tangency point as we move the pair $(v,P)$, even though the second tangency point may cease to be so along the way.  We distinguish between transverse and non-transverse intersections.

  First, assume that the local intersection is transverse and $P$ is in the relative interior of an edge  $e$ of $\Gamma$ and an end of $\Lambda$, as in the picture labeled (1) in~\autoref{fig:localMoves}. Then, we can pick $\varepsilon >0$ so that the tropical line  with vertex $v+w$ is tangent to $\Gamma$ at a point in $e$ for any $w$ in the Minkowski sum $D + \RR_{\leq 0} (1,1)$, where $D :=D(0,\varepsilon)$ is an open disc. By construction, this local move is 2-dimensional and unbounded.

  Second, assume that the intersection around $P$ is again transverse but $P$ satisfies one of the following conditions, corresponding to pictures (2) and (4) in  the figure:
  \begin{enumerate}[(i)]
    \item $P$ is a vertex of $\Gamma$ and lies in the relative interior of an end $\rho$ of $\Lambda$; or
    \item $P$ is the vertex of $\Lambda$ and lies in the relative interior of a bounded edge $e$ of $\Gamma$.
  \end{enumerate}
  In both situations, we can find $\varepsilon >0$ and an  (open) half disc $D$ centered at $\mathbf{0}$ of radius $\varepsilon$, so that the line $\Lambda$ with vertex $v+w$ is tangent to $\Gamma$ at a point in either $e$ or $\rho$ for all $w \in D +\RR_{\leq 0}(1,1)$. The set $D$ is obtained by intersecting $D(v,\varepsilon)$ with a half-space determined by either $\rho$ or $e$. This local move is also 2-dimensional and unbounded.

  The remaining option for a transverse intersection locally around $P$ corresponds to case (5), where $P$ is also the vertex of $\Lambda$. If the local multiplicity is two, the configuration is the one in (5a). In this case, we can translate this point along the three bounded edges of $\Gamma$ adjacent to $P$. The new tangency point will correspond to a type (3) local tangency, as in \autoref{fig:localTangencies}. The local move is 1-dimensional and bounded.

  On the contrary, if the local multiplicity is four, then $P$ is adjacent to three bounded edges of $\Gamma$, with directions $(1,0)$, $(-3,1)$ and $(2,-1)$, as seen in (5b). In this situation, we have two possible local moves: one bounded 1-dimensional move along the horizontal edge of $\Gamma$, and a second one corresponding to the Minkowksi sum of $\RR_{\leq 0}(1,1)$ and a circular sector $D$ bounded by the line with slope one through $v$ and the slope $-1/3$ edge of $\Gamma$. This second move is 2-dimensional and unbounded.

  It remains to address all non-transverse intersections. First, assume we have a type (3) local tangency along a horizontal end of $\Lambda$. In this situation, we can move $(v,P)$ along both directions $(1,0)$ and $(-1,0)$, as seen in the pictures (3a), (3b) and (3c). The movement is 1-dimensional and locally unbounded.
  
  Finally, assume that $P$ is a vertex of both $\Gamma$ and $\Lambda$, and that locally around $P$, both curves intersect along a common horizontal end,  as in (6a) and (6b) in the figure.   The exact values of the remaining directions of the star of $\Gamma$ at $P$ depend on the intersection multiplicity of $P$.

  In both cases, we can find $\varepsilon>0$ and a bounded open circular sector $D$ of $D(0,\varepsilon)$ so that the line with vertex $v+w$ is tangent to $\Gamma$ along a bounded edge of $\Gamma$ adjacent to $P$ for each $w\in D + \RR_{\leq 0} (1,1)$. The sector $D$ is bounded between the vector $(-1,-1)$ and the vectors $(1,-1)$ for (6a), respectively, $(3,-1)$ for (6b), corresponding to the direction of the relevant edge of $\Gamma$ adjacent to $P$.

  In addition, (6a) allows for an extra move: we can translate $(v,P)$ along a bounded vertical segment with direction $(0,1)$. As it occured with type (5b), this  local movement is not pure-dimensional. This concludes our proof.
\end{proof}

\begin{figure}[tb]
  \includegraphics[scale=0.27]{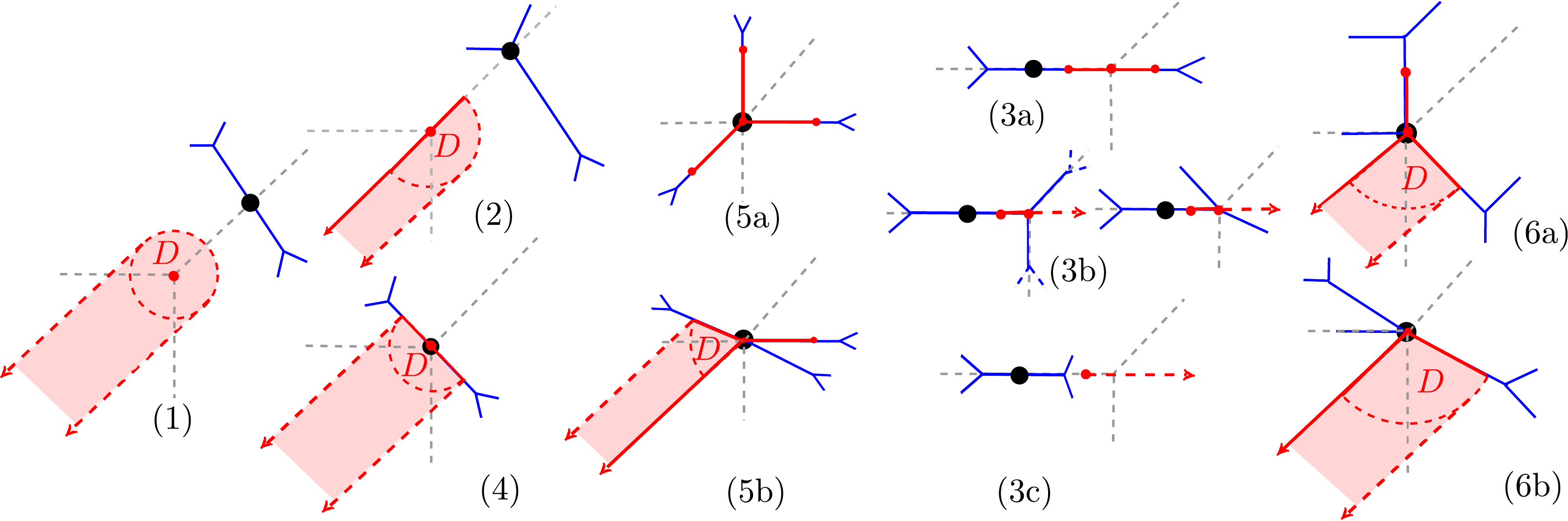}
  \caption{Local moves on bitangents fixing one combinatorial type of tangency. The tangency point $P$ is indicated by a circled black dot.\label{fig:localMoves}}
\end{figure}

Local moves that preserve the two tangencies are obtained by   combining the local moves determined by each tangency point separately. \autoref{lm:localMoves} has  an important topological consequence. Each point of a bitangent class admits a local dimension, corresponding to the dimension of the local moves. Notice that whenever a local move is bounded, its boundary is determined by a line segment. Thus:

\begin{corollary}\label{cor:polyhedralStructure} 
  Bitangent classes to $\Gamma$ are connected polyhedral complexes.
\end{corollary}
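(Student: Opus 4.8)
The plan is to deduce the statement directly from \autoref{lm:localMoves}, since that lemma already does almost all the work by enumerating the local moves around a single tangency point. First I would recall that a bitangent class is, by \autoref{def:bitangent class}, a connected component of the set of vertices of tropical bitangent lines linearly equivalent to a fixed $\Lambda$, so connectedness is built into the definition; only the polyhedral-complex structure needs an argument. To get that structure, I would argue that the bitangent class is locally polyhedral at each of its points, which for a closed subset of $\RR^2$ suffices to endow it with a (finite, since everything is cut out by the finitely many cells of the subdivision induced by $\Gamma$ and the finitely many combinatorial tangency configurations) polyhedral complex structure.

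The key step is the local analysis. Fix a vertex $v$ lying in a bitangent class, corresponding to a bitangent line $\Lambda$ with tangency points $P$ and $P'$. By \autoref{lm:localMoves}, the set of displacement vectors $w$ (of small norm) such that the translate of $P$ by $w$ remains a tangency point of the translated line is, in each of the six cases of \autoref{fig:localTangencies}, a polyhedral cone or a bounded polyhedral set intersected with a small disc — explicitly a Minkowski sum of $\RR_{\leq 0}(1,1)$ with a disc or half-disc (the $2$-dimensional cases (1), (2), (4)), a line segment (the $1$-dimensional cases (3a)--(3c), (5a), and the extra moves in (5b), (6a)), or a circular sector summed with $\RR_{\leq 0}(1,1)$ (cases (5b), (6a), (6b)). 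In every case, after intersecting with a sufficiently small ball, the move locus agrees near $\mathbf 0$ with a finite union of polyhedral cones. The local move locus preserving \emph{both} tangencies is the intersection of the two single-tangency move loci, one for $P$ and one for $P'$; a finite intersection of finite unions of polyhedral cones is again a finite union of polyhedral cones. Hence, near $v$, the bitangent class coincides with a finite union of polyhedral cones based at $v$, so it is locally polyhedral at $v$.

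Next I would globalize: since the combinatorial type of the pair $(\Lambda,P,P')$ is constant on the relative interior of each such local cone, and there are only finitely many combinatorial types (finitely many cells of the $\Gamma$-induced subdivision of $\RR^2$, finitely many tangency types), the bitangent class is covered by finitely many polyhedra, each with its supporting affine hull determined by the directions listed in \autoref{lm:localMoves}; whenever a move is bounded its boundary is a line segment, as noted just before the corollary, so the boundaries of adjacent cells match up along common faces. This produces a polyhedral complex structure whose support is the bitangent class. Connectedness is immediate since a bitangent class is by definition a connected component.

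The main obstacle is not any single computation but making the ``gluing'' rigorous: one must check that the finitely many local polyhedra assemble into an honest polyhedral complex, i.e.\ that the intersection of any two of them is a common face, rather than, say, two polyhedra overlapping in their relative interiors or meeting in a lower-dimensional set that is not a face of both. This is handled by observing that two cells of the bitangent class carrying distinct combinatorial types of $(\Lambda,P,P')$ can only meet where the combinatorial type degenerates, which by the list in \autoref{lm:localMoves} happens exactly along the boundary segments/rays described there; and two cells of the same type, being pieces of the same affine subspace cut out by the same cell-membership inequalities, intersect in a common face by convexity. Thus the local moves of \autoref{lm:localMoves} are genuinely the only mechanism by which one can leave a cell, and the corollary follows.
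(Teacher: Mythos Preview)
Your proposal is correct and follows the same approach as the paper: the paper also deduces the corollary directly from \autoref{lm:localMoves} by noting that the local moves preserving both tangencies are obtained by intersecting the single-tangency move loci, that each point therefore has a well-defined local dimension, and that bounded moves are delimited by line segments. The paper's own argument is in fact just the two sentences preceding the corollary, so your write-up is a considerably more careful fleshing-out of the same idea---in particular your explicit discussion of why the local pieces glue into an honest polyhedral complex goes beyond what the paper records.
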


In order to combine local moves associated to tangency points lying in the same end of $\Lambda$, it will be useful to compare the position of vertices of $\Gamma$ relative to this end. The following definition arises naturally:

\begin{definition}\label{def:relativeOrder} Consider two vertices $v,v'$ of $\Gamma$. We say that $v$ is \emph{smaller} than $v'$ \emph{relative to  a weight} $\ww\in \RR^2$ if $v\cdot \ww < v'\cdot \ww$.  In particular, if $\ww=(1,-1)$ we say $v$ is smaller than $v'$ \emph{relative to the diagonal end of $\Lambda$} and write $v\prec_d v'$. If $v\cdot (1,-1)=v'\cdot (1,-1)$, then $v$ and $v'$ are aligned along the diagonal. In this case, we write $v=_d v'$.
\end{definition}

We end this section by discussing unboundedness of  cells on 
bitangent shapes. To this end, we define:
\begin{definition}\label{def:unboundedDirections}
  Let $\sigma$ be an unbounded cell of a bitangent shape and pick $\rho \in \RR^2\smallsetminus \{\mathbf{0}\}$. If $\sigma + \RR_{\geq 0}\rho \subset \sigma$ we say $\rho$ is an \emph{unbounded direction} for $\sigma$.
\end{definition}

The next lemma provides a sufficient condition for a cell to be unbounded.

\begin{lemma}\label{lm:Bunbounded} Let $\Lambda$ be a bitangent line to $\Gamma$ with vertex $v$. Assume that the two tangency points are contained in the interior of the same end  of $\Lambda$ with direction $\rho$. Then, the set $v+\RR_{\geq 0}(-\rho)$ is contained in the bitangent class of $\Lambda$. In particular, this class is unbounded.
\end{lemma}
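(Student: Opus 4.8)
\textbf{Proof strategy for Lemma~\ref{lm:Bunbounded}.} The plan is to reduce the statement to the analysis of local moves already carried out in~\autoref{lm:localMoves}, exploiting the $\Sn{3}$-symmetry to normalize the end direction. By~\autoref{rm:actionBySn}, it suffices to treat the case where $\rho = (-1,0)$ is the horizontal end of $\Lambda$, so that $-\rho = (1,0)$. Writing $v = (\lambda, c)$ for the vertex and $P, P'$ for the two tangency points on the horizontal end of $\Lambda$, the goal is to show that for every $s \ge 0$ the translated line with vertex $v + s(1,0)$ is again bitangent to $\Gamma$ with the same two tangency points (suitably translated), hence linearly equivalent to $\Lambda$ and contributing its vertex to the bitangent class of $\Lambda$.

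\medskip
\noindent\textbf{Key steps, in order.} First I would record the combinatorial possibilities for a tangency point lying in the relative interior of a horizontal end of $\Lambda$: inspecting~\autoref{fig:localTangencies}, such a point must be of type (3a), (3b), (3c), (6a) or (6b). For each of these, \autoref{lm:localMoves} shows that $(v,P)$ can be translated along the direction $(-1,0)$ — equivalently, in our normalization, along $-\rho = (1,0)$ — while keeping $P$ a tangency point; for types (3a)--(3c) this is exactly the locally-unbounded $1$-dimensional move along $(\pm1,0)$, and for types (6a), (6b) the sector $D + \RR_{\le 0}(1,1)$ of the local move contains the ray $\RR_{\ge 0}(1,0)$ since $(1,0) = \tfrac12(1,1) + \tfrac12(1,-1)$ lies between $(1,1)$ and the lower boundary direction $(1,-1)$ or $(3,-1)$. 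Second, I would observe that the move along $(1,0)$ of the vertex simply slides the tangency point farther along the \emph{same} horizontal end of $\Lambda$ (the end grows, nothing obstructs it on the unbounded side), so that both $P$ and $P'$, which by hypothesis both sit on that end, remain tangency points simultaneously for every $s \ge 0$; here there is no interference between the two local moves precisely because they happen on one shared end. Third, since the translated configuration $(v + s(1,0), P + s(1,0), P' + s(1,0))$ is a genuine bitangent line with two tangencies, it is linearly equivalent to $(\Lambda, P, P')$ by~\cite[Proposition 3.6]{bak.len.ea:16}, so its vertex lies in the bitangent class of $\Lambda$; as $s$ ranges over $[0,\infty)$ this gives $v + \RR_{\ge 0}(1,0) = v + \RR_{\ge 0}(-\rho)$ inside the class, and unboundedness is immediate. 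Finally I would undo the normalization by applying the relevant element of $\Sn{3}$ from~\autoref{tab:S3Action} to transport the conclusion to an arbitrary end direction $\rho$.

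\medskip
\noindent\textbf{Main obstacle.} The delicate point is verifying that sliding the vertex along $-\rho$ never forces either tangency point to collide with a vertex of $\Gamma$ in a way that \emph{destroys} the tangency type — i.e., that the local move really is unbounded in the $-\rho$ direction and not merely locally so. This is where the hypothesis that \emph{both} tangency points lie on the same end is essential: the relevant end of $\Lambda$ extends to infinity in the direction $\rho$, and moving $v$ by $-\rho$ only lengthens that end and pushes the tangency points outward along it, into the unbounded part of $\Gamma$'s end structure, so no new vertex of $\Gamma$ is ever encountered on that side. I would make this precise by noting that on the unbounded end of $\Gamma$ meeting $\Lambda$ there, the overlap region can only grow, so the argument from~\autoref{lm:localMoves} applies uniformly for all $s \ge 0$ rather than just for small $s$. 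Modulo this observation the remaining verifications are the routine case checks already encapsulated in~\autoref{lm:localMoves}.
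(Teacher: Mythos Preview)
Your overall strategy—both tangencies sit on one end, so lengthening that end by pushing $v$ in direction $-\rho$ should preserve them—is sound, but the execution has real errors and an unaddressed gap.

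First, the tangency-type classification is wrong: types (6a) and (6b) place the tangency at the \emph{vertex} of $\Lambda$, so they cannot occur in the interior of an end, while types (1) and (2) (transverse tangencies at interior points of an end of $\Lambda$) do occur here but are missing from your list. Second, the tangency points $P$ and $P'$ are points of the fixed curve $\Gamma$ and do not translate with $v$; the new triple is $(v+s(1,0), P, P')$, not $(v+s(1,0), P+s(1,0), P'+s(1,0))$. Third, and most importantly, you never explain why the \emph{other two} ends (diagonal and vertical) of the translated line do not acquire new intersections with $\Gamma$. The local moves of \autoref{lm:localMoves} control only the picture near $P$ and $P'$; they say nothing global. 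Your ``Main obstacle'' paragraph acknowledges this issue, but the proposed fix (``the overlap region can only grow'') is vague and, for types (1) and (2), inapplicable since there is no overlap region at all.

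The paper's proof bypasses all of this with a single topological observation: since the entire stable intersection $\Gamma \cap_{st} \Lambda$ (of total multiplicity four) is supported on the horizontal end, the connected curve $\Gamma$ cannot enter the open component of $\RR^2 \smallsetminus \Lambda$ bounded by the diagonal and vertical ends—to do so it would have to cross one of them. Translating $v$ by $s(1,0)$ slides the diagonal and vertical ends deeper into this $\Gamma$-free region, while the horizontal end, now longer, still contains the fixed points $P$ and $P'$ with the same local intersection configuration. No case analysis on tangency types, and no appeal to \autoref{lm:localMoves}, is needed.
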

\begin{proof}  We let $P$ and $P'$ be the tropical tangency points of $\Gamma$ and $\Lambda$. Without loss of generality, assume $\rho = (-1,0)$. Then, the connected component of $\mathbb{R}^2\setminus \Lambda$ bounded by the diagonal and vertical ends   does not intersect $\Gamma$. Thus, we can move the vertex of $\Lambda$ horizontally to the right arbitrarily far and each new tropical line is tangent to $\Gamma$ at $P$ and $P'$. All these new tropical lines have the same bitangent class as $\Lambda$. The class is unbounded in the direction $(1,0)$.
\end{proof}

Our next lemma identifies the unbounded directions for each bitangent shape  with the ends of a min-tropical line. We use it in~\autoref{sec:comb-class-bitang} to classify shapes with unbounded cells.

\begin{lemma}\label{lm:unboundedCellsAndDirs} Assume that a bitangent shape has an unbounded component $\sigma$. Then,  we conclude that the relative interior of $\sigma$  lies in one of three unbounded components of $\RR^2\smallsetminus \Gamma$, namely those dual to $(0,0)$, $(4,0)$ or $(0,4)$. Furthermore, each $\sigma$ is unbounded in a single direction: it is $(-1,-1)$ for $(0,0)$, $(1,0)$ for $(4,0)$ and $(0,1)$ for $(0,4)$.
\end{lemma}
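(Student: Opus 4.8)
The plan is to combine \autoref{lm:Bunbounded} with \autoref{lm:localMoves} and the smoothness/duality dictionary to pin down both the ambient cell of $\RR^2\smallsetminus\Gamma$ containing $\relint(\sigma)$ and the unique unbounded direction of $\sigma$. First I would observe that unboundedness is a purely local condition at infinity: if $\sigma$ is an unbounded cell, then there is a point $v\in\relint(\sigma)$ and a ray $v+\RR_{\geq 0}\rho\subset\sigma$, and by translating along this ray we may push $v$ arbitrarily far. Since $\Gamma$ is a finite polyhedral complex, for $v$ far enough along the ray the bitangent line $\Lambda$ with vertex $v$ meets $\Gamma$ only in the unbounded parts of both, so the two tangency points $P,P'$ must lie on ends of $\Lambda$. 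The key step is then to show these two tangencies lie on the \emph{same} end of $\Lambda$: by \autoref{lm:localMoves}, a tangency on an end of $\Lambda$ contributes an unbounded local move only in the single ray direction $-\rho$ (cases (1), (2), (4) all produce a local move whose only unbounded direction is $\RR_{\leq 0}(1,1)$ after the appropriate $\Sn{3}$-normalization, i.e. the negative of the end it sits on); combining the local moves of $P$ and $P'$, the bitangent class is unbounded only if both contributions point the same way, which forces $P$ and $P'$ onto the same end, and the unbounded direction to be $-\rho$ where $\rho$ is that common end direction. Conversely \autoref{lm:Bunbounded} guarantees such configurations do occur.

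Next I would identify which region of $\RR^2\smallsetminus\Gamma$ contains $\relint(\sigma)$. Once $P$ and $P'$ lie in the relative interior of the same end $\rho\in\{(-1,0),(0,-1),(1,1)\}$ of $\Lambda$, the argument in the proof of \autoref{lm:Bunbounded} shows the open connected component $C$ of $\RR^2\smallsetminus\Lambda$ bounded by the other two ends misses $\Gamma$ entirely. As the vertex $v$ of $\Lambda$ moves off to infinity in direction $-\rho$, the component $C$ sweeps out an entire unbounded connected region of $\RR^2\smallsetminus\Gamma$; since this sweeping region is adjacent to the end of $\Gamma$ in direction $-\rho$ and is bounded by the two rays of $\Lambda$ in the other two directions, it must be precisely one of the three unbounded components of $\RR^2\smallsetminus\Gamma$, namely the one dual to the corner vertex of the Newton triangle opposite to the edge dual to $-\rho$: $(0,0)$ when $\rho=(1,1)$ so $v\to\infty$ in direction $(-1,-1)$, $(4,0)$ when $\rho=(-1,0)$ so $v\to\infty$ in direction $(1,0)$, and $(0,4)$ when $\rho=(0,-1)$ so $v\to\infty$ in direction $(0,1)$. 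Using the $\Sn{3}$-action from \autoref{tab:S3Action} it suffices to treat a single case, say $\rho=(-1,0)$, and transport the conclusion.

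Finally, I would assemble these facts. The vertex $v$ eventually lies in the unbounded region $U$ dual to one of $(0,0),(4,0),(0,4)$, and since $\sigma$ is a cell of the \emph{shape} — which refines the class by the subdivision of $\RR^2$ induced by $\Gamma$ — the whole cell $\sigma$, or at least its unbounded tail, lies in $U$; in particular $\relint(\sigma)\subset U$. The unbounded direction is $-\rho$, which is $(-1,-1)$, $(1,0)$, $(0,1)$ respectively, matching the claimed list. Uniqueness of the unbounded direction follows because a two-dimensional cone of unbounded directions would force a two-dimensional local move at infinity, contradicting that for $v$ deep inside $U$ the only way to keep both $P$ and $P'$ tangent is to translate along $-\rho$ (moving transversally to $-\rho$ either bends an end of $\Lambda$ into $\Gamma$ or loses a tangency).

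\textbf{Anticipated main obstacle.} The delicate point is the case analysis certifying that \emph{only} tangencies on a common end of $\Lambda$ can produce an unbounded cell, and that this forces the direction to be exactly $-\rho$: one must rule out that a type (3), (5), or (6) tangency (whose local moves in \autoref{fig:localMoves} are one-dimensional or not pure-dimensional, and in the (5b)/(6a) cases contain unbounded two-dimensional pieces) could contribute a \emph{new} unbounded direction when combined with the other tangency's move. Here one uses crucially that those unbounded two-dimensional sectors in cases (5b) and (6a) are bounded on one side by an \emph{edge of $\Gamma$}, so translating the vertex far along them drags $P$ into a bounded edge and the configuration degenerates before reaching infinity — hence such moves cannot globally escape to infinity inside a fixed region of $\RR^2\smallsetminus\Gamma$. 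Making this "eventually leaves every bounded edge" argument precise, uniformly across all six local types and their $\Sn{3}$-images, is the technical heart of the proof.
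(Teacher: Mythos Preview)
Your approach is correct but takes a considerably more circuitous route than the paper's. The paper sidesteps the local-moves case analysis entirely with a direct multiplicity obstruction: since the shape is refined by $\Gamma$, the set $\relint(\sigma)$ already lies in a single connected component of $\RR^2\smallsetminus\Gamma$ (or in $\Gamma$ itself). If that component is dual to a non-corner lattice point $(i,j)$, then for any $v\in\relint(\sigma)$ with $|v|\gg 0$ the tropical line with vertex $v$ must meet an end of $\Gamma$ on the boundary of that component with intersection multiplicity one --- and a multiplicity-one point cannot be absorbed into a tangency, so $v$ cannot be a bitangent vertex. The same obstruction rules out $\relint(\sigma)\subset\Gamma$ (where $\sigma$ would lie on an end of $\Gamma$ and the line would hit a different end with multiplicity one), and, after invoking \autoref{lm:Bunbounded} to produce the claimed direction, it also kills any other candidate unbounded direction. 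This argument needs no knowledge of tangency types at all.

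Your route through \autoref{lm:localMoves} does work and has the virtue of explaining mechanistically \emph{why} both tangencies end up on a common end of $\Lambda$: every unbounded local move in \autoref{fig:localMoves} has its unique ray direction equal to the negative of one end of $\Lambda$, so intersecting two such moves is bounded unless the rays coincide. In fact your anticipated obstacle is milder than you suggest --- the unbounded sectors for (5b) and (6a) already carry the single ray $\RR_{\leq 0}(1,1)$, and types (3) and (5a) have only bounded global moves --- so the case analysis collapses quickly. Still, this is machinery the paper reserves for the later shape classification; here the multiplicity-one trick is both shorter and more robust. One small imprecision: your paragraph identifying the region via ``sweeping out $C$'' conflates the component of $\RR^2\smallsetminus\Lambda$ missing $\Gamma$ with the component of $\RR^2\smallsetminus\Gamma$ containing $v$; the conclusion is right, but the cleanest way to reach it is again the multiplicity-one obstruction rather than the sweep.
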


\begin{proof} By construction, $\sigma^{\circ}$ intersect at most a single connected component of  $\RR^2\smallsetminus \Gamma$. First, assume $\sigma^{\circ}\subset \Gamma$, so $\sigma^{\circ}$ lies in an end of $\Gamma$. A simple inspection show that for any $v\in \sigma^{\circ}$ with $|v|\gg 0$, the tropical line with vertex $v$ will have a multiplicity one intersection point with another end of $\Gamma$, which cannot happen since $v\in \sigma$.

  Similarly, if $\sigma^{\circ}\not\subset \Gamma$, we let $(i,j)$ be the vertex dual to the unbounded two-dimensional component of $\RR^2\smallsetminus \Gamma$ meeting $\sigma$ and pick $v\in \sigma^{\circ}$ with $|v|\gg 0$. 
  If $(i,j) \neq  (0,0), (4,0)$ or $(0,4)$, then the tropical line with vertex $v$ will have a multiplicity one intersection point with an end of $\Gamma$ in the boundary of this connected component. This leads to a contradiction.

  The second claim in the statement is a consequence of the previous claim. By $\Sn{3}$-symmetry we need only analyze one case, say when $(i,j)=(0,0)$. In this situation, for all $v$ in $\sigma^{\circ}$, the tangency points between $\Gamma$ and the line $\Lambda$ with vertex $v$ must occur along the diagonal end of $\Lambda$. \autoref{lm:Bunbounded} then implies that $(-1,-1)$ is the unique unbounded direction for $\sigma$. For any other direction $\rho'$, a line with vertex in $v + \RR_{\geq 0} \rho'$ will meet an end of $\Gamma$ at a point of multiplicity one.
  \end{proof}

\section{A combinatorial classification of bitangent classes}\label{sec:comb-class-bitang}

Our objective in this section is to  classify the bitangent classes of $\Gamma$  and their shapes. The results are purely combinatorial and rely heavily on those obtained in~\autoref{sec:bitang-class-shap}.
The classification is organized by the minimal number of connected components of $\Gamma\cap \Lambda$ and the properness of this intersection where $\Lambda$ is a given member of this class (see~\autoref{tab:classificationShapes}). 
{Propositions}~\ref{prop:mult4intersection}, \ref{prop:oneComponentIntersections} and~\ref{prop:shapesfortwoconnectedcomp} determine the shapes corresponding to each combination.
Our findings are summarized in~\autoref{fig:2dCells}. \autoref{fig:NP} contains the relevant information on the dual subdivision to $\Gamma$ responsible for each bitangent shape. To simplify the exposition, the information for each subdivision can be found in the proofs of the propositions and lemmas classifying the corresponding shapes.

\begin{figure}[htb]
\includegraphics[scale=0.3]{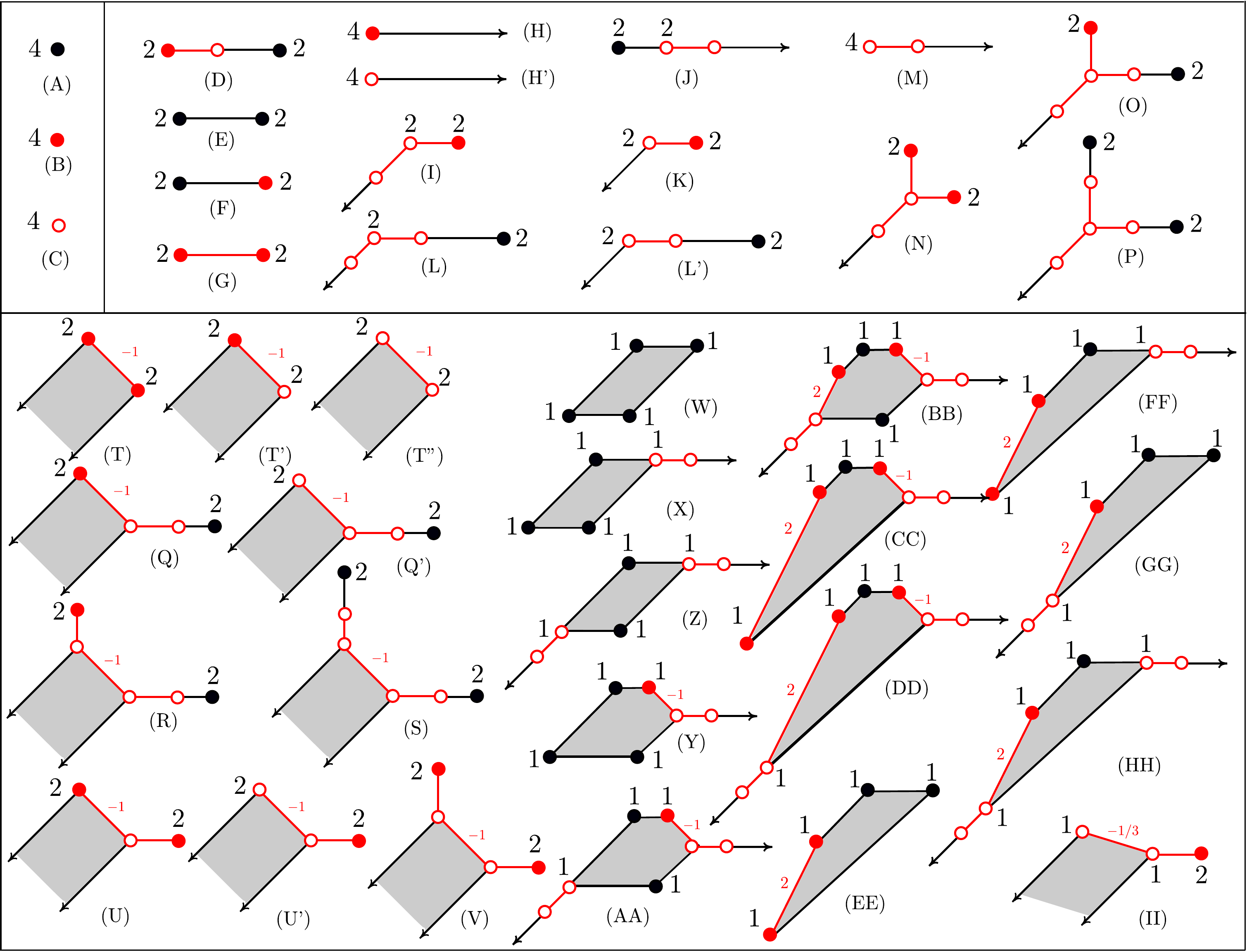}
\caption{Orbit representatives of all 41 shapes of bitangent classes to $\Gamma$, grouped by the dimension of its maximal cells. The numbers above each vertex indicate lifting multiplicities over the complex numbers (discussed in~\autoref{sec:lift-trop-bitang}), whereas the red ones above edges indicate slopes. The black cells of each bitangent class miss $\Gamma$, whereas the red ones lie on it. The unfilled dots are vertices of $\Gamma$.\label{fig:2dCells}}
\end{figure}

\begin{table}[htb]
  \begin{tabular}{|c|c|c|c|}
\hline     min. conn. comp.  &  proper & shapes \\
    \hline
    1 & yes & (II) \\\hline
    1 & no  &  (C), (D),(L),(L'),(O),(P),(Q),(Q'),(R),(S) \\
    \hline 2  & yes/no  &  rest \\
    \hline
      \end{tabular}
    \caption{Classification of the 41 bitangent shapes by the minimal number of components for $\Gamma \cap \Lambda$ and the type of intersection (proper or no) for some $\Lambda$ in the given bitangent class. The shape labels refer to those in~\autoref{fig:2dCells}.\label{tab:classificationShapes}}
  \end{table}

We start by discussing the shapes appearing in the first two rows of~\autoref{tab:classificationShapes}:

\begin{figure}
  \includegraphics[scale=0.28]{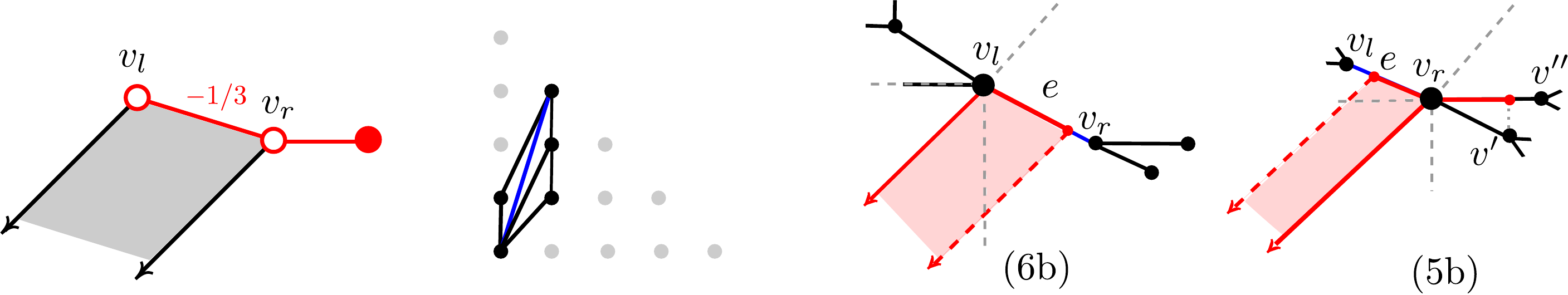}
  \caption{From left to right: shape (II), cells $v_l^{\vee}$, $v_r^{\vee}$ and $(v')^{\vee}$ in the dual subdivision to $\Gamma$ and local moves determining this bitangent shape. The edges $e$ and $e^{\vee}$ are marked in blue.\label{fig:ShapeII}}
\end{figure}

\begin{proposition}\label{prop:mult4intersection}  Let $B$ be a bitangent class of $\Gamma$ associated to a tropical line $\Lambda$ where $\Lambda\cap \Gamma$ has one connected component and the intersection is transverse. Then, $B$ lies in the $\Sn{3}$-orbit of shape (II). 
\end{proposition}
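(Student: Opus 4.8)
The plan is to combine the structure theorem on tangencies from \autoref{lm:transverseTangencies} with the description of local moves from \autoref{lm:localMoves} and then rule out all but one combinatorial possibility. Since $\Lambda\cap\Gamma$ is connected and the intersection is transverse, the bitangent must have both tangency points located at a common point $P$, and this point must be a vertex of $\Lambda$; moreover the total stable intersection multiplicity at $P$ is $4$. First I would invoke \autoref{lm:transverseTangencies}: the only transverse configurations at which a vertex $v$ of $\Lambda$ that is also a vertex of $\Gamma$ can be a tangency point are types (5a) and (5b). Type (5a) carries local multiplicity two, which is not enough to account for two tangencies with total multiplicity four at a single connected component; so we must be in type (5b), where $\Star_\Gamma(v)$ has ends of directions $(1,0)$, $(2,-1)$, $(-3,1)$.

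Next I would read off the local moves for a type (5b) tangency from \autoref{lm:localMoves} and its proof. There are exactly two: a bounded $1$-dimensional move along the horizontal edge of $\Gamma$ adjacent to $v$, and an unbounded $2$-dimensional move given by the Minkowski sum of $\RR_{\leq 0}(1,1)$ with a circular sector $D$ bounded by the slope-one line through $v$ and the slope $-1/3$ edge of $\Gamma$. I would then track what happens to the pair of tangency points as we traverse each of these moves. Moving along the horizontal edge, the vertex of $\Lambda$ separates from the vertex of $\Gamma$ and the multiplicity-four tangency point redistributes into the interior of the horizontal edge; by \autoref{rem:mult4} and the description preceding it (case (3b)), this is exactly a type (3b) tangency, so along this part of the class $\Lambda\cap\Gamma$ is still connected but now the two chips sit in the interior of a single horizontal edge. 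Moving along the $2$-dimensional sector, the vertex $v$ of $\Lambda$ slides off the vertex of $\Gamma$ into one of the two adjacent regions, and the intersection breaks into a type (5b)-type (3)-type (3) configuration or remains a single component; in all cases the relevant cells $v_l^\vee$, $v_r^\vee$, $(v')^\vee$ of the dual subdivision are forced by unimodularity and the prescribed edge directions, producing the combinatorial picture of shape (II) in \autoref{fig:ShapeII}. Concretely, I would identify the dual cells: the vertex $v$ of $\Gamma$ with star $(1,0),(2,-1),(-3,1)$ is dual to a triangle whose edges are orthogonal to these directions, and the two ends of $\Lambda$ bounding the sector force the neighboring triangles, pinning down the Newton subdivision locally and hence the shape.

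I expect the main obstacle to be the bookkeeping needed to verify that \emph{no other} shape can occur: one must confirm that a bitangent class containing a type (5b) multiplicity-four vertex, when one exhausts the local moves in all directions, never reaches a configuration giving a genuinely different shape (e.g. a two-component intersection of the kind handled in \autoref{prop:shapesfortwoconnectedcomp}). The key point is that the connectedness-and-transversality hypothesis at \emph{the particular member} $\Lambda$ only pins down that member, while the shape is an invariant of the whole class; so I would argue that every bitangent class meeting the hypothesis must contain such a (5b) vertex (using that (5a) is excluded by the multiplicity count and that all other transverse single-component configurations are ruled out by \autoref{lm:transverseTangencies}), and then that the $\Sn{3}$-orbit of the resulting combinatorial data is exactly the orbit of shape (II). Tracking the dual Newton subdivision throughout — especially checking that the two adjacent triangles glue consistently under unimodularity — is the routine-but-delicate part; the conceptual content is entirely contained in Lemmas~\ref{lm:transverseTangencies} and~\ref{lm:localMoves}.
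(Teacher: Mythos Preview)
The overall strategy is sound, but there is a genuine gap in the first step. You assert that the unique intersection point $P$ ``must be a vertex of $\Lambda$'' and then invoke \autoref{lm:transverseTangencies}, which further requires $P$ to be a vertex of $\Gamma$. Neither claim is justified. A transverse single-point intersection of multiplicity four can equally well occur with $P$ in the relative interior of the diagonal end of $\Lambda$ (tangency types (1) or (2) in \autoref{fig:localTangencies}), or with $P$ the vertex of $\Lambda$ lying in the relative interior of an edge of $\Gamma$ (type (4)); in each of these cases the relevant edge of $\Gamma$ has direction $(-3,1)$ up to $\Sn{3}$-symmetry, as recorded in \autoref{rem:mult4}, and tropical B\'ezout guarantees that a multiplicity-four point exhausts the intersection. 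So all of these configurations satisfy the hypothesis, and your argument covers only one of several possible starting members $\Lambda$.

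The paper sidesteps this by not anchoring the analysis at the given member $\Lambda$ at all. It uses \autoref{rem:mult4} to deduce directly that $\Gamma$ contains a bounded edge $e$ of direction $(3,-1)$, reads off from smoothness and degree that $e^\vee$ joins $(0,0)$ to $(1,3)$ --- which pins down three dual triangles --- and then traces out the entire class $B$ starting from the left vertex $v_l$ of $e$ (a type (6b) configuration), across $e$ to $v_r$ (type (5b)), and along the adjacent horizontal edge. Your approach can be repaired by first arguing that $B$ necessarily contains the (5b) vertex $v_r$, e.g.\ by moving the given $\Lambda$ in direction $(1,1)$ until its vertex lands on $e$ and then sliding along $e$; but that step is missing. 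Separately, your description of the one-dimensional horizontal move as producing a single type (3b) tangency is inaccurate: once the vertex of $\Lambda$ leaves $v_r$ along the horizontal edge, the vertical end of $\Lambda$ meets the $(2,-1)$ edge of $\Gamma$ in a second connected component, and it is the far endpoint $v'$ of that edge --- not $v''$ --- that bounds the move.
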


\begin{proof} Since the intersection is transverse, there is a unique tangency point of multiplicity four. By~\autoref{rem:mult4} combined with the $\Sn{3}$-symmetry, such tangency can only arise if $\Gamma$ has a bounded edge $e$ with direction $(3,-1)$.  We let $v_l$ and $v_r$ be the left and right endpoints of $e$ as in~\autoref{fig:ShapeII} . The smoothness and the degree of $\Gamma$ determine its dual subdivision locally around $e^{\vee}$: its  endpoints are $(0,0)$ and $(1,3)$. In turn, this fixes three triangles in the subdivision, as seen in the second picture from the left in the figure, those dual to the vertices $v_l$, $v_{r}$ and a vertex $v'$ adjacent to $v_r$.
  This subdivision completely determines $B$ by analyzing the local moves seen in the same figure, as we now explain.
  
  We start by placing the vertex $v$ of $\Lambda$ at $v_l$. This gives a multiplicity four tangency of type (6b). By~\autoref{lm:localMoves}, we can move $v$  along $e$ until we reach $v_r$, as we see in the third picture in~\autoref{fig:ShapeII}. The vertices of all these lines will be multiplicity four tangency points. The closure in $\RR^2$ of the local moves along the relative interior of $e$ described by the lemma yield the unbounded set $e + \RR_{\leq 0}(1,1)$. 

  The vertex $v_r$ corresponds to a vertex-on-vertex transverse local tangency of multiplicity 4, i.e., of type (5b). The possible local moves away from $v_r$ can be seen in the figure. Notice that the one-dimensional local move along the horizontal edge connecting $v_r$ with a vertex $v''$ of $\Gamma$ is bounded by the location of the vertex $v'$ connected to $v_r$ by the edge of $\Gamma$ with direction $(2,-1)$. Indeed, the vertex $v''$ appears to the right of $v'$ by the information we have already gathered on the dual subdivision. Since there are no further moves to make, we conclude that $B$ corresponds to shape (II).
\end{proof}

\begin{proposition}\label{prop:oneComponentIntersections} Let $B$ be a bitangent class of $\Gamma$ associated to a tropical line $\Lambda$ where $\Lambda\cap \Gamma$ has one connected component which is  non-transverse.  Then, $B$ lies in the $\Sn{3}$-orbit of a shape  labeled by (C), (D), (L), (L'), (O), (P), (Q), (Q'), (R) or (S) in~\autoref{fig:2dCells}.
\end{proposition}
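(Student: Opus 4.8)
The strategy mirrors that of~\autoref{prop:mult4intersection}: enumerate the possible local tangency configurations along a connected, non-transverse intersection $\Lambda \cap \Gamma$, and for each one run the local-move machinery of~\autoref{lm:localMoves} to reconstruct the entire bitangent class $B$, matching the resulting polyhedral complex against the list in~\autoref{fig:2dCells}. Since the intersection is connected and non-transverse, \autoref{def:tropicalBitangent}(ii) tells us the total stable intersection multiplicity is four, and by~\autoref{fig:localTangencies} a non-transverse local tangency at $P$ must be one of the types (3a), (3b), (3c), (6a) or (6b). The coarse case split is by the number of tangency points and their types: either a single tangency point of multiplicity four (types (3b), (6a), (6b) with the multiplicity-four scenario of~\autoref{rem:mult4}), or two tangency points of multiplicity two each, at least one of which is non-transverse of type (3a), (3b) or (3c).

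\textbf{Key steps.} First I would handle the single-point-of-multiplicity-four subcase. By~\autoref{rem:mult4}, for types (3b) and (6b) the configuration of $\Star_\Gamma(P)$ (and hence the surrounding Newton subdivision) is pinned down up to $\Sn{3}$-symmetry; for (6a) one checks the multiplicity-four option is excluded (or leads into an already-covered shape). In each surviving case I start the vertex $v$ of $\Lambda$ at the distinguished position forced by~\autoref{rem:mult4}, read off the local Newton subdivision from smoothness and degree exactly as in the proof of~\autoref{prop:mult4intersection}, and then apply~\autoref{lm:localMoves} repeatedly: each bounded move terminates at a vertex of $\Gamma$ whose position is constrained by the portion of the dual subdivision already determined, and each unbounded move contributes a Minkowski summand $\RR_{\leq 0}(1,1)$ (or the relevant $(0,1)$/$(1,0)$ ray after symmetry). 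Second, for the two-point subcase I enumerate, up to $\Sn{3}$, the admissible unordered pairs $\{T,T'\}$ of local types with $T$ non-transverse — using~\autoref{tab:edgeOnEdgeMult2}, \autoref{lm:transverseTangencies} and the direction data in~\autoref{fig:localTangencies} to rule out incompatible edge-direction combinations — and for each pair combine the two independent families of local moves from~\autoref{lm:localMoves} by the translate-and-overlap argument used to prove~\autoref{cor:polyhedralStructure}. \autoref{lm:Bunbounded} and~\autoref{lm:unboundedCellsAndDirs} then govern which of the resulting classes are unbounded and in which direction, cutting down the bookkeeping. Finally I colour the cells according to whether they meet $\Gamma$ (\autoref{def:bitangent class}), compare with~\autoref{fig:2dCells}, and record the dual cells in the relevant figure.

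\textbf{Main obstacle.} The real work — and the most error-prone part — is the exhaustive case analysis: ensuring that every admissible combination of tangency types is considered, that no two distinct combinations are conflated, and that each produces exactly one shape on the list (C), (D), (L), (L'), (O), (P), (Q), (Q'), (R), (S) and no shape off it. In particular one must carefully track how the still-undetermined vertices of $\Gamma$ encountered at the ends of bounded local moves are ordered relative to the ends of $\Lambda$ (the relation $\prec_d$ of~\autoref{def:relativeOrder} is the right tool here), since these orderings are what distinguish, say, (L) from (L') or (Q) from (Q'). The proof will therefore proceed shape by shape, each time: (a) specify the forced portion of the Newton subdivision, (b) place $v$ at an extremal position, (c) list the local moves and their termination data, and (d) take closures and identify the polyhedral complex. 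Routine verifications of edge slopes (the red labels in~\autoref{fig:2dCells}) are left to the reader.
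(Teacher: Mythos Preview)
Your strategy matches the paper's: enumerate configurations, apply \autoref{lm:localMoves}, and use the $\prec_d$ order to separate shapes. The difference is in the opening reduction. The paper observes immediately that the connectedness hypothesis forces the vertex $v$ of $\Lambda$ to be a vertex of $\Gamma$ with $\Lambda\cap\Gamma$ a bounded union of edges adjacent to $v$ --- i.e., a type-(3b) configuration --- and then splits only into $\Star_\Gamma(v)=\Star_\Lambda(v)$ (shape (C)) versus a single shared bounded edge. Your broader split over types (3a), (3c), (6a), (6b) introduces cases that are either vacuous or redundant: for (3a)/(3c) the vertex $v$ lies off $\Gamma$, so the non-transverse overlap contributes only multiplicity $2$ (cf.\ \autoref{fig:localTangencies}) and the remaining $2$ must come from a second, necessarily disconnected, component; (6a) has no multiplicity-four option; and running the local moves from a (6b) configuration rebuilds shape (II), already handled by \autoref{prop:mult4intersection} and not on the target list here.

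Once the reduction to a single shared bounded edge $e$ is made, your ``main obstacle'' paragraph is exactly right and is precisely what the paper does: after fixing (via \autoref{rem:mult4} and $\Sn{3}$-symmetry) the dual triangle to $v$ as $\{(1,0),(1,1),(2,2)\}$, the nine remaining shapes (D), (L), (L'), (O)--(S) are distinguished by the $\prec_d$-order of the vertices $v_0, v'', v_0'$ around the bounded region of $\RR^2\smallsetminus\Gamma$ dual to $(1,1)$, together with the finitely many possibilities for their dual triangles. The paper organizes this as a nested comparison ($v_0$ vs.\ $v''$; then whether $v_0'=v''$; then $v_0'$ vs.\ $v''$ and the value of the parameter $i$) rather than shape-by-shape as you propose, but either bookkeeping works.
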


\begin{proof} By construction, the vertex $v$ of $\Lambda$ is also a vertex of $\Gamma$ and the intersection $\Lambda\cap \Gamma$ is bounded (we have a type (3b) tangency). If $\Lambda \cap \Gamma$ consists of three edges, then  $\Star_{\Gamma}(v) = \Star_{\Lambda}(v)$, as see in~\autoref{fig:localTangencies}~
  {(3b)}. The line cannot be moved while preserving the bitangency condition, so  $B= \{v\}$ and its shape is (C).  

    Otherwise, $\Lambda \cap \Gamma$ is a bounded edge $e$ of $\Gamma$,  and the stable intersection equals the two endpoints of $e$. By exploiting the $\Sn{3}$-symmetry, we may assume $e$ is horizontal and its leftmost vertex $v_0$ has local multiplicity one. Furthermore, by~\autoref{rem:mult4} we may further assume that the triangle  dual to $v$ has vertices $(1,0)$, $(1,1)$ and $(2,2)$. In turn, the edges $e'$ and $e''$ of $\Gamma$ adjacent to $v$ must have directions $(2,-1)$ and $(-1,1)$. We let  $v'$ and $v''$ be their second endpoints, respectively.  By construction, the dual triangle to $v_0$ has vertices $(1,0), (1,1)$ and $(0,i)$ for $i=0,\ldots,4$. This information can be seen in~\autoref{fig:ShapeDLetc}.

\begin{figure}[htb]
  \includegraphics[scale=0.25]{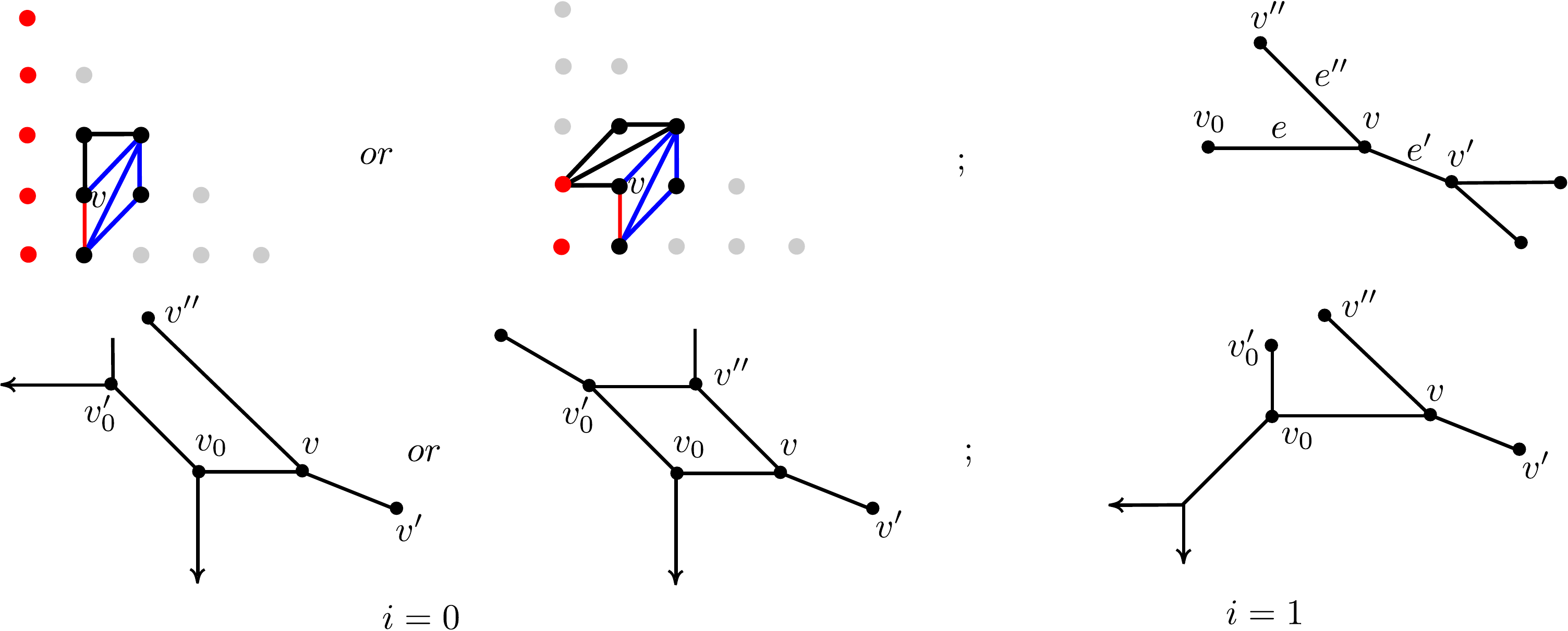}
  \caption{Dual subdivisions and partial information on the tropical curve $\Gamma$ from~\autoref{prop:oneComponentIntersections}. The red dots on the top left figures are the possible points $(0,i)$ that can be chosen to form the triangle $(v_0)^{\vee}$ with the red edge joining $(1,0)$ and $(1,1)$. The (blue) triangle dual to $v$ is labeled. \label{fig:ShapeDLetc}}
\end{figure}

To determine $B$, we start by analyzing the local moves around the vertex $v$ and how can we continue moving the vertex to generate $B$. The relevant information is recorded in~\autoref{fig:localMovesDLetc}. In particular, there are only two possibilities for the triangle dual to $v''$ as shown in~\autoref{fig:ShapeDLetc}.

The edge $e$ restricts our moves around $v$ to the horizontal direction.
   If we move  $v$ in the direction $(1,0)$, the first tangency point of the new line remains at the midpoint of $e$, but the second one lies in the edge $e'$ with direction $(2,-1)$. Our movement stops when the vertical end of the bitangent line meets the vertex $v'$ of $e'$. 

   In turn, if we move $v$ in the direction $(-1,0)$ the tangency points lie in $e$ and $e''$. Notice that this second tangency point belongs to the diagonal end of $\Lambda$. To decide when we stop and how we can continue moving beyond this point, we use the partial other $\prec_d$ from~\autoref{def:relativeOrder} to compare $v_0$ and $v''$. In turn, this will impose certain restrictions on the dual subdivision to $\Gamma$. Each case is depicted in~\autoref{fig:localMovesDLetc}.

   First, assume $v_0\prec_d v''$. By convexity of the connected component of $\RR^2\smallsetminus \Gamma$ dual to $(1,1)$, the triangle dual to $v''$ has vertices $(1,1)$, $(1,2)$ and $(2,2)$ (see the top-left of~\autoref{fig:ShapeDLetc}.) All values of $i$ are possible. In this situation, the movement of $v$ in the direction $(-1,0)$ stops at a point in the relative interior of $e$ (seen in the top-left picture in~\autoref{fig:localMovesDLetc}.) This corresponds to a new bitangent line where  $v''$ is a tangency point of type (2). We cannot move past this point. This yields shape (D).

   On the contrary, assume $v''\preceq_d v_0$. In this situation, we can move from from $v$ along $e$ until we reach $v_0$. If $v''=_d v_0$ we can move beyond $v_0$ along a ray in the direction $(-1,-1)$. This is due to the fact that both edges $e$ and $e''$ have the same lattice length forcing $i=0$ or $1$ by the convexity of the component of $\RR^2\smallsetminus \Gamma$ dual to $(1,1)$. This is seen in the two pictures in the top-right of~\autoref{fig:localMovesDLetc}. The resulting shapes are (L) and (L').

   For the remaining cases, we assume $v''\prec_d v_0$. We let $v_0'$ be the other vertex of $\Gamma$ joined to $v_0$ by a bounded edge whose outer-direction has a positive $y$-coordinate, as in the bottom-right of~\autoref{fig:localMovesDLetc}. If $v_0'=v''$, then $i=1$ and the triangle dual to $v''$ has vertices $(1,1)$, $(0,1)$ and $(2,2)$. Furthermore, we can move the bitangent line with vertex $v_0$ in two directions: an unbounded movement in the direction $(-1,-1)$ and a bounded one along the vertical edge joining $v_0$ with $v''$. Once we reach $v''$, the new tropical bitangent has two tangency points: $v''$ and the midpoint $P$ between $v_0$ and $v''$. Next, we can continue moving in the same vertical direction: $P$ will remain a tangency point, and the second tangency point will be traveling along the remaining edge adjacent to $v''$, which we call $e'''$ (see the bottom-left of~\autoref{fig:localMovesDLetc}). The movement stops once the second endpoint of $e'''$ (called $v'''$) lies in the horizontal end of the new line. We conclude from this analysis that B has shape (P).

\begin{figure}[htb]
  \includegraphics[scale=0.25]{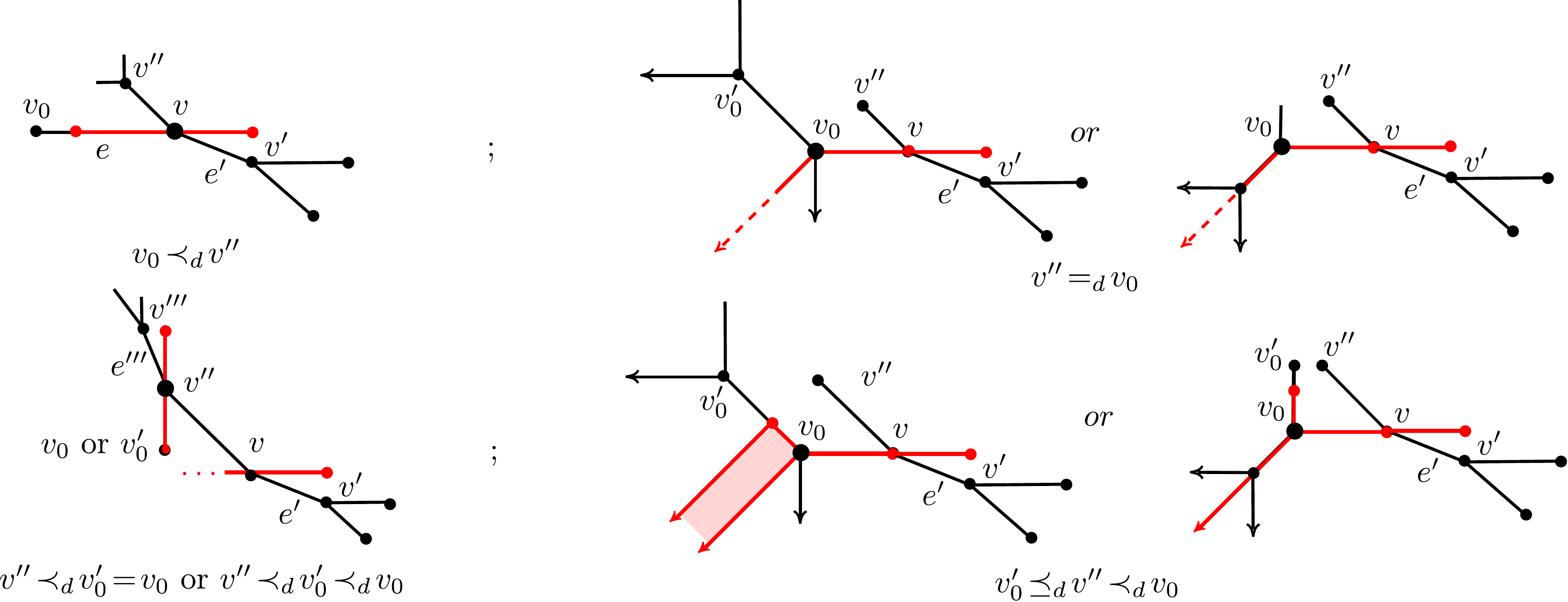}
  \caption{Relevant local moves for vertices of bitangent lines in a bitangent class from~\autoref{prop:oneComponentIntersections} with a shape other than (C) depending on the relative position of $v'', v_0'$ and $v_0$ with respect to $\preceq_d$. The notation for the vertices is that of~\autoref{fig:ShapeDLetc}.\label{fig:localMovesDLetc}}
\end{figure}

Finally, we suppose $v''\prec_d v_0$ and $v_0'\neq v''$. This forces $i=0$ or $1$, as seen in the bottom row of~\autoref{fig:ShapeDLetc}. We claim that B can have shapes (O), (Q), (Q'), (R) or (S), depending on the value of $i$, the dual triangle $(v'')^{\vee}$ and the relative order between $v''$ and $v_0'$ with respect to $\preceq_d$. There are three possible scenarios.

\begin{description}
\item[{Case 1}] $v_0'\preceq_d v''$ and $i=1$. In this situation, the local movement at $v_0$ agrees with that of shape (P), as seen on the bottom-right of~\autoref{fig:localMovesDLetc}. Indeed, we move along the ray with direction $(-1,-1)$ and upwards along the vertical edge joining $v_0$ and $v_0'$ until the diagonal end of the tropical line contains $v''$. Furthermore, since $v_0'\neq v''$, we get $\ell(e'')<2\ell(e)$, so $v_0'\prec_d v''$ and $(v'')^{\vee}$ is the triangle with vertices $(1,1)$, $(1,2)$ and $(2,2)$. Note that $v''$ is a type (2) tangency point of this new line, so the movement stops at a point in the relative interior of the edge $\overline{v_0v_0'}$. This yields shape (O).

\item [{Case 2}]  $v_0'\preceq_d v''$ and $i=0$. It follows that $(v'')^{\vee}$ is the triangle with vertices $(1,1)$, $(1,2)$ and $(2,2)$. The local move at $v_0$ is seen at the bottom-center  picture in~\autoref{fig:localMovesDLetc}: we move along the edge joining $v_0$ and $v_0'$ allowing for an unbounded movement in the direction $(-1,-1)$ from any point in this segment. The movement stops once the new line contains $v''$ in its diagonal end. If $v_0'\prec_d v''$, the stopping point in the interior of the edge $\overline{v_0v_0'}$, and $B$ has shape (Q). If $v_0'=_d v''$, the movement stops at $v_0'$, and $B$ has shape (Q').

\item [{Case 3}] $v'' \prec_d v_0'$. This forces $i=0$ and $(v_0')^{\vee}$ to be the triangle with vertices $(0,0)$, $(1,1)$ and $(0,1)$. The vertex $v_0$ lies in $B$ and  can be moved along the edge $\overline{v_0v_0'}$ and along rays with direction $(-1,-1)$ at each point in this segment until we reach $v_0'$ (as in the bottom-center of~\autoref{fig:localMovesDLetc}). From $v_0'$, we can further move along the vertical edge containing $v_0'$ until the diagonal end of the line meets $v''$.

  If $v_0'$ is not adjacent to $v''$ in $\Gamma$, then $(v'')^{\vee}$ is the triangle with vertices $(1,1)$, $(1,2)$ and $(2,2)$ and the movement stops at the interior point of the vertical edge of $\Gamma$ containing $v_0'$. Thus, $B$ has shape (R).

  On the contrary, if $v_0'$ and $v''$ are adjacent in $\Gamma$, then $(v'')^{\vee}$ is the triangle with vertices $(1,1)$, $(2,2)$ and $(0,1)$. The movement stops at $v''$ and continues as for shape (P), as we see on the bottom-left of \autoref{fig:localMovesDLetc}. Thus, $B$ has shape (S).\qedhere
\end{description}
\end{proof}

\begin{remark}\label{rem:chipFiringsOneIntersection}
  A key argument in the proof of~\autoref{prop:oneComponentIntersections}  involves the comparison between the relative $\preceq_d$-order  among various vertices in the bounded connected component of $\RR^2\smallsetminus \Gamma$ dual to $(1,1)$. In turn, this yields an order between the lattice lengths of certain bounded edges of $\Gamma$ and partial knowledge of the dual subdivision to $\Gamma$. Following the notation of~\autoref{fig:PossibleSkeletons}, we conclude that shapes (P) and (S) can only arise when the skeleton of $\Gamma$ is the graph (212) and we have $\ell_6 = \len(e'')$, $\len(e) + \len(e'')\leq \ell_1=2 \len(e'')$.

  For the remaining shapes listed in~\autoref{prop:oneComponentIntersections} except (C), the skeleton of $\Gamma$ correspond to the graph (111) in~\autoref{fig:PossibleSkeletons} and $\ell_6=\len(e'')$. The partial information on the dual subdivisions to $\Gamma$ provided by each shape imposes different restrictions of the lengths $\min\{\ell_4,\ell_5\}$ and $\ell_1$. For example, to obtain shapes (L) and (L') we must have $\ell_6=\len(e'')$, $\ell_1 = 3\len(e'')$ and $\min\{\ell_4,\ell_5\}\leq \len(e)$. For (O), we have $\min\{\ell_4,\ell_5\}\leq \len(e)-\len(e'')$, $\ell_6=\len(e'')$ and $\ell_1 =\len(e'')+\len(e)$. Similar restrictions arise for  shapes  (D), (Q), (Q') and (R). 

  For these bitangent classes described here, the associated divisor $D$ places one chip on each side of the central loop. The tropical semimodule $R(D)$ associated to the linear system $|D|$ is a line segment~\cite{haa.mus.yu:12}. We can view it in the bounded cells of $B$ that are either outside $\Gamma$ or on the central loop of the graph.

  Finally, if $B$ has shape (C), then the skeleton of $\Gamma$ is the graph (000) from~\autoref{fig:PossibleSkeletons}, with $\ell_1\leq \ell_2\leq \ell_3$. Furthermore, the associated divisor $D$  places one chip $(\ell_2-\ell_1)/2$ and $(\ell_3-\ell_1)/2$ units away from the vertex of the two largest edges, and $R(D)$ is a single vertex.
\end{remark}

To conclude our classification of bitangent classes, we focus on the last row of~\autoref{tab:classificationShapes}. All members of such classes have two distinct tangency points. In order to simplify the exposition, we break symmetry by considering the tangencies to be either in the diagonal or the horizontal end of $\Lambda$. This non-uniform convention will allow us to simplify the lifting obstructions in~\autoref{sec:lift-trop-bitang}.

\begin{proposition}\label{prop:shapesfortwoconnectedcomp}
Let $B$ be a bitangent class of $\Gamma$ where every member  intersects $\Gamma$ in two connected components.
Then, up to $\Sn{3}$-symmetry, $B$ has one of the following shapes: (A), (B), (E), (F), (G), (H), (H'), (I), (J), (K), (M), (N), (T), (T'), (T''), (U), (U'), (V), (W), (X), (Y), (Z) or (AA) through (HH), depicted in~\autoref{fig:2dCells}.
\end{proposition}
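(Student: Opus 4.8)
The plan is to carry out the same kind of exhaustive local-to-global case analysis used in \autoref{prop:mult4intersection} and \autoref{prop:oneComponentIntersections}, but now for bitangent lines $\Lambda$ whose intersection with $\Gamma$ has two connected components and hence two distinct tangency points $P,P'$. The first step is to record, using \autoref{lm:localMoves}, which local-move cones are available at each tangency point separately: a transverse interior-of-edge tangency (type (1)) contributes a $2$-dimensional unbounded cone of the form $D+\RR_{\leq 0}(1,1)$; types (2) and (4) contribute a $2$-dimensional half-disc cone; type (3) (the non-transverse end overlap) contributes a $1$-dimensional locally unbounded move along the relevant end direction; and type (5a) contributes a bounded $1$-dimensional move along a bounded edge of $\Gamma$, which transitions to a type (3) tangency. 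Since the two tangencies can be combined essentially independently (the global move set is the intersection of the two local move sets as one translates the vertex $v$), the shape of $B$ is governed by (a) which pair of local tangency types occurs, (b) on which two ends of $\Lambda$ the points $P,P'$ lie (we break symmetry by putting points on the diagonal or horizontal end), and (c) the relative order of the involved vertices of $\Gamma$ with respect to the weights $(1,-1)$, $(1,0)$, $(0,1)$ via \autoref{def:relativeOrder}, together with the induced partial knowledge of the Newton subdivision.

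Concretely, I would organize the proof by the combination of local tangency types at $P$ and $P'$ (restricted by \autoref{thm:combinationsOfTangencies} / \autoref{lm:transverseTangencies} and \autoref{rem:mult4} to the admissible list (1),(2),(3a),(3b),(3c),(4),(5a),(6a) for multiplicity-two tangencies, plus (5b),(6b) for the multiplicity-four ones), and within each combination by the end-placement and the $\preceq_d$-type comparisons. For each such case, I would start the vertex $v$ of $\Lambda$ at an extreme admissible position (typically at a vertex of $\Gamma$ where one of the tangencies is vertex-on-edge or vertex-on-vertex), use \autoref{lm:localMoves} to list the allowed moves, push $v$ along each move until a new vertex of $\Gamma$ enters an end of $\Lambda$ (detected by the relative-order comparisons), and then re-apply \autoref{lm:localMoves} at the new configuration; smoothness and the degree-four condition pin down the surrounding triangles of the Newton subdivision at each stop, exactly as in \autoref{fig:ShapeDLetc} and \autoref{fig:localMovesDLetc}. \autoref{cor:polyhedralStructure} guarantees the resulting object is a connected polyhedral complex, and \autoref{lm:Bunbounded} and \autoref{lm:unboundedCellsAndDirs} control the unbounded rays, forcing them to point in a single direction among $(-1,-1),(1,0),(0,1)$ and to live in the unbounded cells of $\RR^2\smallsetminus\Gamma$ dual to $(0,0),(4,0),(0,4)$. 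Matching each terminal configuration to the catalogue in \autoref{fig:2dCells} produces exactly the listed shapes (A), (B), (E)--(N) (with primes), (T)--(Z), (AA)--(HH), with the remaining unlisted labels having already been accounted for in \autoref{prop:mult4intersection} and \autoref{prop:oneComponentIntersections}.

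The main obstacle will be completeness and non-redundancy of the bookkeeping: one must verify that every admissible pair of local tangency types together with every admissible end-placement and every $\preceq_d$-comparison actually arises from a genuine smooth tropical plane quartic (so that no spurious shapes are introduced and none are missed), and that different starting configurations of the same class are not double-counted as different shapes. This requires a disciplined traversal of the tree of cases — essentially a finite but large table — and repeated appeals to convexity of the bounded and unbounded regions of $\RR^2\smallsetminus\Gamma$ (as in \autoref{rem:chipFiringsOneIntersection}) to discard combinatorially impossible Newton subdivisions. A secondary subtlety is the handling of the non-pure-dimensional local moves at type (5b) and (6a) tangencies, where a bounded $1$-dimensional segment protrudes from a $2$-dimensional cone; these must be tracked carefully so that the final polyhedral complex is described correctly. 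I expect the proof to be long but entirely mechanical once the combination-of-tangencies framework of \autoref{thm:combinationsOfTangencies} is in place, with the figures \autoref{fig:2dCells} and \autoref{fig:NP} doing much of the communicative work.
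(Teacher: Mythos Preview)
Your overall strategy is sound, but your organizing principle differs from the paper's in a way that creates exactly the bookkeeping hazard you yourself flag. The paper does \emph{not} partition the analysis by pairs of local tangency types; instead it partitions by the \emph{dimension} of the top-dimensional cells of $B$ and by whether those cells are \emph{bounded}, treating the five subcases (two-dimensional unbounded, two-dimensional bounded, one-dimensional unbounded, one-dimensional bounded, zero-dimensional) in five separate lemmas. This is cleaner because dimension and boundedness are global invariants of the class $B$, whereas the tangency-type pair depends on which member $\Lambda\in B$ you happen to be looking at and changes as you move the vertex. Once a dimension/boundedness stratum is fixed, the paper picks a point in the relative interior of a top cell --- where the tangency types are forced (e.g.\ both type (1) for a two-cell) --- and only then runs the local-move analysis outward. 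This sidesteps the ``different starting configurations of the same class'' redundancy you worry about.

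Two smaller issues: first, your appeal to \autoref{thm:combinationsOfTangencies} is a forward reference and mildly circular, since that theorem is established in part \emph{from} the shape classification you are trying to prove; you only need the raw list of local tangency types from \autoref{fig:localTangencies} and \autoref{tab:edgeOnEdgeMult2}, not the later theorem. Second, types (5b) and (6b) cannot occur here at all: if $\Lambda\cap\Gamma$ has two connected components then each carries stable multiplicity two, so every local tangency has multiplicity two and the multiplicity-four types are excluded by hypothesis.
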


\begin{proof} We prove the statement by a case-by-case analysis, based on the dimension and boundedness of the top-dimensional cells of $B$. To simplify the exposition, we treat each case in five separate lemmas below. The classes with two-dimensional top-cells are discussed in~
  {Lemmas}~\ref{lm:twoCellUnbounded} and~\ref{lm:twoCellBounded}. 
  {Lemmas}~\ref{lm:oneCellUnbounded} and~\ref{lm:oneCellBounded} concern classes with one-dimensional top-cells. Finally, zero-dimensional classes are the subject of~\autoref{lm:zeroCell}.
\end{proof}

The next two lemmas address the possible shapes of two-dimensional bitangent classes:

\begin{lemma}\label{lm:twoCellUnbounded}
Let $B$ be a two-dimensional  bitangent class of $\Gamma$ where every member intersects $\Gamma$ in two connected components. Assume $B$ has an unbounded top-dimensional cell.
Then, up to $\Sn{3}$-symmetry, $B$ has one of the following six shapes:  (T), (T'), (T''), (U), (U') or (V), as depicted in~\autoref{fig:2dCells}.
\end{lemma}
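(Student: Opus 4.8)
The strategy is to combine the local-move analysis of \autoref{lm:localMoves} with the unboundedness constraint from \autoref{lm:unboundedCellsAndDirs}, and then to enumerate the finitely many combinatorial possibilities. Since $B$ is two-dimensional, some tangency point $P$ of a member $\Lambda$ must admit a two-dimensional local move; by inspection of \autoref{fig:localMoves}, the two-dimensional local moves arise precisely from local tangency types (1), (2)/(4), the mult.\ four case of (5b), and (6a)/(6b). Since $B$ is assumed to have an \emph{unbounded} top-dimensional cell $\sigma$, \autoref{lm:unboundedCellsAndDirs} applies: $\sigma^{\circ}$ lies in an unbounded region of $\RR^2\smallsetminus\Gamma$ dual to one of $(0,0)$, $(4,0)$, $(0,4)$, and $\sigma$ has a unique unbounded direction, which after applying an element of $\Sn{3}$ we may take to be $(-1,-1)$ with $\sigma$ meeting the region dual to $(0,0)$. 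In this normalization, for $v\in\sigma^{\circ}$ with $|v|\gg 0$, both tangency points of $\Lambda$ must lie on the diagonal end of $\Lambda$ (this is the content of the last paragraph of the proof of \autoref{lm:unboundedCellsAndDirs}).

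\textbf{Key steps.} First, I would pin down the local tangency type responsible for the two-dimensional unbounded cell $\sigma$. Because both tangencies sit on the diagonal end of $\Lambda$ when $v$ is far out, and because two connected components are required, the relevant configuration is: one tangency is a transverse type (1) tangency in the relative interior of an edge $e$ of $\Gamma$ of one of the admissible diagonal directions listed in \autoref{tab:edgeOnEdgeMult2} (namely $(-1,1)$, $(1,3)$, or $(3,1)$), and the second tangency lies in the same diagonal end. I would then use \autoref{lm:localMoves}(1): the local move at such a $P$ is $D+\RR_{\leq 0}(1,1)$, exactly producing the unbounded two-dimensional shape. Next I would track what happens to the \emph{other} tangency point as the vertex $v$ is pushed toward $\Gamma$: this is the standard ``walk the vertex back along the class'' argument used throughout \autoref{prop:oneComponentIntersections} and \autoref{prop:mult4intersection}. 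The second tangency will migrate from the diagonal end, hit a vertex of $\Gamma$ (type (2), or possibly (4) if a $(-3,1)$ edge is involved), then the diagonal end of $\Lambda$ may begin to overlap an edge of $\Gamma$ (type (3) along the diagonal), etc. Each branch of this walk, constrained by smoothness and the Newton-polygon bookkeeping (the dual cell of each vertex of $\Gamma$ is a unimodular triangle, and edge directions around $\Gamma$ are restricted), terminates after finitely many moves, and the record of all moves made assembles into one of the shapes. I would organize the enumeration by which admissible diagonal direction $e$ has — each choice of $(-1,1)$, $(1,3)$, $(3,1)$, possibly times a choice for whether the second tangency eventually becomes type (2) or type (4), and whether a (3b)/(6a)-type overlap can occur — and check that the six resulting combinatorial outcomes are exactly (T), (T'), (T''), (U), (U'), (V), with the accompanying partial dual-subdivision data recorded in the proof (as in \autoref{fig:ShapeDLetc} and \autoref{rem:chipFiringsOneIntersection}).

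\textbf{Main obstacle.} The delicate point is the completeness of the enumeration: I must be certain that no admissible combination of (a) the diagonal edge direction of $\Gamma$ carrying the type (1) tangency, (b) the type of the second tangency and its subsequent local moves, and (c) the relative $\preceq_d$-orderings of the intermediate vertices of $\Gamma$ (in the spirit of \autoref{def:relativeOrder}, which governs when a move stops versus continues) has been overlooked, and that distinct combinatorial outcomes really do correspond to distinct shapes in \autoref{fig:2dCells} rather than to $\Sn{3}$-equivalent duplicates. Concretely, the risk is in the ``walk-back'' step: after the second tangency reaches a vertex of $\Gamma$, there can be several sub-cases depending on how the star of that vertex sits (as in \autoref{rem:mult4} and the case split in \autoref{prop:oneComponentIntersections}), and one must verify that each sub-case either reproduces one of the six claimed shapes or is excluded by smoothness/degree of $\Gamma$. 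I would handle this by fixing, once and for all, the local picture of the unbounded cell (type (1) tangency on a diagonal edge of fixed direction, vertex $v$ far out along $(-1,-1)$), and then exhausting the bounded complement of $\sigma$ inside $B$ via the local-move dictionary, exactly mirroring the bookkeeping style already used in \autoref{prop:mult4intersection} and \autoref{prop:oneComponentIntersections}; the figures \autoref{fig:2dCells} and the dual-cell figure \autoref{fig:NP} serve as the target, and the verification is a finite (if somewhat lengthy) check.
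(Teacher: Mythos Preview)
Your setup is correct and matches the paper: normalize via $\Sn{3}$ so that $\sigma^\circ$ lies in the region dual to $(0,0)$, with unbounded direction $(-1,-1)$, and observe that for $v\in\sigma^\circ$ both tangencies lie on the diagonal end of $\Lambda$. From there, however, your organizing principle diverges from the paper's and is less efficient.

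The paper's key structural observation, which you do not make, is that for $v\in\sigma^\circ$ \emph{both} tangency points $P,P'$ are of type~(1), lying on two distinct bounded edges $e,e'$ of $\Gamma$ that share the boundary of the component dual to $(1,1)$. Crucially, the closer edge $e'$ (the one $v$ reaches first when pushed in direction $(1,1)$) has its dual edge forced to be $\overline{(0,0)(1,1)}$, hence $e'$ has direction $(-1,1)$; only the farther edge $e$ has the three possible directions you list. Pushing $v$ to $e'$ makes $v$ itself a type~(4) tangency, and one sees $\sigma=(\sigma\cap e')+\RR_{\leq 0}(1,1)$. The entire classification then reduces to determining the segment $\sigma\cap e'$, which is governed by the relative $\preceq_d$-order of the four endpoints $v_l,v_r,v_l',v_r'$ of $e$ and $e'$ (three cases up to $\tau_0$), together with whether one-dimensional horizontal/vertical cells can be attached at $v_r'$ or $v_l'$ (controlled by $v_r'\prec_d v_r$ and $v_l\prec_d v_l'$). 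This yields exactly the six shapes. The direction of $e$ plays no role in distinguishing the shapes.

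Your plan to organize the enumeration by the direction of $e$ would therefore produce redundant sub-cases (each of the three directions would reproduce all six shapes), and your description of the walk-back (``the second tangency will migrate from the diagonal end, hit a vertex of $\Gamma$ \ldots\ then the diagonal end may begin to overlap an edge'') does not accurately reflect what happens: it is $v$ that reaches $e'$ and becomes a type~(4) tangency, after which $v$ slides along $e'$ while the other tangency $P$ stays type~(1) on $e$ until an endpoint of $e$ or $e'$ is reached. Your approach could be made to work, but without first isolating $e'$ and recognizing that $\sigma\cap e'$ is the controlling datum, the finite check you describe would be substantially longer and more error-prone than necessary.
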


    \begin{figure}[tb]
      \includegraphics[scale=0.25]{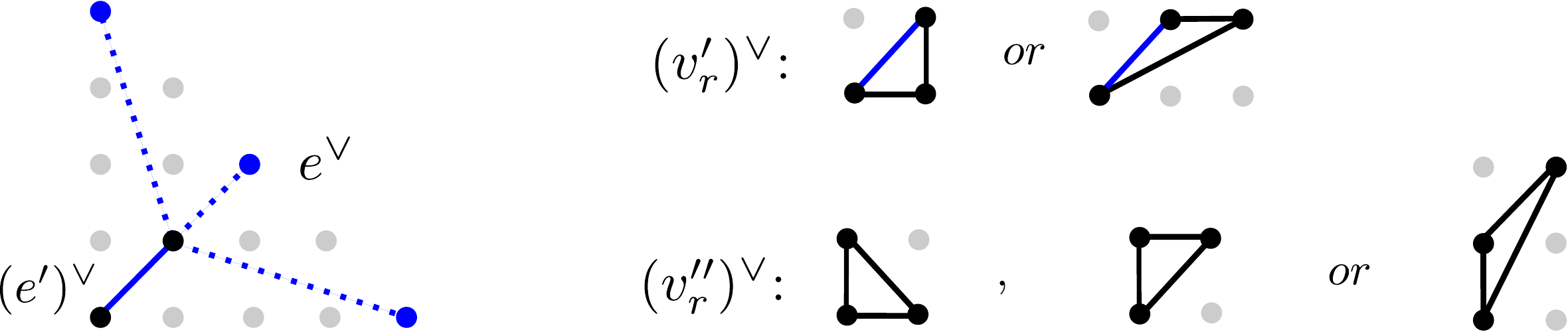}
      \caption{Partial dual subdivisions to $\Gamma$ relevant for~\autoref{lm:twoCellUnbounded}. Here, the vertex $v_r''$ is joined to $v_r'$ by a horizontal edge in $\Gamma$, which can only happen if $(v_r')^{\vee}$ is the first triangle in the top-right.\label{fig:partialNPTelal}}
    \end{figure}

    \begin{proof} We let $\sigma$ be an unbounded two-dimensional cell in $B$. Combining~\autoref{lm:unboundedCellsAndDirs} with the action of $\Sn{3}$, we may assume $\sigma$ lies in the chamber of $\RR^2\smallsetminus \Gamma$ dual to $(0,0)$. Any point $v\in \sigma^{\circ}$ yields a line $\Lambda$ with two distinct type (1) tangency points along its diagonal end, called $P$ and $P'$. Without loss of generality, we assume $P'$ lies in between $v$ and $P$ in this end.

      We let $e$ and $e'$ be the two bounded edges of $\Gamma$ containing $P$ and $P'$, respectively. Note that $e$ and $e'$ are boundary edges of the same connected component of $\RR^2\smallsetminus \Gamma$. Their dual edges $e^{\vee}$ and $(e')^{\vee}$ are adjacent to the same vertex $(i,j)$ in the dual subdivision to $\Gamma$.
      \autoref{tab:edgeOnEdgeMult2} shows the three possible  directions for $e^{\vee}$, namely, $(1,1)$, $(3,-1)$ and $(1,-3)$, whereas $(e')^{\vee}$ must have direction $(-1,1)$. Thus, since the edge $(e')^{\vee}$ contains $(0,0)$, we conclude that $i=j=1$ and the second endpoint of $e^{\vee}$ equals $(k,4-k)$ for $k=0,2$ or $4$. Both dual edges are  marked by  blue segments in the relevant partial subdivisions in the top-left of~\autoref{fig:partialNPTelal}. Since the position of $e^{\vee}$ is uncertain, we use a dotted segment for it.
      
      \begin{figure}[tb]
      \includegraphics[scale=0.4]{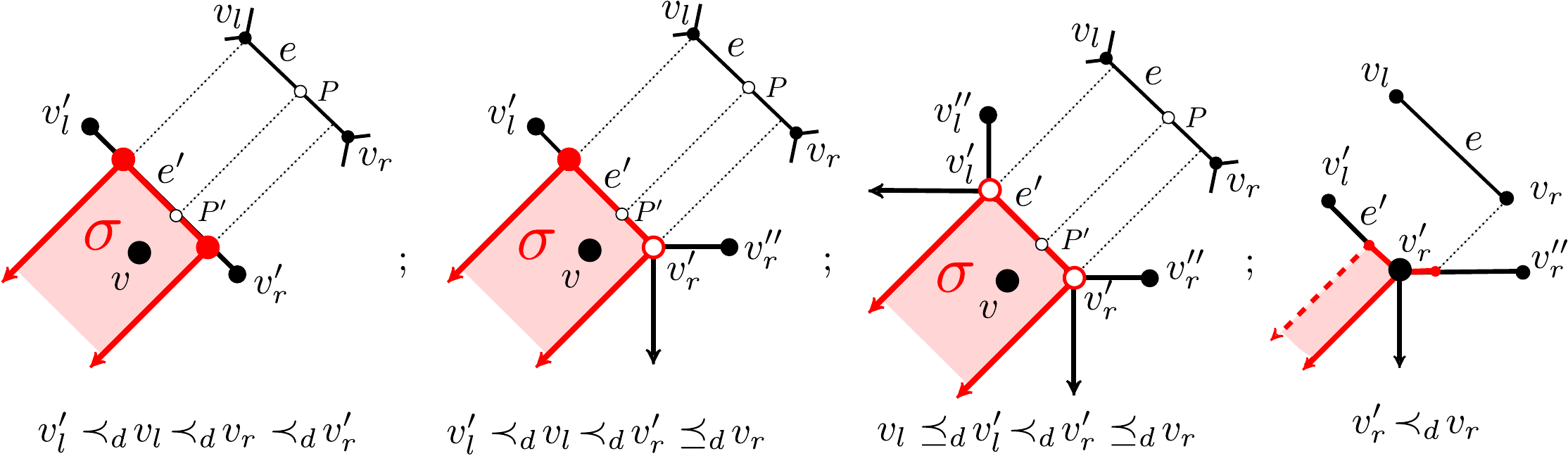}
      \caption{From left to right: possible realizations for the unbounded two-dimensional cell $\sigma$ depending on the relative $\preceq_d$-order of the vertices $v_{l},v_r,v_l',v_r'$ of $\Gamma$, and local movement at $v_r'$ when $v_r'\prec_d v_r$. The movement along the horizontal edge $\overline{v_r'v_r''}$ of $\Gamma$ is restricted by $v_r$.\label{fig:optionsForSigma}}
    \end{figure}

      We let $v_{l},v_r,v_l',v_r'$, be the left and right vertices of $e$ and $e'$, respectively. We use the convention, $v_l\prec_d v_r$ to characterize left and right for the edge $e$ since its slope is undetermined. The position of $(e')^{\vee}$ yields two options for each of the triangles $(v_l')^{\vee}$ and $(v_r')^{\vee}$ (seen on the top-right of~\autoref{fig:partialNPTelal}.) The possible triangles $(v_l')^{\vee}$ are obtained from $(v_r')^{\vee}$ using the map $\tau_0$ from~\autoref{tab:S3Action}.

      Our next objective is to classify the possible shapes of $B$. We start by  focusing our attention on the cell $\sigma$. Since $v\in \sigma^{\circ}$ and $P$ and $P'$ are type (1) tangencies in the diagonal end of $\Gamma$, we can move $v$ locally in the $(1,1)$ direction while remaining in $\sigma^{\circ}$. The movement stops when we reach $P'$, which becomes a type (4) tangency of the new line. By~\autoref{lm:unboundedCellsAndDirs}, we conclude that $B\cap e'=\sigma\cap e'$. This set must be a segment by the description of local moves for type (4) tangencies. The possible shapes for $\sigma$  will be completely characterized by the segment $\sigma\cap e'$  since $\sigma  = \sigma\cap e' + \RR_{\leq 0}(1,1)$.
      
      The set $\sigma \cap e'$ is determined by the relative order of $v_l,v_r,v'_r$ and $v'_l$ with respect to the partial order $\preceq_d$ from~\autoref{def:relativeOrder}. As we move $v$ along $e'$ towards $v_l'$, the point $P$ travels along $e$. The movement stops either when $v$ reaches $v_l'$ (if $v_l'\preceq_d v_l$) or when the diagonal end of the new line meets $v_l$ (if $v_l\preceq_d v_l'$), whichever happens first. The latter yields a stopping point in the relative interior of $e$ if $v_l\prec_d v_l'$.

            Notice that $v_l\prec_d  v\prec_d v_r$ and $v_l'\prec_d v\prec_d v_r'$, so $v_l'\prec_d v_r$. By using the map  $\tau_0$  we reduce our analysis to three cases, namely $v_l'\prec_d v_l\prec_d v_r \prec_d v_r'$, $v_l'\prec_d v_l\prec_d v_r' \preceq_d v_r$, or $v_l\preceq_d v_l'\prec_d v_r' \preceq_d v_r$. These are precisely the three options depicted in~\autoref{fig:optionsForSigma}.

      The convexity of the connected component of $\RR^2\smallsetminus \Gamma$ dual to $(1,1)$ ensures that  $v_r'\preceq_d v_r$ if and only if $(v_r')^{\vee}$ is the triangle with vertices $(0,0)$, $(1,1)$ and $(1,0)$. Thus, $v_r'$ is adjacent to a vertex $v_r''$ of $\Gamma$ along a horizontal edge. The three possible triangles $(v_r'')^{\vee}$ are seen in the bottom-right of~\autoref{fig:partialNPTelal}. Symmetric behavior is observed when comparing $v_l$ and $v_l'$: $v_l\preceq_d v_l'$ if and only if $(v_l')^{\vee}$ is the triangle with vertices $(0,0)$,  $(1,1)$ and $(0,1)$.

     To finish the classification of shapes for the bitangent class $B$, we must analyze what happens if $v_r'\in B$ and/or $v_l'\in B$. By symmetry, we restrict our attention to the case when $v_r'\in B$. There is only one possible movement beyond $v_r'$, and it can only occur if $v_r'\prec_d v_r$.  In this situation, can move along the horizontal edge $\overline{v_r'v_r''}$: one tangency point lies on this edge while the other one travels along the edge $e$ towards $v_r$. The movement stops when the second tangency point reaches $v_r$ or when the vertex of the new line reaches $v_r''$, whichever happens first. This will be determined by the relative order between $v_r$ and $v_r''$ with respect to $\preceq_d$.

     If $v_r''\preceq_d v_r$, the partial information on the dual subdivision recorded so far forces either $v_r''=v_r$ or $v_r''$ to be adjacent to $v_r$ along a slope one bounded edge of $B$. In both situations, the line with vertex $v_r''$ will be a member of $B$ which meets $\Gamma$ in a single connected component. This cannot happen by our assumptions on $B$. We conclude that  $v_r\prec_d v_r''$ so the movement along the horizontal edge $\overline{v_r'v_r''}$ stops at a point in its relative interior, as seen on the right of~\autoref{fig:optionsForSigma}.

     The above discussion confirms that we can attach a one-dimension horizontal cell to $B$ at $v_r'$ if and only if $v_r'\prec_d v_r$. Symmetrically, we can attach a one-dimensional vertical cell to $B$ at $v_l'$ if and only if $v_l\prec_d v_l'$.

      The above analysis on the three options for the two-cell $\sigma$ combined with the restrictions to move past $v_r'$ or $v_l'$ whenever these vertices lie in $B$ yields the six possible shapes in the statement (up to $\Sn{3}$-symmetry.) This concludes our proof.
\end{proof}

\begin{table}[tb]
  \begin{tabular}{|c|c|c|c|c|c|}
\hline    $e'$ vs. $e$  & (1) & (2) & (3) & (4) & (5)\\
    \hline
        (1) & (W) & $\tau_1$(X) & $\tau_1$(Y) & (GG) & (EE) \\
    \hline
    (2) & (X) & (Z) &  (AA) & (HH) & (FF) \\
    \hline
    (3) & (Y) & $\tau_1$(AA)  & (BB) & (DD) & (CC) \\
    \hline
      \end{tabular}
    \caption{Classification of shapes of two-dimensional classes with no unbounded top-cells, following~\autoref{fig:2dCells}. The map $\tau_1$ is described in~\autoref{tab:S3Action}.\label{tab:classification2DBoundedShapes}}
  \end{table}

\begin{lemma}\label{lm:twoCellBounded}
Let $B$ be a two-dimensional  bitangent class of $\Gamma$ for which every member intersects $\Gamma$ in two connected components. Assume all top-dimensional cells in $B$ are bounded.
Then, up to $\Sn{3}$-symmetry, $B$ has one of the following twelve shapes:  (W) through (Z) and (AA) through (HH) (see~\autoref{tab:classification2DBoundedShapes}.)
\end{lemma}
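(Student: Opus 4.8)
The strategy mirrors the proof of~\autoref{lm:twoCellUnbounded}, but now the unboundedness argument of~\autoref{lm:unboundedCellsAndDirs} does \emph{not} apply, so every cell of $B$ must be bounded and we must organize the classification by \emph{which edges of $\Gamma$ carry the two tangency points}. Pick a line $\Lambda$ in $B$ whose vertex $v$ lies in the relative interior of a top-dimensional (hence two-dimensional) cell $\sigma$ of $B$. By~\autoref{lm:localMoves}, a two-dimensional local move forces $v$ to be in the interior of a chamber of $\RR^2\smallsetminus\Gamma$ with both tangency points of type (1), each lying in the relative interior of a bounded edge of $\Gamma$; call these edges $e$ (farther from $v$) and $e'$ (closer to $v$). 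Breaking symmetry as in~\autoref{sec:comb-class-bitang}, we arrange both tangencies to lie on the diagonal end of $\Lambda$, so $e$ and $e'$ are boundary edges of the same bounded component $C$ of $\RR^2\smallsetminus\Gamma$, and their dual edges $e^\vee,(e')^\vee$ share the vertex $(i,j)$ dual to $C$.

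\textbf{Key steps.} First I would enumerate the admissible pairs of dual-edge directions for $e$ and $e'$ at a shared vertex of the Newton subdivision: by~\autoref{tab:edgeOnEdgeMult2} (diagonal end) each of $e^\vee,(e')^\vee$ has direction in $\{(1,1),(-3,1),(-1,3)\}$ (up to sign), and since both issue from the same vertex $(i,j)$ inside the triangle, smoothness and convexity of $C$ cut this down to finitely many local pictures — this is exactly the ``$e'$ vs.\ $e$'' axes of~\autoref{tab:classification2DBoundedShapes}, with five slope-types for $e$ and three for $e'$ (the asymmetry coming from our non-uniform convention). Second, for each such local pair I would run the local-move analysis of~\autoref{lm:localMoves}: starting at $v\in\sigma^\circ$, translate in the $(1,1)$ direction until the closer tangency point $P'$ hits a vertex of $\Gamma$ (becoming a type (2) or type (4) tangency), bounding $\sigma$ on one side; translate along $e'$ (sliding $P$ along $e$) until either $v$ reaches an endpoint of $e'$ or the diagonal end of $\Lambda$ reaches an endpoint of $e$; and then decide, using the partial order $\preceq_d$ of~\autoref{def:relativeOrder} applied to the four endpoints $v_l,v_r,v_l',v_r'$, whether $B$ can be continued past those endpoints by a further (necessarily bounded, since $B$ has no unbounded top-cell) move. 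The crucial input at each branch is the convexity of the chamber $C$ dual to $(i,j)$, which converts a $\preceq_d$-comparison of two endpoints into a forced choice of the adjacent triangle in the Newton subdivision (as in the $v_r\preceq_d v_r'$ dichotomy in~\autoref{lm:twoCellUnbounded}); ruling out configurations where $B$ would acquire an unbounded cell or where $\Gamma\cap\Lambda$ would become connected (contradicting the hypotheses) eliminates the spurious branches. Matching the resulting bounded polyhedral complexes against~\autoref{fig:2dCells} produces the entries of~\autoref{tab:classification2DBoundedShapes}, and reading off distinct entries up to the $\Sn{3}$-action (note several cells of the table carry $\tau_1$-labels, so the list of \emph{orbit representatives} is the twelve shapes (W)--(Z), (AA)--(HH)).

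\textbf{Main obstacle.} The genuinely laborious part is the exhaustive bookkeeping in the second step: for each of the roughly fifteen $(e,e')$ slope-combinations one must track the positions of up to four endpoints $v_l,v_r,v_l',v_r'$ under $\preceq_d$, determine the induced triangles $(v_l)^\vee,(v_r)^\vee,(v_l')^\vee,(v_r')^\vee$ via convexity, check at every endpoint whether a further bounded move is permitted and whether it preserves two-componentedness, and then certify that the resulting complex is bounded (otherwise it belongs to~\autoref{lm:twoCellUnbounded}, not here). I would present this not as fifteen independent arguments but as a single template — ``move diagonally to fix one side; slide along $e'$; compare endpoints via $\preceq_d$; invoke convexity'' — instantiated in a table, deferring the per-cell verification to the figures in the manner already used in~\autoref{prop:oneComponentIntersections}. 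The only subtle points requiring explicit argument are the cases where two of the four endpoints coincide or are $\preceq_d$-aligned, since these are exactly where the combinatorial type of the boundary of $\sigma$ degenerates and where one must check that no extra cell of $B$ is being overlooked.
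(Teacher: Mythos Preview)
Your setup contains a fundamental error that makes the proposed argument prove the wrong lemma. You arrange both tangencies to lie on the \emph{same} (diagonal) end of $\Lambda$. But if that were the case, \autoref{lm:Bunbounded} would immediately give $v+\RR_{\geq 0}(-1,-1)\subset B$, so $B$ would have an unbounded top-dimensional cell, contradicting the hypothesis of this lemma. The same-end configuration is precisely what is classified in \autoref{lm:twoCellUnbounded}; here the complementary case is needed. The paper's first sentence of proof makes this explicit: boundedness of all two-cells forces the two type~(1) tangencies onto two \emph{different} ends of $\Lambda$, taken (up to $\Sn{3}$) to be the diagonal and the horizontal one. Consequently $e$ and $e'$ lie on the boundary of an \emph{unbounded} chamber of $\RR^2\smallsetminus\Gamma$ (dual to a point $(i,0)$, in fact $(2,0)$), not a bounded one as you assert, and the admissible directions for $e^\vee$ and $(e')^\vee$ come from two different rows of \autoref{tab:edgeOnEdgeMult2} rather than the single diagonal row.

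Once the setup is corrected, your high-level template (enumerate slope pairs, slide along edges, compare endpoints, invoke convexity) is reasonable but the organizing device changes: with tangencies on two different ends the single order $\preceq_d$ no longer controls everything. The paper handles this by introducing the parallelogram $\mathcal{P}$ bounded by the two horizontal lines through the endpoints of $e$ and the two slope-one lines through the endpoints of $e'$; the two-cell $\sigma$ is then $B\cap\mathcal{P}$, and the classification reduces to five positions of $e$ relative to $\mathcal{P}$ crossed with three positions of $e'$, giving the $5\times 3$ table you anticipated. Your endpoint-comparison via $\preceq_d$ alone would miss half of this structure.
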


\begin{proof} In order for all two-dimensional cells of $B$ to be bounded, the tangency points for each member must occur in the relative interior of two different ends of the  bitangent line. Without loss of generality, we assume they belong to the  horizontal  and diagonal ones. We let $e$ and $e'$ be the two bounded edges of $\Gamma$ where these two tangencies occur. By picking a point $v$ in the relative interior of a two-cell $\sigma$, we conclude that these two points are of type (1), i.e. they lie in the relative interior of $e$ and $e'$, respectively.

  As in the proof of~\autoref{lm:twoCellUnbounded}, $e$ and $e'$ must lie in the boundary of the same unbounded connected component of $\RR^2\smallsetminus \Gamma$. Furthermore, the dual point corresponding to such component equals $(i,0)$ for $i=1,2$ or $3$. The possible directions for the dual cells $e^{\vee}$ and $(e')^{\vee}$ are listed on~\autoref{tab:edgeOnEdgeMult2}. The fact that $e$ lies to the left of $e'$, combined with the directions for $e^{\vee}$ and $e'^{\vee}$ forces $i=2$. Moreover, only two of the three possible directions for $e^{\vee}$ and $(e')^{\vee}$ can occur, namely $(-2,1)$ and $(-2,3)$ for $e^{\vee}$ and $(1,1)$ or $(-1,3)$ for $(e')^{\vee}$. All four combinations are possible. Using the map $\tau_1$ from~\autoref{tab:S3Action} we can further restrict our analysis to three of them, seen to the left of~\autoref{fig:nightmare}.

  We let $v_1,v_2, v_3$ and $v_4$ be the endpoints of $e$ and $e'$, clockwise oriented. \autoref{fig:nightmare} depicts the location of these vertices along four dotted lines: two horizontal ones containing $v_1$ or $v_2$, and two slope one lines containing $v_3$ or $v_4$. 
  We let $\mathcal{P}$ be the parallelogram determined by these four lines. Note that the upper left corner of $\mathcal{P}$ must lie in the connected component of $\RR^2\smallsetminus \Gamma$ dual to $(2,0)$ so the lines spanned by $e$ and $e'$ must intersect above this point. The intersection $B \cap\mathcal{P}$ will yield a unique two-dimensional cell. This cell will be determined by the location of $e$ and $e'$, relative to $\mathcal{P}$.

  There are up to five cases for the relative position of each edge, which we describe below.  Since we may assume that the slanted side of $\mathcal{P}$ is at least as large as the horizontal size, the options for $e'$ are reduced to three. The first three cases for each edge admit a common description. We accomplish this by writing  these two edges as $\overline{v_jv_{j+1}}$ for $j=1,3$.  For Case (1), the slopes of  $e$ and $e'$ are not further restricted. For all remaining cases, the edges $e$ and $e'$ have directions $(1,2)$ and $(-1,1)$, respectively.
  \begin{itemize}
  \item[(1):] \emph{The edge $\overline{v_jv_{j+1}}$ avoids $\mathcal{P}$}. For $j=1$, this means the bottom and left edges of $\mathcal{P}$ belong to $B$. 
    If $j=3$, then the top and right edges of $\mathcal{P}$ lie in $B$. 
  \item[(2):] \emph{$v_j$ is a vertex of $\mathcal{P}$.} If so, $v_j\in B$ and it has a ray adjacent to $v_j$ in $B$. This ray includes a bounded edge of $\Gamma$ adjacent to $v_j$ followed by an end with the same slope. For $j=1$ this end and edge have direction  $(-1,-1)$, and the bottom and left edges of $\mathcal{P}$ lie in $B$.   If  $j=3$, then the direction is $(1,0)$ and the top and left edges of $\mathcal{P}$ belong to $B$. 
  \item[(3):] \emph{$v_j$ lies in the relative interior of an edge of $\mathcal{P}$}. If so,   $\mathcal{P}\cap \overline{v_jv_{j+1}}\subseteq B$, and a ray preceded by a bounded edge of slope $2$ (for $j=1)$ or $-1$ (for $j=3$) adjacent to  $v_j$ appears in $B$. The ray consists of a bounded edge of $\Gamma$ followed by an end of the same direction as in Case (2).
  \item[(4):] \emph{$v_1$ is a vertex of $\mathcal{P}$ and $e\cap  \mathcal{P}$ is a segment containing $v_1$}. The description of $B$ around $v_1$ is the same as in (3).
  \item[(5):] \emph{$v_1\notin \mathcal{P}$ and $e\cap  \mathcal{P}$ is a segment, not containing $v_1$}.  As with (3), the intersection  $\mathcal{P}\cap \overline{v_1v_{2}}$ belongs to $B$, but the role of $v_1$ is now played by the lower endpoint of this segment. No bounded edge or rays are attached to this endpoint. 
  \end{itemize}
  The above description shows that each of these 15 combinations yields a unique shape, which we indicate in~\autoref{tab:classification2DBoundedShapes}. This concludes our proof.
\end{proof}
  \begin{figure}[tb]
\includegraphics[scale=0.4]{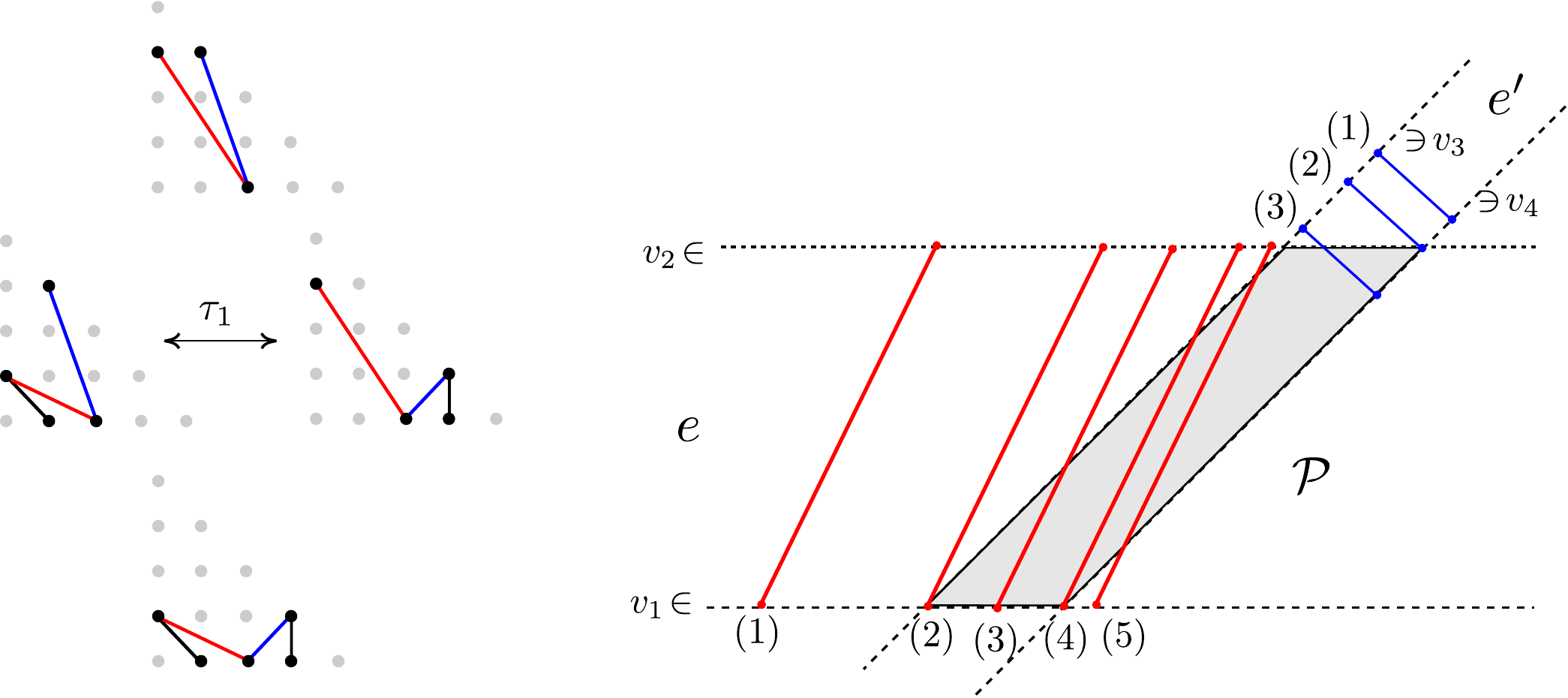}
\caption{From left to right: all possible partial dual subdivisions to $\Gamma$ when all two-dimensional cells of $B$ are bounded, and positions of the (red) edge $e=\overline{v_1v_2}$ (respectively, the (blue) edge $e'=\overline{v_3v_4}$) relative to $\mathcal{P}$.\label{fig:nightmare}}
\end{figure}

The next two lemmas discuss bitangent classes of dimension one.
\begin{lemma}\label{lm:oneCellUnbounded}
Let $B$ be an unbounded one-dimensional  bitangent class of $\Gamma$ where every member intersects $\Gamma$ in two connected components. 
Then, up to $\Sn{3}$-symmetry, $B$ has one of  seven possible shapes:  (H), (H'), (I), (J), (K), (M) or (N).
\end{lemma}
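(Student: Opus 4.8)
The plan is to follow the same strategy as in the proof of~\autoref{lm:twoCellUnbounded}, but now the top-dimensional cells are one-dimensional, which means that the two tangency points cannot both be of type (1) sitting in the relative interior of two distinct ends of $\Lambda$ (such a configuration produces a $2$-dimensional cell by~\autoref{lm:localMoves}). So for a generic member $\Lambda$ of $B$, at least one of the two tangencies must be of a type whose local move is $1$-dimensional, i.e.\ of type (3a), (3b), (3c), (5a) or (6a), and the combination of the two $1$-dimensional local moves must still be $1$-dimensional. By~\autoref{lm:Bunbounded} and~\autoref{lm:unboundedCellsAndDirs}, since $B$ is unbounded, the relative interior of an unbounded cell $\sigma$ lies in one of the three unbounded chambers of $\RR^2\smallsetminus\Gamma$ dual to $(0,0)$, $(4,0)$ or $(0,4)$, and using the $\Sn{3}$-action we may assume it is the chamber dual to $(0,0)$; then the unbounded direction of $\sigma$ is $(-1,-1)$ and both tangency points of any $\Lambda$ with vertex in $\sigma^{\circ}$ must lie in the diagonal end of $\Lambda$.

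First I would pin down which pairs of tangency types can coexist in the diagonal end so that the combined move stays $1$-dimensional and unbounded in direction $(-1,-1)$. The only $1$-dimensional local move along the diagonal end comes from type (3) (its move is along the two directions $\pm$(diagonal), i.e.\ $\pm(1,1)$, and is locally unbounded), so $\sigma$ must be the horizontal-for-the-diagonal-end analog: a segment inside the chamber dual to $(0,0)$ whose closure is unbounded in direction $(-1,-1)$. Concretely, one tangency point $P$ overlaps a bounded edge $e$ of $\Gamma$ of diagonal type (the ``type (3)'' tangency viewed along the diagonal end), and this forces the dual edge $e^\vee$ to be the segment between $(0,0)$ and $(1,1)$ or one of the admissible directions from~\autoref{tab:edgeOnEdgeMult2}; I would read off from that table (diagonal end row) that $e^\vee$ can only be $(1,1)$ here because the edge $e$ bounds the chamber dual to $(0,0)$. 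This determines the two triangles $(v_l)^\vee$, $(v_r)^\vee$ adjacent to $e^\vee$ up to the usual finitely many choices, exactly as in~\autoref{lm:twoCellUnbounded}; the second tangency point $P'$ then lies in a neighboring bounded edge $e'$ of $\Gamma$, again a boundary edge of the same chamber, with $e'^\vee$ forced to have direction $(-1,1)$ through $(0,0)$.

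Next I would run the local-move bookkeeping at the two endpoints $v_l', v_r'$ (or $v_l, v_r$) of the edges, exactly parallel to~\autoref{lm:twoCellUnbounded}: starting from the $1$-dimensional cell $\sigma$, I translate the vertex toward each endpoint, at which point the type (3) tangency degenerates to a type (2) or a vertex tangency, and I decide whether the class can be continued past the endpoint and in which direction, using the partial order $\preceq_d$ of~\autoref{def:relativeOrder} to compare the relevant vertices of $\Gamma$ and using convexity of the chambers of $\RR^2\smallsetminus\Gamma$ dual to the interior lattice points. Each continuation either attaches a bounded edge of $\Gamma$ followed by an end of the same slope (as in cases (2) and (3) in the proof of~\autoref{lm:twoCellBounded}), attaches a bounded segment terminating at a vertex of $\Gamma$, or terminates the class; the finitely many compatible combinations (subject to the constraint that every member meets $\Gamma$ in two components, which as in~\autoref{lm:twoCellUnbounded} rules out the combinations producing a member with a single connected intersection) produce exactly the seven listed shapes (H), (H'), (I), (J), (K), (M), (N). I would present this as a short enumeration keyed to~\autoref{fig:2dCells}, noting the analogous figure of partial dual subdivisions and $\sigma$-realizations.

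The main obstacle I anticipate is the combinatorial case explosion: one must be careful that the analysis is genuinely exhaustive, i.e.\ that every way of combining a diagonal-end type (3) move with a second diagonal-end tangency (and the degenerations at the endpoints) has been accounted for, and that the $\Sn{3}$-reduction together with the ``two connected components'' hypothesis really collapses the list to seven and not more. The bookkeeping is entirely of the same flavor as in Lemmas~\ref{lm:twoCellUnbounded} and~\ref{lm:twoCellBounded} — local moves, $\preceq_d$-comparisons, convexity of the $\RR^2\smallsetminus\Gamma$ chambers, and elimination of single-component members — so no new idea is required, but the proof needs the supporting figures (partial subdivisions and the finitely many realizations of the $1$-cell $\sigma$ with its attached edges/rays) to be readable, and I would organize it as a case distinction on the triangle $(v_r')^\vee$ (two choices), the triangle $(v_l')^\vee$ (two choices, obtained via $\tau_0$), and the $\preceq_d$-order of the four endpoint vertices, discarding the combinations incompatible with the hypotheses.
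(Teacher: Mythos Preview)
Your overall framework—use~\autoref{lm:unboundedCellsAndDirs} to normalize the unbounded direction, pin down tangency types, then track local moves at the boundary—is in the same spirit as the paper's, but the execution has a genuine gap and misses the organizing observation that makes the enumeration clean.

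The paper does not analyze a generic point of $\sigma^\circ$. Instead it takes the unique \emph{vertex} $v$ of the ray $\sigma$. There the vertex of $\Lambda$ lies on $\Gamma$, so $v$ is itself one of the two tangency points, and~\autoref{lm:localMoves} forces its type to be $(4)$, $(3b)$, or $(6a)$. The second tangency $P$ sits in the relative interior of the end of $\Lambda$ opposite the unbounded direction and has type $(1)$, $(2)$, or $(3c)$. This gives a $3\times 3$ grid; the constraint $\dim B=1$ kills three of the nine entries, and the six surviving pairs $(v\text{-type},P\text{-type})$ are handled one at a time (the pair $((3b),(3c))$ bifurcates into (M) and (J) depending on the triangle $(v')^\vee$), producing exactly the seven shapes. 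Your proposed organization—``case distinction on the triangle $(v_r')^\vee$ (two choices), the triangle $(v_l')^\vee$ (two choices), and the $\preceq_d$-order of the four endpoint vertices''—is transplanted from~\autoref{lm:twoCellUnbounded} and does not reflect this structure; in particular you never isolate the tangency at the vertex of $\Lambda$, which is what actually drives the classification here.

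Your analysis inside $\sigma^\circ$ also contains a concrete error. \autoref{tab:edgeOnEdgeMult2} records edge directions for \emph{transverse} tangencies of types $(1)$, $(2)$ and $(4)$, not for type $(3)$. A type $(3)$ tangency along the diagonal end means the overlapping edge $e$ of $\Gamma$ has direction $(1,1)$, so $e^\vee$ has direction $(1,-1)$ (for instance the segment joining $(1,0)$ and $(0,1)$, as in shape (N)); it is never the segment from $(0,0)$ to $(1,1)$, and $e$ need not bound the chamber dual to $(0,0)$. With $e^\vee$ misidentified and the wrong table invoked, the downstream bookkeeping cannot be carried out as written.
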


\begin{proof} We let $\sigma$ be an unbounded one-dimensional cell of the shape refining $B$ and let $\rho$ be its unbounded direction, as in~\autoref{def:unboundedDirections}.  We let $v$ be the  unique vertex of  $\sigma$ and set $\Lambda$ to be the associated  bitangent line. By assumption, $\Lambda$ has two multiplicity two tangency points, one of which is $v$. We let $P$ be the second tangency point. By~\autoref{lm:unboundedCellsAndDirs}, it lies in the end of $\Lambda$ with direction $-\rho$ and the connected component of $\RR^2\smallsetminus \Gamma$ containing $\sigma^{\circ}$ is determined by $\rho$. Furthermore, $\sigma = v + \RR_{\geq 0} \,\rho$.

 \autoref{lm:localMoves} restricts the tangency types of $v$ to three cases: (4), (3b) or (6a). In turn, $P$ can only have tangency type (1), (2) or (3c). Since $\dim B=1$, out of the nine possible combinations, only six are possible. Thus, the local tangencies for the pair $(v,P)$ are reduced to six combinations. The shapes associated to each pair are listed in~\autoref{tab:classification1DUnboundedShapes}. They are determined by how we can move from $v$ away from $\sigma$ while remaining in $B$. Each case is explained in detail below.

  \begin{table}[tb]
  \begin{tabular}{|c|c|c|c|}
    \hline     &\multicolumn{3}{c|} {$P$-type}\\
   \cline{2-4} $v$-type & (1) & (2) & (3c) \\
    \hline
    (4) &      ----- & ----- & (H)\\
    \hline

    (3b) &  (N) &  (I) &  (M) or (J)\\
    \hline
    (6a) & ----- &  (K)  &  (H') \\
    \hline
      \end{tabular}
    \caption{Classification of one-dimensional shapes with an unbounded top-cell by the types of local tangencies for $v$ (the vertex) and $P$.\label{tab:classification1DUnboundedShapes}}
  \end{table}

  \begin{figure}[tb]
    \includegraphics[scale=0.2]{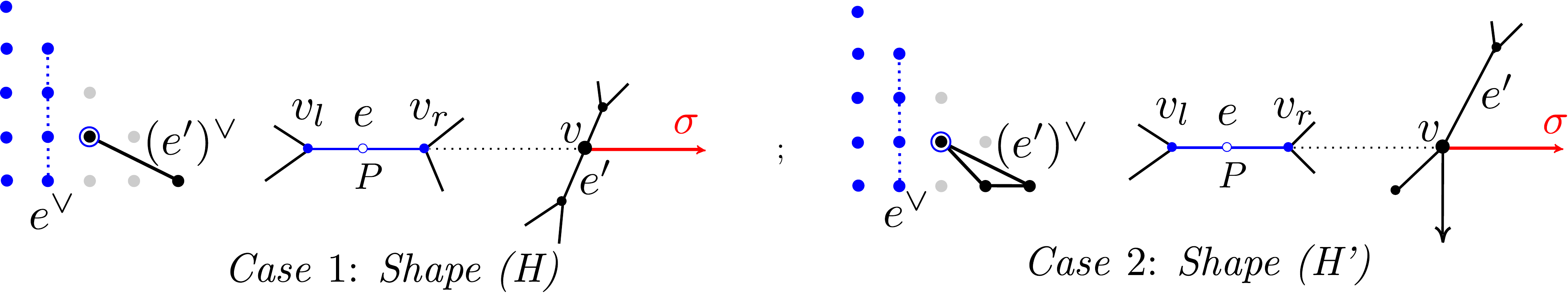}
    \caption{Cases 1 and 2 from~\autoref{lm:oneCellUnbounded}, leading to shapes (H) and (H'). The tangencies $P$ and $v$ are horizontally aligned (using a black dotted line.) The potential location of $e^{\vee}$ is marked with a dotted blue line. One of the blue points $(0,w)$ is a vertex of $v_l^{\vee}$\label{fig:Cases1And2_1dim}}
  \end{figure}

\begin{description}

\item[{Case 1}] type $(v,P)=((4), (3c))$. Exploiting the $\Sn{3}$-symmetry, we assume $P$ lies in the horizontal end of $\Lambda$, as seen on the left of~\autoref{fig:Cases1And2_1dim}. We let $e$ and $e'$ be the edges of $\Gamma$ containing the tangencies $P$ and $v$, respectively. Since $v$ is of type (4), we conclude that $B=\sigma$, so $B$ has shape (H).

  Next, we infer partial information about the dual subdivision to $\Gamma$, which we need for~\autoref{cor:combClass}. Write $v_l$ and $v_r$ for the left and right vertices of the horizontal edge $e$. By~\autoref{lm:unboundedCellsAndDirs}, the edge $e'$ of $\Gamma$ containing $v$ is in the boundary of the connected component of $\RR^2\smallsetminus \Gamma$ dual to $(4,0)$. This information, combined with~\autoref{tab:edgeOnEdgeMult2} ensures the remaining vertex of $(e')^{\vee}$ is $(2,1)$. In turn, since $v_r$ and $e'$ are in the boundary of the connected component of $\RR^2\smallsetminus \Gamma$ dual to $(2,1)$, but $e^{\vee}$ is not, it follows that $e^{\vee}$ has vertices $(1,i)$ and $(1,i+1)$ with $i=0,1,2$, as we see in the left of~\autoref{fig:Cases1And2_1dim}. This edge is joined to $(2,1)$ and a vertex of the form $(0,w)$ to determine the dual triangles $v_l^{\vee}$ and $v_r^{\vee}$, respectively.

\item[{Case 2}] type $(v,P)=((6a), (3c))$. This case is similar to Case 1. Since $v$ has type (6a), exploiting the $\Sn{3}$-symmetry we may assume $P$ lies in the horizontal end of $v$ and that $v^{\vee}$ is the triangle with vertices $(4,0)$, $(3,0)$ and $(2,1)$, as we see in the right of~\autoref{fig:Cases1And2_1dim}. We cannot move beyond $v$ so the bitangent class $B$ has shape (H'). The data on $e^{\vee}$, $v_l^{\vee}$ and $v_r^{\vee}$ matches Case 1.

\item[{Case 3}] type $(v,P)=((3b), (1))$. In this situation, we assume $P$ lies on the diagonal end of $\Gamma$, and we let $v_r$ and $v_l$ be the vertices of the edge $e$ of $\Gamma$ containing $P$, with $v_l\prec_d P \prec_d v_r$.
  
  By~\autoref{lm:unboundedCellsAndDirs}, $v$ lies in the boundary of the connected component of $\RR^2\smallsetminus \Gamma$ dual to $(0,0)$. We let $e'$ be the diagonal edge of $\Gamma$ containing $v$ and let $v'$ be its other endpoint. The dual triangle $v^{\vee}$ must then have vertices $(0,0)$, $(1,0)$ and $(0,1)$.
  
  We claim that out of the three possible dual triangles $(v')^{\vee}$ depicted on the left of~\autoref{fig:Case3_1dim}, only one is feasible. Since the bottom two are related by the  map $\tau_0$, it suffices to analyze the first two.

  The second one (where $v'$ is adjacent to a horizontal end of $\Gamma$) can be ruled out for dimension reasons. Indeed, since $v'$ is a type (6a) tangency and $P$ has type (1), we can move beyond $v'$ while remaining in $B$ if we restrict to a circular sector with center $v'$ and bounded by edges with directions $(1,1)$ and $(2,1)$, as seen on the right of~\autoref{fig:Case3_1dim}. This cannot happen since $\dim(B)=1$. Thus, the star of $v'$ at $\Gamma$ must be a min-tropical line, as depicted in the center of the same figure. 

  Once $\Star_{\Gamma}(v')$ is determined, we let $v'_t$ and $v'_r$ be the vertices of $\Gamma$ connected to $v$ by a vertical and horizontal edge, respectively. The combined tangency types of $v'$ and $P$ allow us to move past $v'$ along these two bounded edges while remaining in $B$. The stopping point will be determined by the relative $\preceq_d$-order  between $v_l$ and $v'_t$, respectively $v_r'$ and $v_r$. 

  We claim that $v_t'\prec_d v_l$ and $v_r\prec_d v_r'$. If so, the movement from $v'$ along the vertical and horizontal edges stops once the diagonal end of the new bitangent lines reaches $v_l$, respectively $v_r$. These stopping points are in the relative interior of the edges $\overline{v'v_t'}$ and $\overline{v'v_r'}$ and are diagonally aligned with $v_l$ and $v_r$, respectively. Thus, $B$ has shape (N). 

  Since the claims are symmetric, it suffices to show $v_r\prec_d v_r'$. We do so by analyzing the three possibilities for the triangle $(v_r')^{\vee}$ (seen in the picture.)  The convexity of the connected component of $\RR^2\smallsetminus \Gamma$ dual to $(1,1)$ ensures that for all three cases we have $v_r\preceq_d v_r'$. Furthermore, equality holds if either $v_r= v_r'$ or $v_r=_d v_r'$ and both vertices are adjacent in $\Gamma$. In both situations, $v_r'\in B$ and the corresponding bitangent line intersects $\Gamma$ in a connected set. This cannot happen by our assumptions on $B$. Thus, in all three cases, we have $v_r\prec_d v_r'$, as we wanted.  

  \begin{figure}[tb]
    \includegraphics[scale=0.23]{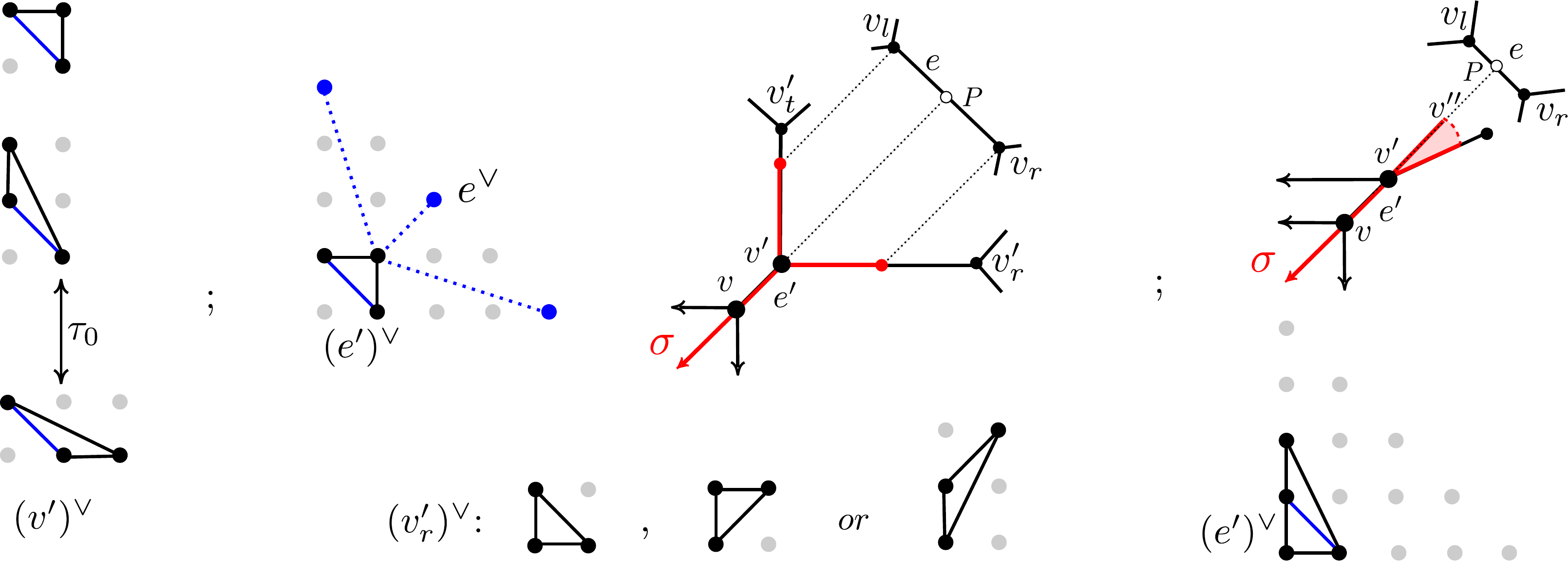}
    \caption{Partial dual subdivisions and local moves for Case 3 from~\autoref{lm:oneCellUnbounded}, leading to shape (N). Points joined by black dotted lines are diagonally aligned. The dotted blue edges in the center correspond to the potential locations of $e^{\vee}$, whereas the blue edge is $(e')^{\vee}$. Imposing the condition $v_l=_d v'$ in the central picture will give shape (I) and Case 4.\label{fig:Case3_1dim}}
  \end{figure}

\item[{Case 4}] type $(v,P)=((3b), (2))$. This situation is very similar to Case 3, with the exception that now $P=v_r$ or $v_l$. The triangle $v^{\vee}$ is fixed as in~\autoref{fig:Case3_1dim}. 
  Since $\dim(B)\neq 2$, there is only one option for the dual triangle to the vertex $v'$ adjacent to $v$ along a slope one edge of $\Gamma$. The partial dual subdivision to $\Gamma$ is depicted  in the center of the same figure, and the star of $v'$ at $\Gamma$ is a min-tropical line.
  
  By applying the map $\tau_0$ if necessary, we may assume $P=v_l$ and we let $v_l'$ be the vertex of $\Gamma$ adjacent to $v'$ by a horizontal bounded edge. The same reasoning as in Case 3 confirms that  we can move from $v'$  along this edge while remaining in $B$ but we must stop at a point in the relative interior of this edge since $v_r\prec_d v'_r$. Thus, $B$ has shape (I).

    \begin{figure}[tb]
    \includegraphics[scale=0.2]{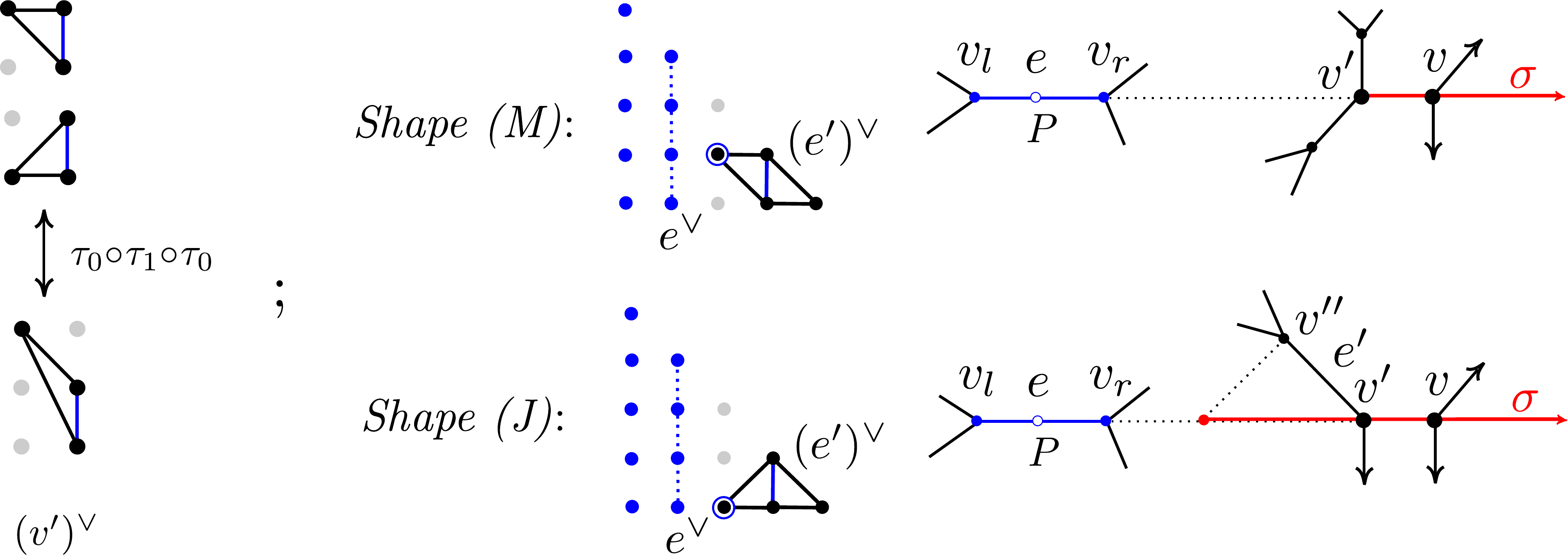}
    \caption{From left to right and top to bottom: possible triangles $(v')^{\vee}$,  partial dual subdivisions and local moves for Case 5 from~\autoref{lm:oneCellUnbounded}, leading to shapes (M) and (J), respectively.\label{fig:Case4_1dim}}
  \end{figure}

\item[{Case 5}] type $(v,P)=((3b), (3c))$. We assume that $P$ lies in the horizontal end of $\Gamma$, so the star of $v$ at $\Gamma$ is a tropical line because $(4,0)\in v^{\vee}$ by~\autoref{lm:unboundedCellsAndDirs}. We let $v'$ be the vertex of $\Gamma$ adjacent to $v$ by a horizontal edge, and let $v_l$ and $v_r$ be the left and right vertices of the horizontal edge $e$ of $\Gamma$ containing $P$. By construction, we can move $v$ along $e$ until we reach $v'$ while remaining in $B$, as in~\autoref{fig:Case4_1dim}.

  It remains to analyze the local moves at $v'$. This will be determined by the dual triangle $(v')^{\vee}$.   There are three possibilities for  $(v')^{\vee}$, as seen on the left of~\autoref{fig:Case4_1dim}, but only two up to $\Sn{3}$-symmetry. Each of them leads to either Shape (M) or (J), as we now explain.

  If $v'$ is adjacent to a vertical bounded end of $\Gamma$, then the line with vertex $v'$ has a type (5a) tangency at $v'$ (seen in the top of the figure.) We cannot move past $v'$ while remaining in $B$ by the tangency point $P$. Thus,  $B$ has shape (M).

  On the contrary,   if $v'$ is adjacent to a vertical end of $\Gamma$, then $v'$ becomes a type (6a) tangency and we can move $v'$ in the direction $(-1,0)$ while remaining in $B$, since the second tangency point will lie in the slope -1 edge $e'$ adjacent to $v'$. The movement stops once the diagonal end of the new line reaches the other endpoint of $e'$, called $v''$ in the figure. This is guaranteed because $v_r\prec_d v''$, and this follows since the connected component of $\RR^2\smallsetminus \Gamma$  dual to $(2,0)$ is convex and contains both $v_r$ and $v''$ in its boundary. Thus, $B$ has shape (J).

  \begin{figure}[tb]
   \includegraphics[scale=0.23]{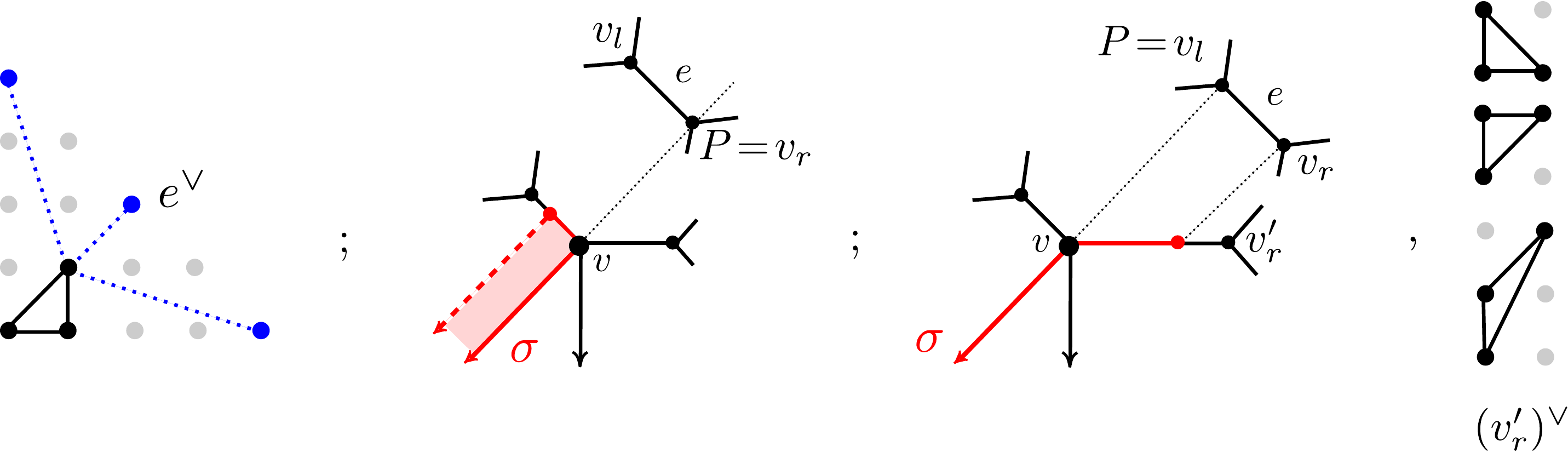}
      \caption{Relevant data for Case 6 from~\autoref{lm:oneCellUnbounded} leading to shape (K): partial dual subdivisions to $\Gamma$ and local movement around selected members of the bitangent class, depending on the location of the tangency $P$ with respect to the edge $e$.\label{fig:Case6_1dim}}
  \end{figure}

\item[{Case 6}] type $(v,P)=((6a), (2))$. We assume $P$ lies in the diagonal end of $\Lambda$. By applying the map $\tau_0$ if necessary, we may assume $v^{\vee}$ has vertices $(0,0)$, $(1,1)$ and $(1,0)$ as in the left of~\autoref{fig:Case6_1dim}. We let $e$ be the edge of $\Gamma$ responsible for the tangency at the vertex $P$. We must decide if $P$ is the left- or right-most vertex of $e$ (recall that by our convention  $v_l\prec_d v_r$.) The two options and the local movement at $v$ within $B$ are illustrated in the figure. Since $\dim B=1$, we must have $P=v_l$ and we can move $v$ beyond $\sigma$ along the horizontal edge of $\Gamma$ containing it.

  We let $v_r'$ be the other endpoint of this edge.
  There are three options for the dual triangle $(v_r')^{\vee}$, depicted to the right of the figure. As was argued for Case 3, our assumptions on $B$ ensure  that $v_r\prec_d v_r'$. This implies that the movement along this horizontal edge that started at $v'$ stops at a point in its relative interior, so $B$ has shape (K).\qedhere
\end{description}
\end{proof}

\begin{lemma}\label{lm:oneCellBounded}
Let $B$ be a bounded one-dimensional  bitangent class of $\Gamma$ where every member intersects $\Gamma$ in two connected components.
Then, up to $\Sn{3}$-symmetry, $B$ has  shape  (E), (F) or (G).
\end{lemma}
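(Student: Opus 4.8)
The plan is to continue the case analysis of the preceding lemmas, now under the standing hypotheses that $B$ is one-dimensional and bounded and that every member meets $\Gamma$ in two connected components, so that by~\autoref{def:tropicalBitangent}(i) the two tangency points of each member have multiplicity two. First I would fix a point $v$ in the relative interior of a top-dimensional (hence one-dimensional) cell $\sigma$ of the shape refining $B$, let $\Lambda$ be the associated bitangent line, and call its tangency points $P$ and $P'$. Since $\dim\sigma=1$ and $\sigma$ is bounded with trivial recession cone, the combined local move at $v$ --- the intersection of the local moves of~\autoref{lm:localMoves} determined by $P$ and by $P'$ separately --- is a bounded segment through $v$, and the whole argument is driven by deciding which configurations of tangency types are compatible with this.

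The first real step is to restrict the tangency types. Since $v$ lies in the relative interior of a one-dimensional cell, it cannot be a vertex of $\Gamma$, so the configurations in which a tangency point is a vertex of $\Gamma$ --- types (5a), (5b), (6a), (6b) and (3b) --- are excluded; the admissible types are thus among (1), (2), (3a), (3c) and (4). By~\autoref{lm:Bunbounded}, $P$ and $P'$ do not both lie in the relative interior of the same end of $\Lambda$; and inspecting~\autoref{fig:localMoves}, the local moves of the ``edge-on-end'' types (1), (2) and (4) are two-dimensional, so a pair of them attached to distinct ends of $\Lambda$ intersects in a two-dimensional set, exactly as in~\autoref{lm:twoCellBounded} --- impossible here. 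Hence at least one of $P,P'$ is a type (3) tangency, whose local move is one-dimensional along a coordinate direction, say $(\pm1,0)$ after normalising by the $\Sn{3}$-action of~\autoref{tab:S3Action}. As $\Lambda$ has a single horizontal end, the other tangency must then be of type (1), (2) or (4) on the vertical or diagonal end, and since its move has recession cone contained in $\RR_{\le0}(0,-1)$ or $\RR_{\le0}(1,1)$, neither of which meets the line $\RR(1,0)$, the combined move is automatically bounded. This leaves a short, explicit list of admissible pairs of types to treat.

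Next I would generate $B$ for each admissible pair. Normalising by $\Sn{3}$ I may place the type (3) tangency $P$ on the horizontal end of $\Lambda$ and $P'$ on the diagonal end; then~\autoref{tab:edgeOnEdgeMult2}, together with the smoothness and degree of $\Gamma$ and the requirement that $\Gamma\cap\Lambda$ be disconnected, fixes the directions of the edges of $\Gamma$ carrying $P$ and $P'$ and the relevant part of the dual subdivision near them. Starting from $v$ I slide the vertex of $\Lambda$ in the $(\pm1,0)$ direction allowed by the type (3) tangency while $P'$ travels along its edge $e'$; the motion halts either when $P'$ reaches an endpoint of $e'$ --- becoming a type (2) tangency, with no further move --- or when the end of the moving line carrying $P'$ meets a vertex of $\Gamma$, in which case I continue from that vertex exactly as in~\autoref{lm:oneCellUnbounded}. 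This continuation is controlled by $\preceq_d$-comparisons (\autoref{def:relativeOrder}) of the endpoints of $e'$ against the newly encountered vertices of $\Gamma$, and by the at most two options for each relevant dual triangle forced by convexity of the bounded chamber of $\RR^2\setminus\Gamma$ dual to $(1,1)$; any continuation reaching a member whose line meets $\Gamma$ connectedly is excluded by the hypothesis on $B$. Running this bookkeeping to completion should leave precisely the three shapes (E), (F) and (G) of~\autoref{fig:2dCells}, up to $\Sn{3}$-symmetry.

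The step I expect to be the main obstacle is the first one: pinning down exactly which pairs of multiplicity-two tangency types can occur at an interior point of a bounded one-dimensional cell, by carefully intersecting the move-cones of~\autoref{fig:localMoves} and checking dimension and boundedness --- and, in particular, cleanly separating these from the pairs producing unbounded one-dimensional classes (\autoref{lm:oneCellUnbounded}) and those producing two-dimensional classes (Lemmas~\ref{lm:twoCellUnbounded} and~\ref{lm:twoCellBounded}). Once that list is fixed, the generation of $B$ and the accompanying dual-subdivision bookkeeping in each case are routine, if somewhat lengthy.
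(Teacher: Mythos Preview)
Your plan is correct and follows essentially the same route as the paper: pick $v$ in the relative interior of a one-cell, use~\autoref{lm:localMoves} and~\autoref{lm:Bunbounded} to force one tangency to be of type~(3a) or~(3c) on one end and the other to be a transverse edge-on-end tangency on a different end, normalise by $\Sn{3}$, and then slide $v$ horizontally and read off the halting configurations using $\preceq_d$-comparisons and the dual subdivision.

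One small sharpening you should make: at an interior point of a one-cell of the \emph{shape}, the second tangency must in fact be of type~(1), not~(2) or~(4). The shape is refined by $\Gamma$, so the combinatorial configuration is constant along $\sigma^\circ$; but a type~(2) tangency (tangency point at a vertex of $\Gamma$) or a type~(4) tangency (vertex of $\Lambda$ on an edge of $\Gamma$ of slope $\neq 0$) cannot persist under the horizontal motion forced by the type~(3) tangency, so they occur only at boundary vertices of $\sigma$, not in its interior. The paper asserts this in one line (``for the same reasons, the point $P$ must be of tangency type~(1)''). With that in hand, the case split is exactly the paper's: $P'$ of type~(3c) with $v_r$ above the horizontal line through $v$ gives shape~(E); $P'$ of type~(3c) with $v_r$ on or below that line forces the motion to the right to halt when $v$ itself hits the edge $e$, producing the type~(4) boundary vertex of shape~(F); and $P'$ of type~(3a) gives shape~(G). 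Your description of the halting conditions conflates two genuinely different events---the diagonal end reaching an endpoint of $e$ (giving a type~(2) boundary) versus the moving vertex $v$ meeting $e$ (giving a type~(4) boundary)---and it is precisely this distinction that separates~(E) from~(F).
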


\begin{proof} Let $\sigma$ be a one-dimensional cell of $B$, pick a point $v$ in its relative interior and let $\Lambda$ be the tropical bitangent line to $\Gamma$ with vertex $v$. We let $P$ and $P'$ be its two tangency points.
  Since $B$ is bounded, \autoref{lm:Bunbounded} ensures that they lie in distinct ends of $\Lambda$. Furthermore, since $v$ is not a vertex of $\Gamma$, $v\in \sigma^{\circ}$ and $\dim B=1$, one of the tangencies (say $P'$) must be non-transverse of types (3a) or (3c). For the same reasons,  the point $P$ must be of tangency type (1).

  We follow the same notation from the proofs of previous lemmas in this section. We let $e$ and $e'$ be the edges of $\Gamma$ containing $P$ and $P'$, respectively, and let $v_l, v_r, v_l',v_r'$ be the endpoints of $e$ and $e'$, with the conventions $v_l\prec_d v_r$ and $v_l'\prec_d v_r'$. By $\Sn{3}$-symmetry, we assume $P$ and $P'$ lie in the  diagonal  and horizontal ends of $\Lambda$, respectively.

  There are three cases to consider,  depending on the tangency type of $P'$ and whether or not $v_r$ is above the horizontal line $L: v+\RR(1,0)$. Each yields a different shape.

  \begin{description}
  \item[Case 1] $P$ has type (3c) and $v_r$ is above $L$. By construction,  $v$ lies in a connected component of $\RR^2\smallsetminus \Gamma$, unbounded in the direction $(0,-1)$. The corresponding dual vertex $(j,0)$ must be an endpoint of $e^{\vee}$. The three options for the direction of $e^{\vee}$ listed in~\autoref{tab:edgeOnEdgeMult2}, the boundedness of $e'$ and the degree of $\Gamma$ combined force $j=2$. This reduces the possibilities for $e^{\vee}$ and $v_r^{\vee}$ to two cases, as we see on the left of~\autoref{fig:EFShapes}.
    
    In turn, $(e')^{\vee}$ lies in the boundary of a different connected component of $\RR^2\smallsetminus \Gamma$ than $v_r$, and the dual triangle $(v_r')^{\vee}$ is unimodular and contains both $(2,0)$ and $(e')^{\vee}$. Thus, the vertices of $(e')^{\vee}$ are $(1,i)$ and $(1,i+1)$ for $i=0,1$ or $2$.

    The non-transverse tangency $P'$ restricts the local movement around $v$ to the horizontal direction. We can move both left and right from $v$ while remaining bitangent. The tangency point $P'$ is fixes throughout, while the second tangency point travels along $e$ towards $v_l$ and $v_r$, respectively. Since $v_r$ lies above the horizontal line $L$, we can move $v$ in the direction $(1,0)$ until the diagonal end of the bitangent line meets $v_r$. This is a type (2) tangency because $v_r\notin L$, so we cannot move beyond this point. The movement is within the same chamber of $\RR^2\smallsetminus \Gamma$ containing $v$, as seen in the center of~\autoref{fig:EFShapes}.
    
    To decide the stopping point when moving from $v$ in the $(-1,0)$-direction, we compare the relative $\preceq_d$-order between $v_l$ and $v_r'$. We claim $v_r'\prec_d v_l$ by the restrictions on $B$. If so, $B=\sigma\subseteq \RR^2\smallsetminus \Gamma$ and so its shape is (E).

    To prove the claim, it suffices to analyze the three possibilities for the triangle $(v_r')^{\vee}$. If $v_l\preceq_d v_r'$, then $i=1$ and $v_r'=_d v_l$. However, in this situation, $v_r'$ and $v_l$ are adjacent in $\Gamma$, and the vertex $v_r'\in B$. If so, the corresponding bitangent line intersects $\Gamma$ in a connected set, contradicting our hypotheses on $B$. Thus, $ v_r'\prec_d v_l$ as we wanted to show.

    \begin{figure}[tb]
    \includegraphics[scale=0.2]{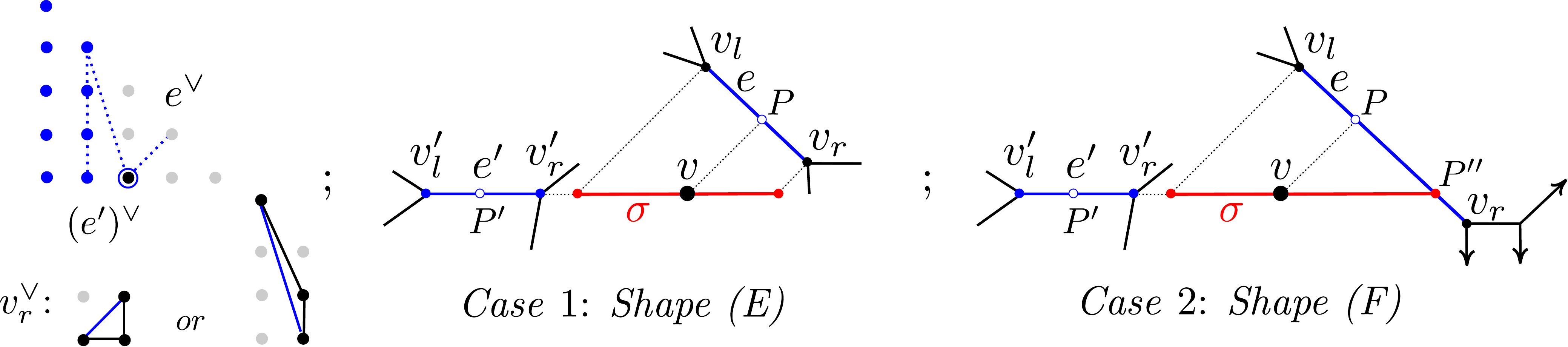}
    \caption{From left to right: partial dual subdivision and bitangent classes with shapes (E) and (F) corresponding to two tangency points of types (3c) and (1) for a bounded one-dimensional class, as in~\autoref{lm:oneCellBounded}.\label{fig:EFShapes}}
    \end{figure}

  \item[Case 2] $P$ has type (3c) and $v_r$ is on or below $L$. We let $P''$ be the intersection point between $L$ and $e$. 
    The same reasoning from Case 1 yields the same partial dual subdivision to $\Gamma$ and the claim $v_r'\prec_d v_l$. Thus, we can move from $v$ in the direction $(1,0)$ until the diagonal end of the new line reaches $v_l$. However, the movement in the direction $(1,0)$ stops at $P''$.
    
    We claim that $P''\neq v_r$,  as seen on the right of~\autoref{fig:EFShapes}. If so, $P''$ becomes a type (4) tangency point and we cannot move past it. This confirms that $B$ has shape (F). To prove our claim, we analyze the possibilities for $v_r^{\vee}$.

    The fact that $v_r$ lies on or below $L$ forces $e'$ to have direction $(-1,1)$ and $v_r^{\vee}$ to be the triangle with vertices $(2,0)$, $(3,0)$ and $(3,1)$. Thus, if $P''=v_r$ we can move beyond $v_r$ along a ray with direction $(1,0)$  while remaining in $B$ because $v_r$ and $P'$ will be tangency points on the horizontal end of a bitangent line. This cannot happen because $B$ is bounded. We conclude that $P''\neq v_r$, as we wanted.

    \begin{figure}
    \includegraphics[scale=0.2]{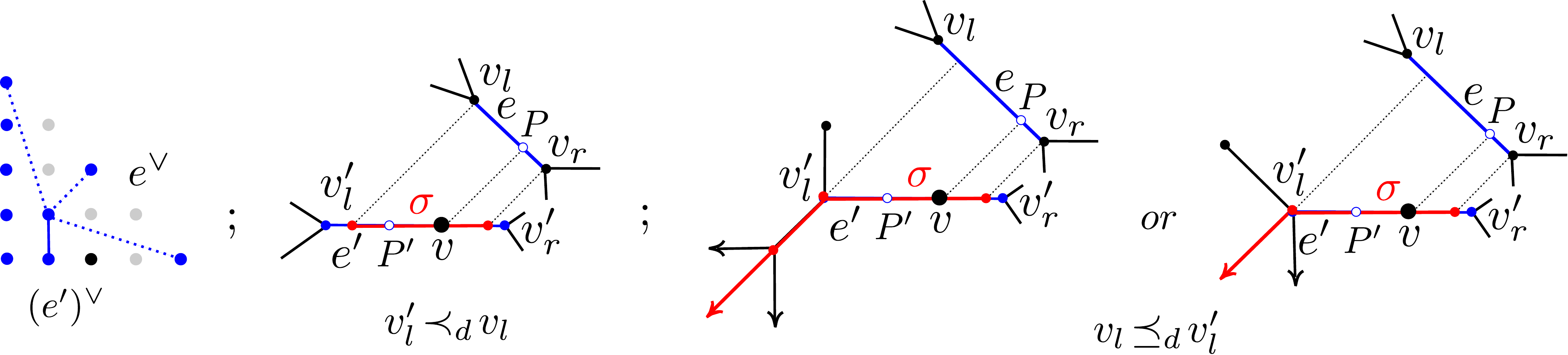}
    \caption{From left to right: partial dual subdivision and bitangent class to $\Gamma$ with a member having tangency points of types (3a) and (1) as in~\autoref{lm:oneCellBounded}; potential local movements depending on the relative $\preceq_d$-order between $v_l$ and $v_l'$.\label{fig:GShape}}
    \end{figure}

  \item[Case 3]  $P$ has type (3a). In this situation, both $e$ and $e'$ are in the boundary of the same connected component of $\RR^2\smallsetminus \Gamma$. In addition $e$ and $e'$ are also in the boundary of connected components of this complement, unbounded in the directions $(1,1)$ and $(0,-1)$, respectively. This forces $e^{\vee}$ and $(e')^{\vee}$ to contain vertices of the form  $(k,4-k)$ and $(j,0)$, respectively, with $k=0,2$ or $4$ and $j=1,2$ or $3$. The restrictions on the directions of $e^{\vee}$ and $(e')^{\vee}$ combined with the fact that these two edges share a vertex, leaves only one option for $j$, namely $j=1$ as we see on the left of~\autoref{fig:GShape}. In turn, this gives three possible locations for $e^{\vee}$ (drawn as blue dotted segments.)

    We can move $v$ horizontally (both left and right) while remaining in $B$. To decide the stopping points, we determine the relative $\preceq_d$-order of the vertices $v_l$ and $v_l'$ respectively, $v_r$ and $v_r'$. We claim that   $v_l' \prec_d v_l$ and $v_r\prec_d v_r'$, so our movement away from $v$ in both directions stops at points in the relative interior of $e$. Thus,  $B$ has shape (G) as seen in the center of~\autoref{fig:GShape}.

To prove the claims, we argue by contradiction. First, assume $v_r'\preceq_d v_r$, so $v_r' \in B$. The convexity of the connected component of $\RR^2 \smallsetminus \Gamma$ dual to $(1,1)$ and the possible realizations of $v_r^{\vee}$ then force $v_r'$ and $v_r$ to be equal. 
If so, the bitangent line to $\Gamma$ with vertex $v_r'$  intersects $\Gamma$ in a connected set, contradicting our assumption on $B$.  Thus, $v_r\prec_d v_r'$.

Second, assume $v_l\preceq_d v_l'$, so $v_l'\in B$. Then, the same convexity argument used above forces the triangle $(v_l')^{\vee}$ to contain one the points $(0,0)$ or $(0,1)$ as a vertex. In both cases, we can move from $v_l'$ along a ray with direction $(-1,-1)$ while remaining in $B$, as the figure shows. This cannot happen since $B$ is bounded. \qedhere
  \end{description}
   \end{proof}

Our final result classifies  the shapes of the remaining zero-dimensional bitangent classes:

\begin{lemma}\label{lm:zeroCell}
Let $B$ be a zero-dimensional  bitangent class for $\Gamma$ whose unique  member intersects $\Gamma$ in two connected components. Then, $B$ has either shape (A) or (B). 
\end{lemma}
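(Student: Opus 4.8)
The plan is to use the local–move analysis of~\autoref{lm:localMoves} to force the two tangency points into a very rigid combinatorial type, and then to read off the shape. Since $B$ is $0$-dimensional it is a single point $\{v\}$, where $v$ is the vertex of a tropical bitangent line $\Lambda$ to $\Gamma$ with two distinct tangency points $P$ and $P'$, one in each of the two connected components of $\Gamma\cap\Lambda$ (so both have local multiplicity two). Writing $M_P$ and $M_{P'}$ for the local move sets of~\autoref{fig:localMoves}, the condition that $B$ be $0$-dimensional is exactly that $M_P\cap M_{P'}=\{\mathbf 0\}$.

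First I would rule out most tangency types. If $P$ or $P'$ has type (1), (2) or (4), then its local move set contains a closed half-plane through $\mathbf 0$; on the other hand, inspecting~\autoref{fig:localMoves} one sees that every local move set is a non-empty finite union of closed convex cones, and that a line, the type (5a) tripod spanned by $(1,0),(0,1),(-1,-1)$, and the type (6a) cone-plus-vertical-segment each meet every closed half-plane in positive dimension. Hence $M_P\cap M_{P'}$ would be at least $1$-dimensional, a contradiction. So $P$ and $P'$ have type (3a), (3b), (3c), (5a) or (6a). Types (5a) and (6a) force the tangency to be the vertex $v$ of $\Lambda$, so at most one of $P,P'$ is of those two types; moreover, if $P$ has type (5a) (resp.\ (6a)), then $M_P$ contains one of $(1,0),(0,1),(-1,-1)$ (resp.\ both $(-1,-1)$ and $(0,1)$), while the other point $P'$, lying in a different component of $\Gamma\cap\Lambda$, must then be a type (3) tangency in the interior of one of the ends of $\Lambda$, with move set $\RR(1,0)$, $\RR(0,1)$ or $\RR(1,1)$. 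Each of these lines shares a ray with $M_P$, again a contradiction. Therefore \emph{both} $P$ and $P'$ have type (3a), (3b) or (3c).

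By~\autoref{lm:Bunbounded} two tangencies lying in the interior of the same end of $\Lambda$ would make $B$ unbounded, hence $P$ and $P'$ lie on two \emph{distinct} ends; using the $\Sn{3}$-action I may take these to be the horizontal and diagonal ends, so that $M_P=\RR(1,0)$ and $M_{P'}=\RR(1,1)$ indeed meet only at $\mathbf 0$. Let $e$ (horizontal) and $e'$ (of slope one) be the bounded edges of $\Gamma$ carrying the two overlaps. The vertex $v$ is the right endpoint of the overlap along $e$, so there are exactly two possibilities: either $v$ lies in the relative interior of a two-dimensional cell of $\RR^2\smallsetminus\Gamma$, or $v$ is a vertex of $\Gamma$, in which case it is the right vertex of $e$ and cannot also lie on $e'$ --- otherwise $\Star_\Gamma(v)$ would consist of the directions $(-1,0),(1,1),(0,-1)$ of a max-tropical line, forcing $\Gamma$ and $\Lambda$ to coincide near $v$ and making $\Gamma\cap\Lambda$ connected there, against our hypothesis. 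In each case I would finish with a short computation of the induced Newton subdivision around the relevant edges and vertex, analogous to those carried out in~\autoref{prop:oneComponentIntersections} and~\autoref{lm:oneCellBounded}, which identifies $B$ with the point shapes (A) and (B) of~\autoref{fig:2dCells}, respectively.

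The main obstacle is the first elimination step: one has to be careful with the exact shapes of the local move sets in~\autoref{fig:localMoves} --- especially the lower-dimensional ``extra'' moves attached to types (5a) and (6a) --- to be sure that the half-plane, tripod, and cone move sets really do force $M_P\cap M_{P'}$ to be positive-dimensional. Once that bookkeeping is done, the rest of the argument is short.
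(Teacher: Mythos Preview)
Your elimination of tangency types (1), (2), (4), (5a), (6a) via the local-move intersection $M_P\cap M_{P'}$ is sound and in the same spirit as the paper's argument (the paper simply asserts ``two non-transverse tangency points of type (3), each on a different end of $\Lambda$'' since the other cases have already appeared in the higher-dimensional lemmas). The reduction to two type-(3) tangencies on distinct ends is correct.

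The genuine gap is in your final case analysis. You dichotomize into ``$v$ lies in the relative interior of a two-dimensional cell of $\RR^2\smallsetminus\Gamma$'' versus ``$v$ is a vertex of $\Gamma$'', but this omits the case where $v$ lies in the \emph{relative interior of an edge} of $\Gamma$. That omitted case is precisely the one producing shape (B): here one tangency is of type (3a) --- the vertex of $\Lambda$ sits in the interior of one of the two overlap edges --- while the other remains type (3c). Your claim that ``$v$ is the right endpoint of the overlap along $e$'' is not correct in general: when $v$ lies strictly to the right of $e$ the overlap is all of $e$ and its right endpoint is the right vertex of $e$, not $v$.

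Conversely, the case you label as shape (B) --- $v$ a vertex of $\Gamma$, hence one tangency of type (3b) --- is not shape (B) at all; the paper shows this configuration is impossible. A Newton-subdivision analysis of the dual triangle at that vertex forces $\Star_\Gamma(v)$ to be a tropical line, so $\Gamma$ and $\Lambda$ would coincide near $v$ and the two intersection components would merge, contradicting the hypothesis. To repair your proof you need to (i) add the edge-interior case and identify it with shape (B), and (ii) finish ruling out the type-(3b) vertex case rather than assigning a shape to it.
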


\begin{proof} Our assumptions on $B$ imply that the corresponding bitangent line $\Lambda$ has two non-transverse tangency points of type (3) (called $P$ and $P'$), each on a different end of $\Lambda$. Without loss of generality, we assume them to be the horizontal and vertical ends, respectively, and let $e$ and $e'$ be the corresponding bounded edges of $\Gamma$ realizing these two tangencies.  By construction, $P$ is of type (3c), while $P'$ can have types (3a), (3b) or (3c).

  Let $v_l$ and $v_r$ be the left and right vertices of $e$. Analogously, call $v_t'$ and $v_b'$ the top and bottom vertices of $e'$.
  Since $\Lambda\cap \Gamma\subset e\cup e'$, it follows that $v_l$ and $v_b'$ are in the boundary of two connected components of $\RR^2\smallsetminus \Gamma$ unbounded in directions $(-1,0)$ and $(0,-1)$, respectively.   This fact imposes strong restrictions on the dual edges $e^{\vee}$ and $(e')^{\vee}$. Indeed, $e^{\vee}$ and $(e')^{\vee}$ are obtained by joining vertices $(1,i)$ and $(1,i+1)$, respectively, $(j,1)$ and $(j+1,1)$, for $i=0,\ldots, 2$ and $j=0,\ldots,2$.

  There are three possibilities for the tangency type of $P'$. In turn, this depends on whether or not  $B\cap \Gamma =\emptyset$. In the first situation, $B$ has shape (A),  $P'$ has type (3c), and $B$  lies in an unbounded connected component of $\RR^2\smallsetminus \Gamma$ containing $v_r$ and $v_t'$ in its boundary. Thus, such component must be dual to $(2,2)$, and this vertex belongs to  both $v_r^{\vee}$ and $(v_t')^{\vee}$, as we see on~\autoref{fig:NP}.

  In the second case, $P'$ is either of type (3a) or (3b), in particular, $B\cap \Gamma \subseteq e'$. First, assume its type is (3a). If so, then  $P$ lies  in the relative interior of $e'$ and $B$ has shape (B). In turn,  $v_r$  and $e'$ lie in the boundary of the same connected component of $\RR^2\smallsetminus\Gamma$. The information on $(e')^{\vee}$ and $e^{\vee}$ forces this component to be dual to $(2,1)$, so $j=2$. The value of $i$ is unrestricted, but $(2,1)$ must be a vertex of the triangle $v_r^{\vee}$, as~\autoref{fig:NP} shows.

  Finally, assume $P'$ has type (3b), i.e., $P'=v_t'$. We argue this cannot happen by exploiting the partial information on the dual subdivision to $\Gamma$ we have collected so far. Indeed, suppose that $P=v_t'$. Note that the open segment $(v_r,v_t')$ of the horizontal end of $\Gamma$ lies in one connected component of $\RR^2\smallsetminus \Gamma$.   Since $(2,2)\in (v_t')^{\vee}$, an analysis of the three possibilities for the dual triangle $(v_t')^{\vee}$ ensures that $v_r$ and $e'$ lie in the same connected component of $\RR^2\smallsetminus \Gamma$, so $j=2$ as before. This fixes $(v_t')^{\vee}$ and shows that $\Star_{\Gamma}(v_t')$   is a tropical line. This cannot happen by the horizontal alignment of $v_r$ and $v_t'$.
\end{proof}

Our combinatorial classification of bitangent shapes yields the following convexity result for bitangent classes summarized in~\autoref{thm:combclass}:

\begin{corollary}\label{cor:MinConvexSets} The bitangent classes associated to a smooth tropical plane quartic $\Gamma$ are min-convex sets. Out of 41 possible $\Sn{3}$-representatives of bitangent shapes, only eight of them are finitely generated tropical polytopes, namely (A) through (G) and (W).
\end{corollary}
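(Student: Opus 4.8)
The plan is to obtain both statements directly from the classification completed in this section. By~\autoref{prop:mult4intersection}, \autoref{prop:oneComponentIntersections} and~\autoref{prop:shapesfortwoconnectedcomp} (the last established through Lemmas~\ref{lm:twoCellUnbounded}--\ref{lm:zeroCell}), every bitangent class of $\Gamma$ coincides, after applying a suitable element of $\Sn{3}$, with one of the 41 shapes in~\autoref{fig:2dCells}. By~\autoref{cor:polyhedralStructure} each shape is a connected polyhedral complex, and by~\autoref{lm:unboundedCellsAndDirs} (together with the slope labels in~\autoref{fig:2dCells}) every edge of such a complex has slope $0$, $1$ or $\infty$ and every unbounded ray points in one of the three directions $(-1,-1)$, $(1,0)$, $(0,1)$ of a min-tropical line. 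Since the min-tropical segment between two points of $\RR^2$ is a concatenation of at most two ordinary segments, each of slope $0$, $1$ or $\infty$, for polyhedral complexes of this form min-tropical convexity reduces to a finite check --- on pairs of vertices and on the finitely many boundary pairs determined by the cells --- equivalently, to writing the complex as a finite intersection of min-tropical half-planes. I would run this inspection shape by shape through~\autoref{fig:2dCells}; this is the ``simple inspection'' announced in~\autoref{sec:introduction} and establishes the first sentence of the corollary (and the corresponding clause of~\autoref{thm:combclass}).

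The delicate point is the interaction with the $\Sn{3}$-action: the transposition $\tau_0$ preserves min-tropical convexity, but $\tau_1$ does not (it fails to fix the lineality direction $(1,1)$), so an $\Sn{3}$-translate of a min-tropically convex set need not be min-tropically convex. Hence, to conclude that \emph{every} bitangent class of \emph{every} smooth tropical plane quartic is min-tropically convex, the verification above has to be performed not only for the 41 orbit representatives but for all shapes in their $\Sn{3}$-orbits; $\tau_0$-invariance cuts the work roughly in half, and the content of the corollary is that each of these shapes really is min-tropically convex. I expect this --- carrying out the inspection uniformly over the whole orbit of each representative, rather than over the 41 representatives alone --- to be the step most easily overlooked.

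For the second statement, recall that a finitely generated tropical polytope, being a min-tropical convex hull of finitely many points, is bounded, while conversely any bounded, closed, polyhedral, min-tropically convex subset of $\RR^2$ is the min-tropical convex hull of the finitely many vertices of its cell structure, hence a tropical polytope. Thus, among the 41 shapes, the finitely generated ones are exactly the bounded ones, and it only remains to single these out --- information already contained in the proofs of the classifying results. The point-shapes (A) and (B) arise in~\autoref{lm:zeroCell}, the point-shape (C) and the bounded segment (D) in~\autoref{prop:oneComponentIntersections}, the bounded one-dimensional shapes (E), (F), (G) in~\autoref{lm:oneCellBounded}, and the bounded two-dimensional shape (W) --- the closed parallelogram $\mathcal{P}$ occurring when neither $e$ nor $e'$ meets $\mathcal{P}$ --- in~\autoref{lm:twoCellBounded}. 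Every other shape is unbounded: by~\autoref{prop:mult4intersection}, \autoref{lm:twoCellUnbounded}, \autoref{lm:oneCellUnbounded}, and the unbounded cases inside~\autoref{prop:oneComponentIntersections} and~\autoref{lm:twoCellBounded}, it contains a ray in one of the directions $(-1,-1)$, $(1,0)$, $(0,1)$ (cf.~\autoref{lm:Bunbounded} and~\autoref{lm:unboundedCellsAndDirs}). Therefore exactly the eight shapes (A) through (G) and (W) are finitely generated tropical polytopes, which is the final assertion.
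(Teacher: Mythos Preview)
Your overall approach---verify min-convexity by inspecting the 41 shapes in~\autoref{fig:2dCells}, and identify the finitely generated ones as exactly the bounded shapes---matches the paper's, and your list of the eight bounded shapes is correct.

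However, your concern about the $\Sn{3}$-action is based on a misreading of the convexity notion in play. Since shapes such as (A) are single points, the relevant min-tropical convexity cannot be the one on $\RR^2$ with lineality $\RR\cdot(1,1)$; rather, one views $\RR^2$ as the affine chart $[x:y:0]$ of $\TP^2=(\RR\cup\{\infty\})^3/\RR\cdot\mathbf{1}$ and uses min-tropical convexity there. In homogeneous coordinates the generators of $\Sn{3}$ from~\autoref{tab:S3Action} act by permuting the three entries (for instance $\tau_1$ swaps the first and third, since $[x:y:0]\mapsto[0:y:x]=[-x:y-x:0]$), and any coordinate permutation commutes with forming min-tropical segments $[\min(\lambda+a_i,\mu+b_i)]_i$. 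Hence the whole $\Sn{3}$-action preserves min-tropical convexity, and checking the 41 representatives suffices; your remark that $\tau_1$ ``fails to fix the lineality direction $(1,1)$'' conflates the vector $(1,1)$ in the affine chart with the genuine lineality $(1,1,1)$ in $\RR^3$, which every permutation fixes. The extra orbit-wide verification you propose is therefore unnecessary.
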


\begin{remark}\label{rm:abstractVsEmbedded}
  By~\cite[Theorem 14 and Proposition 15]{haa.mus.yu:12}, the linear systems of all effective tropical theta characteristics on  the skeleton of $\Gamma$ are finitely generated tropical convex sets. 
  \autoref{cor:MinConvexSets} highlights a fundamental difference between complete linear systems on abstract and embedded tropical curves. 
\end{remark}

The proof of 
{Propositions}~\ref{prop:mult4intersection} and~\ref{prop:oneComponentIntersections}, and 
{Lemmas}~\ref{lm:twoCellUnbounded} through~\ref{lm:zeroCell} yield the following 
combinatorial
consequence which plays a crucial role in the lifting  results discussed in~
{Sections}~\ref{sec:lift-trop-bitang} and~\ref{sec:real-lift-trop}:

\begin{corollary}\label{cor:combClass} The presence of a bitangent shape  for $\Gamma$ partially determines the dual subdivision to $\Gamma$. \autoref{fig:NP} summarizes our findings for chosen $\Sn{3}$-representatives of each shape. Restrictions for non-representative shapes arise  via  the maps $\tau_0$ and $\tau_1$ from~\autoref{tab:S3Action}.
\end{corollary}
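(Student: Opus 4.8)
The plan is to treat this as a bookkeeping consequence of the classification already carried out, so there is essentially nothing new to prove. I would revisit each of the 41 $\Sn{3}$-representative shapes, organized exactly as in \autoref{tab:classificationShapes}, and extract from the corresponding classification argument the partial description of the dual Newton subdivision to $\Gamma$ that was forced along the way. In every case the constraints come from the same three ingredients used in those proofs: smoothness of $\Gamma$, the hypothesis that its Newton polygon is the standard simplex of side length four, and the convexity of the bounded and unbounded connected components of $\RR^2 \smallsetminus \Gamma$.

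Concretely, for shape (II) I would read off from the proof of \autoref{prop:mult4intersection} and \autoref{fig:ShapeII} that $\Gamma$ carries a bounded edge $e$ of direction $(3,-1)$ with $e^{\vee}$ joining $(0,0)$ and $(1,3)$, together with the three triangles dual to $v_l$, $v_r$ and $v''$. For (C), (D), (L), (L'), (O), (P), (Q), (Q'), (R), (S) I would collect the data recorded in the proof of \autoref{prop:oneComponentIntersections} and in Figures~\ref{fig:ShapeDLetc} and~\ref{fig:localMovesDLetc}: the edge joining $(1,0)$ and $(1,1)$, the triangle dual to $v$ with vertices $(1,0),(1,1),(2,2)$, the admissible triangles dual to $v_0$ with third vertex $(0,i)$, and the case-dependent triangles dual to $v''$, $v_0$ and $v_0'$. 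For the remaining shapes, all of which have members meeting $\Gamma$ in two connected components, I would proceed through Lemmas~\ref{lm:twoCellUnbounded}--\ref{lm:zeroCell}, in each of which the relevant dual cells are displayed in the accompanying figures: \autoref{fig:partialNPTelal} for (T), (T'), (T''), (U), (U'), (V); the three partial subdivisions on the left of \autoref{fig:nightmare}, together with the admissible positions of $e^{\vee}$ and $(e')^{\vee}$, for (W) through (HH); Figures~\ref{fig:Cases1And2_1dim}, \ref{fig:Case3_1dim}, \ref{fig:Case4_1dim} and~\ref{fig:Case6_1dim} for (H), (H'), (I), (J), (K), (M), (N); Figures~\ref{fig:EFShapes} and~\ref{fig:GShape} for (E), (F), (G); and the analysis inside the proof of \autoref{lm:zeroCell} for (A) and (B). Along the way I would also record, for each edge of $\Gamma$ carrying a tangency, which of the local tangency types of \autoref{fig:localTangencies} occurs: this is the edge color coding in \autoref{fig:NP}, and it is exactly the data the multiplicity formulas for types (4) and (6a) in \autoref{tab:liftingMultiplicities} depend on. Assembling all of this yields the content of \autoref{fig:NP}.

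Finally, to pass from representative shapes to the rest of each $\Sn{3}$-orbit I would invoke \autoref{rm:actionBySn} and \autoref{tab:S3Action}: since $\Sn{3}$ acts on tropical smooth plane quartics, on their bitangent classes and on the associated shapes compatibly with its action on the Newton polytope, transporting the constraints obtained for a representative through the appropriate composition of $\tau_0$ and $\tau_1$ gives the constraints for every other shape in its orbit. The main obstacle I anticipate is not a new argument but the completeness-and-consistency check: verifying that every dual cell asserted in \autoref{fig:NP} was genuinely forced — rather than merely illustrated — during the classification, and that the tangency-type labels there are compatible with the ones used in \autoref{sec:lift-trop-bitang}.
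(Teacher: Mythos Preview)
Your proposal is correct and matches the paper's approach exactly: the paper presents this corollary as an immediate combinatorial consequence of the proofs of Propositions~\ref{prop:mult4intersection} and~\ref{prop:oneComponentIntersections} and Lemmas~\ref{lm:twoCellUnbounded}--\ref{lm:zeroCell}, with no separate argument given. Your plan of systematically extracting the dual-subdivision constraints from each of those proofs and transporting them via $\tau_0,\tau_1$ to non-representative shapes is precisely what the paper intends.
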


\begin{remark}\label{rm:NPColorCoding} Next, we explain the color coding in~\autoref{fig:NP} (which is available on the online version of this article.) Solid black edges must be part of the given subdivision but do not contain any tangency points. Dotted colored edges indicate potential edges, one of which must occur. The red, green and purple ones correspond to horizontal, vertical and diagonal type (3a), (3b) or (3c) tangencies, while blue edges come from type (1) or (2) tangencies. Pink edges are responsible for a combination of a non-proper tangency along a bounded edge, followed by in unbounded cell of $B$ with the same direction.
  \pagebreak

  \thispagestyle{empty}
\begin{landscape}
  \begin{figure}
    \includegraphics[scale=0.48]{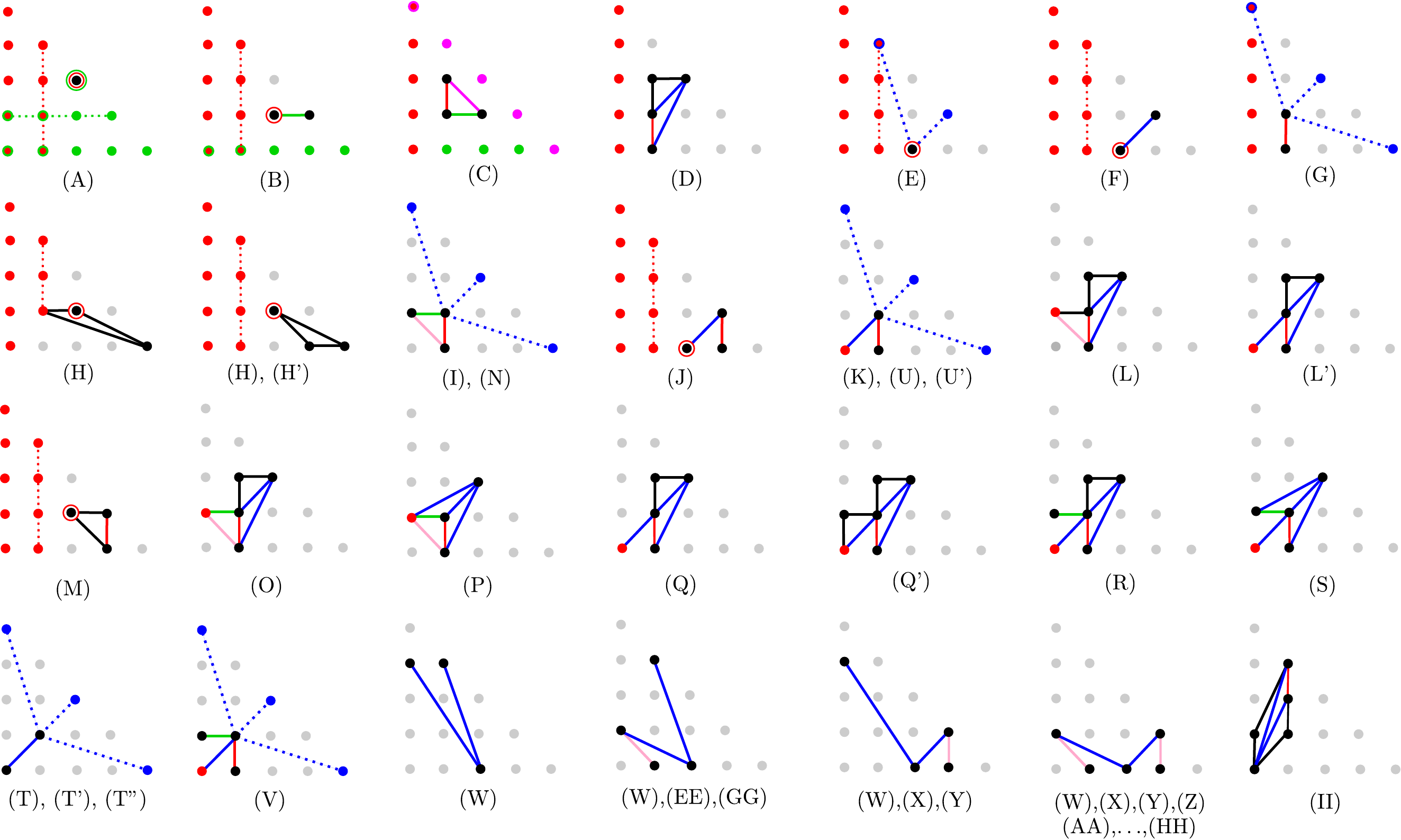}
    \caption{Partial dual subdivisions corresponding to all 41 bitangent classes to $\Gamma$.\label{fig:NP}}
  \end{figure}
  \end{landscape}

  Similarly, black vertices are always present, whereas colored vertices are either endpoints of the corresponding optional dotted edges or they form a triangle with an edge of the same color. Finally, colored circled black dots indicate a connected component of $\RR^2\smallsetminus \Gamma$ that contains the endpoint of a type (3c) tangency (with the same color) in its closure.
\end{remark}

\begin{remark}\label{rem:CNonGeneric} The partial dual subdivision to $\Gamma$ induced by a singleton bitangent shape $\{v\}$ of type (C) as depicted in~\autoref{fig:NP} assumes $\Gamma$ is generic in the following sense. The minimum length among the three edges adjacent to the vertex $v$ with $\Star_v(\Gamma) = \Lambda$ must be attained for exactly one edge. If this were not the case, the points $(0,0)$ and $(4,0)$ should also be colored in green, since they could form a triangle with the edge joining $(1,1)$ and $(2,1)$.
  \end{remark} 

\section{Lifting tropical bitangents}\label{sec:lift-trop-bitang}

The techniques developed by Len and the second author in~\cite{len.mar:20} produce concrete formulas for lifting tropical bitangents over the field $\KK$. We are interested in extending these results to \emph{real} lifts, that is to compute the number of bitangent triples $(\ell,p,p')$ associated to a fixed tropical bitangent where $\ell$ is defined over $\KKr$. In this section, we determine the local lifting multiplicities over $\KKr$ as in~\cite{len.mar:20}. These refined formulas will be used  in~\autoref{sec:real-lift-trop}  to prove~\autoref{thm:reallifting}.

We start by reviewing the lifting multiplicities over $\KK$ for each tangency type. The combinatorial classification developed in~\autoref{sec:comb-class-bitang} allows us determine all possible combinations of local tangencies that can occur within each class by imposing restrictions on the dual subdivision to $\Gamma$ where a given shape can arise.  Most notably, the proofs of~
 {Propositions}~\ref{prop:mult4intersection},~\ref{prop:oneComponentIntersections} and~\ref{prop:shapesfortwoconnectedcomp} combined with~\autoref{tab:liftingMultiplicities} provide extra information regarding which points of $B$ admit lifting to a $\KK$-bitangent triple $(\ell, p, p')$. We record this data by assigning this lifting multiplicity as the \emph{weight} of the point in $B$. Its value can be $0,1,2$ or $4$.

 Here is the precise statement that justifies the weight assignment in~\autoref{fig:2dCells} under the genericity assumptions from~\autoref{rem:genericity}.

 \begin{theorem}\label{thm:combinationsOfTangencies} There are 24 combinations of distinct unordered pairs of local tangencies that arise from tropical bitangents to generic smooth tropical plane quartics (see~\autoref{tab:possibleTangencyPairs}.) Furthermore, only 14 of them lift to  bitangent triples defined over $\KK$.

   In turn, five out of the 24 pairs admit a multiplicity four tangency. Their types are (1), (3b), (4), (5b) and (6b). Assuming the plane quartic  has no hyperflexes, only the last two lift to a $\KK$-bitangent triple. They are members of a  bitangent class of shape (II).
\end{theorem}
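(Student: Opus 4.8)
The plan is to establish Theorem~\ref{thm:combinationsOfTangencies} as a direct corollary of the combinatorial classification in~\autoref{sec:comb-class-bitang}, harvesting the data already assembled in the proofs of~
{Propositions}~\ref{prop:mult4intersection},~\ref{prop:oneComponentIntersections} and~\ref{prop:shapesfortwoconnectedcomp}, together with the local classification of tangencies in~\autoref{fig:localTangencies}, \autoref{lm:transverseTangencies} and~\autoref{rem:mult4}. The bitangent line $\Lambda$ has two tangency points (possibly coinciding when $\Lambda\cap\Gamma$ is connected and transverse, which is the multiplicity-four type (5b)/(6b) case), and each lies in the relative interior of one of the three ends of $\Lambda$ or at its vertex. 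First I would enumerate, for each of the 41 shapes in~\autoref{fig:2dCells}, the unordered pair of local tangency types realized by a generic member (and by the boundary members, since weights on~\autoref{fig:2dCells} are recorded cell-by-cell). This is a finite bookkeeping task: each shape was constructed in~\autoref{sec:comb-class-bitang} precisely by tracking the tangency types of $P$ and $P'$ as the vertex $v$ moves, so the pairs can simply be read off. Collating these and discarding repetitions produces the list of 23 pairs; organizing it as~\autoref{tab:possibleTangencyPairs} (rows indexed by one tangency type, columns by the other, as in~\autoref{tab:classification1DUnboundedShapes}) makes the count transparent.

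\textbf{Next,} I would verify the liftability count. By~\autoref{thm:liftingProducts}, when $\Gamma\cap\Lambda$ is disconnected the lifting multiplicity of $\Lambda$ is the product of the two local lifting multiplicities, which are tabulated in~\autoref{tab:liftingMultiplicities}: type (1) contributes factor $0$, types (2), (3a), (3b), (3c), (5a), (4), (6a) contribute positive factors. Thus a pair lifts over $\KK$ if and only if neither of its two local types is (1) (in the multiplicity-two regime) — this kills exactly those pairs containing a type (1) tangency. Cross-referencing with the list of 23 pairs leaves the stated 13. For the connected-intersection cases one uses instead~\autoref{rm:actionBySn}, \autoref{rem:mult4}, and the results of~\autoref{sec:appendix1} (\autoref{thm:liftingMult4}, \autoref{tab:liftingMult4}, \autoref{pr:mult4type3b}): the connected transverse case is type (5b), which lifts with multiplicity one, and the connected non-transverse vertex-on-vertex horizontal-overlap case with multiplicity four is type (6b), which also lifts with multiplicity one, both inside shape (II) by~\autoref{prop:mult4intersection}.

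\textbf{For the second paragraph of the statement,} I would isolate from the enumeration those pairs in which at least one tangency carries multiplicity four. By~\autoref{rem:mult4} a multiplicity-four tangency forces the relevant edge of $\Gamma$ (or the two edges adjacent to the vertex, in case (6b)) to have one of a short list of prescribed directions — $(-3,1)$ for types (1), (2), (4), (5b), the pair $(3,-1),(-2,1)$ for (6b), and the special right-most-vertex-of-a-horizontal-edge configuration for (3b). Matching these against the shape-by-shape data shows exactly five of the 23 pairs accommodate a multiplicity-four tangency, of types (1), (3b), (4), (5b), (6b). Since $V(q)$ has no hyperflexes by~\autoref{rem:genericity}(ii), the argument following~\autoref{thm:liftingProducts} (``none of the other potential multiplicity four tropical tangencies will lift to a bitangent triple'') eliminates the (1), (3b) and (4) cases — here I would spell out that a multiplicity-four tangency of type (1), (3b) or (4) that lifted would force the classical bitangent to meet $V(q)$ with contact order at least four at a point, i.e.\ a hyperflex. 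The surviving (5b) and (6b) cases lift with multiplicity one as above, and their hosting shape is (II) by~\autoref{prop:mult4intersection} and~\autoref{prop:oneComponentIntersections}.

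\textbf{The main obstacle} I anticipate is not any single deep argument but the completeness and correctness of the enumeration: one must be certain that every tangency pair appearing across all 41 shapes (including transient types realized only on lower-dimensional boundary cells, which is where, e.g., the type (5a) $\leftrightarrow$ type (3) interactions and the multiplicity-four vertices show up) has been captured, and that~\autoref{tab:possibleTangencyPairs} neither double-counts nor omits. I would mitigate this by cross-checking the enumeration against three independent sources of the same data — the weights displayed in~\autoref{fig:2dCells}, the case structure in the proofs of~
{Lemmas}~\ref{lm:twoCellUnbounded} through~\ref{lm:zeroCell}, and the edge-color coding of~\autoref{fig:NP} via~\autoref{rm:NPColorCoding} — so that the numbers $23$ and $13$ (and the sublist of $5$) are each confirmed in more than one way; the actual proof text will then be a short argument citing these tables rather than a recomputation.
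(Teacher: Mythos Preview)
Your overall strategy --- enumerate tangency pairs shape-by-shape, then decide liftability from local multiplicities --- matches the paper's approach, but your liftability criterion is wrong and this is a genuine gap. You claim that a pair lifts over $\KK$ if and only if neither local type is (1), because type (1) is the only entry in~\autoref{tab:liftingMultiplicities} with local multiplicity zero. This misses part~(ii) of~\autoref{thm:liftingProducts}: when $\Gamma\cap\Lambda$ is disconnected but both tangencies lie in the relative interior of the \emph{same} end of $\Lambda$, the lifting multiplicity is zero regardless of the local factors. The paper singles out precisely this mechanism in its proof, using the pair $((2),(3b))$ as the worked example: that pair occurs at certain vertices of shapes (X), (Z), (AA), (EE)--(HH), the two tangencies sit on the same end of $\Lambda$, and hence $\Lambda$ does not lift despite both local multiplicities being positive. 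The pair $((3b),(3c))$ is excluded for the same reason. Under your criterion both of these would be counted as liftable, and your count of 13 would come out wrong.

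A secondary issue: your sketch for ruling out multiplicity-four tangencies of types (3b) and (4) via ``a lift would force a hyperflex'' is not the paper's argument and is not obviously correct as stated. A tropical tangency of stable multiplicity four can lift to a bitangent triple $(\ell,p,p')$ with $p\neq p'$ but $\Trop\,p=\Trop\,p'$; this is not a hyperflex. The paper instead proves non-liftability for these two types by direct computation on the local initial-form system (Propositions~\ref{pr:mult4type4} and~\ref{pr:mult4type3b} in~\autoref{sec:appendix1}), showing the system is inconsistent. If you want to keep a hyperflex-style argument you would need to additionally rule out the $p\neq p'$, $\overline{p}=\overline{p'}$ case, which is exactly the content of~\autoref{thm:liftingMult4} and is nontrivial.
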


  \begin{table}[tb]
  \begin{tabular}{|c|c|c|c|c|c|c|c|c|}
    \hline   types & (1) & (2) & (3a) & (3b) & (3c) & (4) & (5a) & (6a)  \\
    \hline (1) & $\checkmark$ & $\checkmark$ & $\checkmark$ & $\checkmark$ & $\checkmark$ & $\checkmark$ & $\checkmark$ & $\checkmark$  \\
    \hline (2) & &   {$\boxed{\checkmark}$} & {$\boxed{\checkmark}$} & $\checkmark$ & {$\boxed{\checkmark}$} & {$\boxed{\checkmark}$} & {$\boxed{\checkmark}$} & {$\boxed{\checkmark}$} \\
    \hline (3a) &  & & & & {$\boxed{\checkmark}$} & & &   \\ 
    \hline (3b) & & & & {$\boxed{\checkmark}$} & $\checkmark$ & & &    \\
    \hline (3c) & & & & & {$\boxed{\checkmark}$} & {$\boxed{\checkmark}$}& {$\boxed{\checkmark}$}& {$\boxed{\checkmark}$}    \\
    \hline
  \end{tabular}
      \caption{Combinations of unordered pairs of distinct local tangencies that arise from tropical bitangents to generic smooth tropical plane quartics $\Gamma$. The boxed ones are the only ones arising from classical bitangent triples. 
        \label{tab:possibleTangencyPairs} }
\end{table}

The proof of the multiplicity four claim in the theorem is given in~\autoref{sec:appendix1}. The statement regarding the multiplicity two tangencies follows from~\autoref{tab:liftingMultiplicities} and~\autoref{thm:liftingProducts}. As an example, we show why the unordered pair of tangency types ((2),(3b)) is never realizable over $\KK$ under our genericity assumptions. The remaining non-liftable combinations can be argued by the same methods.
Indeed,~\autoref{cor:combClass} and~\autoref{rm:NPColorCoding} imply that a pair of  (3b) and (2) local tangencies only occurs at certain vertices of shapes (X), (Z), (AA), and (FF) through (HH). The corresponding bitangent lines $\Lambda$ have two properties: first,   the set $\Gamma \cap \Lambda$ is disconnected and, second, the tangency points lie in the relative interior of the same end of $\Lambda$. Thus, by~\autoref{thm:liftingProducts}~
{(ii)} we conclude that $\Lambda$ does not lift to a $\KK$-bitangent triple.
\smallskip

In order to address liftings over $\KKr$, we start by reviewing the lifting formulas over $\KK$ from~\cite{len.mar:20}. Recall that we assume our bitangent lines $\ell$ are given by an equation of the form $y+ m + nx =0$, as in~\eqref{eq:line}.
By~\autoref{lm:multivariateHensel}, each bitangent lift $(\ell,p,p')$  is uniquely determined by the initial forms of $m,n$, and the coordinates of $p$ and $p'$. 
In turn,  the formulas for the initial forms of the coefficients $m$ and $n$ of $\ell$ are either rational functions of the local equations of $q$ at the tangency points (giving local lifting multiplicity one), or they involve square roots in the coefficients of these local equations (and so the lifting multiplicity is two.) In the first case, the initial forms will automatically be real because~$q\in \KKr[x,y]$ and the resulting bitangent lift will be defined over  $\KKr$.

Thus, in order to determine lifting multiplicities over $\KKr$, we need only study combinations of tropical bitangents corresponding to  vertices of bitangent classes with weight 2 or 4. In this situation, the local lifting formulas will involve square roots of the local coefficients of $q$. 
This allows us to restrict our attention to local liftings formulas for five tangency types: (3a), (3c), (5a), (4) and (6a). Each of these cases will yield either 0 or 2 lifts over $\KKr$. Applying these results for the higher weight vertices of each shape will yield a total of zero or four real lifts for each bitangent class, as stated in~\autoref{thm:reallifting}.
Type (3b) tangencies that  lift over $\KK$ only occur for shape (C). We discuss them in~\autoref{pr:shapeC}.

As discussed in~\autoref{sec:lift-bitang} the local system defined  by a multiplicity two tangency point determines either $\overline{m}$, $\bar{n}$ or $\overline{m}/\bar{n}$ depending on the end of $\Lambda$ containing the tangency. 
Our formulas for certifying the realness of one of these quantities will depend on the positivity of the product of the initial forms of certain coefficients $a_{ij}$ of $q$ and other coefficients of $\ell$. We must consider eight different combinations of tangency types and ends carrying the given tangency. We list them in~\autoref{tab:localLifts3a3c}.  \autoref{fig:LocalNP} shows the cells in the dual subdivision to $\Gamma$ that are involved in each case, using the same color-coding as in~\autoref{fig:NP}. These cells indicate the specific coefficients  $a_{ij}$'s that feature in each formula.

Throughout, we write $s_{ij}$ for the sign of the initial form $\overline{a_{ij}}\in \RR$.
Our next results explain all the formulas in~\autoref{tab:localLifts3a3c}, starting with types (3a) and (3c):

\begin{proposition}\label{pr:realLifts3a3c} Assume $V(q)$ is defined over $\KKr$. Let $\Lambda$ be a tropical bitangent to $\Gamma$ with a local tangency $P$ of type (3a) or (3c) along a horizontal (respectively, vertical) edge $e$ of $\Gamma$. Then, there are either zero or two real solutions for $m$ (respectively, $m/n$) for the local equations in $(m,n,p)$ determined by the tropical tangency $P:=\Trop\,p$.

  Necessary and sufficient conditions for the existence of a real solution are determined by the positivity of specific polynomials in the signs of $\bar{n}$ and  the four coefficients $\overline{a_{ij}}$ from~\eqref{eq:q} appearing in the dual triangles to the vertices of $e$ (seen in~\autoref{fig:LocalNP}).  The precise formulas are given in~\autoref{tab:localLifts3a3c}.
  Furthermore, whenever real solutions for $m$ (respectively, $m/n$) exist, then the sign of its initial form  equals $s_{uv}s_{u,v+1}$ (respectively, $s_{uv}s_{u+1,v}$). Here, $(u,v)$ is the left-most (respectively, bottom) vertex of $e^{\vee}$.
\end{proposition}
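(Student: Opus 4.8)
By the $\Sn{3}$-symmetry of the statement it suffices to treat a type (3a) or (3c) tangency $P$ along a \emph{horizontal} edge $e$ of $\Gamma$ and to solve for $m$; the vertical case follows by applying the involution $\tau_0$ from~\autoref{tab:S3Action}, which interchanges horizontal and vertical edges and replaces $m$ by $m/n$. First I would set up the local picture exactly as in~\autoref{sec:lift-bitang}: after a translation and rescaling we may assume $P=(0,0)$, $q\in R[x,y]\smallsetminus\mathfrak{M}R[x,y]$, the vertex of $\Lambda$ is $(\lambda,0)$ with $\lambda>0$, $\val(m)=0$, $\val(n)=-\lambda$, and $\overline{p}=(\bar x,\bar y)\in(\resK^*)^2$. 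Since $P$ lies in the relative interior of $e$, the dual cell $p^{\vee}$ equals the (vertical) dual edge $e^{\vee}$, whose endpoints I write as $(u,v)$ and $(u,v+1)$, so that $\overline q=\bar x^{u}\bar y^{v}\,(\overline{a_{uv}}+\overline{a_{u,v+1}}\,\bar y)$ while, as noted right after~\eqref{eq:localEqns}, $\overline\ell=\bar y+\overline m$. Then the first two equations of~\eqref{eq:localEqns} force $\bar y=-\overline{a_{uv}}/\overline{a_{u,v+1}}$ and hence
\[
\overline m=-\bar y=\frac{\overline{a_{uv}}}{\overline{a_{u,v+1}}},\qquad \operatorname{sign}(\overline m)=s_{uv}\,s_{u,v+1},
\]
which already establishes the last assertion of the Proposition. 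A direct computation gives $\overline W=\partial_x\overline q$, which vanishes on the locus $\overline q=0$; thus the local system is non-transverse and~\autoref{lm:multivariateHensel} does not apply as it stands, which is exactly why a genuine lifting obstruction (and square roots) can appear.

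\textbf{The modification step.} To lift $(\overline m,\bar x,\bar y)$ to an actual local solution $(m,n,p)$ I would, following~\cite[Lemma 3.7, Proposition 3.7 and Remark 3.8]{len.mar:20}, resolve the degeneracy by a tropical modification: the support of the local equations is enlarged to record the next-order behaviour of $q$ along $e$, namely the monomials indexed by the third vertices of the triangles $v_l^{\vee}$ and $v_r^{\vee}$ dual to the endpoints $v_l,v_r$ of $e$, together with the slope $n$ (which governs the transverse, $x$-direction behaviour of $\ell$ at $P$, and hence enters the formulas through $\bar n$). Carrying out this computation — uniformly over the sub-cases (3a) and (3c) and over the several admissible positions of the outer vertices of $v_l^{\vee}$, $v_r^{\vee}$, which merely change which coefficients $a_{ij}$ appear, precisely as displayed in~\autoref{fig:LocalNP} — reduces the existence and number of local lifts to the solvability over $\resK$ of a single quadratic $Q(T)=0$ whose coefficients are explicit polynomials in the four initial forms $\overline{a_{ij}}$ attached to $v_l^{\vee}\cup v_r^{\vee}$ and in $\bar n$.

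\textbf{Counting and signs.} Since $\KKr$ is real closed and complete, $\resK=\RR$, so $Q\in\RR[T]$ has either zero or two roots in $\resK$, and it has a root exactly when its discriminant $D$ is non-negative; as the tangency type has local lifting multiplicity two over the algebraically closed field $\KK$ (\autoref{tab:liftingMultiplicities}), we have $D\neq0$, so a real local lift exists if and only if $D>0$. Because $q$ has coefficients in $\KKr$, every $\overline{a_{ij}}$ and $\bar n$ lies in $\RR$, and the computation of $D$ exhibits it as, up to a positive rational factor, a monomial in these quantities; hence $\operatorname{sign}(D)$ is the corresponding product of the signs $s_{ij}$ and of $\operatorname{sign}(\bar n)$, which is the condition tabulated in~\autoref{tab:localLifts3a3c}. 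When this condition holds there are exactly two local solutions, and by the first paragraph both of them satisfy $\overline m=\overline{a_{uv}}/\overline{a_{u,v+1}}$. I expect the only genuine work to be the explicit modification of the second step: one must re-run the computation of~\cite[Proposition 3.7]{len.mar:20} while keeping every coefficient over $\RR$ and tracking signs, and then check that in each configuration of~\autoref{fig:LocalNP} the discriminant indeed reduces to a monomial in the relevant $\overline{a_{ij}}$ and $\bar n$; the remaining bookkeeping (the $(3a)$ versus $(3c)$ split and the $\tau_0$-reduction of the vertical case) is routine.
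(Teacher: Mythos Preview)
Your overall strategy matches the paper's: reduce to the horizontal case by $\tau_0$, read off $\overline m=\overline{a_{uv}}/\overline{a_{u,v+1}}$ (hence its sign) from the initial equations, then resolve the degeneracy of~\eqref{eq:localEqns} by a tropical modification/re-embedding following~\cite[Proposition~3.7, Lemma~3.9]{len.mar:20}, obtaining two solutions for the next-order term $m_2$ of the form $\overline{m_2}=\pm\frac{2}{\overline{\tilde a_{u,1}}}\sqrt{\overline{\tilde a_{u-1,0}}\,\overline{\tilde a_{u+1,0}}}$, so that realness is equivalent to positivity of the radicand, which in turn is a monomial in the relevant $\overline{a_{ij}}$ (and, for (3a), in $\bar n$).

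The one place where you underestimate the work is the ``(3a) versus (3c) split'' that you call routine. In the paper these are handled by genuinely different re-embeddings. For (3c) one uses $z=y+m_1$ with $m_1=a_{u,v}/a_{u,v+1}$ and the radicand involves only the $\overline{a_{ij}}$. For (3a) one must re-embed via $z=y+m_1+n\,x$, so $\bar n$ enters the coefficient $\tilde a_{u+1,0}$ and hence the sign condition (this is why $\operatorname{sign}(\bar n)$ appears only in the (3a) rows of~\autoref{tab:localLifts3a3c}). Moreover, in the (3a) case the naive choice $m_1=a_{u,v}/a_{u,v+1}$ does not by itself force $(u,0)$ to be absent from the Newton subdivision of $\tilde q$; one needs a tail correction $m_1'\in\mathfrak M$, and the paper isolates this as a separate lemma (\autoref{lem:chooseTail3a}) showing that the requisite univariate polynomial has a root in $\mathfrak M\cap\KKr$. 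Your proposal would go through once you supply this argument, but it is a real step rather than bookkeeping.
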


\begin{figure}
  \includegraphics[scale=0.45]{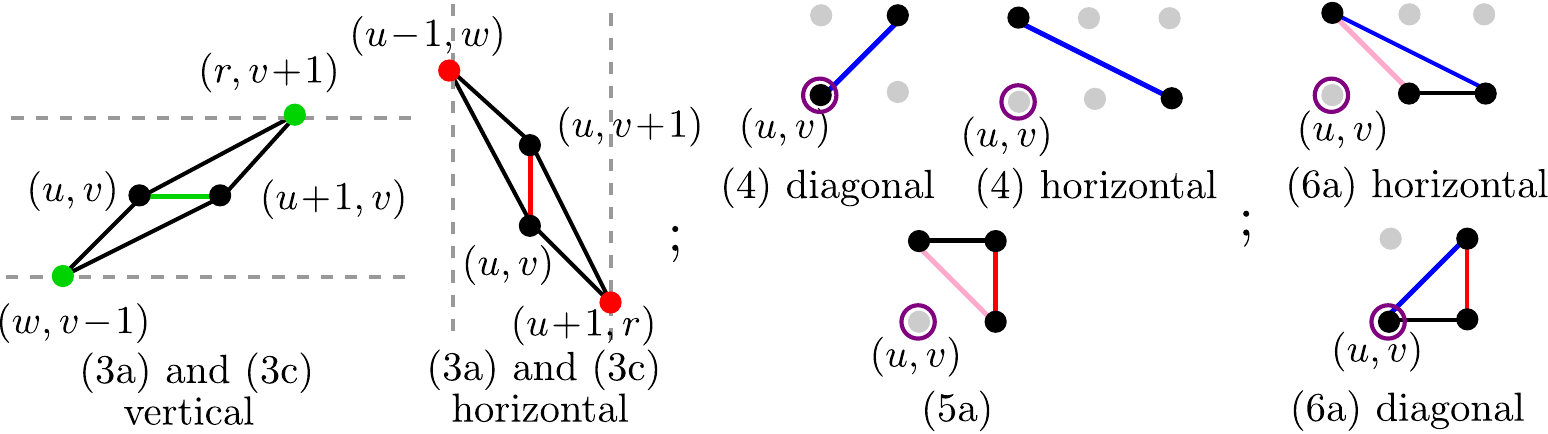}
  \caption{Relevant cells in the Newton subdivision of $q$ and their position in $\ZZ^2$, for each tangency type and end of $\Lambda$. For cases  (4), (5a) and (6a), this data can be deduced from a single vertex, labeled by $(u,v)$.\label{fig:LocalNP}}
\end{figure}

\begin{proof}   Without loss of generality we assume $q\in R[x,y]\smallsetminus \mathfrak{M}\,R[x,y]$. 
  The statement and formulas when the edge $e$ is vertical follow from those for the horizontal case by using the map $\tau_0$ from~\autoref{tab:S3Action}. Thus, it suffices to  prove the statement  when $e$ is  horizontal.

 As~\autoref{fig:LocalNP} indicates, the dual edge $e^{\vee}$ in the Newton subdivision of $q$ has vertices $(u,v)$ and $(u,v+1)$. The are connected to vertices $(u-1,w)$ and $(u+1,r)$. We argue for each local tangency type separately. Following  the modification and re-embedding techniques from~\cite{len.mar:20}, we write $m=m_1+ m_2$ with $\val(m) = \val(m_1)<\val(m_2)$. We set $m_1 = a_{u,v}/a_{u,v+1} \in \KKr$ as in~\cite[Proposition 3.7]{len.mar:20} and determine two solutions for the  initial forms of $m_2$ using~\cite[Lemma 3.9]{len.mar:20}. Each one will yield a unique $m_2$ by~\autoref{lm:multivariateHensel}.

 \textbf{Case (3c):} After translation if necessary,  we set $P=(0,0)$. Since $P$ is the midpoint of $e$,  the vertices of $e$ equal $(-\lambda,0)$ and $(\lambda,0)$ for some $\lambda>0$. Furthermore, we have $\val(a_{u,v})=\val(a_{u,v+1})=0$, $\val(a_{u-1,w})=\val(a_{u+1,r})=\lambda$ and all other $a_{ij} \in \mathfrak{M}$. In addition, since $(u,j)$ for $j\neq v, v+1$ is not in the dual cell to the vertex $(-\lambda, 0)$, it follows that $\val(a_{u,j})>\lambda$ for $j\neq v, v+1$.

 Thus, our proposed solutions $m$ will have the form $m=m_1 +m_2$ with $\val(m)=\val(m_1)=0$ and $\val(m_2) = \lambda$. In particular, $\overline{m}=\overline{m_1}$, so the sign of $\overline{m}$ agrees with that of $\overline{m_1}$, namely $s_{uv}s_{u,v+1}$. In what follows, we explain how to compute the parameter $m_2$.

We start by re-embedding  
$V(q)$ in $(\KK^*)^3$ via the ideal $I=\langle q, z-y-m_1\rangle$. We let
$\tilde{q}(x,z-m_1) = \sum_{i,j} \tilde{a}_{ij} x^i z^j \in \KKr[x,z]$ be the polynomial defining the projection of $V(I)$ onto the $xz$-plane.
Our  choice of $m_1$ fixes the  valuations of  four coefficients in $\tilde{q}$:
\begin{equation*}\label{eq:case3a}\val(\tilde{a}_{u-1,0}) = \val(\tilde{a}_{u+1,0}) = \lambda, \quad \val(\tilde{a}_{u,1})= 0 \text{ and } \val(\tilde{a}_{u,0})>\lambda.
\end{equation*}
Furthermore, we have $\val(\tilde{a}_{u,1} - (-m_1)^{v}\,a_{u,v+1}) > 0$, whereas 
\begin{equation}\label{eq:coeffs_case3a}
     \val(\tilde{a}_{u-1,0} - a_{u-1,w} (-m_1)^{w}) > \lambda \quad \text{ and } \quad
     \val(\tilde{a}_{u+1,0} - a_{u+1,r} (-m_1)^{r})  > \lambda.
\end{equation}

As a consequence, the Newton subdivision of $\tilde{q}$ satisfies two combinatorial properties:
\begin{enumerate}[(i)]
\item it   contains the triangle with vertices $(u-1,0)$, $(u+1,0)$ and $(u,1)$;
  \item the lattice point $(u,0)$  is not a vertex of this subdivision.
\end{enumerate}
In particular, the tangency point $p$ viewed in $V(I)$ has tropicalization $\Trop(p)=(0,0,-\lambda)$.

In turn,~\cite[Lemma 3.9]{len.mar:20} implies that $\val(m_2) =\lambda$ and that there are two solutions for $m_2$, each determined by its initial form. Their values are
\begin{equation}\label{eq:case3aM}
  \overline{m_2} = \pm\frac{2}{\overline{\tilde{a}_{u,1}}}\,{\sqrt{\overline{\tilde{a}_{u-1,0}}\; \overline{\tilde{a}_{u+1,0}}}}.
\end{equation}
 We conclude that $m_2\in \KKr$ if, and only if, the radicand is positive. The expressions in~\eqref{eq:coeffs_case3a} yield the formula for determining the positivity of this radicand as listed in~\autoref{tab:possibleTangencyPairs}.

\begin{table}[tb]
  \begin{tabular}{|c|c|c|c|}
    \hline   type & condition for real solutions  &  coefficient & end of $\Lambda$   \\
\hline     \multirow{2}{*}{(3a)}& $(-1)^{w+v+1} (s_{uv} s_{u,v+1})^{w+v} s_{u-1,w}\, s_{u,v+1} \operatorname{sign}(\bar{n})>0$ & $m$ & horizontal  \\
\cline{2-4}&     $(-1)^{w+u+1} (s_{uv} s_{u+1,v})^{w+u} s_{w,v-1} \,s_{u+1,v} \operatorname{sign}(\bar{n})>0$  & $m/n$  & vertical  \\
    \hline
    \multirow{2}{*}{(3c)} & $(-1)^{r+w} (s_{uv} s_{u,v+1})^{r+w} s_{u+1,r} \,s_{u-1,w}>0$   & $m$ & horizontal \\
\cline{2-4}     & $(-1)^{r+w} (s_{uv} s_{u+1,v})^{r+w} s_{r,v+1} \,s_{w,v-1}>0$  & $m/n$  & vertical\\
\hline
\multirow{2}{*}{(4), (6a)} & $-\operatorname{sign}(\bar{n}) s_{uv} s_{u+1,v+1} > 0$ & $m$ & diagonal\\
\cline{2-4} & $-\operatorname{sign}(\overline{m}) s_{u,v+1}\,s_{u+2,v}>0$ & $n$ & horizontal
\\
\hline
\multirow{2}{*}{(5a)} & $\operatorname{sign}(\bar{n}) s_{u+1,v} s_{u,v+1} > 0$ & $m$ & diagonal\\
\cline{2-4} & $\operatorname{sign}(\overline{m}) s_{u+1,v+1} s_{u+1,v} > 0$ & $n$ & horizontal\\
    \hline
  \end{tabular}
  \caption{Real local lifting conditions for $\overline{m}$, $\bar{n}$ and $\overline{m}/\bar{n}$ imposed by tangency types of lifting multiplicity two and ends of  $\Lambda$ producing the tangencies. For types (4) and (6a), the second tangency point lies in the end of $\Lambda$ producing the first tangency. For type (5a), we indicate the end of $\Lambda$ where the second tangency occurs. Here, $s_{ij}$ is the sign of the initial coefficient $\overline{a_{ij}}$ of the monomial $x^iy^j$ in $q$. The subindices correspond to vertices depicted in~\autoref{fig:LocalNP}.\label{tab:localLifts3a3c}}
\end{table}

 \textbf{Case (3a):} After translation if necessary, we may assume that the leftmost vertex of $e$ is $(0,0)$ and fix $(N,0)$ as the vertex of $\Lambda$ with $N>0$. For any bitangent triple $(\ell,p,p')$ lifting $\Lambda$ we have $\val(m)=0$,  $\val(n)=N$ and $\Trop\,p = (N/2,0)$.
 Our assumption on $q$ ensures that
   $\val(a_{u,v})=\val(a_{u,v+1})=\val(a_{u-1,w})=0$ and all other $a_{ij}$ lie in $\mathfrak{M}$. In addition, since the local (3a) tangency lies in the relative interior of the edge $e$, we have $\val(a_{u+1,r}) > N$. Since any point $(u+1,j)$ with $j\neq r$ lies outside the triangle dual to the right-most vertex of $e$ we conclude that $\val(a_{u+1,j})>\val(a_{u+1,r})$ for all $j\neq r$.

 By~\autoref{thm:liftingProducts}~
 {(ii)}, the second tangency point between $\Lambda$ and $\Gamma$ must be on a non-horizontal end of $\Lambda$. Upon acting by $\Sn{3}$, we may assume it is the diagonal one. In particular, the corresponding local equations will fix the input values for $n$, thus explaining  the appearance of this parameter in the formulas for the (3a) entries of~\autoref{tab:localLifts3a3c}.

 We modify slightly the strategy for (3c) tangencies,  picking  $m_1 = a_{u,v}/a_{u,v+1} +m_1' \in \KKr$ for some suitable $m_1'\in \mathfrak{M}$ to be determined later. This choice of $m_1$  satisfies $\overline{m_1} = \overline{a_{u,v}}/\overline{a_{u,v+1}}$ and it allows us to  re-embed $V(q)$ in $(\KK^*)^3$ via the ideal $I=\langle q, z-y-m_1-n\,x\rangle$.

 We study $\Trop\, V(I)$ through its projection to the $xz$-plane, i.e., by determining relevant cells in the Newton subdivision of  $\tilde{q}(x,z)=q(x,z-m_1-n\, x)$. Our choice of $m_1'$ will ensure that the Newton subdivision of $\tilde{q}$ satisfies the same two properties as it did for type (3c). In turn, this will give $\Trop\,p = (N/2,0,-N/2)$. However, in this new setting, the parameter $n$ contributes to the relevant four coefficients of $\tilde{q}$, namely $\tilde{a}_{u-1,0}$, $\tilde{a}_{u,1}$, $\tilde{a}_{u+1,0}$ and $\tilde{a}_{u,0}$.

 Our restrictions on the valuations of various $a_{i,j}$'s given above allow us to determine the valuations of the first three coefficients by ``cone feeding'' using the monomials $x^{u-1}y^w$, $x^{u}y^v$, $x^{u}y^{v+1}$ and $x^{u+1}y^r$ in $q$ and the substitution $y=z-m_1-nx$. A simple arithmetic computation reveals that $\val(\tilde{a}_{u-1,0}) = \val(\tilde{a}_{u,1})=0$ and    $\val(\tilde{a}_{u+1,0})= N$. Furthermore,
 \begin{equation}\label{eq:case3c}
   \begin{split}
     \val(\tilde{a}_{u-1,0}- a_{u-1,w} (-m_1)^w)>0, \quad \val(\tilde{a}_{u,1}-a_{u,v+1}\,(-m_1)^v)>0\quad \text{ and } \quad \\\val(\tilde{a}_{u+1,0} - a_{u,v+1} (-m_1)^v\,(-n))>N.
   \end{split}
\end{equation}

 Notice that the parameter $m_1'$ is not needed to determine the initial forms of $\tilde{a}_{u-1,0}, \tilde{a}_{u,1}$ and $\tilde{a}_{u+1,0}$ by the valuation constraints we imposed. However, it will be needed if we want to ensure $(u,0)$ is not a vertex of the Newton subdivision of $\tilde{q}$. To this end, it suffices to find $m_1'\in \mathfrak{M}$ for which $\val(\tilde{a}_{u,0})\geq N$. We choose this particular bound because it is simpler to attain than the weaker sufficient condition $\val(\tilde{a}_{u,0})> N/2$.

By construction, the only monomials from $q$ that can contribute terms to $\tilde{a}_{u,0}$ with valuation strictly smaller than $N$ are those of the form $x^{u}y^j$ with $j$ arbitrary. This expression is the following univariate polynomial in $R[m_1']$:
\begin{equation}\label{eq:univpoly3a}
f(m_1'):=\sum_{j} a_{u,j} (-1)^j (\frac{a_{u,v}}{a_{u,v+1}} + m_1')^j.
\end{equation}
By~\autoref{lem:chooseTail3a} below and the genericity constraints on $q$, we know that $f(m_1')=0$ has a solution in $\mathfrak{M}$. This fact then yields the desired condition $val(\tilde{a}_{u,0})\geq N$.

Once the parameter $m_1'$ is fixed,  we apply~\cite[Lemma 3.9]{len.mar:20} again to conclude that the values for $\overline{m_2}$ agree with those in~\eqref{eq:case3aM} since the triangle dual to the tropical tangency $(N/2, -N/2)$ is the same as the one for (3c) tangencies. When this radicand is positive we will get (two) solutions $m_2\in \KKr$. The expressions in~\eqref{eq:case3c} yield the desired formula in~\autoref{tab:localLifts3a3c}. Since the map $\tau_0$ sends $n$ to $1/n$, the role of $\operatorname{sign}(\overline{n})$ does not change when moving from a horizontal (3a) tangency to a vertical one.
\end{proof}

\begin{lemma}~\label{lem:chooseTail3a} The univariate polynomial $f(m_1')\in R[m_1']$ from~\eqref{eq:univpoly3a} has a root in $\mathfrak{M}$ if its constant coefficient is non-zero.
\end{lemma}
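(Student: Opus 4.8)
The plan is to exploit the cancellation that the ansatz $m_1 = a_{u,v}/a_{u,v+1} + m_1'$ was designed to produce. Set $c := a_{u,v}/a_{u,v+1}$; since $\val(a_{u,v}) = \val(a_{u,v+1}) = 0$ we have $c \in R^{\times}$, so $f(m_1') = \sum_j a_{u,j}(-1)^j(c+m_1')^j$ indeed lies in $R[m_1']$. Writing $f(m_1') = \sum_{k\ge 0} b_k\,(m_1')^k$, I will show that the constant term $b_0 = f(0)$ lies in $\mathfrak{M}$ while the linear term $b_1 = f'(0)$ is a unit of $R$; combined with Hensel's lemma this immediately produces a simple root of $f$ in $\mathfrak{M}$.

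Both estimates are one-line cancellations. For $b_0 = f(0) = \sum_j a_{u,j}(-c)^j$, the $j=v$ and $j=v+1$ summands combine to $(-c)^v\bigl(a_{u,v} - c\,a_{u,v+1}\bigr) = 0$ by the very definition of $c$, while every other summand $a_{u,j}(-c)^j$ with $j\neq v,v+1$ lies in $\mathfrak{M}$, since $a_{u,j}\in\mathfrak{M}$ for all such $j$ (one of the valuation constraints established at the start of the (3a) case of~\autoref{pr:realLifts3a3c}) and $\val(c)=0$; hence $b_0\in\mathfrak{M}$. For $b_1 = f'(0) = \sum_j j\,a_{u,j}(-1)^j c^{j-1}$, multiplying by the unit $c$ gives $c\,b_1 = \sum_j j\,a_{u,j}(-c)^j$, whose $j=v$ and $j=v+1$ summands now combine to $(-c)^v\bigl(v\,a_{u,v} - (v+1)\,c\,a_{u,v+1}\bigr) = -(-c)^v a_{u,v}$, a unit of $R$, the remaining summands again lying in $\mathfrak{M}$. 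Therefore $\val(b_1) = \val(c\,b_1) = 0$, so $b_1\in R^{\times}$ and in particular $f\notin\mathfrak{M}\,R[m_1']$.

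Finally I would invoke Hensel's lemma. Since $b_0\in\mathfrak{M}$ and $b_1\in R^{\times}$, we have $\overline{f}(0) = \overline{b_0} = 0$ and $\overline{f'}(0) = \overline{b_1}\neq 0$ in $\resK$, so the standard univariate Hensel's lemma (valid since $R$ is complete, hence Henselian) yields a unique $m_1'\in R$ with $f(m_1')=0$ and $\overline{m_1'}=0$, that is $m_1'\in\mathfrak{M}$, as claimed. To keep strictly within the form of~\autoref{lm:multivariateHensel}, which lifts only residue points that are units, one can instead set $\beta := \val(b_0)$ — finite since the constant coefficient $b_0$ is nonzero by hypothesis, and positive by the previous paragraph — substitute $m_1' = t^{\beta}y$, and divide by $t^{\beta}$: the polynomial $t^{-\beta}f(t^{\beta}y) = t^{-\beta}b_0 + b_1\,y + (\text{terms with coefficients in }\mathfrak{M})$ has the unit $-\,\overline{t^{-\beta}b_0}/\overline{b_1}\in\resK^{\times}$ as an approximate root with non-vanishing derivative, so~\autoref{lm:multivariateHensel} (with $n=1$) produces $y\in R^{\times}$ with that reduction and $m_1':= t^{\beta}y\in\mathfrak{M}$ is the sought root. (If $b_0=0$ then $m_1'=0$ already works, so the conclusion holds unconditionally.) There is no real obstacle: the only point requiring care is this last bit of bookkeeping aligning the situation with the Hensel statement, since the valuation estimates themselves are immediate once the cancellation $b_0\in\mathfrak{M}$ is spotted.
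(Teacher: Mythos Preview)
Your proof is correct and follows essentially the same line as the paper's: both arguments expand $f$ in powers of $m_1'$, establish that $\val(b_0)>0$ while $\val(b_1)=0$, and then invoke Hensel's lemma to obtain the root in $\mathfrak{M}$. Your presentation is slightly more economical in that you compute only the two coefficients $b_0$ and $b_1$ that are actually needed and go straight to Hensel, whereas the paper first frames the existence of a root via the tropical Fundamental Theorem (computing $\val(b_k)$ for all $k$) and only afterwards appeals to Hensel to pin down the root over $\KKr$; the tropical detour is not essential, since the Hensel step already yields existence.
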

\begin{proof}
  By the Fundamental Theorem of Tropical Algebraic Geometry (see~\cite[Theorem 3.2.3]{MSBook}), it is enough to show that the tropical hypersurface defined by the tropical polynomial $\trop(f) \in \RR[m_1']$ has a point in $\RR_{>0}$.

  Simple algebraic manipulations of~\eqref{eq:univpoly3a} reveal that $f(m_1') =\sum_{k} b_{k} (m_1')^k$ with 
  \begin{equation*}\label{eq:univpoly3aExpanded}
    \begin{aligned}
      b_0:=& \sum_{j\neq v, v+1} (-1)^j a_{u,j} \Big(\frac{a_{u,v}}{a_{u,v+1}}\Big )^j, \quad b_k:=\sum_{j\geq k} (-1)^j a_{u,j} \binom{j}{k} \Big(\frac{a_{u,v}}{a_{u,v+1}}\Big )^{j-k} \text{ if }\,k>v+1,  \text{ and }\\
      b_k:= & (-1)^{v+1} a_{u,v+1} \binom{v}{k-1} \Big(\frac{a_{u,v}}{a_{u,v+1}}\Big )^{v-k+1}\!\!\! +\!\! \sum_{\substack{j\geq k\\ j\neq v, v+1}} \!\!(-1)^k a_{u,j}  \binom{j}{k} \Big(\frac{a_{u,v}}{a_{u,v+1}}\Big )^{j-k} \text{ otherwise}.
    \end{aligned}
      \end{equation*}
  In particular, we conclude that $\val(b_k)=0$ for $1\leq k \leq v+1$ whereas $\val(b_k)>0$ for all other values of $k$. Thus, the tropicalization of $f$ in the variable $M_1' = -\val(m_1')$ becomes
  \[\trop(f)(M_1'):= \max_{k\geq 0}\{-\val(b_{k}) + k M_1' 
  \}.\]

  An easy computation reveals that $M_1' = -\val(b_0)<0$ achieves this maximum twice, namely at $k=0,1$. Thus, $f(m_1') = 0$ has a solution  in $\KK$ with $\val(m_1') = \val(b_0)>0$.   By construction, its initial form  is $\overline{m_1'} = -\overline{b_0}/\overline{b_1}$. Since $\overline{f'}(\overline{m_1'}) = \overline{b_1}\neq 0$, ~\autoref{lm:multivariateHensel} ensures that $m_1'$ is unique and, furthermore, lies in $\KKr$.
\end{proof}

   As seen in~\autoref{tab:liftingMultiplicities},   lifting formulas for type (4) and (6) tangencies depend on the second tangency point of $\Lambda$. In particular, having lifting multiplicity two restricts the direction $v$ of the bounded edge of $\Gamma$ responsible for the tangency.
   In particular, since $|\det(u,v)|=2$  we know that the end of $\Lambda$ (with direction $v$) containing the second tangency point is also responsible for the first tangency (associated to  $u$). Such situations manifest themselves at vertices of shapes  (H), (H'), (J), (K), (L'), (Q), (Q'), (T), (T'), (T''), (U) or (U').
   
 Next, we discuss real lifts to these local tangencies of types (4) and (6a):

 \begin{proposition}\label{pr:realLift46a} Assume $V(q)$ is defined over $\KKr$ and that the vertex of $\Lambda$ produces a local tangency of type (4) or (6a)  along the diagonal (respectively, horizontal) end of $\Lambda$. Assume its local lifting multiplicity is two. Then, the local system at this tangency determines two possible solutions for the coefficient $m$ (respectively, $n$) over $\KK$. These solutions lie in $\KKr$ if, and only if, an explicit monomial in the signs of $\bar{a}_{ij}$ and $\bar{n}$ (respectively, $\overline{m}$) is positive. The precise formula is given in~\autoref{tab:localLifts3a3c}.
  \end{proposition}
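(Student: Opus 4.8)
The plan is to mimic the proof of~\autoref{pr:realLifts3a3c}, now refining with signs the $\KK$-lifting computations for types~(4) and~(6a) recorded in~\cite[Proposition 3.10 and Remark 3.8]{len.mar:20}. First I would use the $\Sn{3}$-action to halve the work: the generator $\tau_1$ from~\autoref{tab:S3Action} swaps the diagonal and horizontal ends of $\Lambda$ and, acting on the equation~\eqref{eq:line} of $\ell$, interchanges the coefficients $m$ and $n$; on the lattice it acts by $(i,j)\mapsto(4-i-j,j)$, which (after the relabelling $(u,v)\mapsto(2-u-v,v)$) exchanges the pair of exponents $\{(u,v),(u+1,v+1)\}$ relevant to the diagonal case with the pair $\{(u,v+1),(u+2,v)\}$ relevant to the horizontal case. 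Hence it suffices to prove the statement for, say, the horizontal end, solving for $n$. As in~\autoref{pr:realLifts3a3c} I normalize by a translation and rescaling so that $q\in R[x,y]\smallsetminus\mathfrak{M}\,R[x,y]$ and the tangency point $P$ --- which, for both types~(4) and~(6a), is the vertex of $\Lambda$ --- lies at the origin. By~\autoref{cor:combClass} and~\autoref{fig:LocalNP} the dual cell $p^\vee$ and the edges of $\Gamma$ meeting $P$ are determined up to the single lattice point $(u,v)$; this fixes which $a_{ij}$ have valuation $0$ (those indexed by vertices of $p^\vee$) and forces positive valuation, with explicit lower bounds, on the remaining coefficients entering the local equations.

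Next I would write out the local equations $\overline q=\overline\ell=\overline W=0$ at $P$ as in~\eqref{eq:localEqns}. Because $P$ lies on $\Gamma$ (on a bounded edge of $\Gamma$ in type~(4); at a vertex of $\Gamma$, along a common horizontal end, in type~(6a)), this $3\times 3$ system is degenerate, so --- exactly as in the (3a)/(3c) cases --- I pass to the modification/re-embedding of~\cite[Lemmas 3.7 and 3.9]{len.mar:20}: re-embed $V(q)$ in $(\KK^*)^3$ through $I=\langle q,\ z-y-m-nx\rangle$ and study the Newton subdivision of the projection $\tilde q(x,z)=q(x,z-m-nx)$ to the $xz$-plane. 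Splitting $n=n_1+n_2$ with $\val(n_1)<\val(n_2)$, the initial form of $n_1$ is a rational expression in the leading local data, while~\cite[Lemma 3.9]{len.mar:20} produces exactly two candidates for the initial form of $n_2$, each a square root $\pm\sqrt{\cdot}$ of a ratio of relevant coefficients of $\tilde q$; each candidate lifts to a unique element of $R$ by~\autoref{lm:multivariateHensel}, so the local lifting multiplicity over $\KK$ is two --- recovering~\cite[Proposition 3.10 and Remark 3.8]{len.mar:20} --- and over $\KKr$ it is $0$ or $2$ according to the sign of the radicand.

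The remaining point is to unwind the sign of that radicand. As in~\autoref{pr:realLifts3a3c} I would express the initial forms of the coefficients of $\tilde q$ entering it by ``cone feeding'', i.e.\ by tracking which monomials of $q$, under $y=z-m-nx$, contribute to each relevant coefficient of $\tilde q$ and with what valuation; this is also where I check that the Newton subdivision of $\tilde q$ has the shape demanded by~\cite[Lemma 3.9]{len.mar:20} and that $\Trop(p)$ is as expected. The coefficient $m$ is already pinned down by the second tangency --- which, by the combinatorial classification ($|\det(\cdot,\cdot)|=2$ forces it onto the horizontal end of $\Lambda$) --- and, being a rational function of the coefficients of $q$, it is automatically real; a power $(-m)^k$ feeds into one of the relevant coefficients of $\tilde q$, producing the factor $\operatorname{sign}(\overline m)$. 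Collecting signs reproduces the condition $-\operatorname{sign}(\overline m)\,s_{u,v+1}\,s_{u+2,v}>0$ in the $(4),(6a)$ row of~\autoref{tab:localLifts3a3c}; applying $\tau_1$ gives the diagonal statement, with $n$ real, $\operatorname{sign}(\bar n)$ in place of $\operatorname{sign}(\overline m)$, and exponents $(u,v),(u+1,v+1)$.

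The main obstacle is the combinatorial bookkeeping inside the modification step: certifying, from the partial dual subdivisions attached by~\autoref{cor:combClass} to the shapes that actually carry a weight-two type~(4) or~(6a) vertex --- namely (F), (H), (H'), (J), (K), (L'), (Q'), (T), (T'), (T''), (U) and (U') --- that after the substitution $y=z-m-nx$ the Newton subdivision of $\tilde q$ acquires precisely the triangle needed for~\cite[Lemma 3.9]{len.mar:20}, including the valuation estimates that remove the potentially interfering lattice point. A secondary subtlety, absent in~\autoref{pr:realLifts3a3c}, is that types~(4) and~(6a) are geometrically different --- one transverse, one not, with different $\Star_\Gamma(P)$ --- yet must be shown to collapse to the same local system, and hence the same sign condition, after modification, so the two sub-cases have to be reconciled explicitly.
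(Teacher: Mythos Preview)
Your plan rests on a misconception: the local system $\overline q=\overline\ell=\overline W=0$ at the vertex of $\Lambda$ is \emph{not} degenerate for types~(4) and~(6a), and no tropical modification is required. The paper proves the proposition by a direct elementary computation. With the vertex of $\Lambda$ at the origin and working on the diagonal end (so $\bar n$ is already fixed by the second tangency), the initial form of $q$ at this vertex is
\[
\overline q = \bar x^{u}\bar y^{v}\bigl(\overline{a_{u,v}} + \overline{a_{u+1,v+1}}\,\bar x\,\bar y + \delta\,\overline{a_{u+1,v}}\,\bar x\bigr),
\]
with $\delta=0$ for type~(4) and $\delta=1$ for type~(6a); both cases are handled by a single parameter. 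Eliminating $\bar x$ from $\overline q=0$ and substituting into $\overline W=0$ yields the quadratic
\[
\bigl(\overline{a_{u+1,v+1}}\,\bar y + \delta\,\overline{a_{u+1,v}}\bigr)^2 + \overline{a_{u,v}}\,\overline{a_{u+1,v+1}}\,\bar n = 0,
\]
whose two roots are real precisely when $-\overline{a_{u,v}}\,\overline{a_{u+1,v+1}}\,\bar n>0$. Hensel's Lemma applies since the Jacobian of the local system at these solutions is a nonzero monomial, and $\tau_1$ transports the result to the horizontal end. This is the entire argument.

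Your confusion with the (3a)/(3c) situation is understandable but misplaced. There the tangency sits in the relative interior of an end of $\Lambda$ along which $\Gamma$ and $\Lambda$ overlap; the initial form $\overline\ell$ does not see both coefficients, the naive system collapses to a single value of $\overline m$, and the square root only emerges after re-embedding and computing the next-order term $m_2$. For types~(4) and~(6a) the tangency \emph{is} the vertex of $\Lambda$, so $\overline\ell=\bar y+\overline m+\bar n\,\bar x$ carries both coefficients, and the two solutions appear immediately in the zeroth-order system. Your proposed modification route might eventually reproduce the same radicand, but it introduces exactly the ``combinatorial bookkeeping'' obstacle you worry about---controlling the Newton subdivision of $\tilde q$ across a dozen shapes---for no gain, and your stated reason for needing it is incorrect.
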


\begin{proof} We prove the statement for the diagonal end of $\Lambda$. The result for the horizontal one will be deduced by applying the map $\tau_1$ from~\autoref{tab:S3Action}.

  By construction, the second tangency point must be of type (2) or (3c) and will lie in the diagonal of $\Lambda$. In both situations, this data determines the value of $n$.  We assume $\bar{n}$ to be in $\KKr$, since otherwise $\ell$ will not be defined over $\KKr$.
  As usual, we place the vertex of $\Lambda$ at $(0,0)$ and take $q\in R[x,y]\smallsetminus \mathfrak{M} R[x,y]$. In particular, $m,n\in R\smallsetminus \mathfrak{M}$ for any lift.

We compute explicit solutions for $m$ and the tangency point $p=(x,y)$ with $\Trop\,p = (0,0)$ from their initial forms using   the techniques from~\cite[Proposition 3.10]{len.mar:20}. In  the notation in~\autoref{fig:LocalNP},  the system in $(\overline{m},\bar{x},\bar{y})$ defined by the initial forms of $q$, $\ell$ and the Wronskian $W = W(q,\ell,\bar{x},\bar{y})$ becomes $\overline{q}=\overline{\ell}=\overline{W}=0$, where $\overline{\ell} :=  \bar{y} + \overline{m} + \bar{n}\, \bar{x}$, 
  \begin{equation*}\label{eq:tangencies46a}
        \overline{q} :=  \bar{x}^u\bar{y}^v(\overline{a_{u,v}} + \overline{a_{u+1,v+1}} \,\bar{x}\,\bar{y} + \delta \;\overline{a_{u+1,v}}\, \bar{x}) \quad \text{ and }\quad
            \overline{W} := \det(Jac(\overline{q}, \overline{\ell}; \bar{x},\bar{y})).
    \end{equation*}
  We set $\delta=0$ if $(0,0)$ is a type (4)  tangency and $\delta=1$, if it has type (6a).

  The monomial factor in $\overline{q}$ can be neglected when determining $\overline{W}$. Thus,
  \begin{equation*}\label{eq:W4_6a}
    \overline{W} = \bar{n}\, \overline{a_{u+1,v+1} }\,\bar{x} - (\overline{a_{u+1,v+1}}\,\bar{y} + \delta\, \overline{a_{u+1,v}}) = 0.
  \end{equation*}
  Solving for $\bar{x}$ in $\overline{q}$ and substituting this expression for $\overline{W}$ yields the quadratic expression
  \begin{equation*}
    \big (\overline{a_{u+1,v+1}}\,\bar{y} + \delta \,\overline{a_{u+1,v}}\big ) ^2 +     \overline{a_{u,v}}\, \overline{a_{u+1,v+1}} \, \bar{n}  = 0.
  \end{equation*}
  Its solutions are $\bar{y} =\frac{1}{\overline{a_{u+1,v+1}}}\big(-\delta\; \overline{a_{u+1,v}} \pm \sqrt{-\overline{a_{u,v}}\; \overline{a_{u+1,v+1}}\ \bar{n}}\big )$. Substituting these expressions in  $\overline{\ell}$  determines a unique solution in $\overline{m}$ for each one. This solution is real if, and only if, $-\overline{a_{u,v}}\; \overline{a_{u+1,v+1}}\ \bar{n}>0$. Computing this sign yields the formula in~\autoref{tab:localLifts3a3c}.

  A simple calculation shows that the initial form of the Jacobian of the local system  with respect to $(m,x,y)$ does not vanish at our solutions $(\overline{m},\bar{x},\bar{y})$ since it equals $-2\,\bar{x}^{2u+1}\, \bar{y}^{2v} \, \bar{n} \,\overline{a_{u+1,v+1}}^2$.~\autoref{lm:multivariateHensel}   guarantees a unique solution $(m,x,y)$ for $(q,\ell,W)$ for each  $(\overline{m},\bar{y},\bar{x})$. Thus, the condition to ensure local solutions over $\RR$ yields the  same number of solutions over $\KKr$.
\end{proof}

\begin{remark} \label{rem:type6a} \autoref{pr:realLift46a} is enough to address almost all liftings where one of the tangency points is of type (6a). The only missing case corresponds to the top-left vertex in shape (T''), since its dual cell  in the Newton subdivision of $q$ is the triangle with vertices $(0,0)$, $(0,1)$ and $(1,1)$. The formula to guarantee two real solutions in $m/n$ becomes $-\operatorname{sign}(\bar{n}) s_{00}s_{11} >0$. We derive it from the (diagonal) condition in~\autoref{tab:localLifts3a3c} by applying the map $\tau_0$ and noticing that $(m,n)$ becomes $(m/n, 1/n)$ under this transformation. 
\end{remark}

\begin{remark}\label{rem:type5a} Local tangencies of type (5a) that are liftable over $\KK$ occur for three representative shapes: (I), (L) and (M). In the first two cases, the second tangency point  has type (2) and lies on  the diagonal of $\Lambda$. For the latter, the second tangency is of type (3c) along a horizontal edge of $\Gamma$.

  The local lifting condition over $\KKr$ for the diagonal (5a) tangency is given in~\autoref{tab:localLifts3a3c} and can be obtained following the same arguments as in the proof of~\autoref{pr:realLift46a}.
  The condition for the horizontal (5a) tangency is derived from the diagonal one by applying the map $\tau_1$. In this case, the roles of $m$ and $n$ are switched and the relevant signs are those associated to the vertices $(u+1,v)$ and $(u+1,v+1)$ in~\autoref{fig:LocalNP}. 
\end{remark}

Almost all local conditions listed in~\autoref{tab:localLifts3a3c} for determining solutions over $\KKr$ for one coefficient of $\ell$ require us to know the sign of the initial form of the second coefficient of $\ell$. This data is obtained from the second tangency of $\Lambda$, whose tangency type can be read off from~\autoref{tab:possibleTangencyPairs}. 
In particular, when this point has type (2) and lies in the diagonal or vertical ends of $\Lambda$, then it uniquely determines $n$ or $m/n$ in $\KKr$, respectively. Our last result in this section computes the sign of these expressions in terms of the coefficients of $q \in \KKr[x,y]$:

\begin{lemma}\label{lm:tangency2} Assume that $\Lambda$ has a type (2) tangency point  along its diagonal (respectively, horizontal or vertical) end. Let $e$ be the edge of $\Gamma$ producing this multiplicity two tangency.  Then, the sign of the unique solution $n$ (respectively, $m$ or $m/n$) to the local tangency equations equals $-s_{v_l}s_{v_r}$, where $v_l$ and $v_r$ are the vertices of the cell $e^{\vee}$ in the Newton subdivision of $q$.
\end{lemma}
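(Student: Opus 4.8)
The plan is to reduce to the horizontal (3c)-style computation already carried out in the proof of~\autoref{pr:realLifts3a3c} and extract from it the sign of the relevant coefficient of $\ell$. First I would treat the case where the type (2) tangency lies in the \emph{horizontal} end of $\Lambda$, since this is the normalization under which the local equations~\eqref{eq:localEqns} are written: after a translation and rescaling I may assume $q\in R[x,y]\smallsetminus\mathfrak{M}R[x,y]$, the tangency point is $P=(0,0)$, and the vertex of $\Lambda$ sits at $(\lambda,0)$ for some $\lambda>0$, so $\val(m)=0$, $\val(n)=-\lambda$. A type (2) tangency occurs at a vertex of $\Gamma$, so the dual cell $e^\vee$ is the edge joining $v_l=(u,v)$ and $v_r=(u,v+1)$, with $\val(a_{u,v})=\val(a_{u,v+1})=0$ and all other $a_{ij}$ appearing in $\overline q$ having positive valuation. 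Then the local equation $\overline\ell=\bar y+\overline m=0$ together with $\overline q=\overline{a_{u,v}}\,\bar x^u\bar y^v+\overline{a_{u,v+1}}\,\bar x^u\bar y^{v+1}=\bar x^u\bar y^v(\overline{a_{u,v}}+\overline{a_{u,v+1}}\,\bar y)=0$ forces $\bar y=-\overline{a_{u,v}}/\overline{a_{u,v+1}}$, hence $\overline m=-\bar y=\overline{a_{u,v}}/\overline{a_{u,v+1}}$, whose sign is $s_{v_l}s_{v_r}$.

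Wait — this gives $+s_{v_l}s_{v_r}$, not $-s_{v_l}s_{v_r}$; the sign discrepancy is exactly the kind of point I need to be careful about, and I suspect the intended normalization is that for a type (2) tangency the dual cell $e^\vee$ is traversed in the order prescribed by~\autoref{fig:LocalNP}, which may place the vertex of $\Lambda$ on the opposite side, swapping $\bar y$ with $+\overline{a_{u,v}}/\overline{a_{u,v+1}}$. Concretely, I would check against~\autoref{fig:LocalNP}: for a type (2) tangency in the horizontal end the relevant cell is the one whose vertices are labeled so that the line equation forces $\overline m=-\overline{a_{v_l}}/\overline{a_{v_r}}$; tracking which of the two lattice points of $e^\vee$ is $(u,v)$ versus $(u,v+1)$ pins down the sign. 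I would then verify the nonvanishing of the Jacobian $J(\overline q,\overline\ell,\overline W)$ at this solution (it is a nonzero monomial times $\overline{a_{u,v+1}}$, which is a unit), so that~\autoref{lm:multivariateHensel} promotes $\overline m$ to a unique $m\in R$; since $q\in\KKr[x,y]$ and the solution for $\overline m$ is a ratio of real initial forms, $m\in\KKr$ as well, confirming lifting multiplicity one over both fields.

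For the \emph{diagonal} end I would apply the isomorphism $\tau_1$ from~\autoref{tab:S3Action}, which sends the pair $(m,n)$ to $n$ in the role of $m$ (the horizontal coefficient becomes the diagonal one after the coordinate change) and permutes the lattice points by $(i,j)\mapsto(4-i-j,j)$; pushing the horizontal formula through this substitution and tracking how the vertices of $e^\vee$ transform yields the claimed value $-s_{v_l}s_{v_r}$ for the sign of $n$. Similarly, for the \emph{vertical} end I would apply $\tau_0$ (the swap $x\leftrightarrow y$), under which $(m,n)$ becomes $(m/n,1/n)$; the horizontal formula then transports directly to a formula for the sign of $m/n$. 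In all three cases the sign is $\Sn 3$-equivariant, so it suffices to establish it in one end and the action of $\langle\tau_0,\tau_1\rangle$ distributes it to the other two — and since the vertices $v_l,v_r$ of $e^\vee$ are themselves permuted by this action in a way that leaves the product $s_{v_l}s_{v_r}$ invariant, the formula is stated uniformly as $-s_{v_l}s_{v_r}$.

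The main obstacle I anticipate is purely bookkeeping: getting the overall sign right, i.e. reconciling the ``$-$'' in the statement with the naive ``$+$'' from $\overline m=-\bar y$. This hinges on the precise labeling convention in~\autoref{fig:LocalNP} for which endpoint of $e^\vee$ is designated $v_l$ versus $v_r$ (equivalently, on which side of the tangency vertex the vertex of $\Lambda$ lies), and on correctly composing the sign changes introduced by $\tau_0$ and $\tau_1$ — in particular $\tau_1$ carries the substitution $x\leftrightarrow z$ at the level of the re-embedded equation, which can flip a sign through a factor $(-1)^{\text{something}}$ depending on the exponents at $v_l,v_r$. I would resolve this by doing the diagonal computation \emph{directly} (placing $P$ at the vertex of $\Lambda$, writing $\overline q$ with the two dominant monomials $a_{v_l}x^{i_l}y^{j_l}$ and $a_{v_r}x^{i_r}y^{j_r}$ whose exponent difference is the primitive diagonal direction $(-1,-1)$, and solving $\overline\ell=\bar y+\bar n\bar x=0$ simultaneously), rather than relying on the symmetry transport, and using the symmetry only as a consistency check. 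Once the diagonal sign is confirmed to be $-s_{v_l}s_{v_r}$, the horizontal and vertical statements follow by applying $\tau_1$ and $\tau_1\circ\tau_0$ respectively.
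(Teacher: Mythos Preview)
Your setup misidentifies the local structure of a type (2) tangency. You write the dual cell $e^\vee$ as the vertical segment joining $(u,v)$ and $(u,v+1)$ and take $\overline q$ to be binomial; but that is the configuration for a type (3c) tangency (midpoint of a horizontal edge of $\Gamma$). For type (2), the tangency point $P$ is a \emph{vertex} of $\Gamma$, so the dual cell $P^\vee$ is a unimodular \emph{triangle}, and $\overline q$ has three monomials. The edge $e$ in the statement is one of the three edges of $\Gamma$ adjacent to $P$, namely the one whose direction (listed in \autoref{tab:edgeOnEdgeMult2}) yields intersection multiplicity two with the relevant end of $\Lambda$; for the horizontal end, $e^\vee$ has direction $(-2,1)$, $(2,1)$ or $(-2,3)$, never $(0,1)$. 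This is why your computation lands on the wrong sign: the binomial equation $\overline{a_{u,v}}+\overline{a_{u,v+1}}\bar y=0$ is simply not the system governing a type (2) tangency, and no amount of relabeling $v_l\leftrightarrow v_r$ will repair it.

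The paper's proof proceeds differently. After translating so that one vertex of the dual triangle sits at $(0,0)$, it writes $\overline q=\bar a+\bar b\,\bar x^u\bar y^v+\bar c\,\bar x^r\bar y^w$ with $v_l=(u,v)$ and $v_r=(r,w)$ the endpoints of $e^\vee$, invokes the explicit closed-form solution for $\overline m$ from \cite[Table~1]{len.mar:20}, and obtains $\operatorname{sign}(\overline m)=(-1)^{r+1}(ru)^r\operatorname{sign}(\bar b\,\bar c)^r$. It then runs through the six possible pairs $((u,v),(r,w))$ coming from the three admissible directions of $e^\vee$ (and the two orderings of the endpoints) and verifies that in every case this expression collapses to $-s_{v_l}s_{v_r}$. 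The reduction to the diagonal and vertical ends via $\tau_0,\tau_1$ is as you describe, but the core horizontal computation requires the full three-term local system and the case check; you cannot avoid it.
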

\begin{proof}  We prove the analogous statement when the type (2) tangency point $P$ lies in the horizontal end of $\Lambda$, since the explicit local solutions for $m$ computed in~\cite[Proposition 4.6]{len.mar:20} require this setting. Since the local lifting multiplicity of $P$ is one by~\autoref{tab:liftingMultiplicities}, we know that $m\in \KKr$.  We show that the sign of $m$ becomes $-s_{v_l}s_{v_r}$ if $e^{\vee}$ joins $v_l$ and $v_r$. The statement for the diagonal and vertical ends follow by applying  $\tau_0$ and $\tau_1$ since these turn $m$ into $m/n$ and $n$, respectively.

It remains to prove the formula for $\operatorname{sign}(m)$.  Up to translation, the cell dual to $P$ is the triangle with vertices $(0,0), (u,v)$ and $(r,w)$, with $v_l-v_r = (u-r,v-w)$. Up to a monomial shift, we have $\overline{q}:=\bar{a}+\bar{b}\,\bar{x}^u\,\bar{y}^v+\bar{c}\,\bar{x}^r\,\bar{y}^w$. Thus,~\cite[Table 1]{len.mar:20} implies
  \[\overline{m} = -(\frac{B^u}{A^r})^{(uw-vr)},\quad \text{ where } A = -\frac{\bar{c}\,r\,B}{u \bar{b}} \text{ and } B= -\frac{\bar{a}\,u}{\bar{c}\,(u-r)}.
  \]

  Since the tangency is horizontal, we have $uw-vr = \pm 1$ and $u-r = \pm 2$. Thus,
  \begin{equation}\label{eq:signM}\operatorname{sign}(\overline{m}) = (-1)^{r+1} (r\,u)^r \operatorname{sign}\big (\bar{c}\,\bar{b}\big ) ^r.
  \end{equation}
As  \autoref{tab:edgeOnEdgeMult2} shows, there are three $\Sn{3}$-representatives for the direction of $(u-r,v-w)$, namely $(-2,1)$, $(2,1)$ or $(-2,3)$. This gives six possible combinations for $(u,v)$ and $(r,w)$:
  \[ ((u,v),(r,w))  = \pm ((1,-1),(-1,0)),\;   \pm ((1,1), (-1,0)), \text{ or } \pm((1,-1), (-1,2)).
  \]
\noindent    Substituting these values in~\eqref{eq:signM} yields $-s_{uv}s_{rw} = -s_{v_l}s_{v_r}$ as we wanted to show.
\end{proof}

\section{Real liftings of tropical bitangents}\label{sec:real-lift-trop}

\autoref{sec:lift-trop-bitang} addresses local liftings over $\KKr$ and provides
the combinations of tangency types that can occur for members of all bitangent shapes. In this section we combine these results to prove~\autoref{thm:reallifting}. In particular, we provide explicit sign constraints that characterize when a given bitangent shape has exactly four real lifts. These conditions are stated in~\autoref{tab:lifting} and were obtained by analyzing each of the 41  $\Sn{3}$-representatives of bitangent shapes from~\autoref{fig:2dCells}.

\begin{table}
  \begin{tabular}{|c|c|}
    \hline
    Shape & Lifting conditions \\
    \hline\hline
(A) & $\!\!\!(-s_{1v}s_{1,v+1})^i s_{0i} s_{22}>0$
     \text{ and }\;
    $(-s_{u1}s_{u+1,1})^j s_{j0} s_{22}>0$\\
   \hline     (B)  & $(-s_{1v}s_{1,v+1})^{i+1}s_{0i}s_{21} > 0$\;\;  \text{ and }\;
    $(-s_{21})^{j+1}{s_{31}}^js_{1v}s_{1,v+1}s_{j0} > 0$ \\
   \hline (C) &
      $(-s_{11})^{i+j}(s_{12})^i(s_{21})^js_{0i}s_{j0}>0 \;\text{ and }\;  (-s_{21})^{k+j}(s_{12})^k(s_{11})^js_{k,4-k}s_{j0}>0$. \\
   \hline  (H),(H')& \;$(-s_{1v}s_{1,v+1})^{i+1} s_{0i} s_{21}> 0$ \;
   \text{and}\;  $s_{1v}s_{1,v+1}s_{21}s_{40} < 0$\\
   \hline (M)  & $(-s_{1v}s_{1,v+1})^{i+1}s_{0i}s_{21} > 0$ \;\text{and}\;\; $s_{1v}s_{1,v+1}s_{30}s_{31} > 0$ \\
\hline   \hline (D) & $(-s_{10}s_{11})^i s_{0i} s_{22} > 0$ \\
   \hline
     (E),(F),(J) & $(-s_{1v}s_{1,v+1})^is_{0i}\, s_{20} > 0 $\\
   \hline (G) & $(-s_{10}s_{11})^i s_{0i} \,s_{k,4-k} > 0$ \\
   \hline
   (I),(N)  & $ s_{10}s_{11} s_{01} s_{k,4-k} < 0 $\\
   \hline (K),(T),(T'),&    \multirow{2}{*}{
     $  s_{00} s_{k,4-k} > 0 $}\\
(T''),(U),(U'),(V) & \\
   \hline (L),(O),(P)    & $ s_{10}s_{11} s_{01} s_{22} < 0$ \\
   \hline (L'),(Q),(Q'), &  \multirow{2}{*}{$s_{00}s_{22}>0$}\\
   (R),(S) & \\
   \hline\hline
   rest & no conditions\\
\hline  \end{tabular}
  \caption{Real lifting sign conditions for each representative bitangent class.\label{tab:lifting}}
 \end{table}

Throughout we assume $V(q)$ is generic as in~\autoref{rem:genericity} and defined over $\KKr$. Members of bitangent classes with weight one will automatically lift over $\KKr$, so we need only look at vertices with weights two or four. For each such member, we combine the local sign conditions given in~\autoref{tab:localLifts3a3c} arising for each tangency point and verify the system of two inequalities is either consistent or has no solution. This proves~\autoref{thm:reallifting} for bitangent classes with a single higher weight member. 
When two members of a  class have weight two, we show the local conditions on each vertex agree, thus giving a global affirmative answer for the class.
Our first result  involves shapes that always lift over $\KKr$:
\begin{proposition}\label{pr:exactly4} Bitangent  shapes (II), (W), (X), (Y), (Z) or (AA) through (HH) have four real lifts.
\end{proposition}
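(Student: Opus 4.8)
The plan is to exploit the fact that, by \cite[Theorem 4.1]{len.mar:20}, every bitangent class of $\Gamma$ has exactly four lifts over $\KK$; it therefore suffices to show that for each of the listed shapes all four of those lifts are already defined over $\KKr$. The starting observation — the one made just before \autoref{pr:exactly4} — is that the initial form of a coefficient of $\ell$ extracted at a tangency point is a rational function of the (real) coefficients of $q$ unless the local lifting multiplicity at that point equals two, in which case a square root intervenes; by \autoref{tab:liftingMultiplicities} this occurs exactly for tangencies of type (3a), (3c), (5a), or of type (4) or (6a) with $|\det(e,e')| = 2$. So the first step is to reduce the statement to the claim that no liftable member of any of the shapes (II), (W), (X), (Y), (Z), (AA)--(HH) carries such a tangency: all remaining tangency types yield rational, hence automatically real, local lifting formulas, and the corresponding $\KK$-lifts then lie in $\KKr$.

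For shape (II) the reduction closes the argument immediately. By \autoref{thm:combinationsOfTangencies}, a member of a shape-(II) class lifts over $\KK$ only through its unique multiplicity-four tangency, which is of type (5b) or (6b), and \autoref{tab:liftingMultiplicities} records that both of these lift with local multiplicity one. Hence all four lifts are rational and real; I expect this case to be routine.

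For the remaining shapes — which are precisely the two-dimensional bitangent classes with only bounded maximal cells classified in \autoref{lm:twoCellBounded} — I would proceed member by member using that classification together with \autoref{thm:liftingProducts}(i). For any member $\Lambda$ of such a class the two tangency points lie in the relative interiors of two distinct ends of $\Lambda$ (taken to be the diagonal and horizontal ones), so the lifting multiplicity is the product of the two local multiplicities and $\Lambda$ lifts only if \emph{neither} tangency has type (1). Tracking the tangency types exactly as in the proof of \autoref{lm:twoCellBounded}, every member in the relative interior of the maximal cell $\mathcal{P}$, or of a boundary edge of $\mathcal{P}$, or of an attached one-dimensional cell has at least one type-(1) tangency; the only candidates for liftable members are therefore the finitely many corner vertices of the class. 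At each such vertex both tangencies occur at vertices of $\Gamma$ lying in the relative interior of an end of $\Lambda$, and the partial Newton subdivisions recorded in \autoref{fig:NP} (\autoref{cor:combClass}) determine the directions of the two edges of $\Gamma$ carrying them; a short check — using \autoref{lm:transverseTangencies} to exclude type-(5a) behaviour and the computation $|\det(e,e')| = 1$ to dispose of any residual type-(4) or (6a) behaviour — shows that both tangencies must be of type (2). By \autoref{tab:liftingMultiplicities} such tangencies lift with local multiplicity one, so each of the four $\KK$-lifts is rational and hence real.

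I expect the main obstacle to be precisely this last step: one has to run through the shapes (W), (X), (Y), (Z) and (AA)--(HH) — effectively through the few $\Sn{3}$-representatives, after applying the map $\tau_1$ — and, for each, identify the liftable corner vertices and verify from \autoref{fig:NP} that neither tangency can degenerate to type (3a), (3c), (5a), or to a type-(4)/(6a) tangency of $\det$-multiplicity two. Most of this is bookkeeping already latent in the proof of \autoref{lm:twoCellBounded} and in the colour coding of \autoref{fig:NP}, but it must be carried out uniformly, so that the conclusion ``exactly four real lifts'' holds with no sign hypothesis whatsoever on the coefficients of $q$.
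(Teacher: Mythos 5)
Your reduction for the shapes (W), (X), (Y), (Z), (AA)--(HH) is in line with the paper: for those classes every liftable member has lifting multiplicity one (both tangencies of a type whose local formulas are rational in the coefficients of $q$), so the four $\KK$-lifts are automatically defined over $\KKr$; the paper simply quotes the weight-one property recorded in~\autoref{fig:2dCells} rather than re-deriving it from~\autoref{lm:twoCellBounded}, but that part of your plan is sound bookkeeping.

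The genuine gap is in shape (II). It is not true that a shape-(II) class lifts over $\KK$ only through its two multiplicity-four tangencies: \autoref{thm:combinationsOfTangencies} says that the liftable multiplicity-four tangencies (types (5b) and (6b)) occur only in shape (II), not that shape (II) has no other liftable members. In fact the two vertices of types (6b) and (5b) each lift with multiplicity one (\autoref{cor:mult1forvl-vr}), which accounts for only two of the four lifts — your own count does not close. The remaining two lifts come from the weight-two member of (II), whose tangencies are of types (3a) (horizontal) and (2) (vertical); the (3a) tangency has local multiplicity two, so its lifting formula contains a square root and realness is \emph{not} automatic from rationality. This is precisely the content of the paper's proof: using $(u,v,w)=(1,2,0)$ in \autoref{pr:realLifts3a3c} one gets $\operatorname{sign}(\overline{m})=s_{12}s_{13}$, the type (2) tangency gives $\operatorname{sign}(\overline{m}/\bar{n})=-s_{00}s_{12}$ via \autoref{lm:tangency2}, hence $\operatorname{sign}(\bar{n})=-s_{00}s_{13}$, and substituting into the (3a) condition of \autoref{tab:localLifts3a3c} yields $\bigl(s_{00}s_{12}(s_{13})^{2}\bigr)^{2}=1>0$, so the radicand is always positive and the two lifts from this member are real with no hypothesis on the signs of $q$. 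Without this sign cancellation argument your proof of the (II) case does not go through.
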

\begin{proof} Since all weight one members lift over $\KKr$, we only need to show the statement holds for (II). We work with the $\Sn{3}$-representative depicted in~\autoref{fig:2dCells} and show that its single weight two member has two bitangent lifts over $\KKr$. Combined with the two real lifts for the weight one vertices, we get four real lifts in total for shape (II).

  We let  $\Lambda$ be the tropical bitangent associated to the weight two vertex for this shape. Its two tangency points have types (3a) and (2) along the horizontal and vertical ends of $\Lambda$, respectively, as we see in the right-most picture in~\autoref{fig:ShapeII}. The first point yields two solutions for $m\in \KK$ with $\overline{m}\in \RR$, whereas the second one determines a unique value for $m/n\in \KKr$. In particular, $\bar{n}\in \RR$.

We use the notation from~\autoref{fig:ShapeII} for the remaining of this proof.   In order to check the validity of the local lifting conditions  given in~\autoref{tab:localLifts3a3c} for the horizontal tangency along the edge $\overline{v_rv''}$ of $\Gamma$ we must determine   the sign of both $\overline{m}$ and $\bar{n}$ and  relevant vertices in the Newton subdivision of $q$ seen in the figure. In particular, the three relevant parameters $u,v,w$ are determined by $v_r^{\vee}$, i.e. $(u,v,w) = (1,2,0)$ for the horizontal type (3a) tangency. In turn, \autoref{pr:realLifts3a3c} provides the sign of $\overline{m}\in \RR$: it is $s_{12}s_{13}$.

  The vertex where the tangency (2) occurs is adjacent to an edge $e$ with direction $(-2,1)$. Its dual edge $e^{\vee}$ has endpoints $v_r=(0,0)$ and $v_l=(1,2)$.  \autoref{lm:tangency2} determines the sign of $\overline{m}/\bar{n}$ from these two vertices: it equals $-s_{00}s_{12}$. Thus,
  \begin{equation}\label{eq:signnII}
    \operatorname{sign}(\bar{n}) = \operatorname{sign}(\bar{n}/\overline{m})\operatorname{sign}(\overline{m}) = 
    -s_{00}{(s_{12})}^2s_{13} = -s_{00}s_{13}.
  \end{equation}

  We will have two solutions in $\KKr$ for $m$ and thus two real bitangent lifts for $\Lambda$ if and only if the formula  in~\autoref{tab:localLifts3a3c} for a horizontal type (3a) tangency holds. Replacing the values for $(u,v,w)$ computed earlier and $\operatorname{sign}(\bar{n})$ given in~\eqref{eq:signnII} yields the positive expression
  \[
(-1)^{0+2+1} (s_{12} s_{13})^{0+2} s_{00}\, s_{13} (-s_{00}s_{13}) =  \big (s_{00}\,s_{12}\,(s_{13})^2\big)^2 = 1.   \qedhere\]
\end{proof}

The remaining 28 shapes in~\autoref{fig:2dCells} have four bitangent lifts over $\KK$, and it is possible for none of them to be defined over $\KKr$. To prove~\autoref{thm:reallifting} for these cases, we group together shapes arising from partial dual subdivisions to $\Gamma$ sharing similar combinatorial properties, as we did in~\autoref{sec:lift-trop-bitang}.

Next, we address those shapes listed in the second row of~\autoref{tab:classificationShapes} excluding (C). For all these shapes, the $\KK$-liftable bitangent members have a disconnected intersection with $\Gamma$. The relevant tangency pairs are listed in~\autoref{tab:possibleTangencyPairs}.

\begin{proposition}\label{pr:secondRowTable} Bitangent  shapes (D), (L), (L'), or (O) through (S) have either zero or four lifts over $\KKr$.
  Furthermore, the $\Sn{3}$-representatives listed in~\autoref{fig:2dCells} have four lifts over $\KKr$ if, and only if,
  \begin{equation}\label{eq:shapesDetc}
  (-1)^i(s_{10}\,s_{11})^i\,s_{0i}s_{22} >0.
  \end{equation}
Here, 
$(0,i)$ is the red marked vertex in the corresponding partial Newton subdivisions depicted in~\autoref{fig:NP}. In particular,  $i=0$ for shapes (L'), (Q), (Q'), (R) and (S), whereas $i=1$ for (L), (O) and (P). The value of $i$ for shape (D) is not restricted.
\end{proposition}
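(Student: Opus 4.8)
The plan is to treat the shapes (D), (L), (L'), (O)--(S) uniformly by exploiting the structural information gathered in the proof of~\autoref{prop:oneComponentIntersections}, which tells us that in each case the $\KK$-liftable member $\Lambda$ of the class has two tangency points: one non-transverse tangency of type (3c) along one edge, and one transverse multiplicity-two tangency point of type (2), lying in a different end of $\Lambda$. (For shape (C) a type (3b) tangency intervenes, which is why it is excluded here and handled separately in~\autoref{pr:shapeC}.) By the $\Sn{3}$-action I may put the weight-two member into the normalized position suggested by~\autoref{fig:NP}: the relevant cells in the Newton subdivision involve the vertices $(0,i)$, $(1,0)$, $(1,1)$ together with $(2,2)$, with $i\in\{0,1\}$ for all shapes except (D), where $i$ ranges over $0,\dots,4$.

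First I would identify, for each representative shape, exactly which vertex of the class carries weight two and which end of $\Lambda$ hosts each tangency. The combinatorial bookkeeping here is exactly the one done in the proof of~\autoref{prop:oneComponentIntersections} and recorded in~\autoref{fig:NP}: the (3c) tangency occurs along the horizontal edge $e$ dual to the edge joining $(1,0)$ and $(1,1)$ (so its ``left-most vertex'' $(u,v)$ of $e^\vee$ is $(1,0)$, and the neighbor vertices are $(0,i)$ and $(2,2)$), and the type-(2) tangency occurs along the diagonal end, produced by the edge $e'$ of $\Gamma$ with $(e')^\vee$ joining $(1,1)$ and $(2,2)$. Next I would apply~\autoref{pr:realLifts3a3c} to the horizontal (3c) tangency: it yields two solutions for $m$ over $\KK$, real if and only if
\[
(-1)^{r+w}\,(s_{uv}s_{u,v+1})^{r+w}\,s_{u+1,r}\,s_{u-1,w}>0,
\]
and in the present normalization $(u,v)=(1,0)$, so $(u,v+1)=(1,1)$, the ``$(u-1,w)$'' vertex is $(0,i)$ giving $w=i$, and the ``$(u+1,r)$'' vertex is $(2,2)$ giving $r=2$. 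Substituting, the condition reads $(-1)^{i}(s_{10}s_{11})^{i}\,s_{22}\,s_{0i}>0$, which is precisely~\eqref{eq:shapesDetc}. Crucially, this inequality involves only coefficients of $q$ and not $\bar n$ or $\overline m$, because the $\operatorname{sign}(\bar n)$ factor that appears for a type-(3a) tangency is absent for type (3c) (compare the two rows of~\autoref{tab:localLifts3a3c}).

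Then I must check that the \emph{second} tangency, of type (2) along the diagonal end, imposes no further obstruction: by~\autoref{lm:tangency2} it determines a \emph{unique} value of $n$ over $\KKr$ (its sign is $-s_{v_l}s_{v_r}$ where $v_l,v_r$ are the endpoints of $e'^\vee$, i.e. $-s_{11}s_{22}$), so it never cuts down the number of real lifts. Combining: the type-(2) tangency always contributes its one real lift, and the (3c) tangency contributes two real lifts precisely when~\eqref{eq:shapesDetc} holds and zero otherwise; by~\autoref{thm:liftingProducts}(i) the global lifting multiplicity of $\Lambda$ over $\KKr$ is the product, hence $2$ or $0$. Finally I would note that each of these shapes has exactly one weight-two vertex and no weight-four vertex (read off from~\autoref{fig:2dCells}), and its two weight-one members each lift to one real bitangent triple (since their lifting formulas are rational, giving automatically real initial forms as explained at the start of~\autoref{sec:lift-trop-bitang}); adding the contributions gives a total of $4$ real lifts if~\eqref{eq:shapesDetc} holds and $0$ otherwise, proving~\autoref{thm:reallifting} for these classes. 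To finish I would record the specialization of $i$: the proof of~\autoref{prop:oneComponentIntersections} forces $i\in\{0,1\}$ for (L), (L'), (O), (P), (Q), (Q'), (R), (S), and pins down $i=0$ for (L'),(Q),(Q'),(R),(S) and $i=1$ for (L),(O),(P) by the convexity constraints on the component of $\RR^2\smallsetminus\Gamma$ dual to $(1,1)$ discussed in~\autoref{rem:chipFiringsOneIntersection}, while for (D) no such constraint is present.

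The main obstacle I anticipate is not any single computation but the case management: making sure that for \emph{every} one of these eight representative shapes the weight-two member really does sit in the normalized Newton-subdivision position claimed (so that $(u,v)=(1,0)$, $r=2$, $w=i$ uniformly), and that the end of $\Lambda$ carrying each tangency is the one I assumed, so that the versions of~\autoref{pr:realLifts3a3c} and~\autoref{lm:tangency2} I invoke are the correct ones (horizontal vs.\ vertical vs.\ diagonal). This is exactly the information packaged in~\autoref{fig:NP} and~\autoref{cor:combClass}, so the argument reduces to citing that figure shape-by-shape; the only genuinely new content is the one-line sign substitution into~\autoref{tab:localLifts3a3c} that produces~\eqref{eq:shapesDetc}, together with the observation that the type-(2) tangency is ``sign-transparent'' in the sense that it never obstructs realness.
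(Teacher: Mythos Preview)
Your argument has a genuine gap in the accounting of liftable members. You assert that each of these shapes has ``exactly one weight-two vertex and no weight-four vertex'' together with ``two weight-one members,'' and then sum $2+1+1=4$. This is not the structure of these classes: as stated at the beginning of~\autoref{sec:real-lift-trop} and made explicit in the paper's proof, each of the shapes (D), (L), (L'), (O)--(S) has \emph{two} weight-two members (and no weight-one members contributing). The total $4$ over $\KK$ comes from $2+2$, not $2+1+1$. With your counting, if the sign condition failed, the alleged weight-one members would still lift automatically over $\KKr$, and you would obtain $2$ real lifts rather than $0$, contradicting the statement.

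Consequently, the substantive part of the proof --- the shape-by-shape verification that the \emph{second} weight-two member yields the \emph{same} sign condition~\eqref{eq:shapesDetc} --- is entirely missing from your proposal. In the paper this is the bulk of the argument: for the leftmost weight-two vertex the tangency pair is ((3c) horizontal, (2) vertical), giving~\eqref{eq:shapesDetc} directly; but for the other weight-two vertex the tangency types differ by shape (e.g.\ ((3a),(2)) for (D), ((5a),(2)) or ((6a),(2)) for (L)/(L'), ((4),(2)) or ((6a),(2)) for (Q)/(Q'), etc.), and one must plug the relevant parameters into the corresponding rows of~\autoref{tab:localLifts3a3c}, using~\autoref{lm:tangency2} to compute $\operatorname{sign}(\bar n)$, and check that the resulting inequality reduces to~\eqref{eq:shapesDetc} in every case. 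There is also a minor inaccuracy in your analysis of the first vertex: the type-(2) tangency there lies on the \emph{vertical} end (determining $m/n$), with dual edge joining $(1,0)$ and $(2,2)$, not the diagonal end with dual edge $(1,1)$--$(2,2)$; this does not affect the (3c) computation, but it matters once you attempt the second vertex.
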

\begin{proof} We discuss the statement for the $\Sn{3}$-representatives depicted in~\autoref{fig:2dCells}. The proof of~\autoref{prop:oneComponentIntersections} ensures that each of the two members of each bitangent class with weight two have two distinct tangency points.  For the rightmost vertex, the corresponding line $\Lambda$ has  local  (3c) and (2) type tangencies, along its horizontal and vertical ends. By~\autoref{pr:realLifts3a3c}, the first point yields two values for $m\in \KK$ with $\overline{m}\in \RR$, whereas the second determines a unique $m/n\in \KKr$.

  In the notation of~\autoref{fig:LocalNP} we have $(u,v,r,w)=(1,0,2,i)$ for the horizontal type (3c) tangency. We can read this information from the labeled dual triangles in the two possible partial dual subdivisions to $\Gamma$ depicted in the top-left of~\autoref{fig:ShapeDLetc}. Note that the value of $w$ changes depending on the bitangent shape, as predicted by in the statement. By~\autoref{pr:realLifts3a3c}, the sign of $\overline{m}$ equals $s_{10}s_{11}$.
  
As we see from~\autoref{fig:ShapeDLetc}, the edge $e'$ responsible for the vertical type (2) tangency has direction $(2,-1)$. Its dual $(e')^{\vee}$ has vertices $v_l=(1,0)$ and $v_r=(2,2)$. 
  Replacing this data in the formula for the (3c) horizontal tangency in~\autoref{tab:localLifts3a3c} and ignoring terms with even powers yields expression \eqref{eq:shapesDetc} as the condition  for having two solutions for $m$ in $\KKr$.

  To conclude, we must show the same necessary and sufficient condition for lifting over $\KKr$ arises for the other member of the class that has weight two.
  We let $\Lambda'$ be the corresponding tropical bitangent and let $m'$ and $n'$ be the coefficients of one of its lifts in $\KK$. We argue for each shape separately, identifying the parameter values needed for the corresponding lifting restrictions from~\autoref{fig:ShapeDLetc}:
  
  \begin{description}
  \item[(D)] The local tangencies are of type (3a) and (2) and occur along the horizontal and diagonal ends of $\Lambda'$. In this situation, the first point determines two possible values for $m'$ in $\KK$ and the second one determines a unique solution $n'\in \KKr$. The dual edge $e^{\vee}$ arising from the latter has vertices $v_l'=(1,1)$ and $v_r'=(2,2)$, so the sign of $\overline{n'}$ is $-s_{11}s_{22}$ by~\autoref{lm:tangency2}.
    Furthermore, the values for $u,v$ and $w$ for the first tangency point agree with those given for the line $\Lambda'$. Replacing this information in the formula from horizontal (3a) tangencies in~\autoref{tab:localLifts3a3c} yields~\eqref{eq:shapesDetc}.
  \item[(L) and (L')] The local tangencies are of type (5a) (respectively, (6a)) and (2). The diagonal end of $\Lambda'$ is responsible for both tangencies. By~\autoref{rem:type5a} (respectively, \autoref{pr:realLift46a}), the first point determines two solutions  $m'\in \KK$, and the second a single value $n'\in \KKr$. Since $v_l'=(1,1)$ and $v_r'=(2,2)$, we get $\operatorname{sign}(\overline{n'}) =  -s_{11}s_{22}$ by~\autoref{lm:tangency2}. The first tangency point has parameters $u=v=0$ for both shapes. 

    Applying the formulas in~\autoref{tab:localLifts3a3c} for (5a) and (6a) for the diagonal end gives the conditions $-s_{11}s_{22}s_{10}s_{01}>0$ for shape (L) and $s_{00}s_{22}>0$ for shape (L'). Since $i=0$ for shape (L') and $i=1$ for shape (L),  these two inequalities agree with  expression~\eqref{eq:shapesDetc}.
  \item[(O)] The two tangencies are of type (3a) and (2), and occur along the vertical and diagonal ends of $\Lambda'$. By~\autoref{fig:ShapeDLetc}, $(u,v,w) = (0,1,1)$ for the first point and the local equations determine $m'/n'$. The second tangency has the same behavior as with shape (D) and it determines a unique value for $n'$ with  $\operatorname{sign}(\overline{n'})= -s_{11}s_{22}$. Replacing these values in the formula from the table yields $-s_{01}s_{10}s_{11}s_{22}$. This agrees with the condition in~\eqref{eq:shapesDetc} since $i=1$.
    \item[(P)] In this case, we have a vertical (3c) tangency and a type (2) one along the horizontal end of $\Lambda'$.  The second point contributes a unique solution for $n'$ which lies in $\KKr$. By~\autoref{fig:ShapeDLetc}, the first tangency point has associated combinatorial data $(u,v,r,w)= (0,1,2,1)$.~\autoref{tab:localLifts3a3c} gives the condition $-s_{01}s_{10}s_{11}s_{22}>0$ for having two solutions $m'/n'$ in $\KKr$. This is the same as the one in~\eqref{eq:shapesDetc} since $i=1$.
    \item [(Q) and (Q')] The two local tangencies are of types (2) and either (4) or (6a), respectively. Both arise from intersections with the diagonal end of $\Lambda'$. The relevant vertices   in the dual subdivision to $\Gamma$ for these two points are $(u,v)=(0,0)$, $v_l'=(1,1)$ and $v_r'=(2,2)$. In particular, $\operatorname{sign}(\overline{n'}) =  -s_{11}s_{22}$ from ~\autoref{lm:tangency2}. Replacing this information in the formula for diagonal tangencies of types (4) or (6a)  from~\autoref{tab:localLifts3a3c} recovers~\eqref{eq:shapesDetc} since $i=0$.
    \item[(R)] The reasoning for shape (O) works for this situation as well, except that the vertical (3a) tangency yields $(u,v,w)=(0,1,0)$, and we must set $i=0$ in~\eqref{eq:shapesDetc}.
  \item[(S)] The claim  follows by the same arguments used for (P), with the exception that $(u,v,r,w)=(0,1,2,0)$ for the vertical type (3c) tangency and $i=0$ in~\eqref{eq:shapesDetc}.
  \end{description}
  
   Thus, we conclude that either both members of the class lift twice over $\KKr$ or none of them do, and condition~\eqref{eq:shapesDetc} characterizes when  real solutions exist.
  \end{proof}

Next, we discuss the remaining shapes except for shape (C), which we treat later:
\begin{proposition}\label{pr:rest} Bitangent shapes (A), (B), (E) through (K), (M), (N), (T), (T'), (T''), (U), (U') or (V) have either zero or four bitangent lifts over $\KKr$. The conditions for having real lifts are given in~\autoref{tab:lifting}.
  \end{proposition}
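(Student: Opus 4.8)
The plan is to follow the same blueprint used in \autoref{pr:exactly4} and \autoref{pr:secondRowTable}. Every weight-one member of a bitangent class lifts automatically over $\KKr$ (its local formulas are rational in the coefficients of $q\in\KKr[x,y]$), so for each shape in the statement it suffices to treat its finitely many weight-two members. For such a member $\Lambda$, the proofs of \autoref{lm:twoCellUnbounded} through \autoref{lm:zeroCell}, together with \autoref{tab:possibleTangencyPairs}, pin down the unordered pair of local tangency types of $\Lambda$, while \autoref{cor:combClass} (summarised in \autoref{fig:NP}) supplies the portion of the Newton subdivision of $q$ around the two tangency points, hence the concrete vertices $(u,v)$, $(u\pm1,\cdot)$, $(k,4-k)$ of \autoref{fig:LocalNP}. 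Feeding this data into \autoref{tab:localLifts3a3c} (through \autoref{pr:realLifts3a3c}, \autoref{pr:realLift46a}, \autoref{rem:type5a} and \autoref{rem:type6a}) and into \autoref{lm:tangency2} yields, at each weight-two vertex, one polynomial inequality in the signs $s_{ij}$; the claim to be checked is that this inequality coincides with the entry of \autoref{tab:lifting} attached to the shape.

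I would organise the verification by the pair of tangency types. In every case listed in the proposition one tangency has lifting multiplicity one --- it is of type (2) --- and the other has lifting multiplicity two --- of type (3a), (3c), (4), (5a) or (6a), which is the tangency responsible for the radicand in the lifting formula. The type (2) tangency fixes the sign of the second coefficient of $\ell$ (one of $m$, $n$, $m/n$, according to the end of $\Lambda$ where it occurs) as $-s_{v_l}s_{v_r}$ by \autoref{lm:tangency2}, with $v_l,v_r$ the endpoints of the relevant dual edge; substituting this into the positivity condition for the other tangency in \autoref{tab:localLifts3a3c} and cancelling even powers collapses the condition to the single monomial recorded in \autoref{tab:lifting}. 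For the unbounded shapes (H), (H'), (K), (T), (T'), (T''), (U), (U') and (V), I would first use \autoref{lm:unboundedCellsAndDirs} and \autoref{lm:Bunbounded} to observe that the unbounded end forces both tangencies onto the same end of $\Lambda$, so that by \autoref{thm:liftingProducts}(ii) only the distinguished weight-two vertex of the shape --- not the entire ray --- can lift; for (K) and (T)--(V) that weight-two vertex carries a type (4) or (6a) tangency together with a type (2) tangency, both along the diagonal, and using the dual triangle $(u,v),(u+1,v),(u+1,v+1)$ (the exceptional triangle $(0,0),(0,1),(1,1)$ being covered by \autoref{rem:type6a}) the computation collapses uniformly to $s_{00}s_{k,4-k}>0$, which explains why these shapes share a row in \autoref{tab:lifting}. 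Shapes (I), (M) and (N) in addition require the type (5a) formula of \autoref{rem:type5a}, applied at the vertex where the corresponding local moves terminate.

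The point needing genuine care is the shapes possessing \emph{two} weight-two members --- these occur among the two-dimensional unbounded shapes of \autoref{lm:twoCellUnbounded} (a horizontal and a vertical ``foot'') and among some one-dimensional shapes of \autoref{lm:oneCellUnbounded}. There I would argue exactly as in \autoref{pr:secondRowTable}: compute, at each of the two vertices separately, the sign of the coefficient of $\ell$ pinned down by the local type (2) tangency (again via \autoref{lm:tangency2} and the partial subdivisions of \autoref{fig:NP}), and then check that the two radicand conditions extracted from \autoref{tab:localLifts3a3c} reduce, after cancelling squares, to one and the same monomial in the $s_{ij}$. Consequently either both weight-two members lift twice over $\KKr$ or neither does, so the class has four or zero real lifts, as recorded in \autoref{tab:lifting}. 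The main obstacle is thus not conceptual but organisational: there are roughly seventeen shapes, each with its own dual-subdivision data, its own pair of local tangency types, and --- in the case of two weight-two members --- two independent local computations done at geometrically distinct vertices that must be shown to land on the same sign condition; keeping the orientations, the $\Sn{3}$-normalisations, and the parameters $(u,v)$, $w$, $r$, $k$, $i$ straight is the real work, whereas each individual substitution is the routine one already performed in \autoref{pr:exactly4} and \autoref{pr:secondRowTable}.
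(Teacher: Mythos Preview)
Your overall blueprint matches the paper's, but there is a genuine gap in your description of the local tangency types. You assert that ``in every case listed in the proposition one tangency has lifting multiplicity one --- it is of type (2) --- and the other has lifting multiplicity two.'' This is false for shapes (A), (B), (H), (H') and (M). Each of these has a single weight-\emph{four} member (not two weight-two members) whose two tangency points \emph{both} carry local lifting multiplicity two: the pairs are ((3c),(3c)) for (A), ((3c),(3a)) for (B), ((3c),(4) or (6a)) for (H) and (H'), and ((3c),(5a)) for (M). No type (2) tangency is present at that vertex, so \autoref{lm:tangency2} never enters, and your mechanism for collapsing to a single sign monomial does not apply. This is precisely why these five shapes occupy separate rows in \autoref{tab:lifting} with \emph{two} independent sign conditions rather than one.

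The correct treatment of these shapes, carried out in the paper, is different in character. For (A) the two (3c) tangencies independently determine $m$ and $m/n$, and each contributes its own radicand condition; the four lifts over $\KK$ arise as $2\times 2$, and one gets either zero or four over $\KKr$ according as both radicands are positive or not. For (B) there is an additional subtlety: the vertical (3a) formula requires $\operatorname{sign}(\bar n)$, but $n$ is not fixed by any type (2) tangency --- instead $\operatorname{sign}(\bar n)$ must be computed as $\operatorname{sign}(\overline m)\cdot\operatorname{sign}(\overline{m/n})$ using the signs supplied by \autoref{pr:realLifts3a3c} for the two multiplicity-two tangencies themselves. Shapes (H), (H') and (M) are handled analogously, with the (4)/(6a)/(5a) formula from \autoref{tab:localLifts3a3c} requiring $\operatorname{sign}(\overline m)$ coming from the companion (3c) tangency. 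Your proposal would need to be amended to accommodate this two-radicand mechanism for the weight-four vertices; the remaining shapes (those with two weight-two vertices, each carrying one type (2) and one multiplicity-two tangency) are handled exactly as you describe.
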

\begin{proof} We use the $\Sn{3}$-representative shapes and the associated partial Newton subdivisions to $\Gamma$ summarized in~\autoref{fig:NP} to determine the parameters $u,v,r,w$ used in the corresponding formulas from~\autoref{tab:localLifts3a3c}. In all cases, we have either one or two members for each class that lift over $\KK$, with a combined lifting multiplicity of 4. We prove the statement for each case, grouping shapes whenever the dual subdivisions to $\Gamma$ share some common features.
  \vspace{-1ex}
  
    \begin{description}
    \item[(A)] The unique member $\Lambda$ of this bitangent class has two tangency points, both of type (3c) along the  horizontal and vertical ends. The first point determines $m$, whereas the second determines $m/n$. We must determine the values of the parameters $u$ (respectively, $v$) $r$ and $w$ needed to use the formulas in~\autoref{tab:localLifts3a3c}. For a concrete example, we refer to~\autoref{fig:quartic}.

      In the notation of~\autoref{fig:LocalNP}, we have  $(u,r,w)=(1,2,i)$ ($v$ is unknown as the dotted vertical segment in the figure indicates) arising from the horizontal tangency. This gives the first of the two formulas in~\autoref{tab:lifting}, ensuring that the coefficient $m$ has two solutions in  $\KKr$. 

      In turn, for the second point we have $(v,r,w) = (1,2,j)$ ($u$ is unknown as seen in the dotted horizontal segment in the figure). Replacing these values in the formula from~\autoref{tab:localLifts3a3c} yields the second formula in~\autoref{tab:lifting}. If the condition is satisfied, we will have two values for $m/n$ in $\KKr$. 
Thus, combining both positivity constraints ensures that the shape has four $\KKr$-bitangent lifts. If any of them fails, there are no $\KKr$-lifts.
    \vspace{1ex}
  \item[(B)] We have two tangency points; a horizontal (3c) one with $(u,r,w)=(1,1,i)$ ($v$ is unknown) and a vertical (3a) one with $(u,v,w)=(2,1,j)$. By~\autoref{pr:realLifts3a3c}, the first point determines two values for $m\in \KK$ with $\overline{m}\in \RR$ of sign $s_{1v}s_{1,v+1}$. The second one determines two solutions for $m/n\in \KK$ with $\overline{m}/\bar{n}\in \RR$ of sign $s_{21}s_{31}$. Thus, $\bar{n}\in \RR$ and its sign is $s_{1v}s_{1,v+1}s_{21}s_{31}$. Substituting these values in the conditions for real liftings in~\autoref{tab:localLifts3a3c} and eliminating factors with even exponents yield the two positivity conditions listed in~\autoref{tab:lifting}.
    \vspace{1ex}

  \item[(E), (F) and (J)] Both weight-two vertices in each of these bitangent shapes have a horizontal (3c) tangency with $(u,r,w)=(1,0,i)$ and a diagonal tangency (of type (2), (4) or (6a)). The second tangency has local lifting multiplicity one and the local equations provide a unique value for $n$, therefore in $\KKr$. The parameter values for the (3c) tangency are the same for both members, and they are determined from~
    Figures~\ref{fig:EFShapes} and~\ref{fig:Case4_1dim}, respectively. The local equations yield two solutions for $m\in \KK$ for each member. Thus, the condition to have bitangent lifts in $\KKr$ is the same for both members of the bitangent shape, since the formula in~\autoref{tab:localLifts3a3c} only depends on $u,v,w$ and $r$. Eliminating factors with even exponents yields the expected inequality from~\autoref{tab:lifting}.
    \vspace{1ex}

  \item[(G)] As we see from the first two pictures in~\autoref{fig:GShape}, members associated to a weight-two vertex of this shape have a horizontal (3a) tangency with $(u,v,w)= (1,0,i)$ combined with a diagonal type (2) tangency with $v_l= (1,1)$ and $v_r=(k,4-k)$, for some $k=0,2$ or $4$.
    By~\autoref{lm:tangency2}, the unique value of $n$ obtained from the type (2) tangency lies in $\KKr$ and $\operatorname{sign}(\bar{n}) = -s_{11}s_{k,4-k}$. Thus, the condition for lifting each member over $\KKr$ is the same and it is obtained directly from the formula in~\autoref{tab:localLifts3a3c}.
  \vspace{1ex}

\item[(H) and (H')] As we see in~\autoref{fig:Cases1And2_1dim}, the weight-four member has two horizontal tangencies; one of type  (3c)  with $(u,r,w)=(1,1,i)$ and one of types (4) or (6a), respectively, with $(u,v)=(2,0)$. By~\autoref{pr:realLifts3a3c}, the first point yields two solutions for $m\in \KK$ with $\overline{m} \in \RR$ of sign $s_{1v}s_{1,v+1}$ ($v$ is unknown). The second tangency point produces two values for $n\in \KK$. Replacing the combinatorial data and the sign of $\overline{m}$ into the formulas provided in~\autoref{tab:localLifts3a3c} yields the two positivity conditions in~\autoref{tab:lifting}.
  \vspace{1ex}
  \item[(I) and (N)] The rightmost weight-two member $\Lambda$ of both shapes has a horizontal (3a) tangency with $(u,v,w)=(1,0,1)$ and a diagonal (2) tangency with $v_l= (1,1)$ and $v_r=(k,4-k)$ for $k=0,2$ or $4$. These values are obtained from~\autoref{fig:Case3_1dim}. By~\autoref{lm:tangency2}, the type (2) point determines a unique solution for $n\in \KKr$ with 
    $\operatorname{sign}(\bar{n})= -s_{11}s_{k,4-k}$. Replacing this data into the lifting conditions for the (3a) horizontal  tangency  yields $-s_{10}s_{11}s_{01}s_{k,4-k}>0$.

    To derive the statement we must show that this condition also arises for the other weight-two member of the class, labeled $\Lambda'$. We check this for each of the two shapes separately. 

    If the class has shape (I) (respectively, (N)) the tangency points of $\Lambda'$ are of types (2)  and (5a) (respectively, vertical (3a)). The former occurs along the diagonal end of $\Lambda'$ with $v_l= (1,1)$ and $v_r=(k,4-k)$ for $k=0,2$ or $4$ (the same values as for $\Lambda$). Therefore,  the sign of $\overline{n'}$ arising from $\Lambda'$ agrees with the one for $\bar{n}$ from $\Lambda$.

    For both shapes, the second tangency point of $\Lambda'$ determines two solutions for $m'$ and the required data for the corresponding lifting conditions in~\autoref{tab:localLifts3a3c} is $(u,v)=(0,0)$ for shape (I) and $(u,v,w) = (0,1,1)$ for shape (N). A simply substitution recovers the positivity constraint obtained earlier for $\Lambda$.
  \item[(K)] By~\autoref{fig:Case6_1dim}, the rightmost weight-two member $\Lambda$ has tangencies of type (3a) and (2) along the horizontal and diagonal ends. The first point determines two values for $m$ and $(u,v,w)=(1,0,0)$. The second point has associated vertices $v_l=(1,1)$ and $v_r=(k,4-k)$ with $k=0,2,4$ in the dual subdivision to $\Gamma$ and yields a unique solution for $n\in \KKr$ with $\operatorname{sign}(\bar{n})=-s_{11}s_{k,4-k}$ by~\autoref{lm:tangency2}. Replacing this data on the corresponding lifting conditions on~\autoref{tab:localLifts3a3c} for the first point yields $s_{00}s_{k,4-k}>0$.

  We must check the other weight-two member of this class (called $\Lambda'$) has the same constraint for lifting over $\KKr$.
  The tangency points of $\Lambda'$ are of type (6a) with $(u,v)=(0,0)$ and of type (2). The latter occurs along the diagonal end of $\Lambda'$ and has the same  associated  vertices $v_l$ and $v_r$ as  $\Lambda$. In particular, the latter point determines a unique $n'\in \KKr$ with initial form of the same sign as the one for $\Lambda$. Replacing this into the formula for diagonal type (6a) tangencies from~\autoref{tab:localLifts3a3c} recovers  $s_{00}s_{k,4-k}>0$, as we wanted.
  \vspace{1ex}

\item[(M)] In this case, the weight-four member has one horizontal (3c) tangency with $(u,r,w)= (1,1,i)$, which determines two solutions in $m$ with $\overline{m}\in \RR$ of sign $s_{1v}s_{1,v+1}$ by~\autoref{pr:realLifts3a3c}. The value of $r$ corresponds to the  marked vertex in~\autoref{fig:Case4_1dim}
  This second tangency has type (5a) with $(u,v)=(2,0)$ and yields two solutions for $n$.  Replacing this information in the two relevant lifting formulas from~\autoref{tab:localLifts3a3c} gives the two expressions in~\autoref{tab:lifting}.
  \vspace{1ex}
\item[(T), (T') and (T'')] The combinatorial information necessary to determine the constraints for this case can be recovered from Figures~\ref{fig:partialNPTelal} and~\ref{fig:optionsForSigma}. We show that each weight-two vertex of these shapes yield the same constraint.

  We start with the two vertices of (T). They have diagonal tangencies of types (4) and (2). The latter contributes a unique value for $n\in \KKr$ with $\operatorname{sign}(\bar{n})=-s_{11}s_{k,4-k}$, where $k=0,2$ or $4$. The other tangency point fixes two values for $m$ and $(u,v)=(0,0)$ for both members. The lifting conditions for both vertices of (T) arising from the corresponding formula in~\autoref{tab:localLifts3a3c} is the same, namely  $s_{00}s_{k,4-k}>0$.

  Next, we look at the rightmost member of (T'), which has a diagonal (6a) tangency with $(u,v)=(0,0)$ and a diagonal type (2) tangency giving $\operatorname{sign}(\bar{n})= - s_{11}s_{k,4-k}$. Replacing this information in the appropriate formula from the same table recovers  $s_{00}s_{k,4-k}>0$.
  
  Finally, we look at the leftmost member of (T''). Applying $\tau_0$ turns this member into the rightmost member of a class with the same shape. By~\autoref{rem:type6a}, the lifting condition is invariant under $\tau_0$, thus, it is also valid for the leftmost vertex of shape (T'').

    \vspace{1ex}
  
\item[(U), (U')  and (V)] As~\autoref{fig:NP} shows, the combinatorial data for the rightmost vertex with weight two agrees with the analogous  member of shape (K). Thus, the lifting conditions over $\KKr$ for this vertex is $s_{00}s_{k,4-k}>0$.  We must check the same inequality arises from the other weight-two member of each shape.

  For shapes (U) and (U'), the tangencies occur along the diagonal end of the line and they match those corresponding to the left-most vertex of shapes (T) and (T''), respectively. Thus, the lifting conditions for this member are also $s_{00}s_{k,4-k}>0$.

  For shape (V), the tangencies are of type (2) along the diagonal end of a line and of type (3a) along the vertical end, with $(u,v,w)=(0,1,0)$. The parameters are obtained from~Figures\ref{fig:partialNPTelal} and~\ref{fig:optionsForSigma} after applying the map $\tau_0$.
The first tangency gives a unique solution for $n$ in $\KKr$ with $\operatorname{sign}(\bar{n})=-s_{11}s_{k,4-k}$. Substituting this the lifting formulas for $m$ from the vertical (3a) tangency yields $s_{00}s_{k,4-k}>0$.\qedhere
  \end{description}
  \end{proof}

We conclude by discussing the real-lifting conditions for shape (C). Up to translation, we place this class at (0,0). The Newton subdivision of $q\in \KKr[x,y]$ depicted in~\autoref{fig:NP}~
{(C)} has three colored marked vertices: $(0,i)$, $(j,0)$, $(k,4-k)$. They determine the endpoints of the three edges adjacent to $(0,0)$ in $\Gamma$.

We let $\lambda_1, \lambda_2, \lambda_3$ be the length of the vertical, horizontal and diagonal edges of $\Gamma$ adjacent to $(0,0)$. By~\autoref{rem:genericity}, we   assume $\lambda_1<\lambda_2\leq \lambda_3$. This assumption ensures that $j=1,2$ or $3$. Our next result provides the real-lifting conditions for this representative shape, adapting the ideas seen in proof of~\autoref{pr:realLifts3a3c} for a type (3a) tangency.

\begin{proposition}\label{pr:shapeC} Consider the $\Sn{3}$-representative bitangent class of shape (C) discussed above. Then, this class has either none or exactly four bitangent lifts over $\KKr$. The conditions for having real lifts depend on the marked vertices $(0,i)$, $(j,0)$ and $(k,4-k)$:
  \[
      (-s_{11})^{i+j}(s_{12})^i(s_{21})^js_{0i}s_{j0}>0  \quad\text{ and }\quad  (-s_{21})^{k+j}(s_{12})^k(s_{11})^js_{k,4-k}s_{j0}>0. 
  \]
  \end{proposition}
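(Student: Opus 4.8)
The plan is to adapt the proof of \autoref{pr:realLifts3a3c} for a horizontal type (3a) tangency, applied to each of the two tangency points of the unique member $\Lambda$ of this class. We already know from \cite{len.mar:20} that $\Lambda$ has exactly four lifts over $\KK$, so the whole content of the statement is to decide how many of them are defined over $\KKr$. After a translation we place the common vertex $v$ of $\Gamma$ and $\Lambda$ at the origin and rescale so that $q\in R[x,y]\smallsetminus\mathfrak{M}R[x,y]$. Since $\lambda_1<\lambda_2\le\lambda_3$, the shortest of the three bounded edges of $\Gamma$ at $v$ is the vertical one, so by \autoref{rem:chipFiringsOneIntersection} the tangency points $P$ and $P'$ lie in the relative interiors of the horizontal and diagonal edges of $\Gamma$, at distances $(\lambda_2-\lambda_1)/2$ and $(\lambda_3-\lambda_1)/2$ from $v$. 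Each of $P$ and $P'$ overlaps with an end of $\Lambda$ (horizontal and diagonal, respectively), so each is a type (3b) local tangency whose local lifting behaves exactly like a (3a) tangency in \autoref{pr:realLifts3a3c}: $P$ produces two solutions for $m\in\KK$ with $\overline m\in\RR$, and $P'$ two solutions for $n\in\KK$ with $\overline n\in\RR$. Since the tangency points lie on distinct ends of $\Lambda$, the same reasoning as in \autoref{thm:liftingProducts}(i) identifies the four $\KK$-lifts with the pairs obtained by combining these choices, so $\Lambda$ has four real lifts when both square roots are real and none otherwise.

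It therefore remains to compute the two radicands, and I treat the horizontal tangency $P$ first. Its dual edge $e^\vee$ in the Newton subdivision of $q$ is the vertical side joining $(1,1)$ and $(1,2)$ of the dual triangle $(0,0)^\vee=\{(1,1),(1,2),(2,1)\}$, the triangle of the subdivision across that side has apex $(0,i)$, and the one across the horizontal side joining $(1,1)$ and $(2,1)$ (dual to the shortest, vertical, edge of $\Gamma$) has apex $(j,0)$. Running the re-embedding and ``cone feeding'' argument of \autoref{pr:realLifts3a3c} with $(u,v)=(1,1)$, and with \autoref{lem:chooseTail3a} supplying the admissible tail $m_1'$, I obtain $m=m_1+m_2$ with $\overline m=\overline{m_1}$ of sign $s_{11}s_{12}$ and $\overline{m_2}$ a square root as in \eqref{eq:case3aM}. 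The crucial point is that the tangency position $(\lambda_2-\lambda_1)/2$ depends on the length $\lambda_1$ of the shortest (vertical) edge, which is what draws the monomial attached to $(j,0)$ into the valuation bookkeeping; computing the sign of the radicand from the monomials supported at $(1,1),(1,2),(0,i),(j,0)$, together with the value of $\operatorname{sign}(\overline n)$ supplied by the analysis of $P'$, yields the first displayed inequality, including the $j$-dependence of the exponents.

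For the diagonal tangency $P'$ I apply the map $\tau_1$ of \autoref{tab:S3Action}. It turns the diagonal end of $\Lambda$ into a horizontal one, fixes the dual triangle $\{(1,1),(1,2),(2,1)\}$ while swapping $(1,1)\leftrightarrow(2,1)$, and acts on the coefficients of $q$ by $a_{ab}\mapsto a_{4-a-b,\,b}$. Applying the horizontal analysis of the previous paragraph to $\tau_1(\Gamma)$ and translating back through this correspondence produces the second displayed inequality, and along the way pins down $\operatorname{sign}(\overline n)$ in terms of $s_{11}$ and $s_{21}$, closing the loop. The two conditions are exchanged by $\tau_1$, as their shape suggests, so only one of them needs to be derived from scratch. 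Combining them shows that both square roots are real simultaneously or not at all, hence the class has four real lifts or none.

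The main obstacle is twofold. Conceptually, \autoref{thm:liftingProducts} does not apply verbatim here because $\Gamma\cap\Lambda$ is connected, so one must check directly that the two local systems, attached to tangencies on distinct ends of $\Lambda$, are genuinely independent and together account for all four $\KK$-lifts; this is the same mechanism used throughout \cite{len.mar:20}, but it has to be spelled out in this connected case. Technically, the sign bookkeeping through the two re-embeddings, the choice of $m_1'$, and the $\tau_1$-transformation — keeping track of $\operatorname{sign}(\overline m)$, $\operatorname{sign}(\overline n)$, and the precise powers of $s_{11},s_{12},s_{21}$ — is where all the real work lies, exactly as in \autoref{pr:realLifts3a3c} and \autoref{lm:tangency2}.
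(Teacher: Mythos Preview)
Your overall plan is on the right track, and the $\tau_1$-symmetry observation is correct, but the reduction to two independent type (3a) computations has a genuine gap.

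The problem is that at the vertex $v$ you have $\Star_\Gamma(v)=\Star_\Lambda(v)$, so the initial form $\overline{q}_v$ is supported on the full triangle $\{(1,1),(1,2),(2,1)\}$. If you run the (3a) re-embedding of \autoref{pr:realLifts3a3c} for the horizontal tangency $P$ alone, i.e.\ substitute $y=z-m_1$ with $m_1\approx a_{11}/a_{12}$, the monomial $a_{21}x^2y$ contributes $-a_{21}m_1\,x^2$ to $\tilde q$, a term of valuation~$0$. This forces $\val(\tilde a_{20})=0$, so $(2,0)$ sits at the wrong height and the triangle $\{(0,0),(2,0),(1,1)\}$ required by \cite[Lemma 3.9]{len.mar:20} never appears; the argument of \autoref{pr:realLifts3a3c} then simply does not run. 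In particular \autoref{lem:chooseTail3a}, which only adjusts a tail $m_1'$ to control $\tilde a_{u,0}$, cannot repair this: the obstruction lives in $\tilde a_{20}$ and comes from $a_{21}$, not from the $x^u$-column of $q$.

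The paper resolves this by a \emph{single joint} re-embedding $z=y+m_1+n_1x$, with $\overline{m_1}=\overline{a_{11}}/\overline{a_{12}}$ and $\overline{n_1}=\overline{a_{21}}/\overline{a_{12}}$ killing both off-diagonal terms of $\overline{q}_v$ at once, and then proves a dedicated technical lemma (\autoref{lm:liftingC}) choosing the tails $m_1',n_1'$ simultaneously so that $\val(\tilde a_{10}),\val(\tilde a_{30})$ are large while $\val(\tilde a_{20})=\lambda_1$ with $\overline{\tilde a_{20}}$ governed by $a_{j0}$. This is precisely how $s_{j0}$ and the $j$-dependent exponents enter \emph{both} radicands: they come from the shared middle coefficient $\tilde a_{20}$, not from plugging $\operatorname{sign}(\bar n)$ into a (3a) formula. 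Your sentence ``together with the value of $\operatorname{sign}(\overline n)$ supplied by the analysis of $P'$'' is therefore the wrong mechanism; if you try to match the displayed inequalities via the (3a) row of \autoref{tab:localLifts3a3c} with $(u,v,w)=(1,1,i)$ and $\operatorname{sign}(\bar n)=s_{21}s_{12}$, you will not recover the $s_{j0}$ factor.
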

\begin{proof}
  We let $(\ell,p,p')$ be one of the four bitangent lifts of the class over $\KK$.  A simple computation of the divisor associated to $\Lambda$ in the skeleton of $\Gamma$ places the two tropical tangency point in $\Gamma$ at $\Trop\, p=(\frac{\lambda_1-\lambda_2}{2},0)$ and $\Trop\, p'=(\frac{\lambda_3-\lambda_1}{2},\frac{\lambda_3-\lambda_1}{2})$.

  By construction, $\val(a_{11}) = \val(a_{12}) = \val(a_{21}) < \val(a_{lr})$ for all other $(l,r)$ in the support of $q$.     Thus, after scaling $q$ by $1/a_{12}$ if necessary, we may assume $q\in R[x,y]$ with $a_{12}=1$ and $\val(a_{11}) = \val(a_{21})=0<\val(a_{rs})$. Furthermore,
  \begin{equation}\label{eq:valsaij}
    \val(a_{0i})=\lambda_2,\quad \val(a_{j0})=\lambda_1 \quad \text{ and }\quad \val(a_{k,4-k})=\lambda_3.
  \end{equation}
  In addition, $\val(a_{0,p})>\lambda_2$ if $p\neq i$, $\val(a_{p,0})>\lambda_1$ if $p\neq j$ and $\val(a_{p,4-p})>\lambda_3$ if $p\neq k$.
  
 The coefficients of $\ell$ will have the form $m=m_1 + m_2$ and $n=n_1+n_2$ with $m_1, n_1\in \KKr$
  satisfying $\val(m)=\val(m_1)=0<\val(m_2)$ and $\val(n)=\val(n_1)=0<\val(n_2)$. ~\autoref{lm:liftingC} below provides an explicit formula for $m_1$ and $n_1$ in $\KKr$ with $\overline{m_1}=\overline{a_{11}}$ and $\overline{n_1} = \overline{a_{21}}$. Once $m_1$ and $n_1$ are determined, there will be two independent solutions for $m_2,n_2\in \KK$. Each of them will be fixed by its initial form.

  Following~\cite[Proposition 3.12]{len.mar:20}, we use $m_1$ and $n_1$ to re-embed $V(q)$ in $\KK^3$ via the ideal $I:=\langle q, z-y-m_1-n_1x\rangle \subset \KKr[x,y,z]$. Our choice of $m_1$ and $n_1$ guarantees that viewed in $\KK^3$, the tangency points have tropicalizations $\Trop\,p = (-\frac{\lambda_2-\lambda_1}{2}, 0, -\frac{\lambda_1+\lambda_2}{2})$ and $\Trop\,p' = (\frac{\lambda_3-\lambda_1}{2}, \frac{\lambda_3-\lambda_1}{2}, -\lambda_1)$.

  After projecting to the $xz$-plane, these two points are vertices of $\Trop\,V(\tilde{q})$ where $\tilde{q}(x,z) = q(x,z-m_1-n_1x)$.  By~\autoref{lm:liftingC}, their dual triangles in the Newton subdivision of $\tilde{q}$ satisfy the requirements of~\cite[Lemma 3.9]{len.mar:20}. Moreover, the projection of $\Trop\,V(\langle \ell,  z-y-m_1-n_1x\rangle)$ to the $xz$-plane has vertex $(\frac{\lambda_3-\lambda_2}{2}, -\frac{\lambda_1+\lambda_2}{2})$.  This gives two independent values for $m_2$ and $n_2$ satisfying     $\val(m_2) = \frac{\lambda_1+\lambda_2}{2}, \val(n_2) = \frac{\lambda_3+\lambda_1}{2}$, with 
  \begin{equation}\label{eq:solsC}
 \overline{m_2} = \pm \frac{2}{\overline{\tilde{a}_{11}}}{\sqrt{\overline{\tilde{a}_{00}}\;\overline{\tilde{a}_{20}}}} \quad \text{ and } \overline{n_2} = \pm \frac{2}{\overline{\tilde{a}_{21}}}{\sqrt{\overline{\tilde{a}_{20}}\;\overline{\tilde{a}_{40}}}}.
  \end{equation}
    The formula for $\overline{m_2}$ can be obtained from~\cite[Table 1]{len.mar:20}. The one for $\overline{n_2}$ is obtained in the same way, after acting by $\tau_0$.

    Thus, $m_2,n_2\in \KKr$ if, and only if, the two radicands in~\eqref{eq:solsC} are positive. The explicit values for  $\overline{\tilde{a}_{00}}$,  $\overline{\tilde{a}_{20}}$  and $\overline{\tilde{a}_{40}}$ depend on  $i,j$ and $k$, and are listed in~\autoref{lm:liftingC} below.
    The formulas in~\autoref{tab:lifting} are obtained from these positivity constraints after replacing each $s_{lr}$ by $s_{lr}s_{12}$, to free ourselves from the simplifying assumption $a_{12}=1$. 
\end{proof}

\begin{lemma}\label{lm:liftingC} Let $q\in R[x,y]$ and $\Gamma$ be as in~\autoref{pr:shapeC} with $a_{12}=1$. Then, there exists $m_1,n_1\in \KKr$ of valuation zero, with $\overline{m_1}=\overline{a_{11}}$ and $\overline{n_1}=\overline{a_{21}}$, for which the coefficients $\tilde{a}_{lr}$ of $\tilde{q}(x,z)=q(x,z-m_1-n_1x)$   satisfy:
  \begin{enumerate}[(i)]
  \item $\val(\tilde{a}_{00})=\lambda_2$, $\val(\tilde{a}_{40})=\lambda_3$, $\overline{\tilde{a}_{00}} = \overline{a_{0i}}(-\overline{a_{11}})^i$, and
    $\overline{\tilde{a}_{40}} = \overline{a_{k,4-k}}(-\overline{a_{21}})^{4-k}$;
  \item $\val(\tilde{a}_{11}) = \val(\tilde{a}_{21})=0$,  $\overline{\tilde{a}_{11}} = -\overline{a_{11}}$ and $\overline{\tilde{a}_{21}} = -\overline{a_{21}}$;
  \item $\val(\tilde{a}_{10}) > \frac{\lambda_1+\lambda_2}{2}$, $\val(\tilde{a}_{30})> \frac{\lambda_3+\lambda_1}{2} $,     $\val(\tilde{a}_{20}) =\lambda_1$, and
    \begin{equation}\label{eq:initiala11}
    \overline{\tilde{a}_{20}} = \overline{a_{20}}  \; \text{ if } j=2, \quad \text{ or } \quad \overline{\tilde{a}_{20}} =-\overline{a_{j0}}\left ({\overline{a_{21}}} / {\overline{a_{11}}}\right)^{2-j} \; \text{ if }j=1,3.
    \end{equation}
        \end{enumerate}
  \end{lemma}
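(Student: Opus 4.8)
The plan is to make the modification completely explicit. Write $m_1=a_{11}+m_1'$ and $n_1=a_{21}+n_1'$, where $m_1',n_1'\in\mathfrak{M}$ are still to be chosen; then automatically $\val m_1=\val n_1=0$, $\overline{m_1}=\overline{a_{11}}$ and $\overline{n_1}=\overline{a_{21}}$. Expanding $\tilde{q}(x,z)=q(x,z-m_1-n_1x)=\sum_{i,j}a_{ij}x^i(z-m_1-n_1x)^j$ with the binomial theorem, the monomial $a_{ij}x^iy^j$ of $q$ contributes $a_{ij}\binom{j}{r}\binom{j-r}{l-i}(-m_1)^{j-r-(l-i)}(-n_1)^{l-i}$ to the coefficient $\tilde{a}_{lr}$ of $x^lz^r$ whenever $0\le l-i$ and $r+(l-i)\le j$; I would read off each relevant $\tilde{a}_{lr}$ this way, using $\val a_{11}=\val a_{12}=\val a_{21}=0<\val a_{rs}$ for all other $(r,s)$, the normalization $a_{12}=1$, the valuations in~\eqref{eq:valsaij} together with $\lambda_1<\lambda_2\le\lambda_3$, and $\charF(\resK)=0$. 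Conditions (i) and (ii) then hold for \emph{every} admissible pair $(m_1',n_1')$: setting $x=0$ gives $\tilde{a}_{00}=\sum_j a_{0j}(-m_1)^j$ with leading term $a_{0i}(-m_1)^i$ of valuation $\lambda_2$, and dually only the monomials with $i+j=4$ feed $x^4$, so $\tilde{a}_{40}=\sum_{i+j=4}a_{ij}(-n_1)^j$ with leading term $a_{k,4-k}(-n_1)^{4-k}$ of valuation $\lambda_3$, whence (i); while the only valuation-zero feeds into $\tilde{a}_{11}$ (resp.\ $\tilde{a}_{21}$) come from $a_{11}xy$ and $a_{12}xy^2$ (resp.\ $a_{21}x^2y$ and $a_{12}xy^2$), giving $\tilde{a}_{11}=a_{11}-2m_1+(\text{higher valuation})$ and $\tilde{a}_{21}=a_{21}-2n_1+(\text{higher valuation})$, so by $\charF(\resK)=0$ and the equalities of initial forms one gets $\overline{\tilde{a}_{11}}=-\overline{a_{11}}\ne0$, $\overline{\tilde{a}_{21}}=-\overline{a_{21}}\ne0$, which is (ii).

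For (iii) I would choose the tails $m_1',n_1'$ to push up the valuations of $\tilde{a}_{10}$ and $\tilde{a}_{30}$. The key point is that, modulo terms of valuation $>\tfrac{\lambda_1+\lambda_2}{2}$, the coefficient $\tilde{a}_{10}$ equals the single-variable polynomial $g(m_1'):=a_{10}-a_{11}m_1+m_1^2-a_{13}m_1^3\in R[m_1']$ — the remaining feeds come from the $i=0$ monomials, have valuation $\ge\lambda_2>\tfrac{\lambda_1+\lambda_2}{2}$, and are also the only way $n_1'$ enters $\tilde{a}_{10}$. Its valuation-zero part $m_1^2-a_{11}m_1=a_{11}m_1'+(m_1')^2$ vanishes at $m_1'=0$, so $g=c_0+c_1m_1'+c_2(m_1')^2+c_3(m_1')^3$ with $\val c_0>0$, $\val c_1=\val c_2=0$ and $\val c_3\ge\lambda_3$; the Newton polygon of $g$ (equivalently, a one-variable instance of~\autoref{lm:multivariateHensel}, exactly as in~\autoref{lem:chooseTail3a}) yields a root $\mu\in\mathfrak{M}$, and I set $m_1':=\mu$. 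Freezing $m_1$ and running the identical argument for $n_1$: modulo valuation $>\tfrac{\lambda_1+\lambda_3}{2}$, $\tilde{a}_{30}$ is a single-variable polynomial $h(n_1')\in R[n_1']$ with $\val h(0)>0$ and unit linear coefficient (coming from $a_{21}$), so it has a root $\nu\in\mathfrak{M}$; setting $n_1':=\nu$ gives $\tilde{a}_{30}=0$. Hence $\val\tilde{a}_{10}>\tfrac{\lambda_1+\lambda_2}{2}$ and $\val\tilde{a}_{30}>\tfrac{\lambda_1+\lambda_3}{2}$, the first two assertions of (iii).

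It remains to identify $\tilde{a}_{20}$. Cone feeding gives $\tilde{a}_{20}=\bigl(-a_{11}n_1+2m_1n_1-a_{21}m_1\bigr)+[\,j=2\,]\,a_{20}+R_{20}$ with $\val R_{20}>\lambda_1$, and substituting $m_1=a_{11}+m_1'$, $n_1=a_{21}+n_1'$ collapses the first bracket to $a_{11}n_1'+a_{21}m_1'+2m_1'n_1'$. I would then split on $j$: when $j=2$ one has $\val a_{10},\val a_{30}>\lambda_1$, so $\val m_1',\val n_1'>\lambda_1$ by the previous paragraph and therefore $\val\tilde{a}_{20}=\lambda_1$ with $\overline{\tilde{a}_{20}}=\overline{a_{20}}$; when $j=1$, $\val a_{10}=\lambda_1$ forces $\val m_1'=\lambda_1$ with $\overline{m_1'}=-\overline{a_{10}}/\overline{a_{11}}$ while $\val n_1',\val a_{20}>\lambda_1$, so $\overline{\tilde{a}_{20}}=\overline{a_{21}}\,\overline{m_1'}=-\overline{a_{10}}\,(\overline{a_{21}}/\overline{a_{11}})^{2-1}$; and $j=3$ is symmetric, giving $\overline{\tilde{a}_{20}}=\overline{a_{11}}\,\overline{n_1'}=-\overline{a_{30}}\,(\overline{a_{21}}/\overline{a_{11}})^{2-3}$. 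In every case $\val\tilde{a}_{20}=\lambda_1$ and the initial form is the one recorded in~\eqref{eq:initiala11}, completing (iii). I expect the main obstacle to be precisely the valuation bookkeeping in the cone-feeding step: one has to verify, using only the stated valuation hypotheses on the $a_{ij}$ — including the mixed $m_1,n_1$ feeds coming from the $i=0$ monomials — that every contribution to $\tilde{a}_{10},\tilde{a}_{20},\tilde{a}_{30}$ other than the ones isolated above lies strictly above the required threshold, and to keep the $j\in\{1,2,3\}$ trichotomy aligned with the three formulas in~\eqref{eq:initiala11}.
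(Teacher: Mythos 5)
Your proposal is correct and takes essentially the same route as the paper: the same decomposition $m_1=a_{11}+m_1'$, $n_1=a_{21}+n_1'$, the same univariate polynomials whose roots in $\mathfrak{M}$ kill the low-valuation parts of $\tilde{a}_{10}$ and $\tilde{a}_{30}$ (your $g$ and $h$ agree with the paper's $f$ and $g$ up to terms of valuation at least $\lambda_3$ that you retain, and are handled by the same Newton-polygon/Hensel argument as in~\autoref{lem:chooseTail3a}), and the same cone-feeding trichotomy on $j$ for $\tilde{a}_{20}$. Two cosmetic points: ``setting $n_1':=\nu$ gives $\tilde{a}_{30}=0$'' should say only that the collected low-valuation part vanishes (which is how you then use it), and when a constant term $c_0$ or $h(0)$ happens to vanish one simply takes the corresponding tail to be $0$, as the paper notes for its polynomial $g$.
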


\begin{proof} We follow the techniques developed in the proof of~\cite[Proposition 3.7]{len.mar:20}. The conditions in the statement will guarantee that the Newton subdivision of $\tilde{q}$ has specific triangles involving $(1,1)$, $(1,2)$ and the points $(0,0)$, $(2,0)$ and $(4,0)$ in the $x$-axis, as seen in~\cite[Figure 17]{len.mar:20}. In what follows, we give explicit formulas for $m_1$ and $n_1$. To simplify notation,  we  write them as  $m_1=a_{11} + m_1'$ and $n_1=a_{21}+n_1'$, with $\val(m_1'),\val(n_1')>0$. This is the same approach that we used for determining local lifting conditions for type (3a) tangencies in the proof of~\autoref{pr:realLifts3a3c}.

  The ``cone feeding'' process allows us to determine the expected valuations of all coefficients $\tilde{a}_{lr}$ where $(l,r)$ is in the trapezoid with vertices $(0,0), (1,1)$, $(2,1)$ and $(4,0)$. The claims for $\tilde{a}_{00}$ and $\tilde{a}_{40}$ in item (i) follow from \eqref{eq:valsaij} since $a_{0,i}$ and $a_{k,4-k}$ have the lowest valuation among all $a_{0r}$ and $a_{r,4-r}$, respectively. Similar reasoning  yields the conditions in (ii). The parameters $m_1'$ and $n_1'$ play no role in ensuring that (i) and (ii) hold. To finish, we must show the claims in (iii). We do so by picking appropriate values for $m_1'$ and $n_1'$.

  We first focus on $\tilde{a}_{10}$ and $\tilde{a}_{30}$. We collect those $a_{lr}$ yielding contributing summands for these coefficients with the potential of having valuation $< \lambda_2$ and $\lambda_3$, respectively. This yields $(r,l) = (1,1)$, $(1,2)$, $(1,0)$ for $\tilde{a}_{10}$, and $(r,l)=(1,2), (2,1)$, $(0,3)$ and $(3,0)$ for $\tilde{a}_{30}$.  This process determines two univariate polynomials in $m_1'$ and $n'_1$:
  \begin{equation}\label{eq:vanishing10And30}
    \hspace{5ex}\begin{cases}
          f:= (m_1')^2 + a_{11} \, m_1' + a_{10}\, ,\\
  g:= -a_{03}\,(n_1')^3 + (1-3\,a_{03}\,a_{21})((n_1')^2 +  a_{21}\, n_1') + (a_{30}-a_{03}\,a_{21}^3).
\end{cases}
     \end{equation}
  Picking $m_1'$ and $n_1'$ to be roots of $f$ and $g$ with positive valuation, would ensure $\val(\tilde{a}_{10})\geq \lambda_2$ and $\val(\tilde{a}_{30})\geq \lambda_3$. These inequalities imply those in (iii) since $\lambda_3,\lambda_2>\lambda_1$.

  In order to choose the appropriate roots of $f$ and $g$, we notice that their valuation and initial forms are determined by the initial forms of $f$ and $g$ in~\eqref{eq:vanishing10And30}:   $\overline{m_1'}$ and $\overline{n_1'}$ must be solutions to $\overline{f}=\overline{g}=0$ in $\RR$. 

  An analysis using (max) tropical polynomials confirms the existence of solutions with the desired properties. We analyze each equation separately and invoke~\autoref{lm:multivariateHensel}  to lift the input solutions of $\overline{f}=0$ and $\overline{g}=0$ to solutions $(m_1', n_1')$ over $\KKr$.

  The restrictions on the valuations of the coefficients of $q$ ensures that $M=-\val(a_{10})<0$ lies in the (max) tropical hypersurface determined by $\trop(f)$, so $f$ admits a solution $m_1'$ with $\val(m_1)=\val(a_{10})>0$. Furthermore, the initial form $\overline{m_1'}$ solves $\overline{a_{10}} + \overline{a_{11}}\,\overline{m_1'} = 0$. Since $\overline{f'}(\overline{m_1'})\neq 0$, we can lift $\overline{m_1'}$ to a unique solution of $f(m_1')=0$ in $\KKr$.

  The analysis for $g$ is very similar, except that we need to separate two cases since the constant term of $g$ could be zero in which case we pick $n_1'=0$. This choice satisfies our desired properties. Note that $g(0)=0$ can only occur if $\val(a_{03})=\val(a_{03})$. This forces $j\neq 3$ by our restrictions on the valuations of the coefficients of $q$.
  
  On the contrary, if $g(0)\neq 0$, then  $N=-\val(a_{3,0}-a_{03}\,a_{21}^3)<0$ lies in the (max) tropical hypersurface determined by $\trop(g)$, so $g$ has a solution $n_1'$ with $\val(n_1')= - N>0$. Its initial form $\overline{n_1'}$ solves $\overline{a_{21}} \, \overline{n_1'} + \overline{a_{3,0}-a_{03}\,a_{21}^3} = 0$. Again, since $\overline{g'}(\overline{n_1'})\neq 0$, we can find a unique solution to $g(n_1')=0$ in $\KKr$ with the given properties.

  To conclude, we must verify that our choice for $(m_1',n_1')$ with $\val(m_1') = \val(a_{10})$ and $\val(n_1')=\val(a_{3,0}-a_{03}\,a_{21}^3)$
  ensures $\val(\tilde{a}_{20})=\lambda_1$. As before, we write the contributions of $a_{rl}$ to $\tilde{a}_{20}$ producing summands with potential valuations $\leq \lambda_1$. They come from $(r,l) = (1,1)$, $(2,1)$, $(1,2)$, and $(2,0)$. This gives the expression
\[h:=a_{11}n_1'+a_{21}m'_1 + 2n_1'm_1' - a_{20} (|j-2|-1),
  \]
  since $\val(a_{20}) \geq \lambda_1$ and equality holds if, and only if,  $j=2$. Combining the information already collected for $m_1'$ and $n_1'$ with the three choices for $j$ yields $\val(h) = \lambda_1$, thus $\val(\tilde{a}_{20}) = \lambda_1$. In all cases, the minimum valuation among all summands of $h$ is achieved exactly once.   We compute  $\overline{\tilde{a}_{20}}$ from this unique term and obtain~\eqref{eq:initiala11}, as desired.     \end{proof}

\section{Tropical totally real bitangents classes}
\label{sec:trop-totally-real}

The machinery developed in~\cite{len.mar:20} by Len and the second author to determine bitangents to generic smooth plane quartics from their tropical counterparts produces all bitangent triples to a given $\Lambda$ from the local equations at the tropical tangent points. In particular, these techniques allow us to  decide when the tangency points lie in $(\KKr)^2$.

Our main result in this section shows that this occurs whenever $\ell$ is defined over~$\KKr$:

\begin{theorem}\label{thm:totallyKKr} Fix a generic quartic polynomial $q\in \KKr[x,y]$ with $\Gamma=\Trop\,V(q)$ smooth and generic, and let $\Lambda$ be a tropical bitangent to $\Gamma$. Assume $(\ell,p,p')$ is a bitangent lift of $\Lambda$. Then, if $\ell$ is defined over $\KKr$, so are $p$ and $p'$.
  \end{theorem}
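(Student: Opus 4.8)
The strategy is to reduce the statement to a tangency-by-tangency analysis, exactly as in the lifting arguments of Section~\ref{sec:lift-trop-bitang}. Fix a bitangent lift $(\ell,p,p')$ of $\Lambda$ with $\ell=y+m+nx$ defined over $\KKr$, so that $m,n\in\KKr^*$, and hence their initial forms $\overline m,\overline n$ lie in the residue field $\RR$ of $\KKr$. We must show $p,p'\in(\KKr)^2$. By~\autoref{thm:liftingProducts} (and the multiplicity-four discussion of~\autoref{sec:appendix1}), each tangency point $p$ is recovered independently from the local system at the corresponding tropical tangency $P=\Trop\,p$, whose combinatorial type is one of those classified in~\autoref{fig:localTangencies}. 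So it suffices to treat each local tangency type separately and show that, given that the relevant coefficient of $\ell$ among $\overline m,\overline n,\overline m/\overline n$ is real, the solution for $\overline p=(\bar x,\bar y)$ produced by the local equations~\eqref{eq:localEqns} (possibly after tropical modification) is forced to lie in $(\resK^*)^2=(\RR^*)^2$; uniqueness in~\autoref{lm:multivariateHensel} then upgrades $\overline p\in\RR^2$ to $p\in(\KKr)^2$, because the multivariate Hensel lift of a system with coefficients in $R_{\KKr}$ starting from a residue-field solution stays in $R_{\KKr}$.

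\textbf{Key steps, in order.} First I would isolate the genuinely ``transcendental'' cases: by the analysis of Section~\ref{sec:lift-trop-bitang}, a local tangency of type (1) never lifts; types (2), (3b) (in shape (C)) have local lifting multiplicity one and the coordinates of $p$ are rational functions of the coefficients of the local polynomial $\overline q$ together with $\overline m$ (or $\overline n$, $\overline m/\overline n$), hence automatically real once that coefficient of $\ell$ is real and $q\in\KKr[x,y]$. The remaining work is the five multiplicity-two types (3a), (3c), (4), (6a), (5a), together with the multiplicity-four types (5b), (6b) (and the modified versions treated via re-embedding). For each of these I would revisit the explicit solution formulas already derived: in~\autoref{pr:realLifts3a3c} the point $p$ has $\bar y$ determined by $\overline\ell=0$ from $\overline m$, and $\bar x$ by $\overline q=0$; in~\autoref{pr:realLift46a} the explicit formula $\bar y=\frac{1}{\overline{a_{u+1,v+1}}}\big(-\delta\,\overline{a_{u+1,v}}\pm\sqrt{-\overline{a_{u,v}}\,\overline{a_{u+1,v+1}}\,\bar n}\big)$ shows $\bar y\in\RR$ precisely when the lift exists over $\KKr$, i.e. exactly when the radicand is nonnegative — the same inequality that governs realness of $m$; and then $\bar x$ is recovered rationally from $\overline q=0$ (or from $\overline W=0$), hence real. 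The crucial observation to extract is that in every one of these formulas the \emph{same} radicand controls the realness of the coefficient of $\ell$ and of the tangency coordinate, so no new positivity condition is introduced: whenever $\ell$ is real the $\overline p$ produced is automatically in $\RR^2$. For the re-embedding cases (type (3a) via the auxiliary $\tilde q$, and shape (C) via~\autoref{lm:liftingC}) I would note that $\tilde q$ has coefficients in $\KKr$ by construction (since $m_1,n_1\in\KKr$ by~\autoref{lem:chooseTail3a} and~\autoref{lm:liftingC}), so the same reasoning applies to the tangency points viewed on $\Trop\,V(I)$, and their images under the (real, linear) projection back to the $xy$-plane remain in $(\KKr)^2$.

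\textbf{Expected main obstacle.} The bookkeeping nuisance, rather than a conceptual difficulty, is handling the tropical-modification cases uniformly: when the naive local system~\eqref{eq:localEqns} is degenerate one must pass to an enlarged support and a re-embedded curve, and one has to check that the tangency point, first located as a \emph{vertex} of $\Trop\,V(I)$ over $\KKr$ and then lifted by Hensel, really does have real coordinates in the original $(x,y)$ variables. Here the key point is that every auxiliary coefficient introduced ($m_1$, $n_1$, the tail parameters $m_1',n_1'$) was shown in~\autoref{lem:chooseTail3a} and~\autoref{lm:liftingC} to lie in $\KKr$, and the change of variables $z=y+m_1+n_1x$ is $\KKr$-linear, so realness is preserved throughout; I would make this explicit once and then invoke it for each case. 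A secondary point requiring care is the genericity hypotheses of~\autoref{rem:genericity}: they guarantee that the relevant Jacobian initial forms are nonzero (so~\autoref{lm:multivariateHensel} applies and the lift is unique), and that the tangency points are not hyperflexes, which is what lets us exclude the local types that would otherwise produce spurious non-real candidates. Assembling these per-type verifications — none of which introduces a positivity constraint beyond those already recorded in~\autoref{tab:localLifts3a3c} and~\autoref{tab:lifting} — yields that $p,p'\in(\KKr)^2$ whenever $\ell$ is defined over $\KKr$, proving the theorem.
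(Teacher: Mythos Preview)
Your proposal is correct and follows essentially the same approach as the paper: reduce to a local analysis at each tropical tangency, observe that for the multiplicity-one types the tangency coordinates are rational in the $\overline{a_{ij}}$ and the coefficient of $\ell$, while for the multiplicity-two types the \emph{same} radicand governs realness of the relevant coefficient of $\ell$ and of $\overline{p}$, and then invoke~\autoref{lm:multivariateHensel} over $\KKr$. The paper's proof is terser on the re-embedding bookkeeping (it simply cites the formulas from~\cite[Table~1]{len.mar:20}), whereas you spell out explicitly that the auxiliary parameters $m_1,n_1$ lie in $\KKr$ and the changes of variables are $\KKr$-linear; this is a welcome clarification but not a different argument.
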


\begin{proof} To prove the statement we analyze solutions for local lifting equations for $\Lambda$ and $\Gamma$ at the two tropical tangency points.~\autoref{sec:appendix1} discusses this result in the presence of a multiplicity four tropical tangency. Thus, we focus on the case  where $\Trop\,p\neq \Trop\,p'$.

  The methods developed in~\cite[Section 4]{len.mar:20} determine $(\ell,p,p')$ by computing the initial forms $\overline{m},\bar{n}$ of the coefficients of $\ell$ and the coordinates of $\overline{p}=(\bar{x},\bar{y})$ and $\overline{p'}=(\overline{r},\overline{s})$ as solutions to the systems $\overline{q}=\overline{\ell}=\overline{W}=0$ at both $\Trop\,p$ and $\Trop\,p'$. As we discussed in~\autoref{sec:lift-trop-bitang}, each system determines either $\overline{p}$ or $\overline{p'}$ and one of $\overline{m}$, $\bar{n}$ or $\overline{m}/\bar{n}$, depending on the location of the tropical tangency point in $\Lambda$. The formulas from~\cite[Table 1]{len.mar:20} express these solutions as rational functions in the initial forms  $\overline{a_{ij}}$ and their radicands.

  Furthermore, radicands appear only in the presence of local multiplicity two, as we saw when proving the validity of the formulas in~\autoref{tab:localLifts3a3c}. These radicands feature in the solutions for all three unknowns of each local system: either $\overline{m},\bar{n}$ or $\overline{m}/\bar{n}$ and both entries in $\overline{p}$, respectively $\overline{p'}$. Thus, the local solutions  are real if and only if the initial of the relevant coefficient (or ratio of coefficients) of $\ell$ is real.
  Since the local systems are independent and~\autoref{lm:multivariateHensel} applies to both $\KKr$ and $\KK$, we conclude that $\ell$ is defined over $\KKr$ if and only if the whole triple $(\ell,p,p')$ is.
  \end{proof}

We provide an example to illustrate the close resemblance among the explicit solution formulas for all three unknowns of our local systems, complementing our discussion in the proof of~\autoref{pr:realLift46a}.

\begin{example} We fix a bitangent $\Lambda$ to a tropical plane quartic $\Trop \,V(q)$ with $q\in \KKr[x,y]$ generic, and let $(\ell,p,p')$ be a bitangent triple associated to $\Lambda$. We assume the vertex of $\Lambda$ is the tangency point $\Trop\,p$ and has tangency type (5a) with $(u,v)=(0,0)$, following the notation of~\autoref{fig:LocalNP}. To fix ideas, we place the second tangency point $\Trop\,p'$  along the diagonal end of $\Lambda$, thus it determines $n$ and $p'$. Our first tropical tangency point will fix $m$ and $p$. Assuming both $m,n\in \KKr$, we will show that $p\in (\KKr^*)^2$.

  By~\autoref{lm:multivariateHensel}, 
  it suffices to check $\overline{p}=(\bar{x},\bar{y})\in \RR^2$.
  Our local system in $(\overline{m},\bar{x},\bar{y})$ becomes $\overline{q}=\overline{\ell}=\overline{W}=0$ with:
  \[
  \overline{q} := \overline{a_{10}}\, \bar{x} + \overline{a_{01}}\,\bar{y} +\overline{a_{11}}\,\bar{x}\,\bar{y}, \quad \overline{\ell} := \bar{y} +\overline{m} +\bar{n}\,\bar{x}\,\text{ and }\, \overline{W} := (\overline{a_{10}}+\overline{a_{11}}\,\bar{y})-\bar{n}(\overline{a_{01}} +\overline{a_{11}}\,\bar{x}).
  \]
  We view $\overline{a_{10}}$, $\overline{a_{01}}$, $\overline{a_{11}}$ and $\bar{n}$ as parameters and use \texttt{Singular}~\cite{DGPS} to compute elimination ideals of the ideal $I=\langle \overline{q},\overline{\ell},\overline{W}\rangle$ in a polynomial ring in  $\overline{m}$, $\bar{x}$ and $\bar{y}$ with coefficients in the quotient field $\RR(\overline{a_{10}}, \overline{a_{01}}, \overline{a_{11}},\bar{n})$.

  Eliminating $\bar{x}$ and $\bar{y}$ from $I$ yields the quadratic equation  in $\overline{m}$
  \[\overline{a_{11}}^2 \overline{m}^2 -2 \overline{a_{11}} (\overline{a_{10}} +\overline{a_{01}}\;\bar{n})\overline{m} + (\overline{a_{10}}^2 - 2 \overline{a_{10}}\;\overline{a_{01}}\;\bar{n} + \overline{a_{01}}^2\,\bar{n}^2)=0,
  \]
  which we use to solve for $\overline{m}$. The radicand expression's sign for both solutions is $s_{10}\,s_{01}\operatorname{sign}(\bar{n})$.

  Similarly, eliminating $\overline{m}$ and $\bar{y}$ from $I$ produces a quadratic equation in $\bar{x}$:
  \[
\overline{a_{11}}^2\,\bar{n}\,\bar{x}^2 + 2\,\overline{a_{01}}\,\overline{a_{11}}\,\bar{n}\,\bar{x} + (\overline{a_{01}}^2\,\bar{n} - \overline{a_{01}}\,\overline{a_{10}})=0.  \]
  The sign of the radicand for both solutions is again $s_{10}\,s_{01}\,\operatorname{sign}(\bar{n})$.

  Finally, eliminating  $\overline{m}$ and $\bar{x}$ yields the quadratic equation $\overline{a_{11}}^2\,\bar{y}^2 + 2\,\overline{a_{10}}\,\overline{a_{11}}\,\bar{y} + (\overline{a_{10}}^2 - \overline{a_{01}}\,\overline{a_{10}}\,\bar{n})=0$ in $\bar{y}$. 
The sign of the radicand for each solution $\bar{y}$ is the same as before. From this it follows that if $\overline{m}\in \RR$ then $\bar{x}$ and $\bar{y}$ are also in $\RR$, as we wanted.
\end{example}

 After passing to the tropical limit, we can use these methods to study totally real lifts of tropical bitangents to smooth plane quartics, that is real bitangents lines where the tangency points are also real.
More precisely,~\autoref{thm:totallyKKr} has the following consequence:

\begin{corollary}\label{cor:totallyreal}
  Real lifts of bitangents to generic smooth tropical plane quartics are also totally real.
\end{corollary}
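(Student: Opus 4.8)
The plan is to obtain~\autoref{cor:totallyreal} as an immediate consequence of~\autoref{thm:totallyKKr}. First I would make explicit the dictionary between the two notions in the statement: a \emph{real lift} of a tropical bitangent $\Lambda$ to $\Gamma$ is a bitangent triple $(\ell,p,p')$ lifting $(\Lambda,P,P')$ with $\ell$ defined over $\KKr$, and such a lift is \emph{totally real} precisely when the tangency points $p$ and $p'$ also lie in $(\KKr^*)^2$. With this rephrasing, the assertion to prove is exactly the implication that $\ell$ being defined over $\KKr$ forces $p$ and $p'$ to be defined over $\KKr$ as well. Under the genericity hypotheses on $q$ and $\Gamma$ of~\autoref{rem:genericity}, this is precisely the content of~\autoref{thm:totallyKKr}, so the body of the proof is a single sentence invoking that theorem.

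Second, I would verify that all relevant configurations are covered. For $q\in\KKr[x,y]$ generic with $\Gamma=\Trop\,V(q)$ smooth and generic, every bitangent line to $V(q)$ with non-degenerate tropicalization is the $\ell$-component of a bitangent lift of some tropical bitangent $\Lambda$ of $\Gamma$; moreover, the combinatorial classification of~\autoref{sec:comb-class-bitang} together with~\autoref{rem:genericity} guarantees that such a lift either has $\Trop\,p\neq\Trop\,p'$ or carries a multiplicity four tropical tangency of type (5b) or (6b). In the first case~\autoref{thm:totallyKKr} applies verbatim. In the second, the explicit lifting formulas of~\autoref{sec:appendix1} express the coordinates of the unique tropical tangency point as rational functions of the initial forms $\overline{a_{ij}}$ of the coefficients of $q\in\KKr[x,y]$ together with their radicands, so that tangency point is automatically real once $\ell$ is. Either way, $p,p'\in(\KKr^*)^2$.

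I do not anticipate a genuine obstacle. The substantive point — that the radicands controlling the realness of $\overline m$, $\bar n$ or $\overline m/\bar n$ are the \emph{same} radicands that control the realness of the coordinates of $\overline p$ and $\overline{p'}$ — was already established in the proof of~\autoref{thm:totallyKKr} and illustrated by the worked example following it; all that remains for~\autoref{cor:totallyreal} is this short global translation, combined with the fact (from~\autoref{sec:comb-class-bitang}) that the above two cases exhaust the possibilities for a $\KKr$-bitangent lift.
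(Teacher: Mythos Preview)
Your proposal is correct and takes essentially the same approach as the paper: the corollary is an immediate consequence of \autoref{thm:totallyKKr}, and the paper treats it as such. Your second paragraph, which re-splits into the cases $\Trop\,p\neq\Trop\,p'$ versus a multiplicity four tangency, is redundant here, since the statement and proof of \autoref{thm:totallyKKr} already cover both cases (its proof explicitly defers the multiplicity four case to \autoref{sec:appendix1}); you can safely drop that paragraph and keep just the one-sentence invocation. The paper also records an independent alternative argument via Klein's formula and the real inflection point count from \cite{BLdM11}, which you may wish to mention as a sanity check but is not needed for the proof.
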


An alternative proof of~\autoref{cor:totallyreal}  can be given using Klein's formula from~\cite{Kle76} relating the number $I$ of real inflection points and the number $B$ of real but not totally real bitangents  of a curve of degree $d$: 
\begin{equation}\label{eq:Klein}
I+2B=d(d-2).
\end{equation}
By \cite[Theorem 5.7]{BLdM11}, any real lift of a generic smooth tropical curve of degree $d$ has $d(d-2)$ real inflection points. Thus, from Klein's formula~\eqref{eq:Klein} it follows that there are no real bitangents which are not totally real.
This result gives rise to the following question:

\begin{question}\label{qn:realNotTotallyReal} What do tropicalizations of quartics with real, but not totally real bitangents look like? Can we determine their skeletons?
\end{question}

From~\autoref{thm:totallyKKr} we can conclude that they cannot be smooth tropical quartics in $\RR^2$. We suspect that they come from faithfully embeddings producing a tropical quartic on a tropical two-dimensional linear space of a higher-dimensional space $\mathbb{R}^n$. 
The study of tropical plane curves on these alternative tropical $2$-dimensional linear spaces, their moduli spaces and their properties is an active topic of research in tropical geometry~\cite{HMRT18}.
We believe that an answer to~\autoref{qn:realNotTotallyReal} will require us to extend the tropical lifting techniques beyond the smoothness and genericity constraints from~\autoref{rem:genericity}, in the spirit of~\cite{LL17}.
We leave this task for further research.

\section*{Acknowledgments}
We wish to thank Erwan Brugall\'e, Alheydis Geiger, Aziz Burak Guelen, Yoav Len, Mario Kummer and Yue Ren for very fruitful
conversations.  
We would like to thank two anonymous referees for their in-depth reports on an earlier version, with numerous suggestions which helped us to improve the exposition of the material.
The first author was supported by  NSF Standard Grants DMS-1700194 and DMS-1954163 (USA). The second author was
supported by the DFG-collaborative research center TRR 195 (INST 248/235-1).
Computations were made in \texttt{Singular}~\cite{DGPS} and~\texttt{Sage}~\cite{sagemath} (using the packages \texttt{tropical.lib}~\cite{JMM07a} and~\texttt{Viro.sage}~\cite{Viro:Sage}),
and the online tool~\cite{Viro:sage2} for combinatorial patchworking by Boulos El Hilany, Johannes Rau and Arthur Renaudineau, available at:

\href{https://matematicas.uniandes.edu.co/~j.rau/patchworking_english/patchworking.html}{https://matematicas.uniandes.edu.co/~j.rau/patchworking\_english/patchworking.html}
                      
Part of this project was carried out during the 2018 program on \emph{Tropical Geometry, Amoebas and Polytopes} at the Institute Mittag-Leffler in Stockholm (Sweden).  The authors would like to thank the institute for its hospitality and  excellent working conditions.

\medskip

\appendix
\section{Multiplicity four local tangencies}
\label{sec:appendix1}

In this section we  compute bitangent lifts $(\ell,p,p')$ in the presence of tropical tangencies of multiplicity four, clarifying some small inaccuracies in part of the proof of~\cite[Theorem 4.1]{len.mar:20}.
We are primarily interested in transverse intersections of multiplicity four at  a vertex of $\Gamma$, which we set as $P$. In this situation, $\Trop\,p=\Trop \,p'=P$. Since we assume $V(q)$ has no hyperflexes, we have $p\neq p'$.

Tropical tangencies of multiplicity four at a vertex of $\Gamma$ appear on bitangent classes of shape (II) on the closure of its two-dimensional cell. They come in various types. This includes the weight one vertices of (II), whose local types are (6b) and (5b).  At the end of this section we discuss the missing  multiplicity four tangencies listed in~\autoref{tab:possibleTangencyPairs}: those corresponding to types (4) and (3b). 

We start by discussing the first two cases. To match our computations with that of~\cite[Theorem 4.1]{len.mar:20} we use $\Sn{3}$-symmetry and work with the image of (II) under the map $\tau_0\circ \tau_1\circ \tau_0$. In particular, we are interested in the vertices of  a bounded edge of $\Trop\, V(q)$ with direction $(4,1)$. We call them $v_l$ and $v_r$.   Their dual triangles in the Newton subdivision of $q$ have vertices $\{(1,0),(0,3), (0,4)\}$ and $\{(1,0), (1,1), (0,4)\}$, respectively. We indicate them as $v_l^{\vee}$ and $v_r^{\vee}$ at the center of~\autoref{fig:mult4}. We let $\Lambda_l$ and $\Lambda_r$ be the corresponding bitangents.

The local equations for $q$ around $v_l$ and $v_r$ become
\begin{equation}\label{eq:localMult4}
  q_l(x,y) = a \,x + b\, y^3 + c\, y^4 \quad \text{ and } \quad q_r(x,y) = a\, x + b'\, xy + c\, y^4.
  \end{equation}

We let $\overline{p} = (\bar{x}, \bar{y})$ and  $\overline{p'}=(\overline{r}, \overline{s})$ be the initial forms of $p$ and $p'$. Even though $p\neq p'$, it is a priori possible that $\overline{p} = \overline{p'}$. 
Our first two lemmas give necessary conditions for $\Lambda_l$ and $\Lambda_r$ to lift to a classical bitangent line $\ell$. In addition, they provide tests to rule out $\overline{p}=\overline{p'}$.

\begin{lemma}\label{lm:vl_NecConditions} If $(\ell,p,p')$ is a bitangent  triple with  $\Trop\,\ell = \Lambda_l$, then  $\overline{m},\bar{n}\in \resK^*$ satisfy
  \begin{equation}\label{eq:conditionsvl}
    (\overline{m},\bar{n})= (\bar{b}/(4\,\bar{c}), 4\,\bar{a}\,\bar{c}^2/\bar{b}^3) \quad\text{ or } \quad(\overline{m}, \bar{n}) =(-\bar{b}/(8\,\bar{c}), 8\,\bar{a}\,\bar{c}^2/\bar{b}^3).
  \end{equation}
Furthermore, if $\overline{p}=\overline{p'}$, then the first option must occur.
\end{lemma}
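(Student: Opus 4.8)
The plan is to set up the local lifting system at the vertex $v_l$ and extract the two solution branches by a direct elimination computation, following the general method reviewed in~\autoref{sec:lift-bitang}. Concretely, after translating so that $v_l$ sits at the tropical tangency point $P$ and rescaling so that $q\in R[x,y]\smallsetminus\mathfrak M R[x,y]$, the initial form of $q$ with respect to $P$ is the trinomial $\overline{q_l}=\bar a\,\bar x+\bar b\,\bar y^3+\bar c\,\bar y^4$ from~\eqref{eq:localMult4}, while $\overline\ell=\bar y+\overline m+\bar n\,\bar x$ and $\overline W=\det J(\overline{q_l},\overline\ell;\bar x,\bar y)$. First I would write out $\overline W$ explicitly: using $\partial\overline{q_l}/\partial x=\bar a$, $\partial\overline{q_l}/\partial y=3\bar b\,\bar y^2+4\bar c\,\bar y^3$, $\partial\overline\ell/\partial x=\bar n$, $\partial\overline\ell/\partial y=1$, one gets $\overline W=\bar a-\bar n(3\bar b\,\bar y^2+4\bar c\,\bar y^3)$. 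The system $\overline{q_l}=\overline\ell=\overline W=0$ then has three equations in $(\overline m,\bar x,\bar y,\bar n)$ with $\bar n$ still free at this stage; but $\bar n$ is pinned down by the \emph{other} tangency point, so really the task is to solve for $(\overline m,\bar x,\bar y)$ in terms of $\bar n$ and then see which values of $\bar n$ are consistent.

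The key computational step is elimination. From $\overline\ell=0$ we get $\bar y=-\overline m-\bar n\,\bar x$; from $\overline{q_l}=0$ we can solve $\bar x=-(\bar b\,\bar y^3+\bar c\,\bar y^4)/\bar a$ (legitimate since $\bar a\in\resK^*$ by the vertex being dual to a triangle containing $(1,0)$). Substituting into $\overline W=0$ and $\overline\ell=0$ gives two polynomial relations. I expect that the multiplicity-four nature of the tangency forces $\overline W=0$ to have a \emph{double} root structure in $\bar y$, reflecting that the stable intersection multiplicity is $4$; carrying out the resultant of $\overline W$ and the relation coming from $\overline\ell$ (after clearing $\bar x$) should produce a polynomial in $\bar y$ whose relevant roots are $\bar y=-\bar b/(3\bar c)$ type values, and back-substitution yields the two pairs $(\overline m,\bar n)$ in~\eqref{eq:conditionsvl}. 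The precise bookkeeping — tracking which branch of the square root or which factor of the resultant is the ``geometrically meaningful'' one (i.e. gives $\bar x,\bar y\in\resK^*$ and is compatible with $\Trop\,p=P$ rather than an edge point) — is where the two cases $(\overline m,\bar n)=(\bar b/(4\bar c),4\bar a\bar c^2/\bar b^3)$ and $(-\bar b/(8\bar c),8\bar a\bar c^2/\bar b^3)$ separate. I would verify each candidate by plugging back into all three equations.

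For the last assertion — that $\overline p=\overline p'$ forces the first option — I would argue as follows. If $\overline p=\overline p'=:(\bar x,\bar y)$, then the common initial point must be a solution of the local system of \emph{higher} order: not only $\overline{q_l}=\overline W=0$, but the tangency has to ``double up'', so $(\bar x,\bar y)$ must be a point where $\overline{q_l}$ and $\overline\ell$ meet with local multiplicity $4$ rather than $2$, i.e. $\bar y$ is a root of multiplicity (at least) two of the restriction of $\overline{q_l}$ to the line $\overline\ell=0$. Imposing that the discriminant-type condition vanishes singles out one of the two $(\overline m,\bar n)$ pairs — and a direct check shows it is the first. The cleanest way to phrase this: substitute $\bar x=-(\bar y+\overline m)/\bar n$ into $\overline{q_l}$, obtaining a quartic $g(\bar y)=\bar c\,\bar y^4+\bar b\,\bar y^3-(\bar a/\bar n)\bar y-(\bar a\overline m/\bar n)$; the condition that $g$ has a repeated root at the common tangency value is $g(\bar y)=g'(\bar y)=0$, and combined with $\overline W=0$ (which is essentially $g'=0$ in disguise) this over-determined system is consistent only for the first branch.

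The main obstacle I anticipate is not any single computation but the matching of conventions: the formulas in~\cite[Table 1]{len.mar:20} and the normalization of $\overline W$ (sign, which variables are ``first'') must be aligned carefully so that the constants $4$, $8$, the exponent $3$ on $\bar b$, and the powers of $\bar c$ come out exactly as in~\eqref{eq:conditionsvl}; a stray factor would look like an error. I would therefore cross-check the final pairs against the requirement that they be invariant under the rescaling freedom in $q$ (replacing $(\bar a,\bar b,\bar c)$ by $(\lambda\bar a,\lambda\bar b,\lambda\bar c)$ should leave $\overline m$ unchanged and scale $\bar n$ consistently), which both displayed solutions manifestly satisfy, and against the $\tau_0\circ\tau_1\circ\tau_0$-image of the already-known shape (II) data, giving independent confirmation.
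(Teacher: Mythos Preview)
Your setup of the local system is correct, but the strategy has a genuine gap at the point where you say ``$\bar n$ is pinned down by the \emph{other} tangency point.'' In this multiplicity-four situation there is no second tropical tangency: \emph{both} classical tangency points $p$ and $p'$ satisfy $\Trop\,p=\Trop\,p'=v_l$, so both $(\overline m,\bar n)$ must be extracted from the single local system at $v_l$. The constraint you are missing is precisely that the system $\overline{q_l}=\overline\ell=\overline{W_l}=0$ must admit \emph{two} solutions $(\bar x,\bar y)\in(\resK^*)^2$ for a common choice of $(\overline m,\bar n)$, one for $\overline p$ and one for $\overline{p'}$. Without this, your elimination gives a one-parameter family of candidate $(\overline m,\bar n)$ and you have no mechanism to cut it down to the two pairs in~\eqref{eq:conditionsvl}.

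The paper's argument makes this constraint operational as follows: eliminate $\bar y$ via $\overline\ell$, obtaining an ideal $I\subset\resK[\overline m^{\pm},\bar n^{\pm}][\bar x^{\pm}]$ generated by two polynomials in $\bar x$; then manipulate these to produce an element $f_l\in I$ that is \emph{linear} in $\bar x$. Since both $\bar x$-coordinates of $\overline p$ and $\overline{p'}$ lie in $V(I)$, the ideal cannot contain a nontrivial linear polynomial, so the coefficients of $f_l$ must vanish identically. This system in $(\overline m,\bar n)$ is what yields exactly the two branches. Your resultant/back-substitution sketch never isolates this two-solutions requirement, and the phrase ``double root structure in $\bar y$'' is not the right condition either (that would force $\overline p=\overline{p'}$, which is the special case of the last sentence, not the generic one). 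For the ``furthermore'' clause your idea is essentially right --- the paper tests which of the two branches makes the discriminant of the quadratic $g_l$ in $\bar x$ vanish --- but you should be computing the discriminant of that quadratic, not imposing $g=g'=0$ on the quartic $g(\bar y)$, since the latter already follows from $\overline{W_l}=0$ and gives you nothing new.
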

\begin{proof}
As usual, we assume $q(x,y)$ lies in $R[x,y]\smallsetminus \mathfrak{M}\,R[x,y]$, and $v_l = (0,0)$. In particular, this implies that $a, b, c, m, n \in R$. The local equations at $v_l$ are given by the vanishing of  
  \begin{equation}\label{eq:leftVertexEqn}\overline{q_l}\!:=  \bar{a} \,\bar{x} + \bar{b}\, \bar{y}^3 + \bar{c}\, \bar{y}^4, \quad \overline{\ell}\!:= \bar{y} + \overline{m} +\bar{n}\, \bar{x}  \;\text{ and } \overline{W_l}\!:= \det(Jac(\overline{q_l}, \overline{\ell})) = \bar{a} -\bar{n} \bar{y}^2 (3\bar{b} + 4\,\bar{c}\bar{y}).
  \end{equation}
 We wish to   find necessary and sufficient conditions in $\overline{m},\bar{n} \in \resK^*$ that guarantee two solutions $\bar{x}$ (counted with multiplicity.)  We use $\overline{\ell}$ to eliminate $\bar{y}$ from the system~\eqref{eq:leftVertexEqn}, by setting $\bar{y}=-\overline{m}-\bar{n}\bar{x}$. This leads to an ideal $I$ in $\resK[\overline{m}^{\pm},\bar{n}^{\pm}][\bar{x}^{\pm}]$ generated by the following two polynomials:
  \begin{equation*}
    \begin{aligned}
  \overline{q_l}'& =
  \bar{c}\,\bar{n}^4\bar{x}^4\! +\! \bar{n}^3(4\,\bar{c}\,\overline{m}\! -\! \bar{b})\bar{x}^3\! +\! 3\overline{m}\bar{n}^2(2\bar{c}\,\overline{m}\bar{n}^2 \!-\! \bar{b})\bar{x}^2 \!+\! (4\bar{c}\,\overline{m}^3\bar{n} - 3\bar{b}\overline{m}^2\bar{n} + \bar{a})\bar{x} \!+\! \overline{m}^3(\bar{c}\overline{m}\! -\! \bar{b}),\\
 \overline{W_l}' & = 
4\bar{c}\bar{n}^4\bar{x}^3 + 3\bar{n}^3(4\bar{c}\overline{m} - \bar{b})\bar{x}^2 + 6\overline{m}\bar{n}^2(2\bar{c}\overline{m}\bar{n}^2 - \bar{b})\bar{x} + (4\bar{c}\overline{m}^3\bar{n} - 3\bar{b}\overline{m}^2\bar{n} + \bar{a}).    \end{aligned}
  \end{equation*}
  Simple manipulations of $\overline{q_l}'$ and $\overline{W_l}'$ in~\sage~\cite{sagemath} produce new elements in $I$ of lower degree in $\bar{x}$. In particular, we obtained the following polynomials in $I$:
  \begin{equation*}
    \begin{aligned}
      f_l&:=12 \bar{a}\,\bar{n}^4(4\bar{c}\overline{m}-\bar{b})
  \big (2\,\bar{n} (2\bar{b}^2\bar{c}\overline{m}\bar{n} + \bar{b}^3\bar{n} - 6\bar{a}\bar{c}^2)\bar{x} + (\bar{b}^2\overline{m}\bar{n} + \bar{a}\bar{c})\big ),\\
  g_l&= (3\,\bar{b}^2\bar{n}^3)\bar{x}^2 + 6\,\bar{n}(\bar{b}^2\overline{m}\,\bar{n} - 2\,\bar{a}\,\bar{c})\bar{x} + (3\,\bar{b}^2\overline{m}^2\bar{n} + 4\,\bar{a}\,\bar{c}\,\overline{m} - \bar{a}\,\bar{b}).
    \end{aligned}
  \end{equation*}

  Since $p$ and $p'$ are tangencies points and $(\overline{m},\bar{n}) \in (\resK^*)^2$,  the ideal $I$ cannot contain a non-trivial linear polynomial in $\bar{x}$. Thus,  $f_l$ must be the zero polynomial. This happens if and only if either $\overline{m} = \bar{b}/4\bar{c}$ (and $\bar{n}$ is free), or 
    \[2\,\bar{b}^2\,\bar{c}\,\overline{m}\,\bar{n} + \bar{b}^3\bar{n} - 6\,\bar{a}\,\bar{c}^2 = \bar{b}^2\,\overline{m}\,\bar{n} + \bar{a}\,\bar{c} \!=\! 0.\] 
The last system  admits a unique joint solution, namely the second option listed in~\eqref{eq:conditionsvl}.

When $\overline{m} = \bar{b}/4\bar{c}$, the following  combination of  $\overline{W_l}$ and $g_l$, evaluated accordingly, yields:
\[((2\,\bar{b}^2\,\bar{c}\,\bar{n}^2\,\bar{x} - \bar{b}^3\,\bar{n} + 8\,\bar{a}\,\bar{c}^2)\,\overline{W_l} - \bar{b}^4\,g_l)_{|{\overline{m} = \bar{b}/4\bar{c}}} = 128\,(\bar{b}^3\,\bar{n} - 4\,\bar{a}\,\bar{c}^2)\,\bar{a}\,\bar{c}^3\,\bar{x} = 0.
\]
Since $\bar{x}\neq 0$, this implies $\bar{n} = 4\,\bar{a}\,\bar{c}^2/\bar{b}^3$, as~\eqref{eq:conditionsvl} indicates.

     Since $\overline{p}=\overline{p'}$ if and only if   $\bar{x}$ is a double root of $g_l$, we conclude that the discriminant of $g_l$ in the variable $\bar{x}$ must vanish. A direct computation gives:
          \begin{equation*}\label{eq:discrL}
        (-12\,\bar{a}\bar{n}^2)^{-1} \Delta(g_l) = \bar{b}^2(16\,\bar{c}\,\overline{m} - \bar{b})\,\bar{n} - 12\,\bar{a}\,\bar{c}^2).
          \end{equation*}
         After checking both options from~\eqref{eq:conditionsvl} we see that  $\Delta(g_l)$ vanishes only for the first one. 
\end{proof}

\begin{lemma}\label{lm:vr_NecConditions}If $(\ell,p,p')$ is a bitangent triple and $\Trop\,\ell = \Lambda_r$, then $\overline{m},\bar{n} \in \resK^*$ satisfy
  \begin{equation*}\label{eq:conditionsvr}
    (\overline{m},\bar{n})= (-\bar{a}/\overline{b'}, -\overline{b'}^3/(4\,\bar{a}^2\,\bar{c}^2)) \text{ or } (\bar{a}\,(7\pm 4\sqrt{2}\,i)/(9\,\overline{b'}),\overline{b'}^3(7 \mp 4\sqrt{2}\,i)/(108\,\bar{a}^2\,\bar{c})).   
  \end{equation*}
  Furthermore, if $\overline{p}=\overline{p'}$, then the first pair cannot occur.
\end{lemma}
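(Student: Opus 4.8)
The plan is to adapt, essentially verbatim, the strategy used in the proof of Lemma~\ref{lm:vl_NecConditions}, now with the local model $q_r$ from~\eqref{eq:localMult4} at the vertex $v_r$. After the usual normalization so that $q\in R[x,y]\smallsetminus\mathfrak{M}R[x,y]$ and $v_r=(0,0)$ (hence $a,b',c,m,n\in R$), the local equations at $v_r$ are the vanishing of $\overline{q_r}:=\bar a\,\bar x+\overline{b'}\,\bar x\,\bar y+\bar c\,\bar y^4$, of $\overline{\ell}:=\bar y+\overline m+\bar n\,\bar x$, and of $\overline{W_r}:=\det\bigl(Jac(\overline{q_r},\overline\ell)\bigr)=(\bar a+\overline{b'}\,\bar y)-\bar n(\overline{b'}\,\bar x+4\,\bar c\,\bar y^3)$. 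First I would use $\overline\ell$ to eliminate $\bar y$ by substituting $\bar y=-\overline m-\bar n\,\bar x$, obtaining a polynomial $\overline{q_r}'$ of degree $4$ in $\bar x$ (with leading coefficient $\bar c\,\bar n^4$) and a polynomial $\overline{W_r}'$ of degree $3$ in $\bar x$, both with coefficients in $\resK[\overline m^{\pm},\bar n^{\pm}]$, and I would let $I$ be the ideal they generate.

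Next I would run a Gröbner/elimination computation in \singular\ or \sage, treating $\overline m,\bar n$ (and the fixed units $\bar a,\overline{b'},\bar c\in\resK^*$) as parameters, to produce from $I$ an element $f_r$ that is linear in $\bar x$ and an element $g_r$ that is quadratic in $\bar x$ with leading coefficient a unit times a power of $\bar n$, in analogy with $f_l$ and $g_l$. As in the $v_l$ case, since $p$ and $p'$ are tangency points and $(\overline m,\bar n)\in(\resK^*)^2$, the ideal $I$ cannot contain a nonzero polynomial that is linear in $\bar x$: such an element would force $\overline{q_r}'$ to have a single root of multiplicity four, which is impossible because the $\bar x^2$--coefficient of $\overline{q_r}'$ equals $6\,\bar c\,\overline m^2\,\bar n^2-\overline{b'}\,\bar n$, whereas a fourth power $\bar c\,\bar n^4(\bar x+\overline m/\bar n)^4$ would require $\overline{b'}\,\bar n=0$. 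Hence $f_r$ must be the zero polynomial. Setting all coefficients of $f_r$ (as a polynomial in $\bar x$) to zero gives a polynomial system in $(\overline m,\bar n)$, which I expect to split into a branch on which $\overline m=-\bar a/\overline{b'}$ (with $\bar n$ then fixed to $-\overline{b'}^3/(4\,\bar a^2\bar c^2)$ by reducing $g_r$ and $\overline{W_r}'$ modulo this relation, exactly as $\bar n$ was determined in the $\overline m=\bar b/(4\bar c)$ branch of Lemma~\ref{lm:vl_NecConditions}) and a branch governed by a $2\times2$ system whose elimination yields a quadratic in $\overline m$ with roots $\bar a(7\pm4\sqrt2\,i)/(9\,\overline{b'})$ together with the corresponding values $\bar n=\overline{b'}^3(7\mp4\sqrt2\,i)/(108\,\bar a^2\bar c)$. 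This produces exactly the three pairs in the statement.

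Finally, for the last assertion I would use that $g_r$ is genuinely quadratic in $\bar x$ with the two roots $\bar x$ and $\overline r$, so $\overline p=\overline{p'}$ holds if and only if $\bar x=\overline r$, i.e. if and only if $\operatorname{disc}_{\bar x}(g_r)$ vanishes. Substituting $(\overline m,\bar n)=(-\bar a/\overline{b'},-\overline{b'}^3/(4\,\bar a^2\bar c^2))$ into this discriminant and checking that the resulting expression in the units $\bar a,\overline{b'},\bar c$ is nonzero then shows that the first pair forces $\overline p\neq\overline{p'}$, which is the contrapositive of the claim.

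I expect the main obstacle to be the elimination bookkeeping: producing clean generators $f_r$ and $g_r$, verifying that the vanishing of $f_r$ decomposes into precisely these (and no spurious) branches, and confirming that the conjugate complex values $7\pm4\sqrt2\,i$ are genuinely forced by the equations rather than an artifact of the symbolic manipulation. These are routine but delicate computations, and I would carry them out with a computer algebra system, recording the intermediate polynomials just as was done for $f_l$ and $g_l$.
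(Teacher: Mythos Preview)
Your proposal is correct and follows essentially the same route as the paper: eliminate $\bar y$ via $\overline\ell$, manufacture a linear element $f_r$ and a quadratic $g_r$ in the ideal, force the coefficients of $f_r$ to vanish to obtain the three candidate pairs $(\overline m,\bar n)$, and then evaluate $\operatorname{disc}_{\bar x}(g_r)$ at the first pair to handle the $\overline p=\overline{p'}$ clause. Your justification for why $I$ cannot contain a nontrivial linear element (forcing $\overline{q_r}'$ to be a perfect fourth power in $\bar x$, which is ruled out by the $\bar x^2$-coefficient) is in fact more explicit than the paper's one-line ``since $\ell$ is a bitangent'' and is fine once you note that both tangent $\bar x$-values are common roots of $\overline{q_r}'$ and $\overline{W_r}'$. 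One minor organisational difference: the paper fixes $\bar n$ in the branch $\overline m=-\bar a/\overline{b'}$ directly from the vanishing of the two coefficients $A_0,A_1$ of $f_r$ (via the factorisation $A_0-\overline m\,A_1=-(\overline{b'}\,\overline m+\bar a)(12\,\bar a\,\bar c\,\overline m\,\bar n-\overline{b'}^2)$), rather than by a further reduction against $g_r$ and $\overline{W_r}'$; and in the second branch it observes that the residual condition is literally $\Delta(g_r)=9\,\overline{b'}^2\overline m^2-14\,\bar a\,\overline{b'}\,\overline m+9\,\bar a^2=0$, which immediately gives the conjugate pair and simultaneously explains why those two solutions automatically satisfy $\overline p=\overline{p'}$.
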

\begin{proof}
  We follow the same strategy as in the proof of~\autoref{lm:vl_NecConditions}, with $v_r=(0,0)$ and  $q\in R[x,y]\smallsetminus \mathfrak{M}R[x,y]$, so all $a,b',c,m$ and $n$ lie in $R$. In this case, 
\begin{equation}\label{eq:rightVertexEqn}
\overline{q_r} = \bar{a}\,\bar{x}+\overline{b'}\bar{x}\,\bar{y} + \bar{c}\bar{y}^4, \quad \overline{\ell}\!:= \bar{y} + \overline{m} +\bar{n}\, \bar{x}  \;\text{ and }\; \overline{W_r}=(\bar{a}+\overline{b'}\bar{y}) - \bar{n}(\overline{b'}\bar{x} + 4\bar{c}\bar{y}^3).
\end{equation}
We substitute $\bar{y}=-\overline{m}-\bar{n}\bar{x}$ in both $\overline{q_r}$ and $\overline{W_r}$
and obtain the ideal $I_r=\langle \overline{q_r}', \overline{W_r}'\rangle$ in $\resK[\overline{m}^{\pm},\bar{n}^{\pm}][\bar{x}^{\pm}]$. Algebraic manipulations  yield two polynomials in $I_r$ of low-degree in $\bar{x}$:
\begin{equation*}
  \begin{aligned}
    f_r&:= -2\overline{m}\big(\bar{n}(\bar{c}(9\bar{b}^2\overline{m}^2 \!+\! 10\bar{a}\bar{b}\overline{m} \!+\! 9\bar{a}^2)\bar{n} \!-\! 2\bar{b}^3)\bar{x}\! +\!    (\bar{c}\overline{m}(9\bar{b}^2\overline{m}^2 \!-\! 2\bar{a}\bar{b}\overline{m} \!-\! 3\bar{a}^2)\bar{n} \!+\! \bar{b}^2(  \bar{a} \!-\!\bar{b}\overline{m})\big),\\
    g_r&:= (2\bar{b}\bar{n}^2)\bar{x}^2 + \bar{n}(\bar{b}\overline{m} - 3\bar{a})\bar{x} + \overline{m}(  \bar{a}- \bar{b}\overline{m} ).
  \end{aligned}
  \end{equation*}
Since $\ell$ is a bitangent, $I$ has no non-trivial linear polynomials. Thus $f_r=0$, so both its coefficients in $R[\overline{m}^{\pm}, \bar{n}^{\pm}]$ must vanish. Call them $A_1$ and $A_0$, depending on the degree of $\bar{x}$. A simple algebraic manipulation gives:
\[ A_0-\overline{m}\,A_1=-\,(\overline{b'}\,\overline{m}+\bar{a})(12\,\bar{a}\,\bar{c}\,\overline{m}\,\bar{n}-\overline{b'}^2)=0.
\]
Assuming $\overline{m}=-\bar{a}/\bar{b}$  and solving $A_0=A_1=0$ for $\bar{n}$ gives the first option in~\eqref{eq:conditionsvr}.
In turn, setting $\bar{n} = \overline{b'}^2/(12\,\bar{a}\,\bar{c}\,\overline{m})$ and substituting this expression in both  $A_0$ and $A_1$ yields a single non-monomial factor that must vanish. This is precisely  the discriminant $\Delta(g_r)\!=\! 9\overline{b'}^2\overline{m}^2 - 14\bar{a}\overline{b'}\overline{m} + 9\bar{a}^2$. Replacing the two roots of $\Delta(g_r)$ back into the expression for $\bar{n}$ gives the second option in~\eqref{eq:conditionsvr}.

Finally, a direct computation gives $\Delta(g_r)(-\bar{a}/\overline{b'}, -\overline{b'}^3/(4\,\bar{a}^2\,\bar{c}^2)) = 32\bar{a}^2\neq 0$. Thus, this solution in $\overline{m},\bar{n}$ is not valid whenever $\overline{p}=\overline{p'}$. 
  \end{proof}

The next proposition  provides explicit formulas to lift the bitangent classes associated to $v_l$ and $v_r$ to unique bitangent triples $(\ell,p,p')$ satisfying $\overline{p}\neq \overline{p'}$. Each lifting is unique, as was stated in~\cite[Theorem 4.1]{len.mar:20}.

\begin{proposition}\label{pr:liftingFormulasMult4} Assume $V(q)$ has no hyperflexes. Then, the tropical bitangents $\Lambda_l$ and $\Lambda_r$ each lift uniquely to classical bitangent triples $(\ell,p,p')$ with $\overline{p}\neq \overline{p'}$. \autoref{tab:liftingMult4} gives explicit formulas for the tuple of initial forms $\overline{m},\bar{n}$, $\overline{p}$ and $\overline{p'}$ in each case.
\end{proposition}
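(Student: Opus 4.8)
The plan is to prove \autoref{pr:liftingFormulasMult4} by leveraging the two necessary-condition lemmas just established (\autoref{lm:vl_NecConditions} and \autoref{lm:vr_NecConditions}) together with the no-hyperflex hypothesis, and then applying the multivariate Hensel's Lemma (\autoref{lm:multivariateHensel}) to each candidate to promote the residue-field data to genuine bitangent triples over $\KK$. The strategy runs in parallel for $\Lambda_l$ and $\Lambda_r$; I will describe $\Lambda_l$ and indicate how $\Lambda_r$ mirrors it.

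First I would recall that \autoref{lm:vl_NecConditions} pins down $(\overline{m},\bar{n})$ to exactly two options, and that the second option is the only one compatible with $\overline{p}=\overline{p'}$ (equivalently, with $\Delta(g_l)=0$). Since we seek lifts with $\overline{p}\neq\overline{p'}$, this forces the first option, $(\overline{m},\bar{n})=(\bar{b}/(4\bar{c}),\,4\bar{a}\bar{c}^2/\bar{b}^3)$; wait --- both options a priori satisfy $\overline{p}\neq\overline{p'}$ only if their discriminants are nonzero, so the careful step is to check $\Delta(g_l)\neq 0$ at \emph{each} of the two options and discard the one where it vanishes. The lemma's last line already does this computation: $\Delta(g_l)$ vanishes precisely at the first option, so in fact the \emph{second} option $(\overline{m},\bar{n})=(-\bar{b}/(8\bar{c}),\,8\bar{a}\bar{c}^2/\bar{b}^3)$ is the one yielding $\overline{p}=\overline{p'}$ and must be discarded, leaving the first. (I will need to re-read the lemma statement to get this orientation exactly right in the write-up; the key point is that exactly one option survives.) Having fixed $(\overline{m},\bar{n})$, substitute back into $g_l$: its two roots in $\bar x$ are then distinct (since $\Delta(g_l)\neq 0$ at the surviving option), and each determines $\bar y=-\overline{m}-\bar n\bar x$ and hence a point $\overline{p}$, resp.\ $\overline{p'}$; one checks the two resulting residue points are swapped by the choice of root, so $(\overline{p},\overline{p'})$ is well-defined as an unordered pair. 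This yields the explicit entries of \autoref{tab:liftingMult4} for $\Lambda_l$ by direct substitution.

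Next I would establish existence and uniqueness of the lift. With $(\overline{m},\bar n,\overline{p},\overline{p'})$ all in $(\resK^*)^{\text{appropriate}}$ and satisfying the reduced system $\overline{q}=\overline{\ell}=\overline{W}=0$ at both tangency points, I invoke \autoref{lm:multivariateHensel}: the relevant Jacobian (of $q,\ell,W$ in the unknowns $m,n,x,y$ at each point, or equivalently the combined system over $\resK$) must be shown to have nonzero initial form. This is where the no-hyperflex hypothesis enters: a hyperflex would manifest as a degeneracy (higher-order vanishing) in exactly this Jacobian, and its absence guarantees the simple-root condition needed for Hensel. So the step is: compute (or cite the analogous computation from \cite{len.mar:20}) that $\overline{J}\neq 0$ at the surviving solution, using that $\overline{p}\neq\overline{p'}$ and no hyperflexes to rule out the degenerate loci; then Hensel produces a \emph{unique} $(m,n,p,p')\in R^{\,\ast}$-tuple reducing to it, which is the desired bitangent triple. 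Uniqueness of the lift follows because \autoref{lm:vl_NecConditions} already showed the residue-level data is unique (once $\overline{p}\neq\overline{p'}$ is imposed), and Hensel gives uniqueness of the lift of that data. The argument for $\Lambda_r$ is identical with \autoref{lm:vr_NecConditions} in place of \autoref{lm:vl_NecConditions}: there the first listed pair $(\overline{m},\bar n)=(-\bar a/\overline{b'},\,-\overline{b'}^3/(4\bar a^2\bar c^2))$ is the one with $\Delta(g_r)\neq 0$ (the lemma shows $\Delta(g_r)=32\bar a^2\neq 0$ there), hence it is retained and the complex pair with $7\pm 4\sqrt 2\,i$ is discarded (it corresponds to $\overline{p}=\overline{p'}$, which would force $p=p'$ contradicting no-hyperflexes, or is simply excluded by our $\overline{p}\neq\overline{p'}$ requirement); then substitute and apply Hensel.

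The main obstacle I anticipate is bookkeeping rather than conceptual: correctly tracking which of the two candidate $(\overline{m},\bar n)$ pairs in each lemma is the one compatible with $\overline{p}\neq\overline{p'}$, and then verifying the non-vanishing of the initial form of the Jacobian at that pair so that \autoref{lm:multivariateHensel} applies cleanly --- in particular making precise how the ``no hyperflexes'' assumption rules out Jacobian degeneracy at a multiplicity-four vertex tangency. A secondary subtlety is confirming that the two roots of $g_l$ (resp.\ $g_r$) genuinely give $\overline{p}\neq\overline{p'}$ and that the resulting triple's tropicalization is indeed $(\Lambda_l,P,P)$ (resp.\ $(\Lambda_r,P,P)$), i.e.\ that the valuations come out as required; this should follow from the valuation normalizations already set up in the proofs of the two lemmas ($q\in R[x,y]\setminus\mathfrak M R[x,y]$, $v_l$ or $v_r$ at the origin), but I would state it explicitly. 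Finally I would remark that this corrects the small inaccuracies in \cite[Theorem 4.1]{len.mar:20} as advertised, and note (for use in \autoref{sec:trop-totally-real}) that since the surviving formulas for $\overline{m},\bar n$ are rational in the $\overline{a_{ij}}$ while those for $\overline{p},\overline{p'}$ may involve the square root appearing in $g_l$ or $g_r$, realness of $\ell$ over $\KKr$ is equivalent to realness of the whole triple.
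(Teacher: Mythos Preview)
Your overall strategy matches the paper's: isolate the unique $(\overline{m},\bar n)$ compatible with $\overline{p}\neq\overline{p'}$ via the two lemmas, solve the residue system for the two tangency points, and lift by Hensel. Two points need correction.

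First, your orientation for $\Lambda_l$ is backwards. The lemma shows $\Delta(g_l)$ vanishes at the \emph{first} option, which means the first option forces a double root of $g_l$ and hence $\overline{p}=\overline{p'}$; it is therefore the first option that must be discarded, and the surviving pair is the second one, $(\overline{m},\bar n)=(-\bar b/(8\bar c),\,8\bar a\bar c^2/\bar b^3)$, exactly as recorded in \autoref{tab:liftingMult4}. (Your $\Lambda_r$ analysis is correct.)

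Second, and more substantively, your account of where the no-hyperflex hypothesis enters is wrong: it does \emph{not} control the Jacobian. The paper verifies the Hensel hypothesis by direct computation. One applies \autoref{lm:multivariateHensel} to the six-equation system $q(p)=\ell(p)=W(p)=q(p')=\ell(p')=W(p')=0$ in the six unknowns $(m,n,x,y,r,s)$; the initial form of its Jacobian at the residue solution turns out to be an explicit Laurent monomial in $\bar a,\bar b,\bar c$ (respectively $\bar a,\overline{b'},\bar c$), namely $-9\sqrt{3}\,\bar a^3\bar b^2/\bar c$ for $\Lambda_l$ and $128\sqrt{2}\,\bar a^5\bar c/\overline{b'}^2$ for $\Lambda_r$. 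These are units in $\resK$ unconditionally, so Hensel applies without any appeal to hyperflexes. The no-hyperflex assumption is not actually used in this proposition's proof; it is the companion result \autoref{thm:liftingMult4} (ruling out lifts with $\overline{p}=\overline{p'}$) where that hypothesis does work.
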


\begin{proof} 
  {Lemmas}~\ref{lm:vl_NecConditions} and~\ref{lm:vr_NecConditions} yield unique potential values for $(\overline{m},\bar{n})$ where $\overline{p}\neq\overline{p'}$. We substitute each value in the three equations from~\eqref{eq:leftVertexEqn}, respectively~\eqref{eq:rightVertexEqn}, and solve for $\bar{x},\bar{y}$. We obtain two distinct solutions in $(\bar{x},\bar{y})$, each corresponding to $\overline{p}$ and $\overline{p'}$. They can be seen in \autoref{tab:liftingMult4}.

  To conclude we use \autoref{lm:multivariateHensel} to show that these initial forms lift uniquely to a triple $(\ell,p,p')$ with $q(p)=\ell(p)=W(p)=q(p')=\ell(p')=W(p')=0$. We use $q=q_l$ or $q_r$ and $W=W_l$ or $W_r$ depending on the location of the tropical bitangent line.

  A~\sage~\cite{sagemath} computation shows that the initial forms of the Jacobians  $(\overline{m},\bar{n},\bar{x},\bar{y},\overline{r},\overline{s})$ for the local systems at $v_l$ and $v_r$  are Laurent monomials in the coefficients of~\eqref{eq:localMult4}, namely $-9\sqrt{3}\,\bar{a}^3\bar{b}^2/\bar{c}$ and $128\sqrt{2}\,\bar{a}^5\bar{c}/\overline{b'}^2$. These expressions are units in $\resK$, so we have unique lifts from the six initial forms to the data $(\ell,p,p')$.
\end{proof}

  \begin{table}[tb]
  \begin{tabular}{|c||c|c|c|c|c|c|}
    \hline $\Trop\, \ell$ &  $\overline{m}$ & $\bar{n}$ & $\bar{x}$ & $\bar{y}$ & $\overline{r}$ & $\overline{s}$  \\
    \hline \hline
    
    $\Lambda_l$ & $-\frac{\bar{b}\,\bar{c}}{8}$ & $\frac{8\bar{a}\bar{c}^2}{\bar{b}^3}$ & $\frac{\bar{b}^4(3+2 \sqrt{3})}{64\,\bar{a}\bar{c}^3}$ & $-\frac{\bar{b}\,(1+\sqrt{3})}{4\,\bar{c}}$ & $\frac{\bar{b}^4(3-2 \sqrt{3})}{64\,\bar{a}\bar{c}^3}$ & $-\frac{\bar{b}\,(1-\sqrt{3})}{4\,\bar{c}}$\\
    \hline
    $\Lambda_r$ & $-\frac{\bar{a}}{\overline{b'}}$ & $\frac{\overline{b'}^3}{4\bar{a}^2\bar{c}}$ & $\frac{4\,\bar{a}^3\bar{c}(1+\sqrt{2})}{\overline{b'}^4}$ & $-\frac{\sqrt{2}\,\bar{a}}{\overline{b'}}$ &
    $\frac{4\,\bar{a}^3\bar{c}(1-\sqrt{2})}{\overline{b'}^4}$ & $\frac{\sqrt{2}\,\bar{a}}{\overline{b'}}$\\
    \hline 
  \end{tabular}
  \caption{Formulas for the initial forms of triples $(\ell,p,p')$ lifting $\Lambda_l$ and $\Lambda_r$.\label{tab:liftingMult4}}
\end{table}

Our next result shows that no bitangent lift $(\ell,p,p')$ of  $\Lambda_l$ or $\Lambda_r$ has $\overline{p}=\overline{p'}$ under the genericity conditions from~\autoref{rem:genericity}. This rules out the overlooked case  in the proof of~\cite[Theorem 4.1]{len.mar:20}. Even though this situation can be discarded using the classical count of bitangents to smooth plane quartics, we provide an independent combinatorial proof.
\begin{theorem}\label{thm:liftingMult4} Assume $V(q)$ has no hyperflexes. Fix a  bitangent triple  $(\ell,p,p')$  where $\Trop\, \ell$ is either $\Lambda_l$ or $\Lambda_r$. Then,  $\overline{p} \neq \overline{p'}$.
\end{theorem}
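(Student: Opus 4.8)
The plan is to compare the \emph{reduced} restriction of $q$ to the bitangent line $\ell$ with the restriction of the vertex initial form, using that both tangency points of such a triple tropicalize to the same lattice point. Normalize as in the proofs above so that $q\in R[x,y]\smallsetminus\mathfrak{M}R[x,y]$ and the relevant vertex $v_{\bullet}\in\{v_l,v_r\}$ is the origin; then $\Trop\,p=\Trop\,p'=(0,0)$, so $p,p'\in(R\smallsetminus\mathfrak{M})^2$ and $\overline p,\overline{p'}\in(\resK^{*})^2$ are well defined, and $\val(m)=\val(n)=0$ since $\Lambda_{\bullet}$ has its vertex at the origin. Parametrize $\ell$ via $x=-(y+m)/n$ and set $\phi(y):=q\bigl(-(y+m)/n,\,y\bigr)$. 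Two facts drive the argument. First, $\phi$ has degree exactly $4$ with unit leading coefficient: the only monomial of $q$ of total degree $4$ and valuation $0$ is $a_{04}y^4$, and by B\'ezout $\ell$ meets $V(q)$ with total multiplicity $4$ at the affine points $p,p'$, hence in no point at infinity. Second, every monomial $a_{ij}x^iy^j$ of $q$ with $(i,j)\notin v_{\bullet}^{\vee}$ has $\val(a_{ij})>0$ — precisely because $v_{\bullet}$ is a vertex of the regular subdivision — so after substituting the unit $x=-(y+m)/n$ these monomials contribute only positive-valuation terms, and therefore $\overline{\phi}=\overline{q_{\bullet}}\bigl|_{\overline{\ell}}$, the restriction of the vertex initial form $q_l$ (resp.\ $q_r$) to the initial line $\overline{\ell}=\{\overline y+\overline m+\overline n\,\overline x=0\}$.

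Then I would argue by contradiction. Since $(\ell,p,p')$ is a bitangent triple and $V(q)$ has no hyperflexes, over $\KK$ one has $\phi(y)=c\,(y-y_p)^2(y-y_{p'})^2$ with $c$ a unit and $y_p,y_{p'}$ the $y$-coordinates of $p,p'$, of valuation $0$. If $\overline p=\overline{p'}$, then in particular $\overline{y_p}=\overline{y_{p'}}=:\alpha$, so reducing gives $\overline{\phi}(y)=\bar c\,(y-\alpha)^4$, a perfect fourth power with $\alpha\neq0$. Comparing with $\overline{q_{\bullet}}\bigl|_{\overline{\ell}}$, and clearing the unit $\overline n$,
\[
\overline n\,\overline{q_l}\bigl|_{\overline{\ell}}(y)=\bar c\bar n\,y^4+\bar b\bar n\,y^3-\bar a\,y-\bar a\bar m,\qquad
\overline n\,\overline{q_r}\bigl|_{\overline{\ell}}(y)=\bar c\bar n\,y^4-\bar b'\,y^2-(\bar a+\bar b'\bar m)\,y-\bar a\bar m,
\]
so the first polynomial has vanishing $y^2$-coefficient and the second vanishing $y^3$-coefficient; but $\bar c\bar n\,(y-\alpha)^4$ has $y^2$-coefficient $6\bar c\bar n\alpha^2$ and $y^3$-coefficient $-4\bar c\bar n\alpha$, both nonzero since $\resK$ has characteristic zero and $\bar c,\bar n,\alpha$ are units. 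This contradiction gives $\overline p\neq\overline{p'}$ in both cases. Alternatively, one may first invoke Lemmas~\ref{lm:vl_NecConditions} and~\ref{lm:vr_NecConditions} to pin $(\overline m,\overline n)$ down to the single candidate compatible with $\overline p=\overline{p'}$ and then observe that the corresponding $\overline{q_{\bullet}}\bigl|_{\overline{\ell}}$ factors with a root of multiplicity exactly three — for $\Lambda_l$ it is proportional to $(u-\tfrac12)(u+\tfrac12)^3$ after the unit rescaling $u=\bar c\,\overline y/\bar b$ — hence is not a fourth power, yielding the same contradiction.

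The hard part is not the final coefficient comparison, which is elementary, but the bookkeeping behind the identity $\overline{\phi}=\overline{q_{\bullet}}\bigl|_{\overline{\ell}}$ together with the compatibility of reduction with the factorization $\phi=c\,(y-y_p)^2(y-y_{p'})^2$: one must check carefully that the normalizing translation makes $v_{\bullet}^{\vee}$ exactly the set of valuation-$0$ exponents, that no intersection point of $\ell$ with $V(q)$ escapes to infinity (so that $\phi$ retains degree $4$ with unit leading term), and that the points $\overline p,\overline{p'}\in(\resK^{*})^2$ are honestly defined. These are the same conventions and genericity already in force for $\Lambda_l$ and $\Lambda_r$ in Lemmas~\ref{lm:vl_NecConditions} and~\ref{lm:vr_NecConditions}; the no-hyperflex hypothesis enters only to ensure $p\neq p'$, so that a bitangent triple at this vertex exists in the first place.
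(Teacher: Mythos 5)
Your argument is correct, and it takes a genuinely different and more elementary route than the paper. The paper's proof of this statement pins down the unique candidate $(\overline m,\bar n)$ compatible with $\overline p=\overline{p'}$ via Lemmas~\ref{lm:vl_NecConditions} and~\ref{lm:vr_NecConditions}, then separates the two tangency points by re-embedding $V(q)$ through tropical modifications with generic shift parameters $\varepsilon_1,\varepsilon_2$, and runs a case analysis on where $\Trop\,p_1$ lands on the modified curve, invoking Payne's density of tropicalization fibers in one case. You instead restrict $q$ to $\ell$, observe that the valuation bookkeeping at the vertex (unit leading coefficient coming from $a_{04}$, positive valuation of all coefficients outside $v_\bullet^\vee$, $\val(m)=\val(n)=0$) forces $\overline\phi=\overline{q_\bullet}|_{\overline\ell}$, and play this off against the forced factorization $\phi=c\,(y-y_p)^2(y-y_{p'})^2$ from B\'ezout: if $\overline p=\overline{p'}$ then $\overline\phi$ is a fourth power of a linear form with nonzero root, contradicting the vanishing $y^2$-coefficient (for $\Lambda_l$), respectively $y^3$-coefficient (for $\Lambda_r$), of $\overline n\,\overline{q_\bullet}|_{\overline\ell}$. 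This avoids the modification machinery and Payne's theorem altogether, works uniformly for both vertices, and does not even need the explicit values from Lemmas~\ref{lm:vl_NecConditions} and~\ref{lm:vr_NecConditions} (your ``alternative'' that does use them, exhibiting the triple root via $(u-\tfrac12)(u+\tfrac12)^3$, also checks out). Two small points worth making explicit: the coefficient comparison uses that $2$ and $3$ are invertible in $\resK$, which is implicit in the paper's standing assumptions (real closed base, and the paper's own formulas divide by $4$, $8$, $108$, $\sqrt2$, $\sqrt3$); and the identification $\Trop\,p=\Trop\,p'=v_\bullet$, hence $\overline p,\overline{p'}\in(\resK^*)^2$, is exactly the setup the appendix fixes for $\Lambda_l$ and $\Lambda_r$, so nothing is missing there.
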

\begin{proof} 
  We argue by contradiction. We assume $q\in R[x,y]\smallsetminus \mathfrak{M}R[x,y]$ and we let $v=(0,0)$ be the vertex of the tropical line. The proof for both $v=v_l$ or $v_r$ is the same, so we argue for both vertices simultaneously. 
  By 
  {Lemmas}~\ref{lm:vl_NecConditions} and~\ref{lm:vr_NecConditions} we have a unique choice for $(\overline{m}, \bar{n})$ that is compatible with $\overline{p}=\overline{p'}$. Replacing these values in~\eqref{eq:leftVertexEqn} and \eqref{eq:rightVertexEqn} yields 
  \[ (\overline{m},\bar{n},\overline{p}) =
  \begin{cases}
    (\frac{\bar{b}}{4\,\bar{c}}, \frac{4\,\bar{a}\,\bar{c}^2}{\bar{b}^3}, \frac{\bar{b}^4}{16\,\bar{a}\bar{c}^3}, -\frac{\bar{b}}{2\,\bar{c}}) & \text{ for }\Lambda_l,\\
    (\frac{\bar{a}\,(7\pm 4\sqrt{2}\,i)}{9\,\overline{b'}},\frac{\overline{b'}^3(7 \mp 4\sqrt{2}\,i)}{108\,\bar{a}^2\,\bar{c}},
    \frac{4\,(43 \pm 13\sqrt{2}i)}{27\,\bar{a}^3\overline{b'}^4\bar{c}},
-\frac{(4 \pm \sqrt{2}i)}{3\,\bar{a}\,\overline{b'}})  & \text{ for }\Lambda_r.
  \end{cases}
  \]
We write $p=(r,s)$ and $p'=(r',s')$. Since $p\neq p '$ both lie in $\ell$, we have $r\neq r'$ and $s\neq s'$. We fix $N\gg \max\{\val(r-r'), \val(s-s')\}$ and re-embed $V(q)$ and $V(\ell)$  via the ideals
  \begin{equation*}\label{eq:idealsMult4}
      I := \langle q, x'-(x-r-\varepsilon_1) ,y'-(y-s-\varepsilon_2)\rangle  \text{ and }
      I_\ell := \langle \ell, x'-(x-r-\varepsilon_1) ,y'-(y-s-\varepsilon_2)\rangle,
 \end{equation*}
  where $\varepsilon_1, \varepsilon_2$ are two suitable parameters in $\KK$ of valuation $N$. This transformation corresponds to performing tropical modifications of $\RR^2$ along $\max\{X,0\}$ and $\max\{Y,0\}$.

  We write $\tilde{q}(x',y') := q(x'+r+\varepsilon_1,y'+s +\varepsilon_2)$ and $\tilde{\ell}(x',y'):= y' + m' + n\,x'$ where $m':=(m + s + \varepsilon_2 + n(r +\varepsilon_1))$. The two tangency points become $p_1:= (-\varepsilon_1, -\varepsilon_2)$, $p'_1 := (r'-r-\varepsilon_1, s'-s-\varepsilon_2)$. Note that $\val(m')>\val(m)$,  $\Trop\,p_1 = (-N,-N)$ and  $\Trop\,p'_1 = (\val(r'-r), \val(s'-s))$ both lie in  $(\RR_{<0})^2$. Our choice  $\varepsilon_1, \varepsilon_2$ is determined by two conditions: $\tilde{q}(0,0)\neq 0$ and $p_1, p'_1\in (\KK^*)^2$. Both can be attained for generic  $\varepsilon_1, \varepsilon_2\in R\smallsetminus \mathfrak{M}$ of valuation $N$.

  \autoref{fig:mult4} shows the relevant pieces of the Newton subdivisions of $\tilde{q}$, $q$, and the tropical curves $\Gamma':=\Trop\,V(\tilde{q})$ and  $\Lambda':=\Trop\, V(\tilde{\ell})$.  We can certify this by confirming that the expected valuations of $x'$, $y'$ and $(y')^4$ for $v_l$ (respectively,  $x',x'y', y'$ and $(y')^4$ for $v_r$) are attained, whereas the valuation of the constant term is higher than expected. Furthermore, the initial forms of $\tilde{a}_{10}$ and $\tilde{a}_{01}$ in both cases depend only on $\overline{p}$ and the coefficients of  $q$.

By construction, the tropical tangency points $\Trop\,p_1$ and $\Trop\,p'_1$ of $\Lambda'$ are distinct and  belong to the slope-one  edge $e$ of $\Gamma'$ adjacent to $v$. We let $v'$ be the second endpoint of $e$. The location of the vertex of $\Lambda'$ is uncertain, but it must lie in $\RR_{\leq 0}(1,1) + \Trop \,p_1$. We analyze two cases, depending on the  positions of $v'$ and $\Trop\,p_1$.

First, assume both $\Trop\,p_1$ and $\Trop\,p_1'$ lie in the relative interior of $e$. Then, the local equations for $\tilde{q}, \tilde{\ell}$ and their Wronskian at both points would yield $\overline{n'} =\overline{\tilde{a}_{10}}/\overline{\tilde{a}_{01}}$, and $\overline{\varepsilon_2} + \overline{\varepsilon_1}\, \overline{\tilde{a}_{10}}/\overline{\tilde{a}_{01}}  =0$, contradicting our genericity assumptions on the parameters $\varepsilon_1,\varepsilon_2$.

Finally, assume $\Trop\,p_1=v'$ and  consider the fiber over $v'$ of the tropicalization map $\trop\colon V(\tilde{q}) \to \RR^2$. By~\cite[Corollary 4.2]{pay:09}, this fiber is Zariski dense in $V(\tilde{q})$. Furthermore, $\overline{\tilde{q}}_{v'}\in \resK[x',y']$ vanishes along  initial forms of all points in the fiber. Thus, we pick a point $u=(r'',s'')$ in the fiber with $\overline{r''}\neq \overline{-\varepsilon_1}$ and $\overline{s''}\neq \overline{-\varepsilon_2}$. We modify along $\max\{X',-N\}$ and $\max\{Y',-N\}$ followed by a re-embedding using $x'' = x' - r'' - \varepsilon_1'$, $y''=y'-s''-\varepsilon_2'$ for generic parameters $\varepsilon_1', \varepsilon_2'$ of valuation $>N$. This has a simple effect on the projection to the $(x'',y'')$-plane: it prolongs  the edge $e$  in the $(-1,-1)$ direction while leaving $\Trop\,p_1$ and $\Trop\,p'_1$ fixed. The result then follows from the earlier case analyzed above. 
\end{proof}

\begin{figure}[htb]
  \includegraphics[scale=0.5]{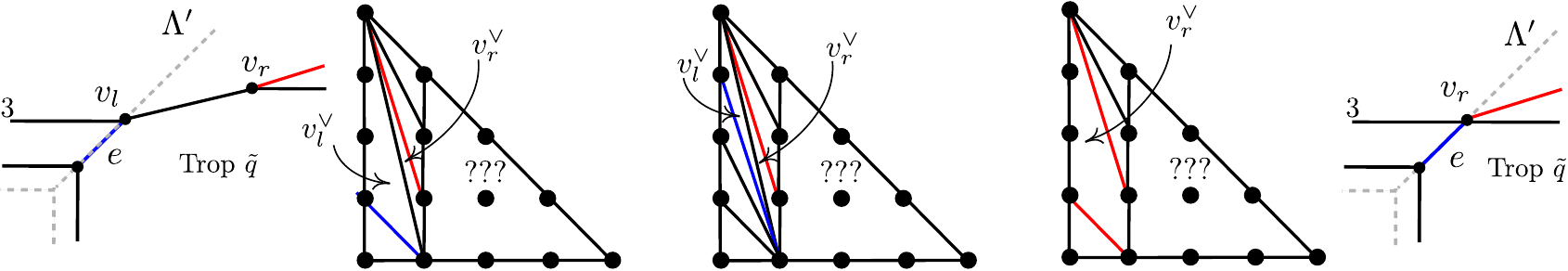}
  \caption{From left to right:  $\Trop\, V(\tilde{\ell})$, $\Trop\, V(\tilde{q})$ and Newton subdivision of $\tilde{q}$ when $\overline{p}=\overline{p'}$ and the tangency $\Trop\,p=\Trop\,p'$ is $v_l$ or $v_r$.\label{fig:mult4}}
\end{figure}

Finally, by combining~\autoref{pr:liftingFormulasMult4} and~\autoref{thm:liftingMult4} we conclude that the bitangents $\Lambda_l$ and $\Lambda_r$ both lift with  multiplicity one, as was asserted in~\cite[Theorem 4.1]{len.mar:20}.

\begin{corollary}\label{cor:mult1forvl-vr} The bitangent $\Lambda_l$ and $\Lambda_r$  have lifting multiplicity one.
\end{corollary}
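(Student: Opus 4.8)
The plan is to obtain the statement as an immediate consequence of the two preceding results of this section, \autoref{pr:liftingFormulasMult4} and \autoref{thm:liftingMult4}. Recall from \autoref{def:liftingBitangents} that the lifting multiplicity of $\Lambda_l$ (respectively $\Lambda_r$) is the number of bitangent triples $(\ell,p,p')$ defined over $\KK$ with $\Trop(\ell)=\Lambda_l$ and $\Trop(p)=\Trop(p')=v_l$ (resp. $v_r$), the tropical tangency occurring with multiplicity four at that vertex of $\Gamma$. So the task is simply to show there is exactly one such triple in each case.

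First I would invoke \autoref{thm:liftingMult4}: under the no-hyperflex hypothesis, every bitangent triple $(\ell,p,p')$ with $\Trop(\ell)\in\{\Lambda_l,\Lambda_r\}$ satisfies $\overline{p}\neq\overline{p'}$. This eliminates the degenerate candidate pairs of initial forms isolated in \autoref{lm:vl_NecConditions} and \autoref{lm:vr_NecConditions}; concretely, for $\Lambda_l$ it rules out $(\overline{m},\bar{n})=(\bar{b}/(4\bar{c}),\,4\bar{a}\bar{c}^2/\bar{b}^3)$, and for $\Lambda_r$ it rules out $(\overline{m},\bar{n})=(-\bar{a}/\overline{b'},\,-\overline{b'}^{3}/(4\bar{a}^2\bar{c}))$, since in each of these the discriminant of the relevant univariate polynomial ($g_l$, resp. $g_r$) vanishes and hence forces $\overline{p}=\overline{p'}$. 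Thus the only admissible value of the pair $(\overline{m},\bar{n})$ is the one that survives into \autoref{pr:liftingFormulasMult4}.

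Next I would apply \autoref{pr:liftingFormulasMult4} itself, which for each of $\Lambda_l$ and $\Lambda_r$ produces a single compatible tuple of initial forms $(\overline{m},\bar{n},\overline{p},\overline{p'})$ — displayed in \autoref{tab:liftingMult4} — and then, via the nonvanishing-Jacobian computation carried out there together with the Multivariate Hensel Lemma (\autoref{lm:multivariateHensel}), lifts this data to a unique bitangent triple $(\ell,p,p')$. Combining the two inputs: existence and uniqueness among triples with $\overline{p}\neq\overline{p'}$ come from \autoref{pr:liftingFormulasMult4}, while \autoref{thm:liftingMult4} guarantees no further triple with $\overline{p}=\overline{p'}$ exists. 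Hence there is exactly one bitangent triple lifting $\Lambda_l$, and exactly one lifting $\Lambda_r$, so both have lifting multiplicity one.

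The corollary is thus pure bookkeeping once those two statements are available; the genuine obstacle sits upstream, in \autoref{thm:liftingMult4}, where excluding $\overline{p}=\overline{p'}$ required the iterated tropical-modification-and-re-embedding argument (elongating the slope-one edge $e$ of $\Gamma'$ in the $(-1,-1)$ direction, and using Zariski density of the fiber of the tropicalization map over $v'$ via~\cite[Corollary 4.2]{pay:09}) rather than a naive dimension count. For the corollary the only point demanding care is the compatibility of conventions: the $\Sn{3}$-twist by $\tau_0\circ\tau_1\circ\tau_0$ normalizing shape (II) so that $e$ has direction $(4,1)$, and the identification of $v_l$ and $v_r$ with the two weight-one vertices of shape (II) in \autoref{fig:2dCells}, so that "multiplicity one" here genuinely records the labels drawn there.
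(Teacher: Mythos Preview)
Your proposal is correct and matches the paper's own justification, which is literally the single sentence ``by combining \autoref{pr:liftingFormulasMult4} and \autoref{thm:liftingMult4}'' preceding the corollary. The logical structure you give---\autoref{thm:liftingMult4} excludes all triples with $\overline{p}=\overline{p'}$, and \autoref{pr:liftingFormulasMult4} supplies exactly one triple with $\overline{p}\neq\overline{p'}$---is exactly right.

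One small slip in your illustrative parenthetical: for $\Lambda_r$ you have the direction of the implication from \autoref{lm:vr_NecConditions} reversed. That lemma says that if $\overline{p}=\overline{p'}$ then the first pair $(\overline{m},\bar{n})=(-\bar{a}/\overline{b'},\,\ldots)$ \emph{cannot} occur (indeed its proof computes $\Delta(g_r)=32\bar{a}^2\neq 0$ there). So this is precisely the pair that \emph{survives} and appears in \autoref{tab:liftingMult4}; what \autoref{thm:liftingMult4} rules out for $\Lambda_r$ are the two complex options involving $\pm 4\sqrt{2}\,i$, which arise from $\Delta(g_r)=0$. This does not affect the validity of your argument, only the accompanying commentary.
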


The techniques discussed above can be used to study multiplicity four tangencies of types (4) and (3b). The first situation arises for members of a shape (II) bitangent class inside the bounded edge in the boundary of its two-cell. We show: 
\begin{proposition}\label{pr:mult4type4}
  Assume a bitangent line $\Lambda$ to $\Gamma$ has a local tangency of type (4) and multiplicity four. Then, $\Lambda$ does not lift to a bitangent triple for $V(q)$.
\end{proposition}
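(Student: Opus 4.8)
The plan is to reduce everything, via the duality for shape~(II), to an elementary impossibility about univariate quartics. No tropical modification is needed, because a type~(4) multiplicity four tangency sits in the interior of an \emph{edge} of $\Gamma$, so the relevant initial form of $q$ is a binomial. Concretely, by the discussion preceding the statement together with~\autoref{prop:mult4intersection}, such a $\Lambda$ only occurs inside the bounded edge of a shape~(II) class: its vertex $v$ lies in the relative interior of a bounded edge $e$ of $\Gamma$ of direction $(3,-1)$, whose dual edge $e^{\vee}$ in the Newton subdivision of $q$ is the primitive segment joining $(0,0)$ and $(1,3)$. Hence any bitangent lift $(\ell,p,p')$ has $\Trop p=\Trop p'=v$. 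First I would normalize: after translating $v$ to the origin and rescaling $q$ (neither affects the existence of a lift), I may assume $q\in R[x,y]\smallsetminus\mathfrak M R[x,y]$, so that $\val(m)=\val(n)=0$, the tangency points $p=(r,s)$, $p'=(r',s')$ lie in $(R^{*})^{2}$, and — since $e^{\vee}$ is the dual cell of $v$ and has one endpoint at the origin — the initial form of $q$ at $v$ is the binomial $\overline q=\overline{a_{00}}+\overline{a_{13}}\,xy^{3}=:\gamma_{1}+\gamma_{2}\,xy^{3}$ with $\gamma_{1},\gamma_{2}\in\resK^{*}$, all other coefficients of $q$ having positive valuation.

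The main step is to parametrize $V(\ell)$ by $x$ through $y=-m-nx$ and study $\phi(x):=q(x,-m-nx)\in R[x]$. Since $V(q)\cap V(\ell)$ is exactly the four finite points of the divisor $2p+2p'$ (here I use that $V(q)$ has no hyperflexes, so $p\neq p'$), all with $x$-coordinate of valuation $0$, the polynomial $\phi$ has degree $4$ and factors as $\phi=c_{4}(x-r)^{2}(x-r')^{2}$; moreover $c_{4}$ is a unit, because the unique valuation-zero monomial of $q$ on the boundary line $\{i+j=4\}$ is $a_{13}xy^{3}$, whence $c_{4}\equiv-\overline{a_{13}}\,\bar n^{3}\pmod{\mathfrak M}$. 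Reducing this factorization modulo $\mathfrak M$ gives $\overline\phi=\overline{c_{4}}\,(x-\bar r)^{2}(x-\bar{r'})^{2}$ with $\bar r,\bar{r'}\in\resK^{*}$. On the other hand reduction is a ring homomorphism, so $\overline\phi$ equals $\overline q(x,-\bar m-\bar n x)=\gamma_{1}+\gamma_{2}\,x(-\bar m-\bar n x)^{3}$, with $\overline q$ the binomial above. Writing $\mu:=-\bar m/\bar n\in\resK^{*}$ and dividing by the unit $-\gamma_{2}\bar n^{3}$, the two expressions for $\overline\phi$ give
\[
(x-\bar r)^{2}(x-\bar{r'})^{2}\;=\;x(x-\mu)^{3}-K\;=\;x^{4}-3\mu x^{3}+3\mu^{2}x^{2}-\mu^{3}x-K,\qquad K:=\frac{\gamma_{1}}{\gamma_{2}\bar n^{3}}\in\resK^{*}.
\]
Writing the left-hand side as $(x^{2}-\rho x+\sigma)^{2}$ with $\rho=\bar r+\bar{r'}$, $\sigma=\bar r\bar{r'}$, and comparing the coefficients of $x^{3}$, $x^{2}$, $x^{1}$ forces $\rho=\tfrac{3}{2}\mu$, then $\sigma=\tfrac{3}{8}\mu^{2}$, and then $\mu^{3}=2\rho\sigma=\tfrac{9}{8}\mu^{3}$; since $\mu\neq0$ this yields $8=9$, a contradiction. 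Hence $\Lambda$ does not lift to a bitangent triple for $V(q)$.

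The hard part, and where I expect to have to be careful, is the valuation bookkeeping that makes the classical factorization of $\phi$ descend to the claimed factorization of $\overline\phi$ over $\resK$ — that $\phi$ has degree exactly $4$ with unit leading coefficient, that its four roots have valuation $0$, and that $\overline\phi$ is genuinely $\overline q(x,-\bar m-\bar n x)$ with $\overline q$ a binomial. All of this rests on two structural facts that I would verify explicitly: that $v$ is \emph{interior} to the edge $e$, so the dual cell is exactly the primitive segment $e^{\vee}$ (forcing $\overline q$ to be a binomial supported on $e^{\vee}$, one of whose endpoints is the origin after the normalization), and that $(1,3)\in e^{\vee}$ lies on the boundary line $i+j=4$ of the Newton polygon. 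Once these are in place, the coefficient comparison above is immediate. As a cross-check in the style of the rest of the appendix, one can instead note that the initial Wronskian $\overline W=\det J(\overline q,\overline\ell)$ is a monomial times $y-3\bar n x$, so the system $\overline q=\overline\ell=\overline W=0$ has a unique torus solution for fixed $(\bar m,\bar n)$, which already forces $\overline p=\overline{p'}$, after which the tropical-modification argument from the proof of~\autoref{thm:liftingMult4} applies; but the quartic computation above is shorter and disposes of the coincident case $\overline p=\overline{p'}$ automatically.
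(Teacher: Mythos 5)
Your argument is correct, and it reaches the contradiction by a genuinely different route than the paper, even though both start from the same reduction: up to $\Sn{3}$-symmetry the class has shape (II), the dual cell of the vertex $v=(0,0)$ is the edge joining $(0,0)$ and $(1,3)$, so the initial form of $q$ at $v$ is the binomial $\overline{a_{00}}+\overline{a_{13}}\,xy^3$, both tangency points tropicalize to $v$, and $\overline{m},\bar{n}\in\resK^{*}$. From there the paper stays with the local initial system $\overline{q_v}=\overline{\ell}=\overline{W_v}=0$, eliminates $\bar{y}$, and by manipulations in the style of~\autoref{lm:vl_NecConditions} exhibits a non-zero linear polynomial in $\bar{x}$ in the resulting ideal, which is incompatible with the double roots forced by the tangencies. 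You instead restrict $q$ to the line, use B\'ezout to get the factorization $\phi(x)=q(x,-m-nx)=c_4(x-r)^2(x-r')^2$ over $R$ with $c_4,r,r'$ units, reduce modulo $\mathfrak{M}$, and compare coefficients in $(x-\bar r)^2(x-\bar{r'})^2=x(x-\mu)^3-K$ to obtain $\mu^{3}=\tfrac{9}{8}\mu^{3}$ with $\mu\neq 0$. Your route is shorter, elimination-free and Wronskian-free, and it is uniform in the degenerate possibilities: since you only use that $\overline{\phi}$ is a unit times the square of a quadratic, the cases $\overline{p}=\overline{p'}$ and even $p=p'$ are handled by the same three comparisons, so your appeal to the no-hyperflex hypothesis is not actually needed. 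What the paper's route buys is consistency with the template used for the other multiplicity-four computations (Lemmas~\ref{lm:vl_NecConditions} and~\ref{lm:vr_NecConditions}, and~\autoref{pr:mult4type3b}) and with the Hensel-lifting machinery of~\autoref{lm:multivariateHensel}. Two minor remarks: your contradiction $8=9$, exactly like the paper's linear form with coefficients $48$ and $12$, tacitly uses that the residue characteristic is not $2$ or $3$, a standing assumption throughout the paper; and your closing cross-check, that forcing $\overline{p}=\overline{p'}$ via the Wronskian lets the modification argument of~\autoref{thm:liftingMult4} carry over, is only sketched (the local geometry along an edge of direction $(3,-1)$ differs from that proof), but your main argument does not rely on it.
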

\begin{proof} We argue by contradiction and write $(\ell,p,p')$ for the bitangent triple associated to $\Lambda$. Up to $\Sn{3}$-symmetry, we may assume the bitangent class of $\Lambda$ has shape (II). As usual, we set $v=(0,0)$ to be the vertex of $\Lambda$ and fix $q\in R[x,y]\smallsetminus \mathfrak{M}R[x,y]$. We let $e$ be the edge of $\Gamma$ containing $v$ in its relative interior. As~\autoref{fig:NP} shows,  $e^{\vee}$ has endpoints $(0,0)$ and $(1,3)$. The local system at $(0,0)$ is defined by the vanish of  the equations 
  \[
  \overline{q_v} = \bar{a} + \bar{b} \bar{x}\bar{y}^3, \quad \overline{\ell}\!:= \bar{y} + \overline{m} +\bar{n}\, \bar{x}  \;\text{ and }\; \overline{W_v}=
  -3\bar{b}\bar{n}\bar{x}\bar{y}^2 + \bar{b}\bar{y}^3.
  \]
  Substituting the value for $\bar{y}$ obtained from   $\overline{\ell}=0$ into the other two equations yields an ideal $I\subset \resK[\overline{m}^{\pm}, \bar{n}^{\pm}][\bar{x}^{\pm}]$ with two generators with a multiplicity two solution at   $\bar{x}$. Algebraic manipulations as those used in the proof of~\autoref{lm:vl_NecConditions} produce a non-zero linear polynomial in $I$, namely
  $h=(48\bar{a}\bar{b}^2\overline{m}\bar{n}^5)\bar{x} + (12\bar{a}\bar{b}^2\overline{m}^2\bar{n}^2)$. This cannot happen. 
  \end{proof}

To conclude, we discuss tangencies of type (3b) of multiplicity four outside shape (C). Assume $\Lambda$ is a bitangent line with vertex $v$ and $\Lambda\cap \Gamma$ is a bounded horizontal edge $e$ with rightmost vertex $v$ with stable intersection multiplicity four. This situation arises for the valency two vertices of weight zero of bitangent classes with shape (D), (L), (L') or (O) through (S). Either by chip-firing on the stable intersection between $\Gamma$ and $\Lambda$ or by analyzing the tangencies for members in a neighborhood of $\Lambda$ we can conclude that $\Lambda$ has two multiplicity two tangencies, namely,the midpoint and the rightmost vertex   of $e$.
As~\autoref{fig:NP} shows, the  local equation for $q(x,y)$ at $v$ becomes
$q_v(x,y)= ax +b xy + c x^2y^2$.
Our final result discusses possible lifts of $\Lambda$.

\begin{proposition}\label{pr:mult4type3b} Let $\Lambda$ be a bitangent triple to  $\Gamma$ that realizes a local tangency of type (3b) at $v$ with two tangencies: one at $v$ and one in the relative interior of the adjacent bounded horizontal edge. Then, $\Lambda$ cannot be lifted to a bitangent triple.
  \end{proposition}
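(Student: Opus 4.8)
The plan is to reduce everything to the local equations at the vertex $v$ and to extract a contradiction from them together with the single constraint imposed by the second tangency, so that no honest lift can exist. First I would normalize as usual: scale so that $q\in R[x,y]\smallsetminus\mathfrak{M}R[x,y]$, translate so that $v=(0,0)$, and record that, since $v$ is the vertex of $\Lambda$, every hypothetical bitangent triple $(\ell,p,p')$ has $\val(m)=\val(n)=0$; hence $\overline{m},\bar{n}\in\resK^*$, the initial form $\bar{p}=(\bar{x},\bar{y})$ of the tangency point over $v$ lies in $(\resK^*)^2$, and the reductions at $v$ satisfy $\overline{q_v}=\overline{\ell}=\overline{W_v}=0$, with $q_v=ax+bxy+cx^2y^2$ and $\bar{a},\bar{b},\bar{c}\in\resK^*$ after rescaling. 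Since the argument only needs a contradiction among these initial forms, I never have to invoke \autoref{lm:multivariateHensel} or the modification machinery.

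Second, I would pin down $\overline{m}$ using the other tangency. The edge $e$ is horizontal and, reading off \autoref{fig:NP}, its dual segment $e^\vee$ joins $(1,0)$ and $(1,1)$; the second tangency lies in the relative interior of $e$, along the horizontal end of $\Lambda$, so the edge polynomial of $q$ along $e$, namely $\bar{x}(\bar{a}+\bar{b}\bar{y})$, must overlap to leading order with $\operatorname{in}\ell=\bar{y}+\overline{m}$. This forces $\overline{m}=\overline{a_{1,0}}/\overline{a_{1,1}}=\bar{a}/\bar{b}$ — exactly the content of the treatment of horizontal type $(3a)$ and $(3c)$ tangencies in the proof of~\autoref{pr:realLifts3a3c}.

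Third comes the computation at $v$. Eliminating $\bar{x}$ via $\overline{\ell}=0$ (so $\bar{x}=-(\bar{y}+\overline{m})/\bar{n}$) and cancelling the unit $\bar{x}$ from $\overline{q_v}$ turns the system into the cubic $F(\bar{y}):=\bar{c}\bar{y}^3+\bar{c}\overline{m}\bar{y}^2-\bar{n}\bar{b}\bar{y}-\bar{n}\bar{a}$ together with the Wronskian equation; using the identity $\overline{W_v}|_{\overline{\ell}=0}=\tfrac{d}{d\bar{x}}\bigl(\overline{q_v}|_{\overline{\ell}=0}\bigr)$ and $F(\bar{y})=0$, the latter becomes $F'(\bar{y})=0$, so $\bar{y}$ is a double root of $F$. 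Substituting $\overline{m}=\bar{a}/\bar{b}$ produces the factorization $F(\bar{y})=\bigl(\bar{y}+\bar{a}/\bar{b}\bigr)\bigl(\bar{c}\bar{y}^2-\bar{n}\bar{b}\bigr)$. Since $\bar{n}\bar{b}\neq0$, the quadratic factor has distinct roots, so $F$ can have a double root only if $-\bar{a}/\bar{b}$ is a root of it as well, i.e.\ $\bar{n}=\bar{c}\bar{a}^2/\bar{b}^3$; but then the double root is $\bar{y}=-\bar{a}/\bar{b}=-\overline{m}$, which gives $\bar{x}=-(\bar{y}+\overline{m})/\bar{n}=0$, contradicting $\bar{x}\in\resK^*$. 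Hence $\Lambda$ does not lift.

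The hard part here is not a conceptual obstacle but getting the combinatorial bookkeeping exactly right: confirming that the horizontal edge at $v$ is dual to the vertical edge $[(1,0),(1,1)]$ of the triangle $\{(1,0),(1,1),(2,2)\}$ (forced by the shape of $q_v$ recorded in the statement), and hence that the leading coefficient relation is $\overline{m}=\bar{a}/\bar{b}$ and not an inverted fraction or a sign variant — any such error would spoil the clean factorization of $F$. I also want to double-check the Wronskian identity so that ``tangency at $v$'' genuinely translates into ``$\bar{y}$ is a double root of $F$''. Once these points are settled, the contradiction $\bar{x}=0$ is immediate.
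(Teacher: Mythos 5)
Your proposal is correct and follows essentially the same route as the paper: both pin down $\overline{m}=\bar a/\bar b$ from the tangency in the relative interior of the horizontal edge and then derive a contradiction from the initial-form system $\overline{q_v}=\overline{\ell}=\overline{W_v}=0$ at $v$, with your double-root analysis of $F$ even recovering the paper's intermediate value $\bar n=\bar a^2\bar c/\bar b^3$. The only difference is bookkeeping — you eliminate $\bar x$ and read the contradiction off the factorization $F=(\bar y+\bar a/\bar b)(\bar c\bar y^2-\bar n\bar b)$, forcing $\bar x=0$, whereas the paper eliminates toward an inconsistent pair of linear equations in $\bar x$ — and your verified Wronskian identity and the normalizations ($\val(m)=\val(n)=0$, $\bar x,\bar y\in\resK^*$) are exactly as in the paper.
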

\begin{proof} We argue by contradiction, and fix a bitangent lift $(\ell,p,p')$ of $\Lambda$ with $\Trop\,p=v$. We set $\overline{p}=(\bar{x},\bar{y})$. As usual, we assume $q\in R[x,y]\smallsetminus \mathfrak{M}R[x,y]$ and $v=(0,0)$. Thus, $\val(a)=\val(b)=\val(c)=\val(m)=\val(n)=0$. Using the local equations for the tangency along the relative interior of the horizontal bounded edge we conclude that $\overline{m} = \bar{a}/\bar{b}$. An algebraic manipulation of the local equations at $v$ (in the variables $\bar{x},\bar{y}$ and $\bar{n}$)  using the same techniques as those in the proof of~\autoref{lm:vl_NecConditions} implies $\bar{n} = \bar{a}^2\bar{c}/\bar{b}^3$. Replacing this value in the class of $q_v$ and $W_v$ in  $\resK[\bar{x}^{\pm}, \bar{y}^{\pm}]$ yields an inconsistent system of equations for $\bar{x}$, namely
  $\bar{a}\,\bar{c}\,\bar{x} + 2\,\bar{b}^2 = 2\,\bar{a}\,\bar{c}\,\bar{x} + 3\,\bar{b}^2 = 0.$ Thus, $\Lambda$ cannot be lifted over $\KK$.
  \end{proof}

\normalsize

  \vspace{2ex}
  

  \noindent
\textbf{\small{Authors' addresses:}}
\smallskip
\

\noindent
\small{M.A.\ Cueto,  Mathematics Department, The Ohio State University, 231 W 18th Ave, Columbus, OH 43210, USA.
\\
\noindent \emph{Email address:} \texttt{cueto.5@osu.edu}}
\vspace{2ex}

\noindent
\small{H.\ Markwig, Eberhard Karls Universit\"at T\"ubingen, Fachbereich Mathematik, Auf der Morgenstelle 10, 72108 T\"ubingen, Germany.
  \\
  \noindent \emph{Email address:} \texttt{hannah@math.uni-tuebingen.de}}

\end{document}